 \newtheorem{theorem}{Theorem}[section]
 \newtheorem{lemma}[theorem]{Lemma}
 \newtheorem{corollary}[theorem]{Corollary}
 \newtheorem{proposition}[theorem]{Proposition}
 \newtheorem{conjecture}[theorem]{Conjecture}
 \newtheorem{remark}[theorem]{Remark}
 \theoremstyle{definition}
 \newtheorem{definition}[theorem]{Definition}
 \newtheorem{question}[theorem]{Question}
 \newtheorem{problem}[theorem]{Problem}
 \theoremstyle{example}
 \newtheorem{example}[theorem]{Example}
\numberwithin{equation}{section}
\newcommand{\p}{\partial}
\newcommand{\bo}{\bm{0}}
\newcommand{\bx}{\bm{x}}
\newcommand{\Ca}{\mathcal{C}}
\newcommand{\dC}{\mathbb{C}}
\newcommand{\dN}{\mathbb{N}}
\newcommand{\cK}{\mathcal{K}}
\newcommand{\dR}{\mathbb{R}}
\newcommand{\dT}{\mathbb{T}}
\newcommand{\dZ}{\mathbb{Z}}
\newcommand{\fA}{\mathfrak{A}}
\newcommand{\fB}{\mathfrak{B}}
\newcommand{\bom} {{\bm\omega}}
\DeclareMathOperator{\Aff}{Aff}
\DeclareMathOperator{\Area}{Area}
\DeclareMathOperator{\Aut}{Aut}
\DeclareMathOperator{\Ch}{Ch}
\DeclareMathOperator{\diam}{diam}
\DeclareMathOperator{\dvol}{dvol}
\DeclareMathOperator{\ess}{ess}
\DeclareMathOperator{\FF}{flat}
\DeclareMathOperator{\Id}{Id}
\DeclareMathOperator{\Ima}{Im}
\DeclareMathOperator{\Injrad}{Injrad}
  \DeclareMathOperator{\Mod}{Mod}
 \DeclareMathOperator{\PI}{PI}
\DeclareMathOperator{\Ric}{Ric}
\DeclareMathOperator{\Rm}{Rm}
\DeclareMathOperator{\RCD}{RCD}
\DeclareMathOperator{\GL}{GL}
\DeclareMathOperator{\SO}{SO}
\DeclareMathOperator{\Supp}{Supp}
\DeclareMathOperator{\SL}{SL}
\DeclareMathOperator{\SU}{SU}
\DeclareMathOperator{\Sp}{Sp}
\DeclareMathOperator{\Isom}{Isom}
\DeclareMathOperator{\II}{II}
\DeclareMathOperator{\TF}{tf}
\DeclareMathOperator{\Tr}{Tr}
\DeclareMathOperator{\Vol}{Vol}
\begin{document}
 \title{Collapsing geometry of hyperk\"ahler 4-manifolds and applications}
 \author{Song Sun}
\address{Department of Mathematics, University of California, Berkeley, CA 94720} 
\email{sosun@berkeley.edu}

 \author{Ruobing Zhang}
\address{Department of Mathematics, Princeton University, Princeton, NJ 08544}
\email{ruobingz@princeton.edu}
 \thanks{The first  author is supported by the Simons Collaboration on Special Holonomy in Geometry, Analysis and Physics ($\#$ 488633), and NSF grant DMS-2004261. The second  author is supported by NSF grant DMS-1906265.}

 \maketitle
 \begin{abstract}
 We investigate the collapsing geometry of hyperk\"ahler 4-manifolds. As  applications we prove two well-known conjectures in the field:
 \begin{enumerate}
 	\item Any collapsed limit of unit-diameter hyperk\"ahler metrics on the K3 manifold is isometric to one of the following: the quotient of a flat 3-torus by an involution,  a singular special K\"ahler metric on the 2-sphere, or the unit interval.
 	\item Any complete hyperk\"ahler 4-manifold with finite energy (i.e., gravitational instanton) is asymptotic to a model end at infinity. 
 \end{enumerate}
 	\end{abstract}
 	
 \setcounter{tocdepth}{1}
 \tableofcontents
 
\section{Introduction} 
A Riemannian metric  $g$ on a smooth four manifold $X$ is \emph{hyperk\"ahler}  if its holonomy group is contained in $SU(2)\subset SO(4)$. The latter condition is equivalent to saying that we can choose an orientation so that the bundle $\Lambda^+X$ of self-dual 2-forms is trivialized by parallel sections. In particular, on a hyperk\"ahler 4-manifold, there is a triple of closed self-dual 2-forms ${\bm\omega}\equiv\{\omega_1, \omega_2, \omega_3\}$ satisfying  $$\omega_\alpha\wedge\omega_\beta=2\delta_{\alpha\beta}\dvol_g, \ \ \ \ \ \ \forall\ \  \alpha, \beta\in \{1, 2, 3\}.$$Such a triple is called a \emph{hyperk\"ahler triple}. 
Notice that conversely a hyperk\"ahler triple uniquely determines a hyperk\"ahler Riemannian metric. It is an important fact that hyperk\"ahler 4-manifolds have vanishing Ricci curvature; indeed they form the simplest non-trivial class of Ricci-flat metrics. In this paper we systematically study degenerations of hyperk\"ahler 4-manifolds, focusing on the case when the volume is \emph{collapsing}. Below we describe two main geometric applications.

The first application of our study is to the moduli compactification of hyperk\"ahler metrics on the K3 manifold. Here the K3 manifold $\mathcal K$ is by definition the oriented smooth 4-manifold underlying a complex K3 surface. We know the intersection form on  $H^2(\mathcal K;\dZ)$ has signature $(3, 19)$. 
 Denote by $\mathfrak M$  the set of equivalence classes of all unit-diameter hyperk\"ahler metrics on $\mathcal K$ modulo the natural action of $\text{Diff}(\mathcal K)$, endowed with the Gromov-Hausdorff topology. This space has an explicit description in terms of the \emph{period map}.  
 Recall that a hyperk\"ahler metric $g$ has a \emph{period}, which is the element in  the  Grassmannian of oriented maximal positive subspaces in $H^2(\mathcal K; \dR)$  given by  the space $\mathbb H^{+}(g)$ of self-dual harmonic forms. Taking into account of $\text{Diff}(\mathcal K)$ action we have a well-defined \emph{period map} (see \cite{KoTo})
 \begin{align}\mathcal P: \mathfrak M \longrightarrow \mathcal D \equiv \Gamma\setminus O(3, 19)/(O(3)\times O(19)), \label{period map}\end{align}
where $\Gamma$ is the automorphism group of $H^2(\mathcal K;\dZ)$ preserving the intersection form.    By the global Torelli theorem, $\mathcal P$ is  injective and maps onto an open dense subset of $\mathcal D$. Moreover, $\mathcal P$ extends to a bijection $\mathcal P: \mathfrak M'\rightarrow \mathcal D$, where $\mathfrak M'$ is the partial compactification of $\mathfrak M$ obtained by adding the volume non-collapsing Gromov-Hausdorff limits of smooth hyperk\"ahler triples ; the latter are known to be hyperk\"ahler \emph{orbifolds}, and their periods are maximal positive subspaces in $H^2(\mathcal K; \dR)$ which annihilate at least one homology class with self-intersection $-2$ (see for example \cite{OO}, Chapter 6). 

We are interested in understanding the full Gromov-Hausdorff compactification $\overline{\mathfrak M}$. The elements in $\overline{\mathfrak M}\setminus \mathfrak M'$ are volume \emph{collapsing} Gromov-Hausdorff limits of hyperk\"ahler metrics whose periods diverge to infinity in $\mathcal D$. We prove the following structural results for these limit spaces, hence confirm a folklore conjecture (see for example \cite{OO}, Proposition IV).

\begin{theorem}\label{t:thm1.1}
	Any collapsed limit in $\overline{\mathfrak M}\setminus \mathfrak{M}'$ must be isometric to one of the following:
	\begin{itemize}
		\item (dimension 3) a flat orbifold $\mathbb T^3/\dZ_2$;
		\item (dimension 2) a singular special K\"ahler metric on $S^2$ with local integral  monodromy; 
		\item (dimension 1) a one dimensional unit interval.
	\end{itemize}
\end{theorem}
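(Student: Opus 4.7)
The plan is to combine the convergence theory of Ricci-flat $4$-manifolds with the rigidity imposed by the parallel hyperk\"ahler triple $\bom$. Since the sequence collapses but has unit diameter, any limit $Y$ in $\overline{\mathfrak M}\setminus\mathfrak M'$ is a compact Cheeger--Colding Ricci-limit space whose Hausdorff dimension $k$ satisfies $k<4$ (volume collapse) and $k\ge 1$ (unit diameter); moreover one should expect $k$ to be an integer via the structure theory of non-collapsed tangent cones for Ricci-flat $4$-manifolds. The proof then proceeds case by case on $k\in\{1,2,3\}$.

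For $k=3$, an equivariant $\epsilon$-regularity theorem in the spirit of Cheeger--Fukaya--Gromov, adapted to the Ricci-flat setting, should show that at a regular point of $Y$ the collapse is along $S^1$-fibers with bounded holonomy. Averaging the three components of $\bom$ along these approximate fibers produces three parallel $1$-forms on the local quotient, forcing the regular part of $Y$ to be flat. Globally, $Y$ is therefore a flat $3$-orbifold, and a combination of the simple-connectedness of $\mathcal{K}$, the classification of flat $3$-orbifolds, and an Euler-number constraint coming from the Seifert-type invariants of the collapse should rule out every case except $\dT^3/\dZ_2$. For $k=2$, at generic regular points of $Y$ the collapsing fibers are $T^2$'s, Lagrangian with respect to any compatible complex structure. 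The periods of $\bom$ along these tori endow the regular part of $Y$ with a special K\"ahler structure, and parallel transport of the period lattice around the isolated singular points produces local monodromy in $SL(2,\dZ)$. Since $\mathcal{K}$ is simply connected the base $Y$ is topologically $S^2$, with the finite set of singular fibers constrained by the Kodaira classification. For $k=1$, $Y$ is a $1$-dimensional Ricci-limit space, hence an interval or a circle, and the circle is ruled out by the simple-connectedness of $\mathcal{K}$ via a lifting argument.

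The principal obstacle is uniform control of the geometry \emph{across} the finitely many singular points of $Y$, where bubbles of several different model types (ALE, ALF, ALG, ALH, or Tian--Yau ends) may appear in a nested bubble-tree. Making the case analysis above rigorous requires a sharp $\epsilon$-regularity theorem for $\bom$ at every relevant scale and a three-scale / bubble-tree decomposition identifying each piece with a classified model. This is tightly coupled with the second main theorem of the paper: the bubbles arising in the collapse are precisely complete hyperk\"ahler $4$-manifolds with finite energy, so one expects the classification of collapsed limits and the classification of ends of gravitational instantons to be established simultaneously, by an inductive argument on the collapsing dimension and the bubble scales.
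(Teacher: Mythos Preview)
Your case split on the essential dimension $d\in\{1,2,3\}$ is correct, and your description of the $d=2$ mechanism (special K\"ahler structure on the regular part, integral monodromy from the $T^2$-lattice) is close to what the paper does. However, there are two genuine gaps.

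First, in the $d=3$ case your claim that ``averaging the three components of $\bom$ along these approximate fibers produces three parallel $1$-forms on the local quotient, forcing the regular part of $Y$ to be flat'' is not how the structure works, and the regular part of $Y$ is \emph{not} a priori flat. What the $S^1$-reduction (Gibbons--Hawking ansatz) actually gives is a \emph{special affine metric}: locally $g_\infty=V\,g_\infty^\flat$ with $g_\infty^\flat$ flat and $V$ a positive harmonic function on the flat background. The three $1$-forms $dx,dy,dz$ are parallel only for $g_\infty^\flat$, not for $g_\infty$. Proving that $Y$ is a flat orbifold requires showing $V$ is constant, and this is where the real analysis lies: one must first understand the singular behavior of $V$ near each point of $\mathcal S$ (the paper shows the flat background extends as an orbifold and $V=\sigma r^{-1}+V_0$ via a B\^ocher-type argument, after a delicate tangent-cone analysis), and then apply the strong maximum principle on the compact limit to force $V\equiv\text{const}$. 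Your outline skips this entirely.

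Second, your assertion that the classification of collapsed limits and the classification of gravitational instanton ends must be ``established simultaneously, by an inductive argument on the collapsing dimension and the bubble scales'' is not needed, and is not the route the paper takes. Theorem~\ref{t:thm1.1} is proved independently of Theorem~\ref{t:thm1.2}: no bubble-tree decomposition, no identification of ALF/ALG/ALH bubbles, and no three-scale analysis is used. The $\epsilon$-regularity theorem (which the paper reproves directly from the special affine / special K\"ahler structure plus Liouville-type arguments) gives a finite singular set, and the singularity structure is then resolved by harmonic-function methods on the limit space itself (weak Harnack inequality on the Ricci-limit space, growth estimates for $V$ or $\operatorname{Im}\tau$, flat-end classification of Eschenburg--Schroeder), not by rescaling and identifying bubbles. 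Your proposed coupling with Theorem~\ref{t:thm1.2} would make the argument much harder than necessary and, as written, circular.
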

In this paper will actually consider the more refined notion of \emph{measured Gromov-Hausdorff convergence}, which includes the extra structure of a \emph{renormalized limit measure} on the limit spaces (c.f. Section \ref{ss:2-1}). From the proof we know that in the first two cases, the limit measure is proportional to the Hausdorff measure, while in the third case  the limit measure may be nontrivial and it encodes interesting topological information of the collapsing family (c.f. \cite{HSVZ, SZ, HSZ}, see also Section \ref{ss:3-3}). Notice that in the more general context of collapsing 4 dimensional Ricci-flat metrics, Lott \cite{Lott} has obtained some classification results of limit spaces under certain technical assumptions on the limit spaces.    

There have been extensive recent work  on constructing special examples of collapsing sequences in $\mathfrak M$, which can be viewed as partial converses to Theorem \ref{t:thm1.1}. See for instance \cite{GW, Foscolo, CCIII, HSVZ, OO, CVZ1, CVZ2}. In particular, any flat orbifold $\mathbb{T}^3/\dZ_2$ is in $\overline{\mathfrak M}\setminus\mathfrak{M}'$; further work is needed in order to classify all 2 dimensional limit spaces in $\overline{\mathfrak M}\setminus\mathfrak{M}'$ explicitly.   We also mention that Odaka-Oshima \cite{OO} proposed an interesting conjecture  relating the Gromov-Hausdorff compactification $\overline{\mathfrak M}$ to certain Satake compactification of $\mathcal D$ as a locally symmetric space, and \cite{OO} made some progress toward the conjecture.

\

The second application of our study is concerned with the asymptotic structure of \emph{gravitational instantons}. The latter are by definition complete non-compact hyperk\"ahler 4-manifolds $(X, g)$ with $$\int_{X} |\Rm_g|^2\dvol_g<\infty.$$ These spaces originated from physics, but they also involve very rich geometry and analysis.  
There are a variety of constructions in the literature, such as hyperk\"ahler quotients, twistor theory, gauge theory, complex Monge-Amp\`ere equation, etc. Gravitational instantons are  important in understanding the singularity formation of  collapsing of hyperk\"ahler metrics, since they may arise as  rescaled limits around  points where curvature blows up. The next theorem gives a classification of the asymptotic geometry of gravitational instantons.
\begin{theorem}\label{t:thm1.2}
A non-flat gravitational instanton $(X,g)$ has a unique asymptotic cone $(Y,d_Y,p_*)$ which is a flat metric cone of dimension $d\in \{1, 2, 3,4\}$.
Moreover, the following classification holds:	\begin{itemize}
		\item ($d=4$) $(X,g)$ is ALE. 
		\item ($d=3$) $(X,g)$ is ALF.		\item ($d=2$) $(X,g)$ is ALG or ALG$^*$.
		\item  ($d=1$) $(X,g)$ is ALH or ALH$^*$.
	\end{itemize}
\end{theorem}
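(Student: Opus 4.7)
The plan is to reduce Theorem~\ref{t:thm1.2} to the structural results for (possibly collapsed) Gromov--Hausdorff limits of hyperk\"ahler 4-manifolds developed in the body of the paper, via a rescaling analysis based at infinity. For hyperk\"ahler 4-manifolds the Chern--Gauss--Bonnet integrand is a positive multiple of $|\Rm_g|^2\dvol_g$, so the finite energy assumption forces $\chi(X)<\infty$ and, more importantly, ensures that $\int_{X\setminus K}|\Rm_g|^2\dvol_g$ can be made arbitrarily small by taking the compact set $K$ large. Combined with the standard $\epsilon$-regularity for Einstein 4-manifolds, this yields a linearly growing harmonic-radius scale at infinity: on any annulus of scale $r$ far enough from a basepoint $p$, the curvature is pointwise $O(r^{-2})$ up to a bounded collapsing factor.

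For any sequence $r_i\to\infty$ I would then consider the pointed rescalings $(X,r_i^{-2}g,p)$ equipped with a renormalized measure and extract a measured Gromov--Hausdorff limit $(Y,d_Y,\nu_Y,p_*)$ of some Hausdorff dimension $d\in\{1,2,3,4\}$. Since the rescaled curvatures tend uniformly to zero on every fixed annulus, the structure theorems of the paper apply along the sequence, and the uniform scale-invariance of the rescaled data forces $Y$ to be a Ricci-flat cone in the generalized sense. The parallel hyperk\"ahler triple passes to the limit on the regular part of $Y$ with vanishing covariant derivative, so the cone is in fact \emph{flat}, and its cross-section is one of the rigid models identified by Theorem~\ref{t:thm1.1} (together with the $d=4$ case of a spherical space form).

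To promote this sub-sequential statement to a \emph{unique} asymptotic cone, I would exploit the rigidity of each possible cross-section (a quotient of $\dS^3$ for $d=4$, a quotient of $\dT^3$ or $\dS^2$ for $d=3$, a singular special K\"ahler $S^2$ for $d=2$, or an interval for $d=1$) together with a Lojasiewicz--Simon-type monotonicity comparing the normalized geometries at scales $r$ and $2r$. This gives polynomial decay of the geometry to the cone. The classification then unfolds by dimension: $d=4$ yields Euclidean volume growth and hence ALE; $d=3$ yields a cone over a spherical orbifold with $S^1$-collapse on the end, i.e.\ ALF; $d=2$ yields a flat 2-dimensional cone with $T^2$-fiber collapse, where ALG versus ALG$^*$ is distinguished by the $\SL(2,\dZ)$-monodromy of the fiber around the vertex; $d=1$ yields a half-line or bounded interval base with $T^3$-fiber collapse, distinguishing ALH from ALH$^*$.

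The \textbf{main obstacle} I anticipate is the distinction between the starred and non-starred cases when $d\le 2$. The flat asymptotic cone alone does not record enough information; one must further identify the monodromy of the collapsing torus fibers around the singular points of the base and upgrade the qualitative collapsing picture to a sharp local model (such as the multi-Taub-NUT or Ooguri--Vafa models in the ALG$^*$ case, and the analogous twisted $T^3$-bundle model in the ALH$^*$ case). Carrying this out will rely on combining the local structure results for collapsing hyperk\"ahler 4-manifolds developed in the paper with a careful analysis of parallel transport of the hyperk\"ahler triple along loops encircling the singular strata, and of the associated Tian--Yau type geometry at the ends.
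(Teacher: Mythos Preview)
Your outline diverges from the paper in two substantive ways, and the second is a genuine gap.

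\textbf{Uniqueness of the asymptotic cone.} The paper does \emph{not} use any \L ojasiewicz--Simon monotonicity. Instead it applies the ``complete versions'' of the structure theorems (Theorems~\ref{t:complete-space} and~\ref{t:complete-space-2d}---not Theorem~\ref{t:thm1.1}, which is about compact K3 limits) to conclude that every asymptotic cone belongs to an explicit finite list of complete flat spaces. Lemma~\ref{l:asymptotic cone classification} then eliminates the non-conical entries ($S^1\times\dR^2$, $\dT^2\times\dR$, $S^1\times\dR$) by a direct scale-comparison argument in $d_{GH}$, and uniqueness follows from the connectedness of $\mathcal{T}_\infty(X)$ (Lemma~\ref{l:tangent cone at infinity connected}) together with the discreteness of the remaining list. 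Your proposed \L S argument is not obviously available: in the collapsing regimes $d\le 3$ there is no evident functional on a fixed Banach manifold whose critical points are the relevant cross-sections, and the cross-sections themselves (points, circles of varying length, spheres) do not sit in a single analytic moduli problem.

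\textbf{From the cone to the model end.} The more serious omission is the passage from ``the asymptotic cone is a flat cone of dimension $d$'' to ``$(X,g)$ is asymptotic at a polynomial rate to one of the model ends ALF/ALG/ALG$^*$/ALH/ALH$^*$.'' Your sketch treats this as a monodromy bookkeeping exercise, but the cone and the fiber monodromy by themselves do not pin down the model, nor do they give any decay of $g$ towards it. The paper's mechanism is as follows: Cheeger--Fukaya--Gromov theory supplies an $\mathcal N$-structure on the end (Theorem~\ref{t:CFG global}); averaging $\bm\omega$ along the nilpotent fibers yields a closed $\mathcal N$-invariant \emph{definite} triple $\bm\omega^{\dag}$ close to $\bm\omega$ (Theorem~\ref{t:invariant almost HK metrics}); and then a quantitative implicit-function-theorem argument in weighted H\"older spaces on the asymptotic cone (Proposition~\ref{p:weighted-esimate-quotient-space}, Theorem~\ref{t:complete invariant hk triple}) perturbs $\bm\omega^{\dag}$ to an $\mathcal N$-invariant \emph{hyperk\"ahler} triple $\bm\omega^{\Diamond}$ with $|\nabla^k(\bm\omega-\bm\omega^{\Diamond})|=O(r^{-k-\epsilon})$. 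Only at this point does the classification become tractable: an $\mathcal N$-invariant hyperk\"ahler metric on the end is literally a Gibbons--Hawking metric ($d=3$), a semi-flat metric over a special K\"ahler base ($d=2$), or one of the two nilpotent models ($d=1$), and Theorems~\ref{t:ALF end classification}, \ref{t:ALG end classification}, \ref{t:ALH* end classification} classify these by analyzing the growth of the potential $V$ or the period $\tau$ via B\^ocher/spectral expansion. The starred/non-starred dichotomy (which you correctly flag as the hard point) is then read off from whether the flat background geometry of the \emph{invariant} metric has a logarithmic/linear potential or a constant one---not from monodromy of the original $\bm\omega$ around singular strata. Without the perturbation-to-invariant step, your proposal has no way to produce the model end or the decay rate.
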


The precise definition of these spaces will be given in Section  \ref{ss:7-4}. The above classification result has  been long sought. The most recent result is due to Chen-Chen \cite{CCI} building upon  ideas from earlier work of Minerbe \cite{Minerbe2}, where one assumes the extra  condition $|\Rm|=O(r^{-2-\epsilon})$ for some $\epsilon>0$, and obtains a classification into only the four classes above without the superscript $^*$. This weaker result is proved by studying the behavior of ``short geodesic loops" at infinity using ODE comparison, and the asymptotic fibration is constructed using these short loops; the hyperk\"ahler property mainly enters as a control on the holonomy.  Our proof is based on a completely different approach. First, we make essential use of the Cheeger-Fukaya-Gromov theory on $\mathcal N$-structures, which has the advantage of incorporating multi-scale collapsing phenomenon at infinity. Secondly,  we manifest the role of the hyperk\"ahler equation itself as an elliptic PDE. These ideas could potentially apply to more general situation. 

It is also worth pointing out that there have been numerous works on the construction and classification of gravitational instantons with \emph{given} asymptotics at infinity, see for example \cite{CCII, CCIII,  CVZ1, CVZ2, CK, FMSW, Hein, HSVZ, HSVZ2, Kronheimer, Kronheimer-Torelli, Minerbe3} and the references therein. In particular, it is known that all the families of gravitational instantons listed in Theorem \ref{t:thm1.2} can be compactified in the complex-analytic sense. Together with these results, Theorem \ref{t:thm1.2} has the following corollary, which confirms the compactification conjecture of S.-T. Yau \cite{Yau-problem-section}  in our setting. 
\begin{corollary}\label{c:yau conjecture}
Given any gravitational instanton $(X, g)$, there is a choice of a complex structure $J$ such that $(X, J)$ is bi-holomorphic to $\overline X\setminus D$, where $\overline X$ is an algebraic surface and $D$ is an anti-canonical divisor.  
\end{corollary}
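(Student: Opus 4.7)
The plan is to combine Theorem~\ref{t:thm1.2} with the body of existing complex-analytic compactification results for gravitational instantons of prescribed asymptotic type. The hard work is already contained in Theorem~\ref{t:thm1.2}; the corollary itself is essentially the packaging of the classification with case-by-case results in the literature, which is precisely how it is advertised in the paragraph preceding the statement.

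By Theorem~\ref{t:thm1.2}, $(X,g)$ is ALE, ALF, ALG, ALG$^*$, ALH, or ALH$^*$. For each type the asymptotic model picks out a distinguished complex structure $J$ in the hyperk\"ahler twistor $S^2$ --- for instance the $J$ invariant under the asymptotic $S^1$-symmetry in the ALF case, or the $J$ compatible with the asymptotic elliptic fibration in the ALG/ALG$^*$ and ALH/ALH$^*$ cases; in the ALE case any $J$ in the twistor sphere works since the asymptotic cone is $\dC^2/\Gamma$. With $J$ so chosen, I would invoke the appropriate compactification theorem: Kronheimer's construction (together with Bando--Kasue--Nakajima) in the ALE case, which identifies $(X,J)$ with the minimal resolution of $\dC^2/\Gamma$ and compactifies to a smooth rational surface; \cite{Minerbe3} and \cite{CCII} in the ALF case; \cite{Hein, CCIII, CVZ1, CVZ2} for ALG and ALG$^*$, where $(X,J)$ is realized as the complement of a fiber in a rational elliptic surface; and \cite{Hein, HSVZ, HSVZ2} for ALH and ALH$^*$, again giving a rational elliptic compactification minus an anti-canonical fiber. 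In each instance the resulting compactification $\overline X$ is algebraic, and $D = \overline X \setminus X$ is verified to be anti-canonical via the fact that the holomorphic symplectic form $\Omega = \omega_2 + \sq\, \omega_3$ extends as a meromorphic section of $K_{\overline X}$ with a simple pole along $D$, so that $K_{\overline X} + D \sim 0$.

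The main obstacle --- namely the asymptotic classification --- has already been addressed by Theorem~\ref{t:thm1.2}. Beyond that, the two minor points to confirm are: (i) that a single $J$ from the twistor $S^2$ can be selected so that all of the asymptotic structure (symmetry group, elliptic fibration, etc.) is $J$-holomorphic, which follows from parallelism of the twistor family with respect to the asymptotic model; and (ii) that the boundary divisor is anti-canonical, a residue computation carried out in each of the references cited above. Combining these ingredients produces the required biholomorphism $(X,J) \cong \overline X \setminus D$ of Corollary~\ref{c:yau conjecture}, with $\overline X$ algebraic and $D$ anti-canonical.
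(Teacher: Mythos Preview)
Your proposal is correct and matches the paper's approach: the corollary is obtained by combining Theorem~\ref{t:thm1.2} with the existing case-by-case complex-analytic compactification results in the literature, exactly as you outline. The paper does not give a detailed proof beyond this.

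One point worth flagging, which you implicitly rely on by citing \cite{HSVZ2} but do not make explicit: in the ALH$^*$ case the compactification result of \cite{HSVZ2} requires exponential decay to the Calabi model end, whereas the definition of ALH$^*$ in Section~\ref{ss:7-4} (and the output of Theorem~\ref{t:thm1.2}) only gives polynomial decay. The paper supplies this missing input via Theorem~\ref{t:exponential decay} in Section~6.5, and Remark~\ref{r:6-21} spells out that this is precisely what is needed to feed into \cite{HSVZ2}. So in the ALH$^*$ case the chain is Theorem~\ref{t:thm1.2} $\Rightarrow$ Theorem~\ref{t:exponential decay} $\Rightarrow$ \cite{HSVZ2}, and the middle step is a nontrivial contribution of the present paper rather than a purely external citation.
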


Now we outline some ideas involved in the proof of the above results. As mentioned before the central goal is to understand the collapsing geometry of hyperk\"ahler 4-manifolds with bounded $L^2$-energy. The result of Cheeger-Tian \cite{CT} provides an $\epsilon$-regularity theorem in our context, and as a consequence we know that the collapsing is with bounded curvature away from finitely many singularities. However due to the lack of a suitable monotonicity formula there has been no progress so far in understanding the structure of these singularities. This issue is  unique compared to other geometric analytic problems.
Our study depends  on three key ingredients:

\

$\bullet$ Geometric structures over the regular region (Section \ref{s-3}): we analyze the structure on the regular region of the limit space coming from the hyperk\"ahler structure. The analysis  depends on the dimension $d$ of the limit space; when $d= 1$, this was already done previously in \cite{HSZ}. A byproduct of this analysis  is a new and simple proof of the $\epsilon$-regularity theorem in our context (see Section \ref{s:3-4}).

\

	$\bullet$ Singularity structure of the limit space (Section \ref{s-4}, \ref{s-5}): we   study in detail the singularity structure in the cases $d=3$ and $d=2$. In particular we show that there is always a unique tangent cone which is a metric cone. Theorem \ref{t:thm1.1} follows from  Theorem \ref{t:global} and Theorem \ref{t:global-2d}. Notice again that the case $d=1$ in Theorem \ref{t:thm1.1} is easy (see \cite{HSZ}). 
	
	\
	
$\bullet$ Perturbation to invariant hyperk\"ahler metrics (Section \ref{s-6} and Section \ref{s-7}): The classical theory of nilpotent Killing structures due to Cheeger-Fukaya-Gromov \cite{CFG} asserts that over the regular region the collapsing sequence inherits an \emph{approximate} nilpotent symmetry along the collapsing directions.  We combine this with the perturbation theory of hyperk\"ahler metrics to obtain nearby hyperk\"ahler metrics with \emph{genuine} nilpotent symmetry. This is performed at both the local and global level. The local result improves our understanding of the collapsing fibers (Section \ref{s-6}), whereas the global result yields that a gravitational instanton with non-maximal volume growth at infinity must be asymptotic to a model end which admits a continuous symmetry (Section \ref{ss:6-3}). These allow us to prove Theorem \ref{t:thm1.2} in Section \ref{ss:7-4}. The techniques needed here are closely related to those used in the gluing constructions in \cite{Foscolo, HSVZ}.

\

\textbf{Notations.}
\begin{itemize}
	\item Given a metric space $(M,d)$ and a closed subset $E\subset M$, we denote  \begin{align*} & B_{r}(E)   \equiv\{q\in M|d(q, E)<r\},\\
	 & S_{r}(E)   \equiv\{q\in M|d(q, E)=r\},
	\\
 & A_{r_1, r_2}(E)  \equiv\{q\in M|r_1<d(q, E)<r_2\}. \end{align*}
\item We have various notations for Gromov-Hausdorff convergence:
  
 \begin{tabular}{cc}

$\xrightarrow{GH}$ :  &   (pointed) Gromov-Hausdorff convergence,
\\
 $\xrightarrow{eqGH}$ :  &  equivariant Gromov-Hausdorff convergence,
\\
  $\xrightarrow{mGH}$ : &  measured Gromov-Hausdorff convergence.
\end{tabular}
\item For a group $G$, we denote by $\mathfrak{Z}(G)$ the center of $G$.
\item $\dR_+\equiv [0, \infty)\subset \dR$.
\end{itemize}

\

{\bf Acknowledgements.}
We are grateful to Vitali Kapovitch and Dmitri Panov for sharing their insight on Question \ref{q:removable-singularity}, and to Gao Chen for discussing the positive mass theorem for gravitational instantons. We  thank Antoine Song for helpful conversations on related topics. We also thank Lorenzo Foscolo, Shouhei Honda, Shaosai Huang, Yuji Odaka and Yoshiki Oshima for  useful comments, and Yuji Odaka for kind suggestions on the references.  
We are also indebted to the anonymous referee whose inspiring comments and suggestions substantially improved the exposition of the paper
\section{Premilinaries}
\label{s-2}

\subsection{Pointed Gromov-Hausdorff distance}

\label{ss:2-0}
 The concept of pointed Gromov-Hausdorff convergence has been extensively used in the literature. For our purpose in this paper, it is convenient to exploit a metric space structure, which is likely well-known and we briefly recall the relevant notions.  We refer the readers to \cite{Rong-notes, Herron} for more details. 
Denote by $\mathcal{M}et$ the collection of isometry classes of all pointed complete length spaces $(M, d, p)$ such that every closed ball in $M$ is compact.
\begin{definition} Let $(M_1, d_1, p_1), (M_2, d_2, p_2)\in\mathcal{M}et$. Then the pointed Gromov-Hausdorff distance bwtween them is defined to be $d_{GH}((M_1, d_1, p_1), (M_2, d_2, p_2))\equiv \min\{\epsilon_0,\frac{1}{2}\}$, where  
$\epsilon_0\geq 0$ is the infimum of all $\epsilon\in [0, 1/2)$ such that there is a metric $d$ on $M_1\sqcup M_2$ extending $d_i$ on $M_i$, such that $d(p_1, p_2)\leq\epsilon$, $B_{1/\epsilon}(p_1)\subset B_{\epsilon}(M_2)$, and $B_{1/\epsilon}(p_2)\subset B_{\epsilon}(M_1)$.

\end{definition}
It is straightforward to verify that $d_{GH}$ defines a metric on $\mathcal{M}et$. One can prove that $(\mathcal{M}et, d_{GH})$ is a complete metric space. The convergence in this metric topology is the \emph{pointed Gromov-Hausdorff convergence}. 
For simplicity of notation in this paper we will omit the word \emph{pointed} and simply refer to this as the \emph{Gromov-Hausdorff convergence}. In the applications, one can also use the notion of $\epsilon$-Gromov-Hausdorff approximation (see \cite{Rong-notes}), which gives essentially the same topology.

Let $(X_j^n, g_j, p_j)$ be a sequence of $n$ dimensional  Riemannian manifolds with $\Ric_{g_j}\geq Kg_j$ for some $K\in \dR$ and for all $j$. Given a sequence of numbers $R_j>0$ with $\overline{B_{R_j}(p_j)}$ compact, from Gromov's compactness theorem, by passing to a subsequence we may assume
$(\overline{B_{R_j}(p_j)}, g_j, p_j)\xrightarrow{GH} (X_\infty, d_\infty, p_\infty)$ for a complete length space $(X_\infty,d_\infty, p_\infty)$. If $\{R_j\}$ is unbounded, then $X_\infty$ is noncompact.
Fix such a limit space $(X_\infty, d_\infty, p_\infty)$, we consider the rescaled spaces $(X_\infty, \lambda d_\infty, p_\infty)$ and let $\lambda\rightarrow\infty$.
 Any  Gromov-Hausdorff limit $(Y, p^*)$ for a subsequence $\{\lambda_i\}\rightarrow \infty$ is called a \emph{tangent cone} at $p_\infty$.  Recall  that a tangent cone is known to be a metric cone under a volume non-collapsing situation; but this is not always true in the volume collapsing case. We will denote by $\mathcal{T}_{p_{\infty}}\subset \mathcal{M}et$ the collection of isometry classes of all tangent cones at $p_\infty$.

Now we fix the above convergent subsequence $(\overline{B_{R_j}(p_j)}, g_j, p_j)$. Given any subsequence $\{\lambda_j\}\rightarrow\infty$, there is a  further subsequence $\{\lambda_{m_j}\}$ such that $(\overline{B_{R_{m_j}}(p_{m_j})},\lambda_{m_j}^2 g_{m_j},p_{m_j})\xrightarrow{GH}(Z, d_Z, \bar p)$. We call the space $(Z, d_Z, \bar p)$ a \emph{bubble limit} at $p_\infty$ associated to  the original convergent sequence. Denote by $\mathcal{B}_{p_{\infty}}$ the collection of isometry classes of all bubble limits at $p_\infty$. Immediately, $\mathcal{T}_{p_{\infty}}\subset\mathcal{B}_{p_{\infty}}$.

 Geometrically speaking, tangent cones describe the first order information of the singular behavior of the space $X_\infty$ itself at $p_\infty$, whereas bubble limits characterize more refined behavior for the singularity formation. The terminology should remind the readers the notion of a \emph{bubble tree}  structure in many geometric analytic problems. 
The following is a simple fact which we leave as an exercise for the readers.

\begin{lemma}\label{l:tangent cone compact}
	For any $p_{\infty}\in X_{\infty}$, both $\mathcal{T}_{p_{\infty}}$ and $\mathcal{B}_{p_{\infty}}$ are  compact in $\mathcal {M}et$. Moreover, $\mathcal{T}_{p_{\infty}}$ is connected.
\end{lemma}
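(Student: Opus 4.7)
The plan is to establish pre-compactness in $\mathcal{M}et$, closedness (which together yield compactness of both $\mathcal{T}_{p_\infty}$ and $\mathcal{B}_{p_\infty}$), and connectedness of $\mathcal{T}_{p_\infty}$ separately.

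For pre-compactness, note that the Ricci lower bound $\Ric_{g_j}\geq K g_j$ descends to $X_\infty$ in the form of uniform Bishop--Gromov volume comparison, so the rescaled spaces $(X_\infty,\lambda d_\infty,p_\infty)$ inherit an effective lower bound $K/\lambda^2$, and similarly $(\overline{B_{R_j}(p_j)},\lambda_j^2 g_j,p_j)$ has Ricci $\geq K/\lambda_j^2$. Since these lower bounds are uniformly bounded below (indeed eventually nonnegative) as $\lambda,\lambda_j\to\infty$, Gromov's precompactness theorem ensures that any sequence in $\mathcal{T}_{p_\infty}$ or $\mathcal{B}_{p_\infty}$ has a subsequence converging in $\mathcal{M}et$. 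Closedness is a diagonal argument: if $(Y_i,p_i^*)\in\mathcal{T}_{p_\infty}$ with $Y_i\to Y$ in $\mathcal{M}et$, then for each $i$ pick $\lambda_i>i$ with $d_{GH}((X_\infty,\lambda_i d_\infty,p_\infty),(Y_i,p_i^*))<1/i$, and the triangle inequality yields $(X_\infty,\lambda_i d_\infty,p_\infty)\xrightarrow{GH} Y$ with $\lambda_i\to\infty$, so $Y\in\mathcal{T}_{p_\infty}$. The same procedure, combining a subsequence of the original index $j$ with a subsequence of the $\lambda$-parameter, shows $\mathcal{B}_{p_\infty}$ is closed.

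For connectedness, I would argue by contradiction. The key point is that the rescaling map $\phi:(0,\infty)\to\mathcal{M}et$, $\phi(\lambda):=(X_\infty,\lambda d_\infty,p_\infty)$, is continuous: the identity on $X_\infty$ is a $(\lambda'/\lambda)$-bi-Lipschitz homeomorphism between $\phi(\lambda)$ and $\phi(\lambda')$, and small bi-Lipschitz distortion on any fixed ball translates into small pointed Gromov--Hausdorff distance. Suppose $\mathcal{T}_{p_\infty}=A\sqcup B$ with $A,B$ non-empty compact and $\eta:=d_{GH}(A,B)>0$. Then $h(\lambda):=d_{GH}(\phi(\lambda),A)-d_{GH}(\phi(\lambda),B)$ is continuous in $\lambda$. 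Choose sequences $\lambda_i^A,\lambda_i^B\to\infty$ with $\phi(\lambda_i^A)\to a_0\in A$ and $\phi(\lambda_i^B)\to b_0\in B$, so that $h(\lambda_i^A)\to -d_{GH}(a_0,B)\leq-\eta$ and $h(\lambda_i^B)\to d_{GH}(b_0,A)\geq\eta$ for large $i$. The intermediate value theorem on $[\lambda_i^A,\lambda_i^B]$ (possibly with swapped endpoints) yields $\nu_i\to\infty$ with $h(\nu_i)=0$, hence $d_{GH}(\phi(\nu_i),A)=d_{GH}(\phi(\nu_i),B)\geq\eta/2$. By pre-compactness, a subsequential limit $y_*\in\mathcal{T}_{p_\infty}$ satisfies $d_{GH}(y_*,A)=d_{GH}(y_*,B)\geq\eta/2$, which contradicts $y_*\in A\cup B$.

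The only technical point is verifying the continuity of $\phi$ in the pointed Gromov--Hausdorff topology, since the scale $1/\epsilon$ at which balls are compared in the definition of $d_{GH}$ itself depends on $\epsilon$. However, this reduces to the elementary estimate that if $|\lambda-\lambda'|/\lambda\leq\delta$ with $\delta$ small, then the identity provides a pointed GH approximation of size $O(\sqrt{\delta})$ between $\phi(\lambda)$ and $\phi(\lambda')$, so no serious difficulty arises.
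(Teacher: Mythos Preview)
Your argument is correct. The paper does not actually supply a proof of this lemma; it states that ``The following is a simple fact which we leave as an exercise for the readers,'' so there is nothing to compare against. Your proposal furnishes a complete solution to that exercise using the standard ingredients: Gromov precompactness (via the scaled Ricci lower bound and the inherited Bishop--Gromov inequality on $X_\infty$) for precompactness, a diagonal argument for closedness, and continuity of the rescaling map $\lambda\mapsto (X_\infty,\lambda d_\infty,p_\infty)$ combined with the intermediate value theorem for connectedness. The $O(\sqrt{\delta})$ estimate you mention for the pointed Gromov--Hausdorff distance under a $(1+\delta)$-bi-Lipschitz map is exactly the right calibration between the distortion and the scale $1/\epsilon$ in the definition of $d_{GH}$, and the IVT step is carried out cleanly. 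One small point worth making explicit: to ensure $\nu_i\to\infty$ you are implicitly using that $\min(\lambda_i^A,\lambda_i^B)\to\infty$, which is immediate since both sequences individually tend to infinity.
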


Later we will also use an analogous result for asymptotic cones. Let $(X, d_X, p)$ be a complete Gromov-Hausdorff limit of a sequence of Riemannian manifolds with non-negative Ricci curvature. An asymptotic cone of $X$ is by definition a complete metric space  $(Y, d, p_*)$ arising as the Gromov-Hausdorff limit of $(X, \lambda_j^{-1}d_X, p)$ for some sequence $\lambda_j\rightarrow\infty$. Clearly this does not depend on the choice of the base point $p$. Denote by $\mathcal{T}_\infty(X)$ the collection of isometry classes of asymptotic cones $X$. Similar to above we have the following.
\begin{lemma} \label{l:tangent cone at infinity connected}
	$\mathcal{T}_\infty(X)$ is connected and compact in $\mathcal{M}et$. Moreover, it is invariant under rescaling, i.e., if $(Y, d_Y, p_*)$ is in $\mathcal T_\infty(X)$, so is $(Y, \lambda d_Y, p_*)$ for all $\lambda>0$.  
\end{lemma}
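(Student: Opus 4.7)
The plan is to verify the three assertions separately, in close analogy with the proof of Lemma \ref{l:tangent cone compact}. The key input is the uniform Bishop--Gromov volume comparison that the rescaled spaces $(X,\lambda^{-1}d_X,p)$ inherit from the approximating sequence of nonnegatively Ricci-curved manifolds. In particular, for any $R,\epsilon>0$ and $\lambda\geq 1$, the ball of radius $R$ in $(X,\lambda^{-1}d_X,p)$ admits an $\epsilon$-net of cardinality at most $C(R,\epsilon,N)$ independent of $\lambda$, so by Gromov's precompactness theorem the family $\{(X,\lambda^{-1}d_X,p):\lambda\geq 1\}$ is precompact in $\mathcal{M}et$. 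Hence any sequence $\lambda_j\to\infty$ admits a subsequential pointed Gromov--Hausdorff limit, and a standard diagonal argument shows that $\mathcal{T}_\infty(X)$ is also closed, hence compact.

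For connectedness, I would consider the curve $f\colon[1,\infty)\to\mathcal{M}et$, $f(\lambda)\equiv [(X,\lambda^{-1}d_X,p)]$. A direct distortion estimate---the two metrics $\lambda^{-1}d_X$ and $\lambda_0^{-1}d_X$ on any $d_X$-bounded set differ pointwise by the factor $|\lambda^{-1}-\lambda_0^{-1}|$---shows that $f$ is continuous in the pointed Gromov--Hausdorff topology, and the image $f([T,\infty))$ lies in a compact subset of $\mathcal{M}et$ for every $T\geq 1$ by the precompactness above. Since $\mathcal{T}_\infty(X)$ is precisely the $\omega$-limit set of $f$ at infinity, I would then invoke the standard fact that such $\omega$-limit sets are connected: if $\mathcal{T}_\infty(X)=A\sqcup B$ with $A,B$ disjoint closed and nonempty, pick disjoint open neighborhoods $U\supset A$, $V\supset B$; any subsequential limit of $f(\lambda)$ as $\lambda\to\infty$ must lie in $U\cup V$, so $f(\lambda)\in U\cup V$ for all $\lambda\geq T_0$, and then connectedness of $[T_0,\infty)$ together with the disjointness of $U,V$ forces $f(\lambda)$ to eventually lie in exactly one of $U$ or $V$, contradicting that both $A$ and $B$ contain accumulation points.

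Rescaling invariance is immediate from the definition: if $(X,\lambda_j^{-1}d_X,p)\xrightarrow{GH}(Y,d_Y,p_*)$, then multiplying both sides of the convergence by $\lambda>0$ yields $(X,(\lambda_j/\lambda)^{-1}d_X,p)\xrightarrow{GH}(Y,\lambda d_Y,p_*)$, and $\lambda_j/\lambda\to\infty$ identifies the right-hand side as an element of $\mathcal{T}_\infty(X)$. No step is particularly delicate here; the point requiring the most care is the continuity of $f$ in the \emph{pointed} topology, where one has to simultaneously control distortions on balls of unbounded radius while preserving the basepoint---a routine consequence of the uniform bound $|\lambda^{-1}-\lambda_0^{-1}|\cdot R\lambda_0$ on the $R$-ball.
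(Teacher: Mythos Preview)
Your proof is correct and is precisely the standard argument the paper has in mind. Note that the paper does not actually supply a proof of this lemma (nor of the preceding Lemma~\ref{l:tangent cone compact}); it simply records them as elementary facts, so your write-up fills in exactly what was left to the reader.
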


We also mention that in this paper various other notions of convergence will also be used, such as Cheeger-Gromov convergence, equivariant Gromov-Hausdorff convergence, etc, and the mixture of them. For definitions of these notions we refer the readers to standard references. 

\subsection{Renormalized limit measure}
\label{ss:2-1}

As above we let $(X_j^n, g_j, p_j)$ be a sequence of $n$-dimensional  Riemannian manifolds with $\Ric_{g_j}\geq K g_j$ such that $\overline {B_2(p_j)}$ is compact. Suppose 
$(X_j^n, g_j, p_j)\xrightarrow{GH} (X_{\infty}, d_{\infty}, p_{\infty})$
for some length space  $(X_{\infty}, d_{\infty})$.
We denote by
\begin{align*}d\nu_j \equiv \frac{\dvol_{g_j}}{\Vol_{g_j}(B_1(p_j))}\end{align*}
 the \emph{renormalized measure} density on $X_j^n$. Then by the work of  Cheeger-Colding (see Theorem 1.10 of \cite{CC1}) we know that by passing to a further subsequence, there is a Radon measure $\nu_{\infty}$ on $X_{\infty}$, called the {\it renormalized limit measure},  such that  for any converging sequence of points $q_j\to q_{\infty}$ and for all $R>0$, we have  
$\nu_j(B_R(q_j))\longrightarrow\nu_{\infty}(B_R(q_{\infty})).$ 
  The metric measure space $(X_{\infty}, d_{\infty}, \nu_\infty, p_{\infty})$ is called a {\it Ricci limit space}, and we have the {\it  measured  Gromov-Hausdorff convergence}
  \begin{align}
	(X_j^n, g_j,  \nu_j, p_j)\xrightarrow{mGH} (X_{\infty}, d_{\infty}, \nu_{\infty}, p_\infty).\label{e:pointed-mGH}
  \end{align}
It is known that  $\nu_{\infty}$ satisfies the relative volume comparison,  and the following volume estimate
\begin{align}
\nu_{\infty}(B_r(x))\leq C \cdot r, \quad \forall\ x\in X_{\infty}, \ \forall\ r\in(0,1]. \label{e:linear-volume-estimate}
\end{align}
See Theorem 1.10 and Proposition 1.22 of \cite{CC1}, respectively.

\begin{definition}\label{d:regular-set}
Let $(X_{\infty},d_{\infty}, \nu_\infty, p_{\infty})$ be a (connected) Ricci limit space of $(X_j^n, g_j, \nu_j, p_j)$.  
 	\begin{enumerate}
 		\item We define the \emph{regular set} $\mathcal R$ to be the set of points $q\in X_\infty$ such that 
		 there exist constants $r_0>0$, $C_0>0$, and a sequence of points $q_j\in X_j^n$ converging to $q$ with $\sup\limits_{B_{r_0}(q_j)}|\Rm_{g_j}|\leq C_0$ for all $j$.
 	\item We define the \emph{smooth set} $\mathcal{G}\subset \mathcal{R}$ to be  the set of  
  points $q$ such that $X_\infty$ is a smooth Riemannian manifold in a neighborhood of $q$. We denote by $g_\infty$ the Riemannian metric on $\mathcal{G}$;
  \item We define the {\it singular set}  $\mathcal{S}\equiv  X_{\infty} \setminus \mathcal{G}$;
   
 \end{enumerate}
 \end{definition} Notice these definitions depend on the convergent sequence $X_j^n$. Clearly, $\mathcal R$ is open. By Colding-Naber  \cite{CN},  there exist a subset $\mathcal{R}^{\#}\subset X_{\infty}$ and an integer $d\in\dZ_+$ such that $\nu_{\infty}(X_{\infty}\setminus \mathcal{R}^{\#}) = 0$
and every point in $\mathcal{R}^{\#}$ has a unique tangent cone which is isometric to $\dR^d$. We call the integer $d$  the {\it essential dimension} of $X_{\infty}$, and we denote it by $\dim_{\ess}(X_\infty)$. It is obvious that $\mathcal{G}= \mathcal{R} \cap \mathcal{R}^{\#}$, so $\dim_{\ess}(X_\infty)=\dim\mathcal G$. 

In this paper we are mainly interested in the collapsing situation so from now on we assume $d<n$. 
  It is worth noting that neither $\mathcal{R}^{\#}\subset\mathcal{R}$ nor $\mathcal{R}\subset \mathcal{R}^{\#}$  necessarily holds in general.  Nevertheless in Section \ref{ss:3-1} we will show  that $\mathcal R\subset \mathcal R^{\#}$ in our setting of collapsing 4 dimensional hyperk\"ahler metrics.

  Fukaya  \cite{Fukaya} showed that on $\mathcal G$ the measure $\nu_\infty$ has an explicit expression, namely, its density is \begin{align*}d\nu_\infty=\chi\cdot \dvol_{g_\infty}\end{align*} for a smooth function $\chi$ determined as follows. Given $q\in \mathcal{G}$, we can find $q_j\in X_j$ converging to $q$ and $\delta>0$ such that the universal cover $(\widetilde{B_\delta(q_j)}, \widetilde g_j, q_j, G_j)$ converges in the equivariant Cheeger-Gromov sense, to a smooth limit $(\widetilde B_\infty, \widetilde g_\infty, q_\infty, G_\infty)$. Here $G_j$ is the fundamental group of $B_\delta(q_j)$, and the identity component of $G_\infty$ is a nilpotent Lie group. Moreover,  a neighborhood 
of $q$ in $X_\infty$ is isometric to the quotient $B_\infty{\equiv}\widetilde B_\infty/G_\infty$. In this context, we can identify the fiber $F_{q'}$ over any $q'\in B_\infty$ of the projection map $\widetilde B_\infty\rightarrow B_\infty$ locally with an open  neighborhood in $G_\infty$. Then $\chi$ is up to constant multiplication given by the ratio between the vertical Riemannian volume form on $F_{q'}$ (of the induced Riemannian metric from $\widetilde g_\infty$) and a fixed left-invariant volume form on $G_\infty$.

We often write $\chi=e^{-f}$. 
As observed by Lott (see Theorem 2 of \cite{Lott-BE}), using O'Neill's formula, the Bakry-\'Emery-Ricci curvature lower bound is preserved on the limit, i.e., on $\mathcal{G}$ we have \begin{align*}\Ric^{n-d}_{g_{\infty}}\equiv \Ric_{g_{\infty}}+\nabla^2_{g_\infty}f-\frac{1}{n-d} df\otimes df\geq K  g_\infty.\end{align*}
 Although not needed in this paper, we notice the fact that  globally one can say $(X_\infty, d_\infty, \nu_\infty)$ has Ricci bounded below by $K$ in the $\RCD$ sense, i.e, it is a $\RCD(K, n)$ space. See \cite{AGS, GMS} for details.

\subsection{Harmonic functions}
\label{ss:2-3}
In this subsection, we introduce some standard concepts and basic results about harmonic functions. For our purpose, we only state them on Ricci limit spaces, and we list the references in the general RCD setting. 
 To begin with, let $(X_j^n, g_j, d\nu_j, p_j)$ be a sequence of $n$-dimensional Riemannian manifolds with $\Ric_{g_j}\geq 0$ and $d\nu_j\equiv \Vol_{g_j}(B_1(p_j))^{-1} \dvol_{g_j}$ such that $
 (X_j^n,g_j,\nu_j,p_j)\xrightarrow{mGH}(X_{\infty},d_{\infty},\nu_{\infty},p_{\infty}). $

A key ingredient in the definition of harmonic functions on a metric measure space is the following notion of {\it minimal weak upper gradient}, which plays the role of $|\nabla u|$ in the smooth case. To define this, let us first introduce the notation of {\it the $2$-modulus} of a family of curves. Let $\Gamma$ be a family of curves on $X_{\infty}$. Then we define {\it the $2$-modulus} $\Mod_2(\Gamma)$ of $\Gamma$ by 
\begin{align*}
\begin{split}
\Mod_2(\Gamma)\equiv \inf\left\{\int_{X_{\infty}}\psi^2 d\nu_{\infty}: \ \psi\geq 0 \ \text{is measurable such that} \ \int_{\gamma}\psi ds \geq 1, \forall \gamma\in\Gamma\right\}.
\end{split}	
\end{align*}
Now we are ready to define the minimal weak upper gradient.
\begin{definition}
[Minimal weak upper gradient]	Let $u$ be a measurable function on $X_{\infty}$.
A nonnegative measurable function $g$ on $X_{\infty}$ is said to be a $2$-weak upper gradient of a function $u$ if for any $z_1,z_2\in X_{\infty}$ and for every rectifiable curve $\gamma:[0,\ell]\to X_{\infty}$ parameterized by arc length with $\gamma(0)=z_1$ and $\gamma(\ell)=z_2$ with the exception in a family of curves $\Gamma$  with $\Mod_2(\Gamma)=0$ such that
\begin{align*}|u(z_2) - u(z_1)| \leq \int_0^{\ell} g(\gamma(s))ds. 	\end{align*}
 {\it The minimal weak upper gradient} $|\nabla u|$ of a function $u$ is the $2$-weak upper gradient such that for all $2$-weak upper gradient $g$, we have $|\nabla u|\leq |g|$ a.e. on $X_{\infty}$.

\end{definition}

Based on the notion of minimal weak upper gradient, the Cheeger energy of $u$ is defined by 
\begin{align*}
	\Ch(u) \equiv \int_{X_{\infty}}|\nabla u|^2 d\nu_{\infty}, 
\end{align*}
and the $W^{1,2}$-Sobolev space is defined by $W^{1,2}(X_{\infty})\equiv \{u\in L^2(X_{\infty})|\Ch(u)<\infty\}$. It is known that the Cheeger energy is quadratic
 on a Ricci limit space (see \cite{AGS} and \cite{GMS}). This enables us to define the following Dirichlet form
\begin{align*}
	\mathcal{E}(u,v) = \int_{X_{\infty}}\langle\nabla u, \nabla v\rangle d\nu_{\infty} \equiv \frac{1}{2}\Big(\Ch(u+v)-\Ch(u-v)\Big).
\end{align*}
where $u,v\in W^{1,2}(X_{\infty})$.
Note that $\langle\nabla u, \nabla v\rangle$ is a well-defined $L^1$-function, but $\nabla u$ itself is not defined in general. We also point out that $\mathcal{E}(u,v)$ coincides with the standard Dirichlet form in the smooth case. 

\begin{definition}
	[Harmonic function]\label{def:harmonic functions} Let $\Omega\subset X_{\infty}$ be an open set.
	A function $u\in W^{1,2}(\Omega)$ is said to be harmonic if 
$\mathcal{E}(u,\varphi) \equiv 0$ for all Lipschitz functions $\varphi$
with compact support in $\Omega$. 
\end{definition}

We will use the following weak Harnack inequality.
\begin{theorem}[Weak Harnack inequality] \label{t:weak-harnack}	Let $(X_{\infty},d_{\infty}, \nu_{\infty}, p_{\infty})$ be a Ricci limit space. For any $p>0$, there exists some constant $C=C_p>0$  depending only on $p$ such that if $u$ is a nonnegative harmonic function on $B_2(p_{\infty})$, then 	\begin{align*}\Big(\int_{B_2(p_{\infty})} u^p d\nu_{\infty}\Big)^{\frac{1}{p}} \leq C \cdot \underset{B_1(p_{\infty})}{\ess\inf} \ u.			\end{align*}	\end{theorem}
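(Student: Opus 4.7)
The plan is to derive the weak Harnack inequality on the Ricci limit space $(X_\infty, d_\infty, \nu_\infty, p_\infty)$ by the classical Moser iteration scheme, adapted to the Dirichlet form setting. The two structural inputs required for this scheme are (i) a local volume doubling property for $\nu_\infty$ on balls of bounded radius, and (ii) a weak $(1,2)$-Poincar\'e inequality. The first is immediate from the Bishop--Gromov volume comparison on each $(X_j^n,g_j)$ combined with the convergence $\nu_j(B_r(q_j))\to\nu_\infty(B_r(q))$ for $q_j\to q$. The second is the Poincar\'e inequality of Cheeger--Colding for Ricci limits: for every $q\in X_\infty$ and every Lipschitz $v$ on $B_{\Lambda r}(q)$,
\begin{align*}
\fint_{B_r(q)}\bigl|v - v_{B_r(q)}\bigr|\,d\nu_\infty \;\leq\; C\,r\biggl(\fint_{B_{\Lambda r}(q)}|\nabla v|^2\,d\nu_\infty\biggr)^{1/2},
\end{align*}
with $C,\Lambda$ depending only on $n$ and on the ambient Ricci lower bound.

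Given these two inputs, I would proceed with the standard Caccioppoli--Moser argument. Because the Cheeger energy is quadratic on Ricci limits (as noted in the text), $\mathcal E$ is a strongly local regular Dirichlet form and the usual Leibniz and chain rules hold for the minimal weak upper gradient. For a nonnegative harmonic $u$ on $B_2(p_\infty)$, set $v_\epsilon\equiv u+\epsilon$ and test the identity $\mathcal E(u,\varphi)=0$ against $\varphi=\eta^2 v_\epsilon^{\alpha}$ for a Lipschitz cutoff $\eta$ supported in $B_2(p_\infty)$ and $\alpha\in\dR\setminus\{0,-1\}$. This yields the Caccioppoli estimate
\begin{align*}
\int \eta^2 v_\epsilon^{\alpha-1}|\nabla v_\epsilon|^2\,d\nu_\infty \;\leq\; C(\alpha)\int |\nabla\eta|^2 v_\epsilon^{\alpha+1}\,d\nu_\infty,
\end{align*}
and combined with the Sobolev inequality produced by doubling plus $(1,2)$-Poincar\'e (Saloff-Coste), an iteration in $\alpha$ delivers both $\sup_{B_{3/2}} v_\epsilon \leq C\bigl(\fint_{B_2} v_\epsilon^{p}\bigr)^{1/p}$ for every $p>0$ and a parallel sup-bound for $v_\epsilon^{-1}$ from below by its negative moments.

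The bridge between these positive and negative moment bounds is the classical John--Nirenberg step. Applying the Caccioppoli inequality at the borderline exponent $\alpha=-1$ to $w\equiv\log v_\epsilon$ gives $\int\eta^2|\nabla w|^2\,d\nu_\infty\leq C\int|\nabla\eta|^2\,d\nu_\infty$, so $w\in\mathrm{BMO}(B_{3/2}(p_\infty))$ with a universal seminorm bound. Since doubling plus $(1,2)$-Poincar\'e implies the John--Nirenberg inequality on $(X_\infty,d_\infty,\nu_\infty)$, there is a universal $p_0>0$ with $\fint_{B_{3/2}}e^{p_0|w-w_{B_{3/2}}|}\,d\nu_\infty\leq C$; pairing this with the moment bounds for $v_\epsilon^{\pm p_0}$ produces
\begin{align*}
\biggl(\fint_{B_2(p_\infty)} v_\epsilon^{p_0}\,d\nu_\infty\biggr)^{1/p_0}\;\leq\; C\,\underset{B_1(p_\infty)}{\ess\inf}\,v_\epsilon,
\end{align*}
and letting $\epsilon\to 0$ gives the stated inequality for the exponent $p_0$. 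For arbitrary $p>0$ one combines this with the $L^\infty$--$L^{p_0}$ bound above: small $p$ are handled by H\"older against the linear volume estimate \eqref{e:linear-volume-estimate}, and large $p$ are absorbed by the sup estimate. The principal technical subtlety I anticipate is the verification of the chain rule for the minimal weak upper gradient applied to the power functions $v_\epsilon^\alpha$ and to $\log v_\epsilon$; however this reduces to the quadraticity of the Cheeger energy and is by now standard in the RCD calculus cited in the paper.
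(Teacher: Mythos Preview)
Your proposal is correct and follows essentially the same approach as the paper: the paper does not give an independent proof but cites Theorem 9.8 of Bj\"orn--Bj\"orn and then simply notes that the two required ingredients---the $(1,p)$-Poincar\'e inequality (via Cheeger--Colding's segment inequality) and the Sobolev inequality obtained from Poincar\'e plus volume doubling---hold on Ricci limit spaces, exactly as you outline. Your write-up supplies the intermediate Moser--Caccioppoli and John--Nirenberg steps that the paper leaves implicit in the citation.
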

This theorem indeed holds in the very general context of metric measure spaces under appropriate assumptions; see Theorem 9.8 of \cite{Bjorn-Bjorn}.  For completeness, we briefly explain the crucial technical ingredients involved in the proof of Theorem \ref{t:weak-harnack}. First, the space is required to support the $(1,p)$-Poincar\'e inequality, i.e., there exists some constant $C_{\PI}>0$ depending on $p$ such that for any function $u\in L^1(B_r(x))$ with $x\in X_{\infty}$ and $r>0$, and for all upper gradients $g$ of $u$, it holds that \begin{align*}\int_{B_r(x)}| u - u_{x,r}|d\nu_{\infty} \leq C_{\PI}\cdot r\cdot \Big(\int_{B_r(x)}g^p d\nu_{\infty}\Big)^{\frac{1}{p}},\end{align*}
where $u_{x,r}\equiv\fint_{B_r(x)}ud\nu_{\infty}$. 
 In the context of Ricci limit spaces, the above $(1,p)$-Poincar\'e inequality (for all $p\geq 1)$ follows from Cheeger-Colding's segment inequality; see Theorem 2.15 in \cite{CC3}. Secondly, one needs to apply the technique of Moser's iteration, which requires a uniform Sobolev inequality. This follows from the Poincar\'e inequality and the volume comparison for the renormalized limit measure; see Theorem 4.21 of \cite{Bjorn-Bjorn}.

Now we consider the setting of Section \ref{ss:2-1}. On the smooth set $\mathcal G$ we have a Riemannian metric $g_{\infty}$, and a  measure  density $d\nu_\infty=e^{-f}\dvol_{g_\infty}$. Suppose $B_\delta(p_\infty)\subset \mathcal G$, then a function $u$ on $B_\delta(p_\infty)$ is harmonic if and only if  $\Delta_{\nu_{\infty}} u = 0$ on $B_\delta(p_{\infty})$, where  \begin{align*}\Delta_{\nu_\infty}u \equiv \Delta_{g_\infty} u -  \langle\nabla_{g_\infty} f, \nabla_{g_\infty} u\rangle \end{align*}
is the Bakry-\'Emery Laplace operator.
 Locally if we pull-back to $\widetilde B_\infty$, then it is easy to see $\Delta_{\nu_\infty}u=\Delta_{\widetilde g_\infty}u$. 
We have
the following Cheng-Yau type gradient estimate (see Theorem 2.1 of \cite{Qian})

\begin{theorem}[Gradient estimate] \label{l:Yau gradient estimate} Suppose $(\mathcal{G}, g_{\infty}, d\nu_{\infty})$ satisfies $\Ric^{n-d}_{g_{\infty}}\geq 0$. Then 
	there exists a constant $C_0=C_0(n)>0$  such that any positive harmonic function $u$ defined on $B_{2r}(x)\subset \mathcal{G}$ satisfies \begin{align*}
\sup\limits_{B_r(x)}|\nabla \log u|\leq C_0r^{-1}.	
\end{align*}

\end{theorem}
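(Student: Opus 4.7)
The plan is to adapt the classical Cheng--Yau gradient estimate to the Bakry--\'Emery setting on the smooth part $\mathcal{G}$. Since $B_{2r}(x)\subset\mathcal{G}$, we work on a smooth Riemannian manifold equipped with a smooth weight $e^{-f}$, and the harmonic equation reads $\Delta_{\nu_\infty}u=\Delta_{g_\infty}u-\langle\nabla f,\nabla u\rangle=0$. Rewriting $\Ric^{n-d}=\Ric+\nabla^2 f-(n-d)^{-1}df\otimes df$ in Wei--Wylie form $\Ric_f^N$ corresponds to effective dimension $N=2n-d\le 2n$, so any constant depending only on $N$ in fact depends only on $n$.

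The first step is to set $w=\log u$ so that the harmonic equation gives $\Delta_{\nu_\infty}w=-|\nabla w|^2$. The weighted Bochner inequality
\[
\tfrac12\Delta_{\nu_\infty}|\nabla w|^2 \ \ge\ \frac{(\Delta_{\nu_\infty}w)^2}{N}+\langle\nabla\Delta_{\nu_\infty}w,\nabla w\rangle+\Ric^{n-d}(\nabla w,\nabla w),
\]
obtained by applying Cauchy--Schwarz with parameter $N-n=n-d$ to the cross term in $(\Delta u)^2=(\Delta_{\nu_\infty}u+\langle\nabla f,\nabla u\rangle)^2$ together with the pointwise inequality $|\nabla^2 u|^2\ge (\Delta u)^2/n$, combined with $\Ric^{n-d}\ge 0$, yields for $F:=|\nabla w|^2$
\[
\tfrac12\Delta_{\nu_\infty}F \ \ge\ \frac{F^2}{N}-\langle\nabla F,\nabla w\rangle.
\]

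The second step is localization via a maximum principle argument. Choose a smooth radial cut-off $\phi(y)=\eta(d_{g_\infty}(x,y)/r)$ with $\eta\equiv 1$ on $[0,1]$ and $\mathrm{supp}\,\eta\subset[0,2)$, and consider $G=\phi^2 F$. After restricting to a slightly smaller ball so that $G$ attains its maximum at an interior point $y_0$, the conditions $\nabla G(y_0)=0$ and $\Delta_{\nu_\infty}G(y_0)\le 0$ produce, after substituting the gradient identity $\nabla F=-F\,\nabla\phi^2/\phi^2$ at $y_0$ and the Bochner bound for $\Delta_{\nu_\infty}F$, an inequality of the schematic form
\[
\frac{G(y_0)^2}{N}\ \le\ C\bigl(r^{-1}G(y_0)^{3/2}+r^{-2}G(y_0)\bigr).
\]
Absorbing the $G^{3/2}$ term by Young's inequality and using $\phi\equiv 1$ on $B_r(x)$ then gives $F\le C(n)r^{-2}$ there, which is the desired estimate.

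The main technical obstacle is the bound $|\Delta_{\nu_\infty}\phi^2|\le C/r^2$ on the weighted Laplacian of the cut-off, needed to produce the $r^{-2}G$ term above. This is provided by the weighted Laplacian comparison theorem of Qian (and Wei--Wylie): under $\Ric_f^N\ge 0$, the distance function $\rho=d_{g_\infty}(x,\cdot)$ satisfies $\Delta_{\nu_\infty}\rho\le (N-1)/\rho$ in the barrier sense, which together with the smoothness of $\eta$ and $|\nabla\phi|\le C/r$ yields the required estimate on $\phi^2$. Once this comparison ingredient is in place, every remaining step is a routine transcription of the Cheng--Yau manipulation, and the final constant depends only on $N\le 2n$, hence only on $n$.
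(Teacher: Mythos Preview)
The paper does not give its own proof of this statement: it is quoted directly from the literature, with the sentence ``We have the following Cheng--Yau type gradient estimate (see Theorem 2.1 of \cite{Qian})'' immediately preceding the theorem. So there is no paper proof to compare against; the result is simply imported from Qian's work on the Bakry--\'Emery Laplacian.

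Your sketch is the standard and correct route to such an estimate, and is essentially what Qian does: weighted Bochner with the $1/N$ term, cutoff plus maximum principle, and the weighted Laplacian comparison $\Delta_{\nu_\infty}\rho\le (N-1)/\rho$ to control $\Delta_{\nu_\infty}\phi^2$. One small slip: you identify the effective dimension as $N=2n-d$, implicitly treating $\mathcal{G}$ as $n$-dimensional. In fact $\mathcal{G}$ is the smooth part of the $d$-dimensional limit, so matching $\Ric^{n-d}=\Ric+\nabla^2 f-\frac{1}{n-d}\,df\otimes df$ with $\Ric_f^N=\Ric+\nabla^2 f-\frac{1}{N-d}\,df\otimes df$ gives $N=n$, not $2n-d$. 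This does not affect your conclusion, since either way the constant depends only on $n$, but it is worth getting the bookkeeping right.
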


\subsection{Deformation of definite triples}\label{ss:triple-deformation}
Here we review \cite{Donaldson, Foscolo, HSVZ}. 
Let $X$ be an oriented smooth $4$-manifold, possibly non-compact or with boundary, and fix a volume form $\dvol_0$. 
Let ${\bm\omega}\equiv\{\omega_1, \omega_2, \omega_3\}$ be a triple of closed 2-forms on $X$. Write 
	$\omega_\alpha\wedge \omega_\beta=2Q_{\alpha\beta}\dvol_0$ for  $1\leq\alpha, \beta\leq 3.$
\begin{definition}	$\bm{\omega}$ is called a  \emph{definite triple} if the matrix-valued function $Q\equiv(Q_{\alpha\beta})$ is positive definite everywhere on $X$.
\end{definition} 
A definite triple  $\bm{\omega}$ uniquely determines a Riemannian metric $g_{\bom}$ such that each $\omega_\alpha$ is self-dual with respect to $g_{\bom}$ and the volume form is given by $ \dvol_{g_\bom}=(\det(Q))^{\frac{1}{3}} \dvol_0.$
Denote by $\Omega^+$ the space of self-dual 2-forms (with respect to $g_\bom$) on $X$. Below we will often identify an element in $\Omega^+\otimes\dR^3$ (i.e.,  a triple of self-dual 2-forms) with a $3\times 3$ matrix-valued function: $\bm{\eta}\in \Omega^+\otimes \dR^3$ corresponds to $ (A_{\alpha\beta})$  if $\eta_\alpha=\sum_\beta A_{\alpha\beta}\omega_\beta$. 

\begin{definition}
 A definite triple $\bom$ is called {\it hyperk\"ahler} if the metric $g_{\bom}$ is hyperk\"ahler, or equivalently,   if the normalized determinant  $
Q_{\bm{\omega}}\equiv(\det(Q))^{-\frac{1}{3}}Q	
$ is the identity matrix.\end{definition}

Fix a definite triple $\bom$. Consider a deformation $\bom'=\bom+{}{\bm{\theta}}$ for a triple  $\bm\theta$ of closed 2-forms. Decompose $\bm\theta = \bm\theta^+ + \bm\theta ^-$, where $\bm\theta^+$ is self-dual and $\bm\theta^-$ is anti-self-dual  with respect to $g_{\bom}$. 
Then as above we may identify ${}{\bm \theta}^{+}$ with a  matrix-valued function $A=(A_{\alpha\beta})$. Define the  matrix-valued function ${}{S}_{{}{\bm\theta}^-}\equiv (S_{\alpha\beta})$ via
\begin{align*}\theta_\alpha^-\wedge\theta_\beta^-=2S_{\alpha\beta}\dvol_{g_\bom}, \ \ \ \ 1\leq\alpha, \beta\leq 3.\end{align*}
If $\bom'$ is definite, then the hyperk\"ahler condition on $\bom'$ can be expressed as
\begin{equation}
\TF(Q_{\bm{\omega}}A^T + AQ_{\bm{\omega}}  + AQ_{\bm{\omega}}A^T) = \TF(-Q_{\bm{\omega}}-S_{\bm{\theta}^-}), \label{e:lftf}
\end{equation}
where we denote by $\TF(B) = B - \frac{1}{3}\Tr(B)\Id$ the trace-free part of a matrix $B$. 
Let $\mathscr{S}_0(\dR^3)$  be the space of trace-free symmetric $3\times 3$ matrices, and denote the non-linear map $\mathfrak{G}: \mathscr{S}_0(\dR^3) \to \mathscr{S}_0(\dR^3); A\mapsto \TF(Q_{\bm{\omega}}A^T + AQ_{\bm{\omega}}+ AQ_{\bm{\omega}}A^T)$. Then $\mathfrak{G}$ is a local diffeomorphism near $0$, and we denote by $\mathfrak{F}:U\rightarrow \mathscr{S}_0(\dR^3)$  its local inverse, where $U$ is a small neighborhood of $0$. A sufficient condition for  \eqref{e:lftf} to hold is 
\begin{align} 
A = \mathfrak{F}\Big(\TF(-Q_{\bm{\omega}}-S_{\bm\theta^-})\Big).
\label{e:elliptic-system}
\end{align} 
Notice this is only a necessary condition if we assume a priori that the matrix $A$ above is symmetric and trace-free.

 We now impose the ansatz $\theta_\alpha=dd^*(\sum_\beta f_{\alpha\beta}\omega_\beta)$, where $\bm{f}\equiv (f_{\alpha\beta})$ is a $3\times 3$ matrix-valued function, and   the Hodge $*$-operator is defined with respect to the metric $g_{\bom}$. We  can write this concisely as \begin{align*} \bm{\theta}=dd^*(\bm{f}\cdot\bom).\end{align*} 
Define the non-linear operator 
\begin{equation} \label{e:230}
	\mathscr{F}:\Omega^{+}\otimes \dR^3\rightarrow \Omega^+\otimes \dR^3;\quad \bm f\mapsto  \mathscr{D}(\bm f)+\mathscr{N}_0(\bm f ),
\end{equation}
where 
$\mathscr{D}(\bm f)\equiv d^+d^*(\bm f\cdot \bom)$ and 
$\mathscr{N}_0(\bm f )\equiv-\mathfrak F(\TF(-Q_{\bom}-S_{d^-d^*(\bm f\cdot\bom)})).$  Strictly speaking, $\mathscr F(\bm f)$ is only well-defined when $|\bm f|_{C^2_{\bom}}$ is small (so that $\TF(-Q_{\bom}-S_{d^-d^*(\bm f\cdot\bom)}(x))$ is in $U$ for all $x\in X$). Clearly  \eqref{e:elliptic-system} follows if $\mathscr{F}(\bm f)=0$.

If $\bom$ is hyperk\"ahler, then $\nabla_{g_{\bom}}\bm \omega\equiv \bm 0$. In this case we have $\mathscr{D}=-\Delta _{{\bm \omega}}$,  where $\Delta_{\bom}$ is the analyst's Laplace operator.  In general we have
  \begin{equation}\label{e:operator error}d^+d^*(\bm{f}\cdot \bm{\omega})=-(\Delta _{{\bm \omega}}{\bm f})\cdot \bm\omega+\nabla_{{\bm\omega}}{\bm f}\star\nabla_{\bm\omega} {\bm\omega}, 
 \end{equation}
 where $\star$ denotes a general tensor contraction.
 This follows from
 \begin{lemma}[see also \cite{HSVZ2}]
 	 Given a closed self-dual  2-form $\gamma$  and a function $f$, we have
 	 \begin{equation*}d^+d^*(f\gamma)=(-\Delta_{{\bm\omega}} f) \gamma+\nabla_{{\bm\omega}}f\star \nabla_{{\bm\omega}}{\gamma}.
 \end{equation*}
 \end{lemma}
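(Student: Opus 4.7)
The plan is to prove the identity by a direct index computation in a local orthonormal frame, using both the closedness and self-duality of $\gamma$ in decisive ways. The starting observation is that self-duality combined with $d\gamma=0$ forces $d^*\gamma=-*d*\gamma=-*d\gamma=0$, i.e.\ $\gamma$ is harmonic. By the graded Leibniz rule $d^*(f\gamma)=fd^*\gamma-i_{\nabla_{\bom}f}\gamma=-i_{\nabla_{\bom}f}\gamma$. Applying $d$ and invoking Cartan's magic formula $\mathcal{L}_X=di_X+i_Xd$ together with $d\gamma=0$, I obtain the clean identity
\begin{equation*}
d(d^*(f\gamma))=-d\,i_{\nabla_{\bom}f}\gamma=-\mathcal{L}_{\nabla_{\bom}f}\gamma.
\end{equation*}

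Expanding the Lie derivative in components yields
\begin{equation*}
(\mathcal{L}_{\nabla_{\bom}f}\gamma)_{ij}=(\nabla^k f)\nabla_k\gamma_{ij}+H_i^{\ k}\gamma_{kj}+H_j^{\ k}\gamma_{ik},
\end{equation*}
where $H_{ij}\equiv\nabla_i\nabla_j f$ is the Hessian. The first term is manifestly of the form $\nabla_{\bom}f\star\nabla_{\bom}\gamma$, so its self-dual part contributes directly to the error term of the lemma. The remaining Hessian terms assemble, in matrix notation, into the anticommutator $H\gamma+\gamma H$ of the symmetric Hessian tensor with the antisymmetric self-dual 2-form $\gamma$.

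The heart of the argument is then a pointwise linear-algebra statement on $\dR^4$: after projection to $\Omega^+$, the anticommutator $H\gamma+\gamma H$ equals a multiple of $(\Delta_{\bom}f)\gamma$. I would decompose $H=\tfrac14(\Delta_{\bom}f)\mathrm{Id}+\tilde H$ into trace and trace-free pieces; the trace part yields a multiple of $\gamma$ itself, which is self-dual and accounts for the $-\Delta_{\bom}f\cdot\gamma$ term. The main obstacle, and the only nontrivial input, is showing that the trace-free contribution $\tilde H\gamma+\gamma\tilde H$ is entirely anti-self-dual and hence killed by $P^+$. I would establish this $SO(4)$-equivariant fact using the standard identification $S^2_0(\dR^4)\cong\Omega^+\otimes\Omega^-$ together with the commutation of $\mathfrak{so}(4)^\pm$ as subalgebras of $M_4(\dR)$: writing $\tilde H=\sum_i\alpha_i\beta_i$ with $\alpha_i\in\Omega^+$ and $\beta_i\in\Omega^-$ (matrix products, which are trace-free symmetric because self-dual and anti-self-dual matrices commute and are orthogonal), the commutativity $\beta_i\gamma=\gamma\beta_i$ reduces $\tilde H\gamma+\gamma\tilde H$ to $\sum_i\beta_i\{\alpha_i,\gamma\}$. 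Under the isomorphism $\mathfrak{so}(4)^+\cong\mathfrak{su}(2)$, the anticommutator of two elements is a scalar multiple of the identity, so the total sum is a linear combination of the anti-self-dual $\beta_i$'s, as required.

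Combining these pieces and applying $P^+=d^+\circ d^*$ yields the stated identity. Beyond the linear-algebra step above, everything is routine exterior calculus enabled by the harmonicity of $\gamma$.
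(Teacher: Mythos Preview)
Your argument is correct and takes a genuinely different route from the paper. The paper simply fixes a geodesic orthonormal frame at a point, writes out the self-duality constraints $\gamma_{12}=\gamma_{34}$, $\gamma_{13}=-\gamma_{24}$, $\gamma_{14}=\gamma_{23}$, observes that $d^+d^*(f\gamma)=d^+*(df\wedge\gamma)$, and then appeals to ``straightforward computation.'' Your approach via $dd^*(f\gamma)=-\mathcal{L}_{\nabla f}\gamma$ and the Hessian decomposition is more structural: the key point---that the trace-free symmetric part $\tilde H$ contributes only to $\Omega^-$ under $\tilde H\gamma+\gamma\tilde H$---is explained by the identification of trace-free symmetric endomorphisms of $\dR^4$ with $\Omega^+\otimes\Omega^-$ together with the commutativity of left and right quaternion multiplication on $\dR^4\cong\dH$. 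This not only avoids index-chasing but also pins down the error term as exactly $-\nabla_{\nabla f}\gamma$ (automatically self-dual since $*$ is parallel), sharper than the schematic $\nabla f\star\nabla\gamma$. One small slip: in your final sentence ``$P^+=d^+\circ d^*$'' should read $d^+=P^+\circ d$.
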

 \begin{proof}
 Given a point $p_0$. We choose a local oriented orthonormal frame $\{e_i\}$ with dual co-frame $\{e^i\}$, such that $\nabla_{\bm\omega} e_i(p_0)=0$ and in a neighborhood of $p_0$,$\Lambda^+$ is spanned by $e^{12}+e^{34}, e^{13}+e^{42}, e^{14}+e^{23}$, where $e^{ij}\equiv e^i\wedge e^j$.
Since $\gamma$ is closed and self-dual we have 
 $d^+d^*(f\gamma)=d^+*(df\wedge\gamma)$. 
 We write $\gamma=\sum\limits_{i<j}\gamma_{ij}e^{ij}$, with $\gamma_{12}=\gamma_{34}, \gamma_{13}=-\gamma_{24}, \gamma_{14}=\gamma_{23}$. 
 Then the conclusion follows from straightforward computation, using the fact that $\Delta_{\bm\omega} f=-\sum\limits_{i=1}^4e_ie_if$ at $p_0$. 
 \end{proof}
We also notice that  algebraically we have the  pointwise estimate
 \begin{equation}\label{e:pointwise non-linear estimate}
 |\mathscr{N}_0(\bm f)-\mathscr{N}_0(\bm g)|\leq C \left(|\nabla^2_{\bom}\bm f|_{\bm\omega}+|\nabla^2_{\bom} \bm g|_{\bm\omega}\right) \cdot |\nabla^2_{\bom}(\bm f-\bm g)|_{\bom},
 \end{equation}
 as long as $\bm f$ and $\bm g$ are in the domain of definition of $\mathscr F$. 

\

Now we assume
$\mathscr{F}(\bm f)=\mathscr{L}(\bm f)+\mathscr{N}(\bm f),$
where $\mathscr{L}$ is a linear operator. In our applications  $\mathscr{L}$ will be a slight modification of $-\Delta_{\bm\omega}$. The following is an application of the standard quantitative implicit function theorem on Banach spaces. 
\begin{proposition} \label{p:implicit function theorem}
	Suppose we have two Banach spaces $(\fA, \|\cdot\|)$, $(\fB, \|\cdot\|)$ and numbers $\eta>0, L>0$, such that the following hold
\begin{enumerate}
\item $\fA\subset C^{2}(\Omega^+\otimes \dR^3)$, $\fB\subset C^0(\Omega^+\otimes \dR^3)$;
\item   For all $\bm f\in B_{\eta}(0)\subset \fA$, the triple $\bm\omega+dd^*(\bm f\cdot\bom)$ is definite, and $\TF(Q_{\bom}(x)+S_{d^-d^*(\bm f\cdot\bom)}(x))\in U$ for all $x\in X$;
	\item $\mathscr L$ and $\mathscr N$ are both differentiable maps from $B_{\eta}(0)\subset \fA$ to $\fB$;
	\item There exists a bounded linear map $\mathscr P: \fB\rightarrow \fA$ with $\mathscr L\circ \mathscr P=\Id$, and $\|\mathscr P\|\leq L$;
	\item $\|\mathscr  N(\bm f)-\mathscr N(\bm g)\|\leq (3L)^{-1}\|\bm f-\bm g\|$ for all $\bm f, \bm g\in B_{\eta}(0)\subset \fA$;
	\item $\|\mathscr F(\bm 0)\|\leq \eta (3L)^{-1}$.
	\end{enumerate}
Then we can find an $\bm f\in \fA$  that solves $\mathscr F(\bm f)=0$ and satisfies $\|\bm f\|\leq 2L\|\mathscr F(0)\|$. In particular, $\widetilde{\bm\omega}=\bm\omega+dd^*(\bm f\cdot\bm\omega)$ is a hyperk\"ahler triple.
\end{proposition}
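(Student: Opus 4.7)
The plan is to rewrite $\mathscr F(\bm f)=0$ as a fixed-point equation and invoke the Banach contraction mapping theorem on the ball $B_\eta(0)\subset\fA$. Specifically, because $\mathscr P$ is a right inverse of $\mathscr L$, the equation $\mathscr L(\bm f)=-\mathscr N(\bm f)$ is implied by
\[
\bm f = T(\bm f) \equiv -\mathscr P\bigl(\mathscr N(\bm f)\bigr).
\]
So I would try to show that $T$ is a contraction from $B_\eta(0)\subset\fA$ into itself, take its unique fixed point, and then translate the identity $\mathscr F(\bm f)=0$ back into the hyperk\"ahler condition via \eqref{e:lftf} and the hypothesis that $\bm\omega+dd^*(\bm f\cdot\bom)$ stays definite (this is where assumption (2) is used).

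The contraction estimate is immediate from (4) and (5):
\[
\|T(\bm f)-T(\bm g)\|\le \|\mathscr P\|\cdot\|\mathscr N(\bm f)-\mathscr N(\bm g)\|\le L\cdot(3L)^{-1}\|\bm f-\bm g\|=\tfrac{1}{3}\|\bm f-\bm g\|,
\]
so $T$ is $\tfrac13$-Lipschitz on $B_\eta(0)$. To see that $T$ preserves $B_\eta(0)$, note $\mathscr L$ is linear so $\mathscr L(0)=0$ and hence $\mathscr N(0)=\mathscr F(0)$; thus $\|T(0)\|\le L\|\mathscr F(0)\|\le \eta/3$ by (6). For any $\bm f\in B_\eta(0)$,
\[
\|T(\bm f)\|\le \|T(0)\|+\|T(\bm f)-T(0)\|\le \tfrac{\eta}{3}+\tfrac{1}{3}\|\bm f\|\le \tfrac{2\eta}{3}<\eta.
\]
The Banach fixed point theorem then produces a unique $\bm f\in B_\eta(0)$ with $T(\bm f)=\bm f$, and the standard geometric-series bound gives $\|\bm f\|\le (1-\tfrac13)^{-1}\|T(0)\|\le \tfrac{3}{2}L\|\mathscr F(0)\|\le 2L\|\mathscr F(0)\|$.

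It remains to verify that this $\bm f$ produces a hyperk\"ahler triple. By construction, $\mathscr L(\bm f)+\mathscr N(\bm f)=\mathscr F(\bm f)=0$, which unpacking the definitions from \eqref{e:230} means
\[
d^+d^*(\bm f\cdot\bom)=\mathfrak F\bigl(\TF(-Q_{\bom}-S_{d^-d^*(\bm f\cdot\bom)})\bigr).
\]
Setting $\bm\theta\equiv dd^*(\bm f\cdot\bom)$ and identifying $\bm\theta^+$ with the matrix $A$ as in the preamble, this is exactly \eqref{e:elliptic-system}. Assumption (2) guarantees $\bm\omega+\bm\theta$ is definite and that the argument of $\mathfrak F$ lies in $U$, so applying $\mathfrak G$ to both sides recovers \eqref{e:lftf}, i.e.\ the normalized determinant of the new definite triple is the identity matrix. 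By the characterization of hyperk\"ahler triples, $\widetilde{\bom}=\bom+dd^*(\bm f\cdot\bom)$ is hyperk\"ahler, as desired.

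There is no serious analytic obstacle here once the Banach spaces $(\fA,\fB)$ and the operator $\mathscr P$ have been supplied: the entire argument is a careful bookkeeping of the quantitative implicit function theorem. The only subtlety I would double-check is the algebraic step at the end, namely that a solution of \eqref{e:elliptic-system} automatically produces a symmetric trace-free $A$ (so that $\mathfrak G\circ \mathfrak F=\Id$ applies and \eqref{e:lftf} genuinely follows). This is forced by the fact that $\mathfrak F$ takes values in $\mathscr S_0(\dR^3)$, so the equation itself constrains the self-dual triple $\mathscr D(\bm f)$ to land in the correct algebraic subspace.
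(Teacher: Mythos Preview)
Your proof is correct and is precisely the standard contraction-mapping argument the paper has in mind; the paper itself does not write out a proof but simply states that the proposition ``is an application of the standard quantitative implicit function theorem on Banach spaces.'' Your final algebraic worry is also correctly resolved: since $\mathfrak F$ has codomain $\mathscr S_0(\dR^3)$, the equation $\mathscr F(\bm f)=0$ forces the matrix $A$ corresponding to $d^+d^*(\bm f\cdot\bom)$ to lie in $\mathscr S_0(\dR^3)$, so $\mathfrak G\circ\mathfrak F=\Id$ applies and \eqref{e:lftf} follows.
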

 In practice the condition (2) will be achieved by making sure that $|dd^*(\bm f\cdot \bom)|_{\bm\omega}$ is small for all $\bm f\in B_\eta(0)$,  which in particular guarantees that $\widetilde{\bm\omega}$ is equivalent to ${\bm\omega}$. 
 The above strategy was used for example by Foscolo \cite{Foscolo} (see also   \cite{HSVZ}) to construct degeneration families of hyperk\"ahler metrics on the K3 manifold with precise geometric information. The main technical issue there is to find a right inverse $\mathscr L$ with uniform estimates on suitable weighted spaces. In the setting of a K3 manifold due to topological reasons $\Delta_{\bm\omega}$ can not be surjective; this is circumvented by  the extra freedom of adding a finite dimensional space of self-dual harmonic forms. 
 
 For our purpose in this paper we will work on manifolds with boundary (and possibly noncompact), so we do not encounter the topological obstruction to the surjectivity of $\Delta_\bom$. We also make the trivial observation that suppose we have a compact group $G$ acting on $X$ preserving $\bm\omega$, then if the assumptions (1)-(6) of Proposition \ref{p:implicit function theorem} holds for $G$-invariant objects, then we can find a nearby hyperk\"ahler triple $\widetilde{\bm\omega}$ which is also $G$-invariant.

Another remark is that the ansatz used in \cite{Foscolo, HSVZ} is slightly different from what is used above. Namely, in those situations one would write $\bm\theta=d\bm\eta+\bm\xi$ for a triple of $d^*$-closed 1-forms $\bm\eta$ and a triple of self-dual harmonic forms $\bm\xi$. The corresponding linearized operator involves the Dirac operator $d^*+d^+$ acting on 1-forms. In our formulation above we have further specified $\bm\eta=d^*(\bm f \cdot \bom)$. This provides some technical simplifications since it allows us to only deal with elliptic operator on functions. To our knowledge this trick goes back to Biquard \cite{Biquard} (see also \cite{HSVZ2}).

\section{Geometric structures over the regular region}
\label{s-3}

In this section, we consider a measured collapsed Gromov-Hausdorff limit $(X_{\infty}^d, d_\infty, p_\infty, \nu_\infty)$ of a sequence of hyperk\"ahler $4$-manifolds $(X_j^4, g_j, p_j,\nu_j)$ with $d\equiv\dim_{\ess}(X_{\infty}^d)<4$. We will always fix a choice of hyperk\"ahler triple $\bm\omega_j$ on $X_j^4$. Our goal is to understand the refined geometric structure on the regular region $\mathcal{R}\subset X_{\infty}^d$ inherited from the hyperk\"ahler structure on $X_j^4$. 
Notice that due to volume collapsing, one can not make obvious sense of convergence of the hyperk\"ahler triples ${\bm\omega}_j$. Instead, we will take the limit of $\bom_j$ on local universal covers, which descends to a local structure on $\mathcal R$, then gluing them together yields certain global structure on $\mathcal R$. The precise structure we obtain on $\mathcal R$ depends on its dimension $d$.  

Now we make the above rigorous. Without loss of generality we always assume $p_\infty\in \mathcal R$ in this section. Then there exists some $\delta>0$ independent of $j$ such that the universal cover $\widetilde B_j$ of $B_j\equiv B_\delta(p_j)$ is volume non-collapsing as $j\rightarrow\infty$. Let $G_j$ be the deck transformation group and $\widetilde{\bm {\omega}}_j$ be the pullback of ${\bm\omega}_j$ to $\widetilde B_j$.  The isometry action of $G_j$ preserves $\widetilde{\bm {\omega}}_j$. Passing to a subsequence, we have the equivariant Gromov-Hausdorff convergence 
\begin{align*}
 \begin{split}
 \xymatrix{
(\widetilde{B_j}, \tilde{g}_j, G_j, \tilde{p}_j) \ar[d]_{\pi_j} \ar[rr]^{eqGH} &&    (\widetilde{B}_{\infty}, \tilde{g}_{\infty}, G_{\infty}, \tilde{p}_{\infty}) \ar [d]^{\pi_{\infty}}
 \\
 (B_j, g_j)\ar[rr]^{GH} &&  (B_{\delta}(p_{\infty}), g_{\infty}),
 } 
 \end{split}
 \end{align*}
 where $G_{\infty}\leq \Isom(\widetilde{B}_{\infty})$ is a closed subgroup such that $B_{\delta}(p_{\infty})=\widetilde{B}_{\infty}/G_{\infty}$.  We refer readers to \cite[Definition 3.3]{FY} for the detailed definition of equivariant Gromov-Hausdorff convergence.
Moreover, the standard regularity theory for non-collapsing Einstein metrics implies that the convergence of $\widetilde{B}_j$ can be improved to the $C^k$-convergence for any $k\in\dZ_+$. Then we obtain a smooth limit hyperk\"ahler triple $\widetilde{\bm {\omega}}_\infty$ on $\widetilde{B}_{\infty}$ which is preserved by $G_\infty$.

We make the following observation that any point in the regular set $\mathcal{R}$ is a manifold point. 
\begin{proposition} \label{p:regular set smooth}
	In our setting, $\mathcal R\cap \mathcal S=\emptyset$. In particular, $\mathcal{R}\subset\mathcal{R}^{\#}$. 
\end{proposition}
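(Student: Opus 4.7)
My plan is to show that the closed subgroup $G_\infty \leq \Isom(\widetilde B_\infty)$ acts with trivial isotropy in a neighborhood of $\tilde p_\infty$. Once this is established, the slice theorem yields that $B_\delta(p_\infty) = \widetilde B_\infty/G_\infty$ is a smooth Riemannian manifold near $p_\infty$, hence $p_\infty \in \mathcal{G} \subset \mathcal{R}^{\#}$. From the smooth equivariant convergence already set up in the discussion preceding the proposition, $\widetilde B_\infty$ is a smooth hyperk\"ahler $4$-manifold with limit triple $\widetilde{\bm\omega}_\infty$, and this triple is preserved by the $G_\infty$-action since each $G_j$ preserves $\widetilde{\bm\omega}_j$. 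By the Cheeger--Fukaya--Gromov theory applied to the non-collapsing sequence $(\widetilde B_j, \tilde g_j, G_j)$, the identity component $G_\infty^0$ is a connected nilpotent Lie group.

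The heart of the argument is to show that no nonzero Killing field $V \in \mathfrak g_\infty^0$ vanishes at $\tilde p_\infty$. The subspace $\mathfrak h := \{V \in \mathfrak g_\infty^0 : V(\tilde p_\infty) = 0\}$ is a Lie subalgebra, and for $V, W \in \mathfrak h$ the standard identity $(\nabla [V,W])(\tilde p_\infty) = [(\nabla V)(\tilde p_\infty), (\nabla W)(\tilde p_\infty)]$, together with the fact that a Killing field is determined by its $1$-jet, shows that $V \mapsto (\nabla V)(\tilde p_\infty)$ is an injective Lie algebra homomorphism $\mathfrak h \hookrightarrow \mathfrak{so}(T_{\tilde p_\infty}\widetilde B_\infty) \cong \mathfrak{so}(4)$. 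Since each $V \in \mathfrak h$ preserves $\widetilde{\bm\omega}_\infty$, the image lies in the isotropy subalgebra $\mathfrak{su}(2)\subset \mathfrak{so}(4)$ of the hyperk\"ahler triple on the tangent space at $\tilde p_\infty$. As $\mathfrak h$ is nilpotent (being a subalgebra of a nilpotent Lie algebra) and $\mathfrak{su}(2)$ is semisimple with no nonzero nilpotent subalgebra, we must have $\mathfrak h = 0$, so $G_\infty^0$ acts locally freely at $\tilde p_\infty$.

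The main remaining obstacle is to handle the discrete part $G_\infty/G_\infty^0$. The full stabilizer of $\tilde p_\infty$ in $G_\infty$ is compact, and by the previous step its identity component in $G_\infty^0$ is trivial, so it linearizes as a finite subgroup $H \subset SU(2) \subset SO(4)$. If $H$ were nontrivial, then since nontrivial elements of this $SU(2)$ act freely on $\dR^4 \setminus \{0\}$ and $H$ preserves the tangent space to the $G_\infty^0$-orbit through $\tilde p_\infty$, the tangent cone at $p_\infty$ would be of the form $\dR^d/H'$ for some nontrivial finite $H' \leq O(d)$, hence not isometric to $\dR^d$. I would then invoke the Colding--Naber fact that the tangent cone is $\dR^d$ at $\nu_\infty$-almost every point, combined with the locally constant nature of the isotropy coming from the bundle structure of bounded-curvature collapse (Fukaya's fibration theorem on $\widetilde B_j$), to deduce that $H$ must in fact be trivial. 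With trivial isotropy, the slice theorem produces the smooth manifold structure on a neighborhood of $p_\infty$, and consequently $p_\infty \in \mathcal G$, which gives both $\mathcal R \cap \mathcal S = \emptyset$ and $\mathcal R \subset \mathcal R^{\#}$.
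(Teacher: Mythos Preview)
Your argument has a genuine error in the treatment of $G_\infty^0$. You assert that $\mathfrak{su}(2)$, being semisimple, has no nonzero nilpotent subalgebra; this is false. Any one-dimensional subspace of $\mathfrak{su}(2)$ is an abelian, hence nilpotent, Lie subalgebra. (Perhaps you are thinking of nilpotent \emph{ideals}, which a semisimple algebra indeed lacks, or conflating ``nilpotent Lie algebra'' with ``Lie algebra of nilpotent endomorphisms''; Engel's theorem only gives one implication.) So the step ``$\mathfrak h$ nilpotent and embeds in $\mathfrak{su}(2)$, therefore $\mathfrak h = 0$'' does not go through as written.

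Your handling of the discrete part is also incomplete. Colding--Naber tells you the tangent cone is $\dR^d$ at $\nu_\infty$-almost every point, but you need this at the particular point $p_\infty$. The appeal to a ``locally constant nature of the isotropy'' is not justified: isotropy type for a group action is only upper semicontinuous in general, and you give no precise statement that would rule out $p_\infty$ lying on a lower-dimensional singular stratum of the quotient.

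The paper's proof is quite different and sidesteps both difficulties by exploiting the approximating groups $G_j$, which act \emph{freely} since they are deck transformation groups. If some nontrivial $\phi \in G_\infty$ fixes $\tilde p_\infty$, its differential $L = d\phi$ lies in $\SU(2)\cong\Sp(1)$ because $\phi$ preserves the hyperk\"ahler triple. The crucial observation is that a nontrivial element of $\SU(2)$ acts freely on $S^3$, so $1$ is not an eigenvalue of $L$, i.e., $L - \Id$ is invertible. Under the smooth equivariant convergence one finds $\phi_j \in G_j$ which is $C^2$-close to $L$ in exponential coordinates, and the implicit function theorem then produces a fixed point of $\phi_j$ near $\tilde p_\infty$, contradicting freeness of $G_j$. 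This argument treats continuous and discrete isotropy uniformly and makes no appeal to the structure of the limit measure.
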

\begin{proof}
To prove the proposition, we claim that $G_\infty$ acts freely on $\widetilde{B}_{\infty}$. To see this, suppose otherwise that $\widetilde p_\infty$ is a fixed point of some non-trivial element $\phi\in G_\infty$ such that there exists a sequence of $\phi_j\in G_j$ that converges to $\phi_j$ equivariantly. 

Now using the exponential map at $\widetilde p_\infty$, we may identify the action of $\phi$ with the linear action $L=d\phi$ on $T_{\widetilde p_\infty}\widetilde{B}_{\infty}$.
Since $G_\infty$ preserves $\widetilde{\bm {\omega}}_\infty$,  we can identify $T_{\widetilde p_\infty}\widetilde{B}_{\infty}$ with the quaternions $\mathbb H$ so that   $d\phi$ acts by left multiplication by a unit quaternion. In particular, $1$ is not an eigenvalue of $L$. Now for any sufficiently large $j$, we may write $\phi_j\in G_j$ as 
\begin{align*}\phi_j=L+E_j,\end{align*} where $\|E_j\|_{C^2}$ is small. By a simple application of the implicit function theorem we see that for $j$ large $\phi_j$ must also have a nearby fixed point. This contradicts the fact that the $G_j$ action is free.
\end{proof}

\begin{remark}
Here we used crucially the property that $\SU(2)(\cong\Sp(1))$ acts freely on the unit sphere $S^3$. This Proposition was also implicitly proved in \cite{CCI} using a different argument. 
\end{remark}

Using similar arguments, we also obtain the following.
\begin{proposition}
The Lie group $G_\infty$ is connected.
\end{proposition}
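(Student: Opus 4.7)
My plan parallels the strategy of Proposition 3.1, and proceeds via a two-step argument: a topological reduction using the bundle structure, followed by a regularity-based choice of scale.

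First, by Proposition 3.1 the action of $G_\infty$ on $\widetilde B_\infty$ is free, so the projection $\widetilde B_\infty \to B_\delta(p_\infty) = \widetilde B_\infty/G_\infty$ is a principal $G_\infty$-bundle. The universal covers $\widetilde B_j$ are simply connected by construction, and the non-collapsing together with the Ricci-flat (in fact hyperk\"ahler) regularity upgrades the equivariant convergence to pointed $C^\infty$-convergence with uniformly bounded geometry. A standard argument then shows that $\widetilde B_\infty$ is also simply connected. The homotopy long exact sequence of the principal bundle therefore yields the identification
\[
\pi_0(G_\infty)\;\cong\;\pi_1(B_\delta(p_\infty)),
\]
so that the connectedness of $G_\infty$ is equivalent to the simple-connectedness of $B_\delta(p_\infty)$.

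Second, I would exploit the freedom of choosing the scale $\delta$. Since $p_\infty\in\mathcal R\subset \mathcal G$ by Proposition 3.1, a neighborhood of $p_\infty$ in $X_\infty$ carries the smooth Riemannian metric $g_\infty$ with a positive injectivity radius $r_0>0$ at $p_\infty$. Shrinking $\delta$ if necessary so that $\delta<r_0$, the ball $B_\delta(p_\infty)$ becomes a geodesic ball inside the smooth Riemannian manifold $(\mathcal G,g_\infty)$, hence diffeomorphic to a standard Euclidean ball and in particular simply connected. Combined with the previous step, this forces $\pi_0(G_\infty)=0$, i.e., $G_\infty$ is connected.

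\textbf{Main obstacle.} The principal technical issue is that shrinking $\delta$ may a priori destroy the volume non-collapsing hypothesis on the universal cover $\widetilde B_j$, because the universal cover of a smaller ball need not embed into a local lift of the universal cover of a larger one (the inclusion-induced map on $\pi_1$ can fail to be injective, so sheets may be lost). This is resolved by invoking the Cheeger--Fukaya--Gromov local nilpotent Killing structure on the regular region $\mathcal R$, or equivalently the Kapovitch--Wilking generalized Margulis lemma: for $\delta$ below a Margulis-type constant, $\pi_1(B_\delta(p_j))$ for $j$ large is generated by loops of length at most $\delta$ that collapse to zero in the family, producing a nilpotent subgroup of short deck transformations of rank $4-d$ in $\widetilde B_j$ whose orbits fill a volume bounded below near $\tilde p_j$ uniformly in $j$. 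An alternative, more hands-on route truly mimicking Proposition 3.1 is to pick $\phi\in G_\infty\setminus G_\infty^0$ realizing the minimal displacement $L=d(\phi(\tilde p_\infty),\tilde p_\infty)>0$ in its coset (attained since $G_\infty^0$ acts properly), approximate by $\phi_j\in G_j$, and exploit the tri-holomorphic rigidity of $d\phi$ (right multiplication by a unit quaternion) together with an implicit function theorem at the level of $\phi_j$ composed with short generators approximating a basis of $\mathfrak g_\infty^0$, to force $\phi_j$ into having a fixed point, contradicting the freeness of the $G_j$-action.
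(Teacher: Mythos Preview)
Your main route is correct and is essentially a fleshed-out version of the paper's argument. The paper's two-line proof asserts that the fibers of $\pi_\infty:\widetilde B_\infty\to B_\infty$ are connected and that $G_\infty$ acts on them freely and transitively, whence $G_\infty\cong(\text{fiber})$ is connected. Since the fibers are by definition the $G_\infty$-orbits, their connectedness is, once freeness is known, equivalent to the conclusion; the implicit justification is precisely what you spell out: $\widetilde B_\infty$ is simply connected (as a non-collapsed smooth limit of universal covers), $B_\delta(p_\infty)$ is a smooth Riemannian manifold by the preceding proposition and hence simply connected once $\delta$ is taken below the injectivity radius, and the homotopy long exact sequence of the principal $G_\infty$-bundle gives $\pi_0(G_\infty)\cong\pi_1(B_\delta(p_\infty))=0$. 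Your discussion of the obstacle in shrinking $\delta$, and its resolution via the bounded-curvature collapsing theory, is the right way to make this step honest.

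One caveat: the alternative ``hands-on'' route you sketch at the end does not work as written. The freeness argument in the preceding proposition produces a contradiction because the hypothetical $\phi$ \emph{has a fixed point}, which the implicit function theorem then transfers to $\phi_j$. In your variant, $\phi\in G_\infty\setminus G_\infty^0$ acts freely with positive minimal displacement, so there is no fixed-point equation to perturb; the tri-holomorphic constraint on $d\phi$ at a point that $\phi$ does not fix carries no usable rigidity. Drop that paragraph and keep the principal-bundle argument.
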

\begin{proof}
Notice that the projection map $\widetilde B_\infty\rightarrow B_\infty$ has connected fibers, on which $G_\infty$ acts transitively.  Then the conclusion follows from the fact that the $G_\infty$ action is free. 
\end{proof}

Now we divide the discussion into 3 cases depending on the dimension $d$.  The case $d=1$ was studied in detail in \cite{HSZ}. So our main focus below is in the other two cases $d=2$ and $d=3$.

\subsection{Case $d=3$}
\label{ss:3-1}

\subsubsection{Geometric structure on $\mathcal R$}
 In this case $G_\infty=\dR$.  Choosing a generator of $G_\infty$ gives a  Killing field $\p_t$ on $\widetilde B_\infty$ which preserves the hyperk\"ahler triple $\widetilde{\bm\omega}_\infty$. Shrinking $\widetilde B_\infty$ if necessary, we may assume there is a triple of moment maps for $\p_t$ with respect to $\widetilde{\bm\omega}_\infty$ given by $\pi_\infty=(x, y, z): \widetilde B_\infty\rightarrow \dR^3$.  These serve as local coordinates on $\mathcal R$, and the Riemannian metric $g_{\infty}$ can  be written as 
 $$g_\infty=V(dx^2+dy^2+dz^2),$$
where $V=|\p_t|^{-2}$ satisfies $\Delta_{V^{-1}g_\infty}V=0$. This is the well-known description of hyperk\"ahler metrics with an $\mathbb R$ symmetry, i.e., the \emph{Gibbons-Hawking ansatz}; see \cite{Gibbons-Hawking, HKLR}  for details of the construction.
We can write the hyperk\"ahler triple  on $\widetilde B_\infty$  as 
\begin{equation*}\begin{cases}\widetilde\omega_{\infty,1}=Vdx\wedge dy+dz\wedge\theta\\
\widetilde\omega_{\infty,2}=Vdy\wedge dz+dx\wedge\theta	\\
\widetilde\omega_{\infty,3}=Vdz\wedge dx+dy\wedge\theta,		
\end{cases}
\end{equation*}
where $\theta$ is the 1-form dual to $\p_t$. See Section 2 of \cite{GW} or Section 2 of \cite{HSVZ} for more details. By the discussion in Section \ref{ss:2-1}, the renormalized limit measure $\nu_\infty$ has  the expression 
\begin{align}\label{e:measure in codimension 1 case}d\nu_\infty=c\cdot e^{-f} \dvol_{g_\infty}, \quad f=\frac{1}{2}\log V, \quad c\in \dR_+.
\end{align}
  Moreover, the Bakry-\'Emery-Ricci tensor is nonnegative:
\begin{align*}\Ric^1_{g_\infty}\equiv\Ric_{g_\infty}+\nabla^2_{g_\infty}f- df\otimes df\geq 0. \end{align*}
An immediate consequence of \eqref{e:measure in codimension 1 case} is that the function $V$ is well-defined  up to a global multiplicative constant on each connected component of $\mathcal R$.
Fixing a choice of $V$ determines the Killing field $\p_t$,  hence the exact frame $\{dx, dy, dz\}$, up to multiplication by $\pm 1$. In particular, $\mathcal R$ is endowed with an affine structure with monodromy contained in $\dR^3\rtimes \dZ_2\subset \Aff(\dR^3)$.
It is easy to see that $V$ is a harmonic function on $\widetilde B_\infty$. Hence on $\mathcal R$, we have 
$\Delta_{\nu_\infty}V=0$.  

\begin{definition}
A special affine structure on a 3-manifold $Y^3$ is an affine structure  with monodromy contained in $\dR^3 \rtimes \dZ_2 $.
\end{definition}
In particular, a special affine structure determines a flat Riemannian metric $g^{\flat}$ on $Y^3$ up to constant multiplication. In local special affine coordinates $(x, y, z)$ we have that $g^{\flat}=C(dx^2+dy^2+dz^2)$.  
\begin{definition}
	A function $u$ on a special affine 3-manifold is harmonic if $\Delta_{g^\flat}u=0$.
\end{definition}

\begin{definition}
	A special affine  metric on a 3-manifold $Y^3$ consists of a special affine structure together with a smooth Riemannian metric $g$ such that $g=Vg^\flat$ for a positive harmonic function $V$ on $Y^3$. \end{definition}
	Here $V$ is well-defined up to constant multiplication. A choice of $V$ determines the flat metric $g^\flat=V^{-1}g$, which we call the \emph{flat background geometry}; it also yields a measure $\nu$ with density $d\nu=V^{-1/2}\dvol_{g}$, so as a metric measure space we have $\Ric^1_g \geq 0$.  Our discussion above shows 
\begin{proposition}\label{p:special affine 3 manifold}
	In the case $d=3$, $\mathcal R$ is endowed with a  special affine metric structure.
\end{proposition}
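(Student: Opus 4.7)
The plan is to package the local Gibbons-Hawking data recalled above into a global structure on $\mathcal{R}$ matching the definitions of special affine structure and special affine metric. Two things remain to verify: first, that the local Gibbons-Hawking charts centered at different base points patch together with transitions in $\dR^3\rtimes\dZ_2$; and second, that the function $V$ is globally well-defined on each connected component of $\mathcal{R}$ up to a single positive multiplicative scalar and is harmonic with respect to the flat background metric.

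For the patching, I would enumerate all local gauge freedom. Around each $q\in\mathcal{R}$ the construction depends on a choice of generator $\p_t$ of $G_\infty\cong\dR$ and on additive constants in the moment maps $(x,y,z)$. Rescaling $\p_t\mapsto\lambda \p_t$ with $\lambda\in\dR^\ast$ induces $V\mapsto\lambda^{-2}V$ and $(x,y,z)\mapsto\lambda(x,y,z)+\text{const}$, so a priori the transition maps between two overlapping local charts only sit in $\dR^3\rtimes\dR^\ast$. The key step is to invoke the measure identification $d\nu_\infty=c\cdot e^{-\frac{1}{2}\log V}\dvol_{g_\infty}$, which comes from Fukaya's description of $\nu_\infty$ in Section~\ref{ss:2-1}: since both $\nu_\infty$ and $g_\infty$ are intrinsic to the limit space, fixing the Haar normalization constant $c$ on each connected component of $\mathcal{R}$ determines $V$ uniquely on that component up to a global positive scalar. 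This forces $|\lambda|=1$ on every overlap, leaving only the sign flip $\lambda=-1$ which acts as $(x,y,z)\mapsto -(x,y,z)+\text{const}$. Hence the transitions land in $\dR^3\rtimes\dZ_2$, yielding a special affine structure whose flat background is $g^\flat=V^{-1}g_\infty=dx^2+dy^2+dz^2$.

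It remains to check that $V$, viewed as a function on $\mathcal{R}$, is harmonic with respect to $g^\flat$. This descends from the universal cover: on $\widetilde{B}_\infty$, $V$ is harmonic for the flat metric $\widetilde V^{-1}\widetilde g_\infty=dx^2+dy^2+dz^2$ (this is part of the Gibbons-Hawking equations already invoked above). Since $V$ is $G_\infty$-invariant and the projection $\widetilde{B}_\infty\to B_\delta(p_\infty)$ becomes, after removing the $\p_t$-direction, an isometry of three-dimensional flat manifolds, harmonicity descends to $\mathcal{R}$. Combined with the decomposition $g_\infty=V g^\flat$, this realizes $(\mathcal{R},g_\infty)$ as a special affine metric in the sense of the preceding definition.

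The main technical point is the scaling-rigidity step: without the renormalized limit measure $\nu_\infty$ one would be stuck with the coarser structure group $\dR^3\rtimes\dR^\ast$, since the Riemannian metric $g_\infty$ alone cannot distinguish $(V,(x,y,z))$ from $(\lambda^{-2}V,\lambda(x,y,z))$. All the other ingredients, including the $\dZ_2$ from sign reversal and the harmonicity of $V$, are formal consequences of the explicit local formulas already derived in the discussion preceding the proposition.
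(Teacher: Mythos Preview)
Your proposal is correct and follows essentially the same reasoning as the paper. The paper does not give a separate proof but simply writes ``Our discussion above shows'' before stating the proposition; the preceding discussion identifies exactly the two ingredients you emphasize, namely that the measure identity $d\nu_\infty=c\,V^{-1/2}\dvol_{g_\infty}$ pins $V$ down to a global positive scalar on each component (hence the local generator $\p_t$ up to sign, reducing the transitions to $\dR^3\rtimes\dZ_2$), and that $V$ is harmonic for the flat background. Your write-up makes the scaling-rigidity step more explicit than the paper does, but the logic is identical.
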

Notice that if we perform hyperk\"ahler rotations, i.e., changing the choice of hyperk\"ahler triples on each $X_j^4$, then the resulting metric is unchanged but the affine structure undergoes a rotation in $\SO(3)$.

\subsubsection{Convergence of special affine metrics}
Now we discuss the convergence of special affine metrics. Suppose we are given a  sequence of special affine metrics $(Y_i^3, g_i,p_i)$ such that $\overline{B_2(p_i)}$ is compact. We can first normalize the harmonic function $V_i$ on $Y_i$ by requiring $V_i(p_i)=1$. This fixes the measure $d\nu_i=V_i^{-1/2}\dvol_{g_i}$. Since $\Ric_{g_i}^1\geq 0$, by Theorem \ref{l:Yau gradient estimate}, we have $0<C^{-1}\leq V_i\leq C$  uniformly on $B_{3/2}(p_i)$. Hence the diameter of $B_1(p_i)$ with respect to the flat background metric $g_i^\flat$ is  also uniformly bounded above and below. Then by passing to a subsequence, we have $(B_1(p_i), g_i^\flat, p_i)\xrightarrow{GH}(Z_\infty,  d_{\infty}, p_\infty)$.

If $\dim_{\ess} Z_\infty=3$, then it is a  flat 3-manifold. Since $\Delta_{g_i^\flat}V_i=0$, the uniform $L^\infty$ bound on $V_i$ gives uniform interior bounds on all derivatives. In particular,  passing to a further subsequence we may assume the local frames $\{dx_i, dy_i, dz_i\}$ converge smoothly to a limit, giving a special affine structure on $Z_\infty$, and the function $V_i$ converges smoothly to a limit harmonic function $V_\infty$. Globally it follows that in this case $Y_\infty^3$ is also a smooth 3 manifold with a special affine metric, and the convergence of $Y_i^3$ to $Y_\infty^3$ is smooth.

If $\dim_{\ess} Z_\infty<3$, then the flat metrics $g_i^\flat$ collapse. Using the fact that the monodromy is contained in $\dR^3 \rtimes \dZ_2 $ it is easy to see that for $i$ large,  $g_i^\flat$ is locally isometric to a product $\mathbb T^k\times \mathbb R^{3-k}$ ($k=1, 2$) for some flat torus $\dT^k$ and Euclidean space $\mathbb R^{3-k}$. Passing to local universal covers we may assume $V_i$ still converges smoothly. Notice that through each point in $B_1(p_i)\subset Y_i^3$ there is a unique flat totally geodesic $\mathbb T^k$ with respect to $g_i^\flat$, which are all isometric as the point varies. 

The upshot of the above discussion is that we have a good understanding of convergence of special affine metrics. In particular, we always have a priori interior curvature bound and its covariant derivatives.

\subsection{Case $d=2$}
\label{ss:3-2}
 \subsubsection{Geometric structure on $\mathcal R$}

In this case we have $G_\infty=\dR^2$.   Fixing a basis of $G_\infty$ yields two  Killing fields $\{\p_{t_1}, \p_{t_2}\}$ on $\widetilde B_\infty$ which  preserve the limit triple $\widetilde{\bm {\omega}}_\infty$. Choose  moment maps $x_{\alpha j}$ for the vector field $\p_{t_\alpha}$ with respect to the symplectic form $\widetilde\omega_\infty^j$.  Since $[\p_{t_1}, \p_{t_2}]=0$, we have $d(\mathcal L_{\p_{t_1}} x_{2 j})=\mathcal L_{\p_{t_1}}dx_{2 j}=0$, so $\p_{t_{1}} x_{2 j}$ is a constant. It then follows that there is a unit vector $\bm a=(a_1, a_2, a_3)\in \dR^3$ such that $\p_{t_1}\sum_j a_jx_{2 j}=0$. Rotating the hyperk\"ahler triple by an element in $SO(3)$ we may assume $\bm a=(1, 0,0)$. Then we have $\widetilde\omega_\infty^1(\p_{t_1}, \p_{t_2})\equiv0$; in other words, the $G_\infty$ orbit is Lagrangian with respect to $\widetilde\omega_\infty^1$. Notice the choice of $\bm a $ (hence of $\widetilde\omega_\infty^1$) is only unique up to $SO(2)$ rotation, but we will fix a choice in the following discussion.      Then we obtain local moment maps $\pi=(x_1, x_2): \widetilde B_\infty\rightarrow \dR^2$ for the $G_\infty$ action with respect to $\widetilde\omega_\infty^1$. We can view $(x_1, x_2)$ as local  coordinates on $\mathcal R$ which depend on the choice of the basis of $G_\infty$, so are well-defined up to $\dR^2\rtimes \GL(2;\dR)$ action.
 Denote  $$W^{\alpha\beta}\equiv\widetilde g_\infty(\p_{t_\alpha}, \p_{t_\beta}), \ \ \ \ 1\leq \alpha, \beta\leq 2,$$ and let $(W_{\alpha\beta})$ be the inverse matrix of $(W^{\alpha\beta})$. Clearly these descend to $\widetilde B_\infty/G_\infty\subset \mathcal R$. As in Section 2.5 of \cite{SZ}, it is easy to see using the hyperk\"ahler equation    that on $\widetilde B_\infty/G_\infty$ we have 
\begin{equation} \label{e:dimensional reduction}
	\begin{cases}
	\det(W_{\alpha\beta})=C>0,\\
	\p_{x_\gamma} W_{\alpha\beta}=\p_{x_\beta} W_{\alpha\gamma}, \ \ \ \  1\leq\alpha, \beta, \gamma\leq 2.
	\end{cases}
\end{equation}
Moreover, the metric $g_\infty$ is given by 
\begin{equation}g_\infty=W_{\alpha\beta} dx_\alpha dx_\beta+W^{\alpha\beta}\theta_\alpha\theta_\beta,
\label{e:semiflat metric formula}	
\end{equation}
where $\theta_\alpha$ is the dual 1-form of the Killing field $\p_{t_\alpha}$.

The second equation in \eqref{e:dimensional reduction} implies that locally we can write $(W_{\alpha\beta})$ as the Hessian $(\phi_{\alpha\beta})$ of a convex function $\phi$. We can rescale the coordinates $\{x_1, x_2\}$ simultaneously by a constant so that the constant $C=1$. In terms of the normalized local coordinates we obtain an affine structure on $\mathcal R$ with monodromy group contained in $ \dR^2\rtimes\SL(2;\dR)$,  and a Riemannian metric $g_\infty=\phi_{\alpha\beta}dx_\alpha dx_\beta$ with $\det(\phi_{\alpha\beta})=1$. 

The discussion in Section 2 implies that in this case the renormalized limit measure $\nu_\infty=dx_1\wedge dx_2$ is simply the volume measure of $g_\infty$. Moreover, we have 
$\Ric_{g_{\infty}}\geq 0$. 
Since $\mathcal R$ has real dimension 2, the metric $g_\infty$ defines a complex structure $J$ on $\mathcal R$ via the Hodge star operator:
$$J dx_1=-\phi^{12}dx_1+\phi^{11}dx_2, \ \ J dx_2=-\phi^{22}dx_1+\phi^{12}dx_2. $$
The corresponding K\"ahler form is 
$\omega=dx_1\wedge dx_2$.
Let $\nabla$ be the flat connection defined by the affine structure. Then we have $\nabla\omega=0$ and 
$d^\nabla J=0$,
where $d^\nabla J(\xi, \eta)\equiv (\nabla_\xi J)(\eta)-(\nabla_\eta J)(\xi)$. Now recall 
\begin{definition}[\cite{Freed}]
	A special K\"ahler manifold is a K\"ahler manifold $(M, \omega, I)$ together with a torsion-free flat symplectic connection $\nabla$ satisfying $d^{\nabla}I=0$.
\end{definition}

Therefore, we have proved the following. 
\begin{proposition}\label{p:special Kahler structure}
	If $d=2$, then $\mathcal R$ is endowed with a special K\"ahler structure.
\end{proposition}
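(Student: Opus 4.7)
The plan is to collect the local data constructed in the preceding paragraphs and verify that they assemble into a globally well-defined special K\"ahler structure on $\mathcal{R}$.

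First, for each $p\in\mathcal{R}$ I would fix a contractible neighborhood $U\ni p$ and work on its local universal cover $\widetilde U$ equipped with the limit hyperk\"ahler triple $\widetilde{\bm\omega}_\infty$ and the free $G_\infty=\dR^2$-action generated by commuting triholomorphic Killing fields $(\partial_{t_1},\partial_{t_2})$. After choosing the complex structure $J_1$ on $\widetilde U$ making $\widetilde\omega_\infty^1$ a K\"ahler form, the standard semiflat analysis (as in Section 2.5 of \cite{SZ}) gives $\widetilde\omega_\infty^1(\partial_{t_1},\partial_{t_2})=0$, so the moment maps $(x_1,x_2)$ defined by $dx_\alpha=-\iota_{\partial_{t_\alpha}}\widetilde\omega_\infty^1$ are $G_\infty$-invariant and descend to local coordinates on $U\subset\mathcal{R}$.

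Second, I would derive the dimensional reduction identities \eqref{e:dimensional reduction} from the hyperk\"ahler equation: closedness of $\widetilde\omega_\infty^2$ and $\widetilde\omega_\infty^3$ yields the symmetry $\partial_{x_\gamma}W_{\alpha\beta}=\partial_{x_\beta}W_{\alpha\gamma}$, while the volume normalization $\widetilde\omega_\infty^\alpha\wedge\widetilde\omega_\infty^\beta=2\delta^{\alpha\beta}\dvol_{\widetilde g_\infty}$ forces $\det(W_{\alpha\beta})\equiv C$ to be a positive constant. The symmetry of mixed partials combined with the Poincar\'e lemma produces a local convex potential $\phi$ with $(W_{\alpha\beta})=(\phi_{\alpha\beta})$; after rescaling $(x_1,x_2)$ by a common constant I may arrange $\det(\phi_{\alpha\beta})\equiv 1$, and the metric \eqref{e:semiflat metric formula} reduces on $U$ to $g_\infty=\phi_{\alpha\beta}dx_\alpha dx_\beta$.

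Third, I would analyze the ambiguities of the construction to exhibit the global affine atlas on $\mathcal{R}$ with monodromy in $\dR^2\rtimes\SL(2;\dR)$. A change of basis of $G_\infty$ by $A\in\GL(2;\dR)$ transforms the moment maps linearly, the normalization $\det(\phi_{\alpha\beta})\equiv 1$ constrains $|\det A|=1$, and picking a consistent orientation restricts us to $\SL(2;\dR)$. The additive ambiguity of the moment maps supplies the translational factor $\dR^2$, and on overlaps the transitions are affine of this type. Consequently $\omega\equiv dx_1\wedge dx_2$ and the flat connection $\nabla$ in which the $dx_\alpha$ are declared parallel are both globally defined on $\mathcal{R}$, with $\omega=\dvol_{g_\infty}$.

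Finally, I would verify the three axioms of a special K\"ahler structure. Torsion-freeness and flatness of $\nabla$ are automatic from the affine atlas; $\nabla\omega=0$ holds since $\omega$ has constant coefficients in affine coordinates. The complex structure $J$ defined by $\omega(\cdot,\cdot)=g_\infty(J\cdot,\cdot)$ admits the explicit matrix form given in the text, and $d^\nabla J=0$ reduces by a direct computation of $(\nabla_{\partial_{x_\gamma}}J)(\partial_{x_\beta})-(\nabla_{\partial_{x_\beta}}J)(\partial_{x_\gamma})$ to the Hessian symmetry $\phi_{\alpha\beta,\gamma}=\phi_{\alpha\gamma,\beta}$ from Step~2. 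The main obstacle I foresee is the monodromy identification in Step~3: carefully combining the rescaling of coordinates with the basis changes of $G_\infty$ to force the transitions into $\SL(2;\dR)$ rather than a larger subgroup of $\GL(2;\dR)$.
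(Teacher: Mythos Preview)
Your proposal is correct and follows essentially the same approach as the paper: the paper's ``proof'' is precisely the discussion immediately preceding the proposition, and your four steps reproduce that discussion (local moment map coordinates from the $\dR^2$-action, the dimensional reduction identities \eqref{e:dimensional reduction} yielding a convex potential with unit Monge--Amp\`ere determinant, the affine transition analysis giving $\dR^2\rtimes\SL(2;\dR)$ monodromy, and the verification of the special K\"ahler axioms). Your write-up is in fact more explicit than the paper's on the monodromy reduction and on the $d^\nabla J=0$ check, but the argument is the same.
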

\begin{remark}
Notice the construction depends on the symplectic form $\widetilde{\omega}_\infty^1$ we choose at the beginning. 	
\end{remark}

By \cite{Freed}, once we fix the choice of local affine coordinates $x_1, x_2$,  there is a pair of conjugate special holomorphic coordinates $z, w$, such that $\text{Re}(z)=x_1$ and $\text{Re}(w)=-x_2$. They are unique up to transformations $z\mapsto z+c$, $w\mapsto w+c'$ for $c, c'\in \sqrt{-1}\mathbb R$. With respect to these special holomorphic coordinates, the monodromy is then contained in $\sqrt{-1}\mathbb R^2\rtimes \SL(2;\mathbb R)$.
Moreover, the K\"ahler form can be written as 
\begin{equation}\label{e:special kahler formula}\omega=\frac{\sqrt{-1}}{2} \text{Im}(\tau) dz\wedge d\bar z, 	
\end{equation}
where the local holomorphic function $\tau\equiv \frac{\p w}{\p z}$ satisfies  $\text{Im}(\tau)>0$.
 We can view $\tau$ as a multi-valued holomorphic map from $Z$ to the upper half  plane $\mathcal H$.

If we go around a loop $\gamma$, then we obtain new local special holomorphic coordinates $(\widetilde z, \widetilde w)$, with affine transformation given by
\begin{equation}\label{e:affine monodromy}
	\begin{pmatrix}
\widetilde z\\
\widetilde w
\end{pmatrix}=\begin{pmatrix}
a & b \\
c & d 
\end{pmatrix}
\begin{pmatrix}
z\\
w
\end{pmatrix}+\begin{pmatrix}
c_1\\
c_2
\end{pmatrix}.
\end{equation}
In particular,
\begin{equation*}
	\widetilde\tau=\frac{d\tau+c}{b\tau+a}.
\end{equation*}
For notational convenience we will call the following matrix the \emph{monodromy matrix} along $\gamma$
\begin{equation*}
	A_\gamma\equiv\begin{pmatrix}
d & c \\
b & a
\end{pmatrix}\in SL(2;\mathbb R)
\end{equation*}
 So we have a monodromy representation $$\rho: \pi_1(\mathcal R)\rightarrow SL(2;\dR);\ \gamma\mapsto A_\gamma,$$ which is well-defined up to conjugation, i.e., up to the choice of the base point and the local special holomorphic coordinates around the base point.

Conversely, suppose we are given a Riemann surface $Z$  with a K\"ahler metric $\omega$. If we can find local conjugate holomorphic coordinates $(z, w)$ with $\tau=\frac{\p w}{\p z}$ satisfying $\text{Im}(\tau)>0$, such that \eqref{e:special kahler formula} holds and the monodromy for $(z, w)$ along any loop is contained in $SL(2; \dR)$. Then there is a unique special K\"ahler structure on $Z$ associated to the metric $\omega$, so that $\text{Re}(dz)$ and $\text{Re}(dw)$ are parallel with respect to the associated torsion-free connection $\nabla$.

\begin{definition}
Let $M$ be a special K\"ahler Riemann surface. We say
	\begin{itemize}
	\item $M$ has \emph{integral monodromy} if the monodromy representation $\rho: \pi_1(M)\rightarrow SL(2;\dR)$ is conjugate to a representation in $SL(2;\dZ)$. 
		\item $M$ has \emph{local integral monodromy} if the monodromy matrix associated to each loop $\gamma$ in $M$ is conjugate to an element in $SL(2;\dZ)$. 
	\end{itemize}
  \end{definition}
In general the two notions are not equivalent; see Remark \ref{r:integeral monodromy}. 
Now we give some singularity models.
\begin{example}\label{example: cone-special-kahler}
  A flat metric cone  $$\omega=\frac{\sqrt{-1}}{2}\beta^2|\zeta
	|^{2\beta-2}d\zeta\wedge d\bar \zeta, \ \ \  \beta\in (0, 1)$$ on $\dC^*$ induces a natural special K\"ahler structure, with local special holomorphic coordinates given by $z=\zeta^{\beta}$ and $w=\sqrt{-1}z$ and $\tau\equiv \sqrt{-1}$. The monodromy matrix around the generator of $\pi_1(\mathbb C^*)$ is \begin{equation*}
	R_\beta=\begin{pmatrix}
\cos(2\pi\beta) &-\sin(2\pi\beta)\\
\sin(2\pi\beta) & \cos(2\pi\beta)
\end{pmatrix}
\end{equation*} 
We denote by \emph{$\textbf{C}_\beta$} such a special K\"ahler cone. By \cite{Freed} the cotangent bundle $T^*\textbf{C}_\beta$ admits a canonical flat hyperk\"ahler metric.
\end{example}
\begin{remark}\label{rmk:flat special kahler}
 \cite{Freed} showed that  on a special K\"ahler manifold, there is a globally defined holomorphic cubic differential, given in local special holomorphic coordinates by $\Theta=\frac{\p \tau}{\p z} dz^{\otimes 3}$.  Moreover, the scalar curvature satisfies $S=4|\Theta|^2$. In particular, $\Theta\equiv0$ if and only if the metric is flat.   Using this one can see that if a special K\"ahler metric is flat, then  the flat symplectic connection $\nabla$ agrees with the Levi-Civita connection. So the special K\"ahler structure is uniquely determined by the flat K\"ahler metric itself. 
\end{remark}
\begin{example}
Consider the metric on the punctured unit disk $(\mathbb D^*, \zeta)$ given by $$\omega=-\frac{\sqrt{-1}}{4\pi}\log |\zeta| d\zeta\wedge d\bar \zeta.$$ This has a global special holomorphic coordinate $z=\zeta$, and local conjugate coordinate $w=-\frac{\sqrt{-1}}{2\pi} z\log z+\frac{\sqrt{-1}}{2\pi}z$. 
The period map is  $\tau=-\frac{\sqrt{-1}}{2\pi}\log \zeta$. The monodromy around the generator of $\pi_1(\mathbb D^*)$ is given by 

  \begin{equation*}
	I_1 =\begin{pmatrix}
1 & 1 \\
0 & 1
\end{pmatrix}
\end{equation*} 
The tangent cone is the flat space $\dR^2$ with standard special K\"ahler structure.  
\end{example}

\begin{example}
Consider the metric on $(\mathbb D^*, \zeta)$ given by $$\omega=-\frac{\sqrt{-1}}{32\pi} |\zeta|^{-1}\log |\zeta|  d\zeta\wedge d\bar \zeta.$$ We can use $z=\sqrt{\frac{\zeta}{2}}$ to be a local special holomorphic coordinate.  The period map is  $\tau=-\frac{\sqrt{-1}}{2\pi}\log \zeta$. The monodromy is given by 

  \begin{equation*}
	I_1^*=\begin{pmatrix}
-1 & -1 \\
0 & -1
\end{pmatrix}
\end{equation*} 
The tangent cone is the flat cone $\dR^2/\dZ_2$, with monodromy $R_{\frac{1}{2}}$. Indeed, the metric here is a $\dZ_2$ quotient of the metric in the previous example.

\end{example}

We will need the following elementary results on the classification of conjugacy classes in $SL(2;\dR)$ and in $SL(2; \dZ)$. 
\begin{lemma}

Let $A$ be an element in $SL(2;\dR)$. Then the following holds:
\begin{enumerate}
\item if $A$ is \emph{parabolic}, i.e.,  $|\Tr(A)|=2$, then $A$ is $SL(2;\dR)$ conjugate to $\Id$, $-\Id$, $I_1$, $I_1^{-1}$,  $I_1^*$, or  $(I_1^*)^{-1}$;
\item  if $A$ is \emph{elliptic}, i.e., $|\Tr(A)|<2$, then  $A$ is $SL(2;\dR)$ conjugate to  $R_\beta$ for some $\beta\in (0, 1)\setminus \{\frac{1}{2}\}$;
\item if $A$ is \emph{hyperbolic}, i.e., $|\Tr(A)|>2$, then $A$ is $SL(2;\dR)$ conjugate to  
\begin{equation*}
	D_r=\begin{pmatrix}
r &0\\
0 & r^{-1}
\end{pmatrix}
\end{equation*}
for some $r\notin\{0, 1, -1\}$. 
\end{enumerate}	
\end{lemma}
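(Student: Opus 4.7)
The proof is a straightforward case analysis based on the characteristic polynomial $\lambda^2-\Tr(A)\lambda+1$, whose discriminant is $\Tr(A)^2-4$. The main subtlety is to upgrade $GL(2;\dR)$ conjugacy (which is just rational canonical/Jordan form over $\dR$) to $SL(2;\dR)$ conjugacy; this is what distinguishes $I_1$ from $I_1^{-1}$ and $I_1^*$ from $(I_1^*)^{-1}$.

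For the hyperbolic case $|\Tr(A)|>2$, the discriminant is positive, so $A$ has two distinct real eigenvalues $r,r^{-1}$ with $r\notin\{0,\pm 1\}$. Choose a basis of eigenvectors $v_1,v_2$ and form $P=[v_1\,|\,v_2]$. After rescaling $v_1$ by a positive constant I can arrange $|\det P|=1$; if $\det P=-1$, swap $v_1\leftrightarrow v_2$, which changes the roles of $r$ and $r^{-1}$ but makes $\det P=+1$. Thus $A$ is $SL(2;\dR)$-conjugate to either $D_r$ or $D_{r^{-1}}$, which is the stated conclusion.

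For the elliptic case $|\Tr(A)|<2$, the eigenvalues are a complex conjugate pair $e^{\pm 2\pi i\beta}$ of modulus $1$ with $\beta\in(0,1)\setminus\{\tfrac12\}$ (the value $\tfrac12$ is excluded since $\Tr=-2$ is parabolic). Since all iterates of $A$ have eigenvalues on the unit circle and $A$ is diagonalizable over $\dC$, the cyclic group $\langle A\rangle$ is bounded in $SL(2;\dR)$; averaging the standard inner product over its closure produces an $A$-invariant positive definite inner product on $\dR^2$ of determinant $1$. Choosing an oriented orthonormal basis for this inner product conjugates $A$ by an element of $SL(2;\dR)$ into the maximal compact subgroup $SO(2)$. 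The result is a rotation $R_\beta$ for some $\beta\in(0,1)\setminus\{\tfrac12\}$ (after possibly replacing $\beta$ by $1-\beta$, which merely reverses the orientation of the basis).

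For the parabolic case $|\Tr(A)|=2$, both eigenvalues equal $\varepsilon:=\tfrac12\Tr(A)\in\{\pm 1\}$. If the geometric multiplicity is $2$, then $A=\varepsilon\Id$. Otherwise $A-\varepsilon\Id$ is a nonzero nilpotent; pick $v_2\notin\ker(A-\varepsilon\Id)$ and set $v_1=(A-\varepsilon\Id)v_2\neq 0$. Then $Av_1=\varepsilon v_1$ and $Av_2=v_1+\varepsilon v_2$, so in the basis $(v_1,v_2)$ the matrix of $A$ is $\begin{pmatrix}\varepsilon & 1\\ 0 & \varepsilon\end{pmatrix}$. Replacing $v_1$ by $sv_1$ (and $v_2$ by $s^{-1}v_2$ to preserve the off-diagonal entry, but actually this is the issue), the remaining freedom within $SL(2;\dR)$ is conjugation by the upper-triangular unipotent stabilizer and by $\mathrm{diag}(s,s^{-1})$ for $s\in\dR^*$; the latter sends the $(1,2)$-entry $x$ to $s^2x$, so only the sign of $x$ is an invariant. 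This gives two $SL(2;\dR)$-orbits for each value of $\varepsilon$, represented by $\varepsilon\begin{pmatrix}1&\pm 1\\0&1\end{pmatrix}$; these are exactly $I_1^{\pm 1}$ when $\varepsilon=+1$ and $(I_1^*)^{\pm 1}$ when $\varepsilon=-1$.

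The only place where care is needed is the parabolic step, where one must verify that the sign of the off-diagonal entry is genuinely a conjugacy invariant in $SL(2;\dR)$ (it is not in $GL(2;\dR)$, where conjugation by $\mathrm{diag}(1,-1)$ flips the sign). Everything else is standard linear algebra.
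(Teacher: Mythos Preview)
The paper does not prove this lemma; it introduces it as one of the ``elementary results on the classification of conjugacy classes in $SL(2;\dR)$'' and simply states it. So there is nothing to compare against, and your write-up would in fact supply what the paper omits.

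Your argument is correct in substance. A couple of minor comments. In the parabolic case your exposition gets a bit tangled around the parenthetical ``but actually this is the issue''; the clean way to phrase it is: having found a basis $(v_1,v_2)$ with matrix $\begin{pmatrix}\varepsilon&1\\0&\varepsilon\end{pmatrix}$, the change-of-basis matrix $P=[v_1\,|\,v_2]$ has some nonzero determinant $\delta$; rescaling $v_1\mapsto \delta^{-1}v_1$ gives $\det=1$ and off-diagonal entry $\delta$; then conjugating by $\mathrm{diag}(s,s^{-1})$ rescales that entry by $s^2>0$, so you can reach exactly $\mathrm{sgn}(\delta)$. To see this sign is well-defined (independent of the initial choice of $v_2$), note that any other admissible $v_2'$ differs from $v_2$ by an element of $\ker(A-\varepsilon\Id)$ plus a nonzero multiple $b$, and then $\det[v_1'\,|\,v_2']=b^2\det[v_1\,|\,v_2]$. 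You flag this point but don't quite close it; the computation above does. In the elliptic case your averaging argument is fine; since the statement only asserts conjugacy to $R_\beta$ for \emph{some} $\beta\in(0,1)\setminus\{\tfrac12\}$, you don't need to worry about whether $R_\beta$ and $R_{1-\beta}$ are themselves $SL(2;\dR)$-conjugate.
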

\begin{lemma}\label{l:integral monodromy classification}
Let  $A$ be an element in in $ SL(2;\dZ)$, then the following holds: \begin{itemize}
	\item if $A$ is elliptic, then $A$ is $SL(2;\dZ)$ conjugate to  one of the following
\begin{align} \label{e:rotation matrix admissible}
	\widetilde R_{1/4}\equiv\begin{pmatrix}
0 &-1\\
1 & 0
\end{pmatrix}; \quad \widetilde 	R_{3/4}\equiv	\begin{pmatrix}
0 &1\\
-1 & 0
\end{pmatrix}	; \quad  \widetilde R_{1/6}\equiv	\begin{pmatrix}
1 &-1\\
1 & 0
\end{pmatrix}; \nonumber\\
\widetilde R_{1/3}\equiv	\begin{pmatrix}
0 &-1\\
1 & -1
\end{pmatrix}	; \quad \widetilde  R_{2/3}\equiv	\begin{pmatrix}
-1 &1\\
-1 & 0
\end{pmatrix}		;\quad \widetilde  R_{5/6}\equiv\begin{pmatrix}
0 &1\\
-1 & 1
\end{pmatrix}.	
\end{align}
\item if $A$ is parabolic, then $A$ is $SL(2;\dZ)$ conjugate to one of the following
\begin{align*}
& I_n\equiv \begin{pmatrix}
1 &n\\
0 & 1
\end{pmatrix}(n\in \dZ); \quad     I_n^*\equiv \begin{pmatrix}
-1 &-n\\
0 & -1
\end{pmatrix}(n\in \dZ);
\\ 
& \widetilde{R}_1\equiv\Id;  \quad   \widetilde{R}_{1/2}\equiv-\Id.
\end{align*}
\end{itemize}
\end{lemma}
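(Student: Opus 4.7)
The plan is to treat the elliptic and parabolic cases by exploiting the $\mathbb Z$-module structure that each type of element induces on $\mathbb Z^2$.

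\textbf{Elliptic case.} If $A\in SL(2;\mathbb Z)$ has $|\Tr(A)|<2$, then $\Tr(A)\in\{-1,0,1\}$, so the characteristic polynomial $x^2-\Tr(A)x+1$ equals $\Phi_3$, $\Phi_4$, or $\Phi_6$. In particular $A$ has finite order $3$, $4$, or $6$. Let $\zeta$ be the corresponding primitive root of unity; by Cayley--Hamilton, $\mathbb Z^2$ becomes a finitely generated $\mathbb Z[\zeta]$-module with $\zeta$ acting as $A$. Since $\mathbb Z[\zeta]$ is a Euclidean domain in each of these three cases, and $\mathbb Z^2$ is torsion-free of rank $2$ over $\mathbb Z$ (so rank $1$ over $\mathbb Z[\zeta]$), it is a free $\mathbb Z[\zeta]$-module of rank one. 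Hence after choosing any $\mathbb Z[\zeta]$-basis, the action of $\zeta$ (and so of $A$) is conjugated into a standard matrix, and one obtains at most one $GL(2;\mathbb Z)$-conjugacy class for each allowed trace. To pass to $SL(2;\mathbb Z)$-conjugacy we must track orientation: replacing the chosen generator $\zeta$ by $\zeta^{-1}$ sends $A$ to $A^{-1}$, and since the change of basis from a $\mathbb Z[\zeta]$-basis to the same lattice viewed as a $\mathbb Z[\zeta^{-1}]$-basis is realized by an element of determinant $-1$, the classes of $A$ and $A^{-1}$ generally split. A direct check identifies the two classes with the pairs $(\widetilde R_{1/4},\widetilde R_{3/4})$, $(\widetilde R_{1/6},\widetilde R_{5/6})$, $(\widetilde R_{1/3},\widetilde R_{2/3})$ respectively; non-conjugacy within each pair follows because $SL(2;\mathbb Z)$-conjugacy would extend to an orientation-preserving isomorphism of the two $\mathbb Z[\zeta]$-module structures on $\mathbb Z^2$.

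\textbf{Parabolic case.} Write $\Tr(A)=2\epsilon$ with $\epsilon=\pm 1$, so $N\equiv A-\epsilon\Id$ satisfies $N^2=0$ over $\mathbb Z$. If $N=0$ then $A=\pm\Id$, giving $\widetilde R_1$ and $\widetilde R_{1/2}$. Otherwise $N$ has rank one, and $\Ima N\subset\ker N$; since both are rank-one saturated sublattices of $\mathbb Z^2$ of the same rank, they coincide. Pick a primitive vector $v\in\mathbb Z^2$ generating this common line and extend to a $\mathbb Z$-basis $\{v,w\}$ of $\mathbb Z^2$ with $\det(v,w)=1$; then $Nv=0$ and $Nw=nv$ for a unique $n\in\mathbb Z$, so in this basis $A=\bigl(\begin{smallmatrix}\epsilon&n\\0&\epsilon\end{smallmatrix}\bigr)$, realizing the conjugate form $I_n$ or $I_n^*$. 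For non-conjugacy of distinct $n$'s, note that the fixed (respectively anti-fixed) line of $A$ is a conjugation invariant, so a conjugating $g\in SL(2;\mathbb Z)$ must preserve $\langle e_1\rangle$ and hence lie in the Borel subgroup of upper triangular elements. A short calculation then shows that conjugation by such $g$ fixes $n$, proving that $\{I_n\}_{n\in\mathbb Z}$ and $\{I_n^*\}_{n\in\mathbb Z}$ are pairwise non-conjugate.

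\textbf{Main obstacle.} The delicate step is the orientation analysis in the elliptic case: one must verify cleanly that replacing a $\mathbb Z[\zeta]$-generator by its Galois conjugate is \emph{not} realized by any element of $SL(2;\mathbb Z)$, and that all remaining choices of generator are. Equivalently, one needs to know that each order-$3,4,6$ element has exactly two $SL(2;\mathbb Z)$-conjugacy classes, related by inversion. This relies on the fact that the unit group $\mathbb Z[\zeta]^\times$ has no element of determinant $-1$ acting on $\mathbb Z[\zeta]$ as a $\mathbb Z$-module, which is precisely where the PID structure and an explicit determinant computation combine. Once this point is settled, the rest of the proof consists of the module-theoretic bookkeeping and the straightforward parabolic reduction sketched above.
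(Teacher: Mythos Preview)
The paper states this lemma as an elementary result and does not supply a proof, so there is no argument of the authors to compare against. Your approach via the $\dZ[\zeta]$-module structure is standard and correct for the elliptic case, and the parabolic reduction is fine.

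Two small points are worth cleaning up. First, in the parabolic case, $\Ima N$ need \emph{not} be saturated (e.g.\ $A=I_2$ gives $N=\bigl(\begin{smallmatrix}0&2\\0&0\end{smallmatrix}\bigr)$ with image $2\dZ\times\{0\}$); only $\ker N$ is. This does not affect your argument, since you only need to choose $v$ primitive in $\ker N$ and use $\Ima N\subset\ker N$, but the sentence asserting both are saturated should be corrected.

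Second, your ``main obstacle'' is lighter than you suggest. To see that $\widetilde R_\beta$ and $\widetilde R_{1-\beta}=\widetilde R_\beta^{-1}$ are not $SL(2;\dZ)$-conjugate, it suffices to observe that they are not even $SL(2;\dR)$-conjugate: by the preceding lemma in the paper, $R_\beta$ and $R_{1-\beta}$ lie in distinct $SL(2;\dR)$-conjugacy classes for $\beta\neq\tfrac12$. So the delicate orientation bookkeeping with $\dZ[\zeta]^\times$ is unnecessary; the PID argument gives a single $GL(2;\dZ)$-class for each trace, and the $SL(2;\dR)$ classification immediately separates $A$ from $A^{-1}$.
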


Notice that each $\widetilde R_\beta$ is $SL(2;\dR)$ conjugate to $R_\beta$, so it is a rotation of $\dR^2$ that preserves a lattice.

\subsubsection{Convergence of special K\"ahler structures}
Let $(M_i, p_i)$ be a sequence of 2 dimensional manifolds  with special K\"ahler metrics $(\omega_i, J_i)$, where $\overline{B_2(p_i)}$ is compact. Since the curvature is non-negative, passing to a subsequence we first obtain a  Gromov-Haudorff limit $(M_\infty, d_\infty, p_\infty)$. For our purpose we may assume $M_\infty$ is not a single point.

 Let $\widetilde U_i$ be the universal cover of $\overline{B_2(p_i)}$, endowed with the induced special K\"ahler structure. Then it has trivial monodromy representation. Let $\widetilde p_i$ be a lift of $p_i$, and let $(z_i, w_i)$ be a choice of special holomorphic coordinates on $\widetilde U_i$. Then we can write 
$$\widetilde\omega_i=\frac{\sqrt{-1}}{2} \text{Im}(\tau_i) dz_i\wedge d\bar z_i$$
for some  holomorphic function $\tau_i$.  Applying a linear transformation to $(z_i, w_i)$ by an element in $SL(2;\dZ)$ we may assume $\text{Re}(\tau_i(\tilde p_i))\in [-\frac12, \frac12]$ and $\text{Im}(\tau_i(\tilde p_i))\geq \frac{\sqrt{3}}{2}$. Then replacing $(z_i, w_i)$ by $(\lambda_i^{-1}z_i, \lambda_iw_i)$ for a suitable $\lambda_i>0$ we may further assume $\text{Im}(\tau_i(\tilde p_i))\in[\frac{\sqrt{3}}{2}, 1]$.

 Notice that $\text{Im}(\tau_i)$ is a positive harmonic function on $\widetilde U_i$, so by Theorem \ref{l:Yau gradient estimate} we have $|\log \text{Im}(\tau_i)|\leq C$ uniformly on $B_{3/2}(\widetilde p_i)$. Then on this ball the metric $\widetilde\omega_i$ is uniformly equivalent to the flat metric $\widetilde{\omega}_i^\flat= \text{Im}(\tau_i)^{-1}\widetilde\omega_i$. Clearly then we have local smooth convergence of $\widetilde\omega_i^\flat$ by identifying each ball $(B_1(\widetilde p_i),z_i)$ holomorphically with a domain in $(\dC,z)$. Then passing to a subsequence we may assume $\text{Im}(\tau_i)$ converges smoothly hence we obtain a smooth limit K\"ahler metric  $(B_1(\tilde p_\infty), \widetilde\omega_\infty$). 

 Since $\text{Im}(\tau_i)$ is harmonic and bounded, its derivative is uniformly bounded on $B_{1}(\tilde p_i)$.  Using the Cauchy-Riemann equation  and the assumption that  $\text{Re}(\tau_i(\tilde p_i))\in [-\frac12, \frac12]$,  one can show that $\text{Re}(\tau_i)$ is also uniformly bounded on $B_{1}(\tilde p_i)$, so passing to a further subsequence we may assume $\tau_i$ converges smoothly to a limit $\tau_\infty$. At the same time, since $\tau_i=\frac{\p w_i}{\p z_i}$, we may  ensure the holomorphic function $w_i$ also converges to a limit $w_\infty$. 
Then we can write 
$$\widetilde\omega_\infty=\frac{\sqrt{-1}}{2} \text{Im}(\tau_\infty) dz_\infty\wedge d\bar z_\infty.$$
In particular, there is a special K\"ahler structure on the limit space $B_1(\tilde p_\infty)$.

A consequence of the above discussion is that for a special K\"ahler metric we have a priori interior curvature bound as well as its covariant derivatives. It is worth mentioning that even though not needed in this paper, the above arguments hold for special K\"ahler metrics in any dimension.

Now we divide into two cases

\

\textbf{Case 1}: $\text{Vol}(B_2(p_i))\geq \kappa$ uniformly for some $\kappa>0$.  In this case $M_\infty$ is a smooth Riemann surface and $M_i$ converges smoothly to $M_\infty$ in the Cheeger-Gromov sense. We may assume the K\"ahler metrics $\omega_i$ converge smoothly to a limit $\omega_\infty$ on $M_\infty$.

 We claim that by passing to a further subsequence $M_\infty$ can be naturally endowed with a special K\"ahler structure. To see this, first we may find a $\delta>0$ such that $B_\delta(p_i)$ is diffeomorphic to a ball for all $i$ large. Then in the above discussion we can directly work with the ball $B_\delta(p_i)$, and find special conjugate holomorphic coordinates $(z_i, w_i)$ which converge to $(z_\infty, w_\infty)$ on  $B_\delta(p_\infty)$. Now for any $q\in M_\infty$ which is the limit of $q_i\in M_i$, we can choose a path $\gamma$ in $M_\infty$ connecting $p$ and $q$. Using the smooth convergence of $M_i$ to $M_\infty$, we may view $\gamma$ as a path $\gamma_i$ in $M_i$ (for $i$ large) connecting $p_i$ and $q_i$. Then we can analytically continue the special holomorphic coordinates $(z_i, w_i)$ along $\gamma_i$ to obtain special coordinates in a neighborhood of $q_i$. Applying the Harnack inequality for $\text{Im}(\tau_i)$ along $\gamma_i$, we see $|\log \text{Im}(\tau_i)|$ is uniformly bounded along $\gamma$, which implies a uniform bound of $|\nabla_{\omega_i} z_i|$ and $|\nabla_{\omega_i}w_i|$ along $\gamma$. Passing to a subsequence we may assume $(z_i, w_i)$ converges uniformly to $(z_\infty, w_\infty)$ along $\gamma$. In particular, $(z_\infty, w_\infty)$ serve as local conjugate holomorphic coordinates in a neighborhood of $q$. Now these coordinates depend on the homotopy class of $\gamma$. But the fact that $M_i$ has monodromy contained in $SL(2;\dR)$ implies the limit special holomorphic coordinates also have monodromy contained in $SL(2;\dR)$.  So we obtain a global special K\"ahler structure on $M_\infty$. 
 
By the above construction, we also have the convergence of the conjugacy classes of monodromy representations. More precisely, if we fix a choice of the monodromy representation $\rho_i: \pi_1(M_i;p_i)\rightarrow SL(2; \dR)$ (for example, by fixing a choice of special conjugate holomorphic coordinates in a neighborhood of $p_i$), then there exists some $P_i\in SL(2; \dR)$ such that 
 for every $\sigma\in \pi_1(M_\infty;p_\infty)$, 
\begin{equation}\lim_{i\rightarrow\infty} P_i A_{\sigma, i} P_i^{-1}=A_{\sigma, \infty}, 
\label{e:convergence of monodromy matrix}	
\end{equation}
where we denote by $A_{\sigma, i}$ and $A_{\sigma, \infty}$ the monodromy matrix of the special K\"ahler structure on $M_i$ and $M_\infty$ along the loop $\sigma$, respectively. An immediate consequence is that 
\begin{equation}\label{e: convergence of trace}\Tr A_{\sigma,\infty}=\lim_{i\rightarrow\infty} \Tr A_{\sigma, i}.
\end{equation}

\

\textbf{Case 2}: $\text{Vol}(B_2(p_i))\rightarrow 0$ as $i\rightarrow\infty$. Then we know $M_i$ collapses with locally uniformly bounded curvature along circle fibrations. Let $\sigma_i$ be a loop in $M_i$ corresponding to the collapsing circle fiber. Since on the local universal cover we have smooth convergence of the special K\"ahler metrics, it follows easily that we can find $P_i\in SL(2;\dR)$ with 
\begin{equation}\label{e:jumping monodromy}
	\lim_{i\rightarrow\infty}P_i A_{\sigma_i, i}P_i^{-1}=\text{Id}.
\end{equation}
In particular,  $A_{\sigma_i,i}$ must be conjugate to $\Id, I_1$, or $I_1^{-1}$ for $i$ large. 

\begin{remark}\label{r:integeral monodromy}
If $M_i$ has integral monodromy for all $i$, then in the above \emph{\textbf{Case 1}} the limit $M_\infty$ must have local integral monodromy. To see this,  we make a choice of local conjugate special holomorphic coordinates $(z_i, w_i)$ near $p_i$ such that the induced monodromy matrices along all the loops are integral. 
Then we look at more closely the above discussion. First by a transformation in $SL(2;\dZ)$ we may assume $\text{Re}(\tau_i(p_i))\in[-1/2, 1/2]$. Now if $\text{Im}(\tau_i( p_i)$ is bounded then we may take $P_i=\Id$ in the above, and it follows that $A_{\sigma, \infty}\in SL(2;\dZ)$ for all loops $\sigma$ based at $p_\infty$. In this case $M_\infty$ indeed has integral monodromy. If $\text{Im}(\tau_i(p_i))$ is unbounded, then we may take 
$$P_i=\begin{pmatrix}
	\lambda_i^{-1} & 0\\
	0& \lambda_i
\end{pmatrix},
$$
and we must have  $\lambda_i\rightarrow0$. Write 
	$$A_{\sigma, i}=\begin{pmatrix}
a_i & b_i\\
c_i & d_i
\end{pmatrix}\in SL(2;\dZ),\ \ \ \ A_{\sigma, \infty}=\begin{pmatrix}
a & b\\
c & d
\end{pmatrix}\in SL(2;\dR).$$
Then  \eqref{e:convergence of monodromy matrix} implies that for $i$ large $a_i\equiv a, d_i\equiv d$, and $\lim_{i\rightarrow\infty}\lambda_i^{2}c_i=c$, $\lim_{i\rightarrow\infty}\lambda_i^{-2}b_i= b$. So for $i$ large we must have $b_i=b=0$, $a_i=d_i=\pm 1$. Therefore we know $A_{\sigma, \infty}$ is parabolic hence is conjugate to a matrix in $SL(2;\dZ)$. 

On the other hand, without extra assumptions one can not expect  $M_\infty$ to have integral monodromy globally. For example,  consider the punctured domain $\Omega=\mathbb D\setminus \{0, 1/2\}$ endowed with the special K\"ahler metrics $$\omega_{m, n}=(-m\log |z|-n\log |z-\frac{1}{2}|)\frac{\sqrt{-1}}{2}dz\wedge d\bar z.$$ These obviously have integral monodromy. Now we take a sequence $m_j, n_j\rightarrow\infty$ such that the ratio $m_j/n_j$ converging to an irrational number. Then the limit of $m_j^{-1}\omega_{m_j, n_j}$ has local integral monodromy but the global monodromy is not integral. 

\end{remark}

\subsubsection{Singular special K\"ahler metric}
We refer the readers to \cite{Callies-Haydys, Haydys-Xu} for discussion on  local models of more general singularities of special K\"ahler metrics. 
For the convenience of our later discussion we introduce the notion of a singular special K\"ahler metric adapted to our context. 
\begin{definition} \label{def:singular special kahler metric}
A singular special K\"ahler metric on a 2 dimensional Riemann surface $M$ is a smooth special K\"ahler metric $\omega$ on $M\setminus\{p_1, \cdots, p_k\}$, such that near each $p_i$, there exists $\delta>0$ and a holomorphic embedding $B_\delta(p_i)\setminus \{p_i\}$ into a domain in $(\dC^*, \zeta)$ which extends to a topological embedding of $B_\delta(p_i)$ into $\dC$, such that one of the following holds
\begin{itemize}
	\item (Type I) $z=\zeta$ is a special holomorphic coordinate on $B_\delta(p_i)$, the local  period map is given by $\tau=-\frac{\sqrt{-1}}{2\pi}\log\zeta+f(\zeta)$ for $f$ holomorphic across $0$, and $$\omega=\frac{\sqrt{-1}}{4\pi}(-\log |\zeta|+\text{Im}(f))d\zeta\wedge d\bar \zeta.$$
	In this case the monodromy matrix around the counterclockwise generator of $\pi_1(B_\delta(p_i)\setminus \{p_i\})$	 is given by $I_1$. 
	\item (Type II) $z=\zeta^{1/2}$ is local special holomorphic coordinate, the local period map is of the form $\tau=-\frac{\sqrt{-1}}{4\pi}\log \zeta+f$ for $f$ holomorphic across 0, and  $$\omega=\frac{\sqrt{-1}}{32\pi} (-\log |\zeta|+\text{Im}(f)) |\zeta|^{-1} d\zeta\wedge d\bar \zeta.$$
		In this case the monodromy around the counterclockwise  generator of $\pi_1(B_\delta(p_i)\setminus \{p_i\})$	 is given by $I_1^*$.  
	\item (Type III) $\zeta=(z-\sqrt{-1}w)^{1/\beta}$ for local conjugate special holomorphic coordinates $(z, w)$ (for some $\beta\in \{\frac 12, \frac 13, \frac 23, \frac 14, \frac34, \frac 16, \frac 56\}$), and we may locally write 
$$\omega= \frac{\sqrt{-1}}{8} (1-|\xi|^2)\beta^2|\zeta|^{2\beta-2}d\zeta\wedge d\bar \zeta, $$
	where $\xi$ is a multi-valued holomorphic function and is related to the local period map $\tau$ by 
\begin{align*}\xi=\frac{\tau-\sqrt{-1}}{\tau+\sqrt{-1}}.\end{align*}
	Moreover, 
	\begin{itemize}
	\item if $\beta=\frac{1}{2}$, then $\xi$ is a holomorphic function of $\zeta$.
	\item if $\beta\in\{\frac{1}{4},  \frac{3}{4}\}$, then $\xi=F(\zeta)^{1/2}$ for a holomorphic function $F$ with $F(0)=0$; 
	\item if $\beta\in \{\frac{1}{6}, \frac{5}{6}, \frac{1}{3},  \frac{2}{3}\}$, then $\xi=F(\zeta)^{1/3}$ for a holomorphic function $F$ with $F(0)=0$.
	\end{itemize}
\end{itemize}
\end{definition}
 
 In particular, a singular special K\"ahler metric is asymptotic to one of the model singularities in Examples \ref{example: cone-special-kahler}, and has local integral monodromy. It is also easy to check that there is always a unique tangent cone at the singularity given by a flat cone of angle in $(0,2\pi]$. More general examples of singular special K\"ahler metrics satisfying the above conditions are given by the base of an elliptic fibration with singular fibers (see for example \cite{Hein}).

\subsection{Case $d=1$}\label{ss:3-3}
In this case, the group $G_\infty$ is either the abelian group $\dR^3$ or the Heisenberg group $\mathscr{H}_1$ with Lie algebra $\mathfrak{h}_1$, where 
\begin{equation*}
\mathscr{H}_1 \equiv\left\{ \begin{pmatrix}
1 & x & t\\
0 & 1 &y\\
0 & 0& 1
\end{pmatrix}: x,y,t\in\dR\right\}, \quad \mathfrak{h}_1 \equiv\left\{ \begin{pmatrix}
0 & x & t\\
0 & 0 &y\\
0 & 0& 0
\end{pmatrix}: x,y,t\in\dR\right\}.
\end{equation*}
 This case has already been studied in detail in \cite{HSZ}. The analysis  is simpler than the discussion in the case $d>1$. We briefly summarize the results here, and  the readers may refer to \cite{HSZ} for proofs. 

If $G_\infty=\dR^3$, then the local universal cover converges to the flat metric on $\dR^4=\dR\times \dR^3$. The limit metric is locally isometric to a one dimensional interval $(a, b)_z$ endowed with the standard metric $g_\infty=dz^2$ and the renormalized limit measure $\nu_\infty=cVdz$ for a constant $c>0$, and $V\equiv 1$. 

If $G_\infty=\mathscr{H}_1$, then the local universal cover converges to $\mathscr{H}_1\times (a,b)_z$, and the limit hyperk\"ahler metric is given by applying the Gibbons-Hawking ansatz to a linear function $V=z+l$ with $l\in \dR$, such that $V=|\p_t|^{-2}$ for some generator $\p_t$ of the center $\mathfrak{z}(\mathfrak{h}_1)$.  Here $z$ is the moment map for the action of the center $\mathfrak{Z}(\mathscr{H}_1)$, and is well-defined up to an affine linear transformation of the type $z\mapsto \lambda z+\mu$. The limit metric $g_\infty=Vdz^2$ and the renormalized limit measure $\nu_\infty=cVdz$ for a constant $c>0$. The Lemma below follows from direct computation, and we omit the details.
\begin{lemma}\label{l:second fund form formula}
The second fundamental form of the limit $\mathscr{H}_1$-fibers satisfies
\begin{align*}
|\II_\infty|= \frac{\sqrt{3}}{2}V^{-\frac{3}{2}},
\end{align*}
and the Bakry-\'Emery Laplace operator is given by $\Delta_{\nu_\infty}=V^{-1}\p_z^2$.
\end{lemma}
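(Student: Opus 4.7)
The plan is to compute both quantities by a direct calculation using the explicit Gibbons-Hawking form of the limit 4-metric on the local universal cover $\widetilde{B}_\infty = \mathscr{H}_1 \times (a,b)_z$, together with the linear potential $V = z + l$. I would first fix the model: writing $\theta = dt + \tfrac{1}{2}(x\,dy - y\,dx)$ so that $d\theta = dx\wedge dy = *_{\mathbb{R}^3} dV$, the 4-metric becomes
\begin{equation*}
\widetilde g_\infty = V(dz^2 + dx^2 + dy^2) + V^{-1}\theta^2,
\end{equation*}
and the $\mathscr{H}_1$-fibers over the base $(a,b)_z$ are the level sets of $z$, carrying the induced metric $h_z = V(dx^2+dy^2)+V^{-1}\theta^2$.

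For the second fundamental form, I would introduce the unit normal $N = V^{-1/2}\partial_z$ and use the standard formula $\II = \tfrac12\mathcal L_N h_z$ restricted to the fibers. Since $V = V(z)$ depends only on $z$ with $V_z \equiv 1$, a direct computation gives
\begin{equation*}
\mathcal L_{\partial_z} h_z = (dx^2 + dy^2) - V^{-2}\theta^2.
\end{equation*}
Working in the left-invariant orthonormal coframe $\hat e^1 = V^{1/2}dx$, $\hat e^2 = V^{1/2}dy$, $\hat e^3 = V^{-1/2}\theta$ on each fiber, the matrix of $\II$ becomes $\tfrac12 V^{-3/2}\,\mathrm{diag}(1,1,-1)$. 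Squaring and tracing then yields $|\II_\infty|^2 = \tfrac34 V^{-3}$, hence the claimed identity $|\II_\infty| = \tfrac{\sqrt 3}{2}V^{-3/2}$. A sanity check is that this is automatically trace-free (reflecting that the total space is Ricci-flat and the fibers collapse volume-preservingly up to the overall $V$-scaling), consistent with the fact that $V$ is a harmonic function on the base.

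For the Bakry-\'Emery operator, I would simply match measures. Since $\dvol_{g_\infty} = V^{1/2}dz$ on the 1-dimensional base and $d\nu_\infty = cV\,dz$, writing $d\nu_\infty = e^{-f}\dvol_{g_\infty}$ gives $f = -\tfrac12\log V$ up to an additive constant. The 1-dimensional Laplace-Beltrami operator of $g_\infty = V\,dz^2$ is
\begin{equation*}
\Delta_{g_\infty} u = V^{-1}\partial_z^2 u - \tfrac12 V^{-2}V_z\,\partial_z u,
\end{equation*}
and the drift term is $\langle \nabla f,\nabla u\rangle_{g_\infty} = -\tfrac12 V^{-2}V_z\,\partial_z u$, so the two correction terms cancel and $\Delta_{\nu_\infty} = V^{-1}\partial_z^2$ as claimed. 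There is no serious obstacle here; the only thing to be careful about is keeping the conventions for $\theta$, $V$, and the normalization of the frame consistent throughout, so that the sign in the $(\hat e^3)^2$ entry of $\II$ comes out correctly and the drift term precisely cancels the $V_z$-term in the Laplacian.
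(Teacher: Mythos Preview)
Your computation is correct and is precisely the ``direct computation'' the paper alludes to (the paper itself omits all details). The Gibbons--Hawking model, the choice of unit normal $N=V^{-1/2}\partial_z$, the Lie derivative calculation, and the cancellation in the drift term are all sound, and both formulas follow exactly as you wrote.

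One small correction: your parenthetical ``sanity check'' that $\II$ is trace-free is wrong. The matrix $\tfrac12 V^{-3/2}\,\mathrm{diag}(1,1,-1)$ has trace $\tfrac12 V^{-3/2}$, not zero; indeed the mean curvature of the fibers equals $\nabla_{g_\infty}\log\chi$ with $\chi\propto V^{1/2}$, which is nonzero here. This does not affect the computation of $|\II_\infty|^2=\tfrac34 V^{-3}$, so the proof stands, but you should delete or amend that remark.
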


Notice since $d=1$, the limit space  $X_\infty$ globally must be a one dimensional manifold, possibly with boundary. The singular set $\mathcal S$ consists of finitely many points in $X_\infty$. The main result of \cite{HSZ} is that these local affine structures indeed patch together to define a global affine coordinate $z$ on $X_\infty$, such that $g=Vdz^2$ and $\nu_\infty=cVdz$, for a concave piecewise linear function $V=V(z)$. Furthermore, in \cite{HSZ} some conjectures are posed on the structure of $V$ in the case when $X_\infty$ is the collapsing limit of hyperk\"ahler metrics on the K3 manifold. Odaka \cite{Odaka-density} and Oshima \cite{Oshima} have made connections with the algebro-geometric study of Type II degenerations of K3 surfaces.  

\subsection{$\epsilon$-regularity theorem}
\label{s:3-4}

The following  was proved by Cheeger-Tian \cite{CT} for general Einstein metrics in dimension $4$.  In the hyperk\"ahler setting, we provide a simple alternative proof, as an application of the study in this section. 

\begin{theorem}
[$\epsilon$-regularity theorem]
\label{t:epsilon regularity HK}
There are universal constants $\epsilon>0$, $\mathfrak C>0$ so that if  a hyperk\"ahler 4-manifold $(X^4, g, p)$ with $\overline{B_{10}(p)}$ compact  satisfies $\int_{B_{10}(p)}|\Rm_g|^2\dvol_g\leq \epsilon$,  then $\sup\limits_{B_1(p)}|\Rm_g|\leq \mathfrak C$. 
\end{theorem}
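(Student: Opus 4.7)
The plan is to argue by contradiction. Suppose the theorem fails; then one can find a sequence $(X_j^4, g_j, p_j)$ of hyperk\"ahler 4-manifolds with $\overline{B_{10}(p_j)}$ compact, $\int_{B_{10}(p_j)}|\Rm_{g_j}|^2\,\dvol_{g_j}\to 0$, but $\sup_{B_1(p_j)}|\Rm_{g_j}|\to\infty$. A standard point-picking argument (in the spirit of Anderson) produces $q_j\in B_2(p_j)$ and scales $s_j\to 0$ with $|\Rm_{g_j}|(q_j)=s_j^{-2}$ and $\sup_{B_{s_j^{1/2}}(q_j)}|\Rm_{g_j}|\leq 4 s_j^{-2}$. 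Rescaling $\hat g_j\equiv s_j^{-2}g_j$ gives $|\Rm_{\hat g_j}|(q_j)=1$, $\sup_{B_{R_j}(q_j)}|\Rm_{\hat g_j}|\leq 4$ with $R_j=s_j^{-1/2}\to\infty$, and, by the scale invariance of the $L^2$-energy of curvature in dimension four, $\int_{B_{R_j}(q_j)}|\Rm_{\hat g_j}|^2\,\dvol_{\hat g_j}\to 0$.

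After passing to a subsequence, I take the measured pointed Gromov-Hausdorff limit $(X_\infty, d_\infty, \nu_\infty, q_\infty)$ with essential dimension $d\in\{1,2,3,4\}$. The uniform curvature bound $|\Rm_{\hat g_j}|\leq 4$ on balls of radius $R_j\to\infty$ forces $X_\infty=\mathcal R$, i.e., every point of the limit lies in the regular region. In the non-collapsed case $d=4$, the convergence is smooth in the Cheeger-Gromov sense and $X_\infty$ is a smooth hyperk\"ahler 4-manifold with $\int|\Rm_{g_\infty}|^2=0$ by lower semicontinuity, so $X_\infty$ is flat, contradicting $|\Rm_{g_\infty}|(q_\infty)=1$. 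In the collapsed case $d<4$, I pass further to the equivariant Cheeger-Gromov limit of local universal covers; the bounded curvature and non-collapsing upstairs, combined with a bootstrap over the growing balls, yield a complete simply connected smooth hyperk\"ahler 4-manifold $Y_\infty$ carrying a free isometric action of a Lie group $G_\infty$ of dimension $4-d$ whose quotient is $X_\infty$, with $|\Rm_{Y_\infty}|(\tilde q_\infty)=1$. Since $G_\infty$ is contractible ($\dR$, $\dR^2$, $\dR^3$, or the Heisenberg group), $X_\infty$ is also simply connected, and as a pointed limit of expanding balls it is non-compact.

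The strategy in each collapsed case is to combine the structure of Section \ref{s-3} with a Liouville-type rigidity to force $Y_\infty$ to be flat. For $d=1$, Section \ref{ss:3-3} directly gives that $Y_\infty$ is the Gibbons-Hawking construction over a linear function on $\dR$, hence flat. For $d=3$, Proposition \ref{p:special affine 3 manifold} endows $X_\infty$ with a special affine structure with monodromy in $\dR^3\rtimes\dZ_2$, which is trivial by simple-connectedness; completeness of $g_\infty$ together with the curvature bound on $Y_\infty$ identifies $X_\infty$ with all of $\dR^3$ carrying a smooth positive harmonic function $V$, and Liouville's theorem for positive harmonic functions on $\dR^3$ makes $V$ constant, so $Y_\infty$ is flat. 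For $d=2$, Proposition \ref{p:special Kahler structure} endows $X_\infty$ with a special K\"ahler structure whose scalar curvature $S=4|\Theta|^2$ is non-negative; being non-compact and simply connected, the underlying Riemann surface is $\dC$ or $\dD$, and the disk case is ruled out by a Herglotz-type argument (positive harmonic functions on $\dD$ grow at most like $(1-|z|)^{-1}$ near the boundary, which is insufficient to render $\omega=\Ima(\tau)\frac{\sq}{2}dz\wedge d\bar z$ complete); hence $X_\infty=\dC$, whereupon Liouville applied to the positive harmonic function $\Ima(\tau)$ on $\dC$ makes $\tau$ constant, so $\Theta\equiv 0$ and $Y_\infty$ is flat.

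In every case the resulting flatness of $Y_\infty$ contradicts $|\Rm_{Y_\infty}|(\tilde q_\infty)=1$, completing the proof. The hard part will be the $d=2$ case, specifically the careful elimination of the disk as the underlying complex manifold of $X_\infty$, which requires a subtle interplay between the Herglotz representation of positive harmonic functions on $\dD$ and the completeness of the K\"ahler metric determined by $\Ima(\tau)$; a secondary technical point is the bootstrap that upgrades the equivariant convergence on small balls to a globally defined complete bubble $Y_\infty$.
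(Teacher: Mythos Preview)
Your overall architecture---contradiction, point-selection, rescaling, then a case split on the essential dimension $d$ of the limit---matches the paper's exactly; the paper just packages the collapsed cases $d<4$ into a separate $\kappa$-noncollapsing lemma (Lemma \ref{l:kappa noncollapsing}) and then reduces to $d=4$. The substantive differences, and the places where your argument is incomplete, are in the rigidity steps for $d=2,3$.

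For $d=3$ the paper applies the Cheng--Yau gradient estimate (Theorem \ref{l:Yau gradient estimate}) directly to the positive harmonic function $V$ on the complete smooth metric measure space $(X_\infty,g_\infty,\nu_\infty)$ with $\Ric^1\geq 0$, giving $V$ constant in one stroke. Your route---identify $X_\infty$ with $\dR^3$ via the developing map, then classical Liouville---has a gap: completeness of $g_\infty=Vg^\flat$ does not by itself make $g^\flat$ complete (for that you would need $V$ bounded below, which is precisely what is at stake), and ``the curvature bound on $Y_\infty$'' does not obviously supply this. For $d=2$ the paper simply invokes Lu's theorem that a complete special K\"ahler manifold is flat. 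Your Herglotz argument is not quite right as stated: the special holomorphic coordinate $z$ need not coincide with the uniformizing coordinate $\zeta$ on $\dD$, so the metric density in $\zeta$ is $\Ima(\tau)\,|z'(\zeta)|^2$, and a Herglotz bound $\Ima(\tau)\leq C(1-|\zeta|)^{-1}$ alone cannot rule out completeness without control on $|z'(\zeta)|$. The quickest repair---which is essentially Lu's theorem in real dimension two---is again the gradient estimate: $\Ima(\tau)$ is positive and harmonic on the complete surface $(X_\infty,g_\infty)$ with $\Ric\geq 0$, hence constant.

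Finally, building a global simply connected bubble $Y_\infty$ with a global free $G_\infty$-action is both delicate (limits of local universal covers need not patch to a simply connected manifold) and unnecessary: the paper stays on $X_\infty$ throughout and only uses the local universal covers to detect that the limit curvature there is nonzero.
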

An immediate consequence is  
 \begin{corollary}\label{c:finite singular points}
 	Let $(X_j^4, g_j, p_j)$ be a sequence of hyperk\"ahler 4-manifolds converging to a  Gromov-Hausdorff limit $(X_\infty, d_\infty, p_\infty)$. If there exists  a $C>0$ such that $\int_{X_j^4} |\Rm_{g_j}|^2\dvol_{g_j}\leq C$ for all $j$, then the singular set $\mathcal S$ consists of at most finitely many points. 
 \end{corollary}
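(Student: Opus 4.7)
The plan is to combine Theorem \ref{t:epsilon regularity HK} with a disjoint-balls / pigeonhole argument, using the scale invariance of $\int|\Rm|^2\dvol$ in dimension $4$. First I rescale: applying Theorem \ref{t:epsilon regularity HK} to $r^{-2}g$ gives the scale-invariant version: if $\overline{B_{10r}(p)}$ is compact and $\int_{B_{10r}(p)}|\Rm_g|^2\dvol_g\leq\epsilon$, then $\sup_{B_r(p)}|\Rm_g|\leq \mathfrak{C}/r^2$. The contrapositive I will actually use is: if $\sup_{B_r(p)}|\Rm_g|>\mathfrak{C}/r^2$, then $\int_{B_{10r}(p)}|\Rm_g|^2\dvol_g>\epsilon$.

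The key observation is that every point of $\mathcal S$ concentrates a definite amount of $L^2$-curvature energy in the sequence. By Proposition \ref{p:regular set smooth} we have $\mathcal R=\mathcal G$, so $\mathcal S=X_\infty\setminus\mathcal R$. Hence, if $q\in\mathcal S$ and $q_j\to q$ is any approximating sequence, then for every fixed $r>0$, the quantity $\sup_{B_r(q_j)}|\Rm_{g_j}|$ must be unbounded in $j$; otherwise a subsequence would satisfy the definition of $\mathcal R$ at $q$. Applying the contrapositive rescaled $\epsilon$-regularity with this $r$, I get a subsequence (depending on $q$ and $r$) along which
\begin{equation*}
\int_{B_{10r}(q_j)}|\Rm_{g_j}|^2\dvol_{g_j} > \epsilon.
\end{equation*}

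Finally I run the disjoint-balls argument. Suppose, for contradiction, that $\mathcal S$ contains $N>C/\epsilon$ distinct points $q_1,\ldots,q_N$. Choose $r>0$ small enough that the balls $B_{100r}(q_i)\subset X_\infty$ are pairwise disjoint, and for each $i$ pick $q_{i,j}\to q_i$. A diagonal extraction produces a common subsequence of $j$ along which the above energy lower bound holds simultaneously for every $i$. For $j$ large the balls $B_{10r}(q_{i,j})\subset X_j^4$ are pairwise disjoint (since distances on $X_j$ converge to distances on $X_\infty$), so summing gives
\begin{equation*}
\int_{X_j^4}|\Rm_{g_j}|^2\dvol_{g_j} \;\geq\; \sum_{i=1}^N \int_{B_{10r}(q_{i,j})}|\Rm_{g_j}|^2\dvol_{g_j} \;>\; N\epsilon \;>\; C,
\end{equation*}
contradicting the hypothesis. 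Hence $|\mathcal S|\leq C/\epsilon$, which is finite.

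The main conceptual content is already contained in Theorem \ref{t:epsilon regularity HK} and Proposition \ref{p:regular set smooth}; granted those, the only minor technical point is the careful choice of a common subsequence and of the radius $r$ so that the balls remain disjoint in $X_j$ for $j$ large. This is routine given Gromov--Hausdorff convergence.
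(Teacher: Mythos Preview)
Your argument is correct and is precisely the standard deduction the paper has in mind; the paper itself offers no proof beyond calling the corollary ``an immediate consequence'' of Theorem~\ref{t:epsilon regularity HK}. Your use of Proposition~\ref{p:regular set smooth} to identify $\mathcal S = X_\infty\setminus\mathcal R$, the scale-invariant contrapositive of $\epsilon$-regularity, and the disjoint-balls energy count are exactly what is intended.
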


To prove the theorem, we denote $A=|\Rm_g(p)|^{1/2}$, and denote by $\mathcal G_p$ the set of all $q\in B_2(p)$ such that $|\Rm_g(q)|\geq A^2$ and $|\Rm_g(q')|\leq 4|\Rm_g(q)|$ for all $q'$ with $d(q',q)\leq A|\Rm_g(q)|^{-1/2}\leq 1$. The following  point-selection lemma is well-known. 
\begin{lemma}
We have $\mathcal G_p\neq\emptyset$.
\end{lemma}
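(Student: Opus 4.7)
The plan is to use the standard point-picking iteration: starting at $p$, if the point fails the defining condition of $\mathcal G_p$, we find a nearby point with much larger curvature, iterate, and show the procedure must terminate inside $B_2(p)$ because $|\Rm_g|$ is continuous on a compact set.

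More precisely, I would set $q_0 \equiv p$, so $|\Rm_g(q_0)| = A^2$, which trivially satisfies $|\Rm_g(q_0)| \geq A^2$ and $A |\Rm_g(q_0)|^{-1/2} = 1$. Inductively, suppose we have constructed $q_k \in B_2(p)$ with $|\Rm_g(q_k)| \geq 4^k A^2$. If $q_k \in \mathcal G_p$ we stop. Otherwise there exists $q_{k+1}$ with
\begin{equation*}
d(q_{k+1}, q_k) \leq A |\Rm_g(q_k)|^{-1/2} \leq 2^{-k}, \qquad |\Rm_g(q_{k+1})| > 4 |\Rm_g(q_k)| \geq 4^{k+1} A^2.
\end{equation*}
The key point is that $A |\Rm_g(q_{k+1})|^{-1/2} < 2^{-(k+1)} \leq 1$, so the second constraint in the definition of $\mathcal G_p$ is automatically satisfied at $q_{k+1}$, and by the triangle inequality
\begin{equation*}
d(q_{k+1}, p) \leq \sum_{j=0}^{k} 2^{-j} < 2,
\end{equation*}
so $q_{k+1} \in B_2(p)$ as required to continue the induction.

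The termination step is the only potential obstacle but it is immediate: since $\overline{B_2(p)} \subset \overline{B_{10}(p)}$ is compact and $|\Rm_g|$ is continuous on $X$, the function $|\Rm_g|$ is bounded on $\overline{B_2(p)}$. Therefore the procedure, which produces points in $B_2(p)$ with $|\Rm_g(q_k)| \to \infty$, cannot continue indefinitely. Hence some $q_k \in \mathcal G_p$, proving the lemma. I don't anticipate any real difficulty; this is a classical Schoen-type point selection argument, entirely local and not requiring any of the hyperk\"ahler structure.
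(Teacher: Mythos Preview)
Your proof is correct and follows essentially the same point-picking iteration as the paper: both construct a sequence $q_k\in B_2(p)$ with $|\Rm_g(q_k)|\geq 4^k A^2$ and $d(q_{k+1},q_k)\leq 2^{-k}$, then observe this contradicts the boundedness of $|\Rm_g|$ on the compact set $\overline{B_2(p)}$. The only cosmetic difference is that the paper phrases it as a contradiction assuming $\mathcal G_p=\emptyset$, while you phrase it as a terminating algorithm; the content is identical.
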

\begin{proof}
If not, then we can find a sequence $q_j(j=0, 1,\cdots)$ with $q_0=p$, such that $d(q_{j+1}, q_{j})\leq |\Rm_{g}(q_j)|^{-1/2}\cdot A$, and $|\Rm_{g}(q_{j+1})|\geq 4|\Rm_g(q_j)|\geq 4A$. So we have $|\Rm_g(q_{j})|\geq 4^{j}A^2$, and $d(q_j, p)<2$ for all $j$. Clearly we get a contradiction if $j\rightarrow\infty$. 
\end{proof}

\begin{lemma}\label{l:kappa noncollapsing}
There exists $A_0>0$ and $\kappa>0$, such that if $|\Rm_g(p)|\geq A_0$, then for any $q\in \mathcal G_p$, 
we have $\Vol(B_{r}(q))\geq \kappa r^4$ with $r=|\Rm_g(q)|^{-1/2}$. 
\end{lemma}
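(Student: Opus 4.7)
My plan is to argue by contradiction, using the structure theorems of Section \ref{s-3} to force a rescaled collapsing limit to be flat. Suppose the conclusion fails. Then there exist a sequence of hyperk\"ahler $4$-manifolds $(X_j, g_j, p_j)$ with $|\Rm_{g_j}(p_j)|\to\infty$ and points $q_j\in \mathcal G_{p_j}$ such that, with $r_j\equiv |\Rm_{g_j}(q_j)|^{-1/2}$, one has $\Vol_{g_j}(B_{r_j}(q_j))/r_j^4\to 0$. Rescaling $\tilde g_j\equiv r_j^{-2}g_j$ yields hyperk\"ahler metrics with $|\Rm_{\tilde g_j}(q_j)|=1$, with $|\Rm_{\tilde g_j}|\leq 4$ on the $\tilde g_j$-ball of radius $A_j\equiv |\Rm_{g_j}(p_j)|^{1/2}\to\infty$ around $q_j$ (by the definition of $\mathcal G_{p_j}$), and $\Vol_{\tilde g_j}(B_1(q_j))\to 0$. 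Passing to a subsequence gives a pointed Gromov-Hausdorff limit $(X_\infty,d_\infty,q_\infty)$ of essential dimension $d\in\{1,2,3\}$; Proposition \ref{p:regular set smooth} places $q_\infty$ in $\mathcal R\subset\mathcal R^\#$.

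The main technical step, which I expect to be the principal obstacle, is to upgrade the local convergence of universal covers from the beginning of Section \ref{s-3} to a single global smooth model. Using $|\Rm_{\tilde g_j}|\leq 4$ together with $A_j\to\infty$ and a diagonal argument over an exhaustion of $X_\infty$ by large geodesic balls, I would extract a pointed complete smooth hyperk\"ahler $4$-manifold $(\widehat X_\infty, \hat g_\infty,\hat q_\infty)$ equipped with a free isometric action of a connected nilpotent Lie group $G_\infty$ of dimension $4-d$, such that $|\Rm_{\hat g_\infty}|\leq 4$ everywhere, $|\Rm_{\hat g_\infty}(\hat q_\infty)|=1$, and the quotient $\widehat X_\infty/G_\infty$ is a complete $d$-manifold carrying the invariant structure of Sections \ref{ss:3-1}--\ref{ss:3-3} globally.

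To finish I would show $\hat g_\infty$ must be flat, contradicting $|\Rm_{\hat g_\infty}(\hat q_\infty)|=1$. When $d=3$, Section \ref{ss:3-1} presents $\hat g_\infty$ as a Gibbons-Hawking metric whose potential $V$, after lifting to the universal cover of the $3$-dimensional base, is a positive harmonic function on $\mathbb R^3$; by Harnack and Liouville $V$ is constant, so the metric is flat. When $d=2$, Section \ref{ss:3-2} realizes $\hat g_\infty$ as a semi-flat metric determined by a smooth convex potential $\phi$ on $\mathbb R^2$ with $\det(\phi_{\alpha\beta})\equiv 1$; J\"orgens' theorem forces $\phi$ to be a quadratic polynomial, so $(\phi_{\alpha\beta})$ is constant and the metric is flat. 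When $d=1$: if $G_\infty=\mathbb R^3$ then $\widehat X_\infty$ is the flat $\mathbb R^4$ directly; if $G_\infty=\mathscr H_1$ then Section \ref{ss:3-3} gives a Gibbons-Hawking ansatz with linear potential $V(z)=z+l$ on a complete $1$-dimensional base, so positivity of $V$ on all of $\mathbb R$ forces $V$ to be a positive constant and the metric to be flat. The resulting contradiction completes the proof.
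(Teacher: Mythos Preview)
Your overall strategy---contradiction, rescale, collapse, then a Liouville-type argument in each dimension---is exactly what the paper does. The difference lies in where you run the Liouville argument, and your choice introduces an unnecessary obstacle.

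You propose to patch the local universal covers into a single complete hyperk\"ahler $4$-manifold $\widehat X_\infty$ with a free $G_\infty$-action, and then argue on (covers of) this space. This globalization is genuinely problematic: the Cheeger--Fukaya--Gromov theory only produces an $\mathcal N$-structure, whose orbits fit together with possible holonomy, and there is no reason in general that the local models glue to a single $4$-manifold with a global Lie group action. The paper sidesteps this entirely by working directly on the complete quotient $X_\infty$, which by Proposition \ref{p:regular set smooth} is a smooth Riemannian manifold everywhere (the curvature bound holds on all of $X_\infty$, so $\mathcal R=X_\infty$). Concretely: for $d=3$, Proposition \ref{p:special affine 3 manifold} makes $X_\infty$ a complete special affine metric $3$-manifold with $\Ric^1_{g_\infty}\geq 0$, and the positive harmonic function $V$ satisfies $\Delta_{\nu_\infty}V=0$; the Cheng--Yau gradient estimate (Theorem \ref{l:Yau gradient estimate}) applied on all of $X_\infty$ forces $V$ constant, hence the local universal cover geometry is flat. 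For $d=2$, Proposition \ref{p:special Kahler structure} makes $X_\infty$ a complete special K\"ahler surface, and Lu's theorem \cite{Lu} gives flatness directly. For $d=1$ the argument is as you wrote.

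Your case-by-case arguments are morally the same as the paper's, but your phrasing (``positive harmonic on $\mathbb R^3$'', ``convex $\phi$ on $\mathbb R^2$ with $\det(\phi_{\alpha\beta})=1$'') implicitly assumes that the flat background $g^\flat$ (in $d=3$) or the special affine structure (in $d=2$) is complete and simply connected. Only the metric $g_\infty$ on $X_\infty$ is known to be complete; completeness of $g^\flat=V^{-1}g_\infty$ or of the affine coordinate domain is not a priori available. The paper's use of the gradient estimate on the metric measure space $(X_\infty,g_\infty,\nu_\infty)$ and of Lu's theorem avoids precisely this gap.
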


\begin{proof}Suppose otherwise, then there is a  sequence $(X_j, g_j, p_j)$ and $q_j\in \mathcal G_{p_j}$ with $A_j\equiv |\Rm_{g_j}(p_j)|\rightarrow\infty$ and
$\text{Vol}(B_{Q_j^{-1/2}}(q_j))\leq j^{-1} Q_j^{-2},
$
where $Q_j\equiv |\Rm(q_j)|^{1/2}\geq A_j$. Then consider the rescaled sequence $(X_j, Q_j^2 g_j, q_j)$. Passing to a subsequence we obtain a  Gromov-Hausdorff limit $(X_\infty, q_\infty)$. By assumption we know $\dim X_\infty<4$. Moreover, for any fixed $R>0$, the collapsing is with curvature uniformly bounded by $4$ on $B_{R}(q_\infty)$. So by  Proposition \ref{p:regular set smooth}, $X_\infty$ is a complete Riemannian manifold and the limit geometry of the local universal cover around $q_j$ is not flat.    On the other hand, below we will show that the limit geometry of the local universal covers is everywhere flat, so a contradiction. We divide into 3 cases:

\textbf{Case (1)}: $\dim X_\infty=3$. By Proposition \ref{p:special affine 3 manifold}  we know $X_\infty$ is a special affine metric 3-manifold. In particular we know $\Ric^1(g_{X_\infty})\geq 0$. Let $V$ be the associated positive harmonic function on $X_\infty$. Since $X_\infty$ is complete, by the gradient estimate (Theorem \ref{l:Yau gradient estimate}) we know $V$ must be a constant, which implies $X_\infty$ is flat, and the limit geometry of local universal covers is flat. 

	\textbf{Case (2)}: $\dim X_\infty=2$. By Proposition \ref{p:special Kahler structure} we know $X_\infty$ is a complete special K\"ahler 2-manifold.  Lu's theorem \cite{Lu} implies that $X_\infty$ is flat,  hence the limit geometry of local universal covers is also flat. 
	
\textbf{Case (3)}: $\dim X_\infty=1$.   By the discussion in  Section \ref{ss:3-3}  we know $X_\infty$ is an interval equipped with an affine coordinate $z$ so that $g_\infty=Vdz^2$ and $d\nu_\infty=Vdz$ for a positive affine function $V$. Since $X_\infty$ is complete the interval has to be the entire $\dR$ but then the positivity of $V$ implies it must be a constant. Hence the limit geometry of local universal covers is again flat.   	\end{proof}

\begin{proof}[Proof of Theorem \ref{t:epsilon regularity HK}]
If not, then we have a sequence $(X_j^4, g_j, p_j)$ with $\int_{B_{9}(p_j)}|\Rm_{g_j}|^2\leq j^{-1}$, but $|\Rm_{g_j}(p_j)|\rightarrow\infty$. We choose $q_j\in \mathcal G_{p_j}$, and consider the rescaled sequence $(X_j, Q_j^2g_j, q_j)$, where $Q_j\equiv |\Rm_{g_j}(q_j)|$.  Passing to a subsequence it converges with uniformly bounded curvature to a limit $(X_\infty, g_\infty, q_\infty)$. Lemma \ref{l:kappa noncollapsing} implies $\dim X_\infty=4$ and $|\Rm_{g_\infty}(q_\infty)|=1$. By scaling invariance it follows that for any fixed $R>0$, $\int_{B_R(q_j, Q_j^2g_j)}|\Rm_{g_j}|^2\dvol_{g_j}\rightarrow 0$, so the limit metric metric $g_\infty$ must be flat. Contradiction. 
 \end{proof}

\subsection{Perturbation to invariant hyperk\"ahler metrics}\label{s-6}Now we go back to the set-up at the beginning of this section. Suppose a sequence of hyperk\"ahler manifolds $(X_j^4, g_j, \nu_j,p_j)$ converge in the measured Gromov-Hausdorff topology to a limit metric measure space $(X_\infty, d_\infty, \nu_\infty, p_\infty)$ with $d=\dim_{\ess}(X_\infty)<4$. The goal of this subsection is to  show that over the regular set $\mathcal R$, one can deform $g_j$ to a  nearby \emph{hyperk\"ahler} metric which exhibits local nilpotent symmetries of rank $4-d$. To prove this we need to combine the foundational results of Cheeger-Fukaya-Gromov with a quantitative implicit function theorem argument. The following is proved in \cite{CFG}, and we give an explanation in Appendix \ref{a1}.

\begin{theorem}[Regular fibration]\label{t:CFG} Let $\mathcal Q\subset\subset \mathcal R$  be a connected compact domain with smooth boundary. Then we can find $j_0=j_0(\mathcal Q)>0$ and a sequence $\tau_j\rightarrow 0$,  such that for all $j\geq j_0$, there exist a compact connected domain $\mathcal Q_j\subset X_j^4$ with smooth boundary, together with a smooth fiber bundle map 
$F_j: \mathcal Q_j\rightarrow \mathcal Q$  
such that the following properties hold.
\begin{enumerate}
\item $F_j: \mathcal Q_j \to \mathcal Q$ is a $\tau_j$-Gromov-Hausdorff approximation;
\item  For any $k\in\dZ_+$, there exists $C_k>0$ such that for all  $j\geq j_0$, we have 
	\begin{equation}\label{e:higher derivative bound on projection}
	|\nabla^k F_j|  \leq C_k;
	\end{equation}
 \item 
There exists a uniform constant $C_0>0$ such that for all $q\in\mathcal Q$ and $j\geq j_0$, we have\begin{align*}|\II_{F_j^{-1}(q)}|\leq C_0,\end{align*} 
where $\II_{F_j^{-1}(q)}$ denotes the second fundamental form of
 the fiber $F_j^{-1}(q)$ at $q\in {\mathcal Q}$.
		\item $F_j$ is an almost Riemannian submersion,  in the sense that for any vector $v$ orthogonal to the fiber of $F_j$, we have
	\begin{align}\label{e:almost riem submersion}
		(1-\tau_j)|v|_{g_j} \leq |dF_j(v)|_{g_\infty} \leq(1+\tau_j) |v|_{g_j};
	\end{align}
 	\item There are flat connections with parallel torsion on $F_j^{-1}(q)$, which depend smoothly on $q\in {\mathcal Q}$, such that each fiber of $F_j$
 affine diffeomorphic to an infranilmanifold $\Gamma\setminus N$, where $N$ is a simply-connected nilpotent Lie group, and $\Gamma$ is a cocompact subgroup of $N_L\rtimes \Aut(N)$ with $N_L\simeq N$ acting on $N$ by left translation. Moreover, the structure group of the fibration is reduced to  $((\mathfrak{Z}(N)\cap\Gamma)\setminus \mathfrak{Z}(N))\rtimes \Aut(\Gamma)\subset \text{Aff}(\Gamma\setminus N)$;
 \item  $\Lambda\equiv \Gamma \cap N_L$ is normal in $\Gamma$ with $\#(\Lambda\setminus\Gamma)\leq w_0$ for some constant $w_0$ independent of $i$.

	 \end{enumerate}
\end{theorem}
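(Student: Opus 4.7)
The plan is to derive Theorem \ref{t:CFG} as a specialization of the Cheeger--Fukaya--Gromov structure theory \cite{CFG} to our hyperk\"ahler setting, where the uniform curvature bounds afforded by the regular set, together with the Einstein regularity of $g_j$, supply all of the smooth control needed to run the CFG machinery.

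First, I would establish uniform higher-derivative bounds on $g_j$ over a neighborhood of $\mathcal Q$. Since $\mathcal Q\subset\subset \mathcal R$, the definition of $\mathcal R$ (Definition \ref{d:regular-set}) together with a finite covering argument produces constants $r_0>0$ and $C_0>0$ such that, for $j$ large, $|\Rm_{g_j}|\leq C_0$ on a fixed open $r_0$-neighborhood $\widehat{\mathcal Q}_j$ of the eventual $\mathcal Q_j$. Because $g_j$ is hyperk\"ahler, hence Einstein, the Bando--Shi estimates upgrade this to uniform $C^k$ bounds $|\nabla^m_{g_j}\Rm_{g_j}|\leq C_m$ on a slightly shrunken neighborhood, for every $m\geq 0$.

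Next, I would invoke Fukaya's fibration theorem \cite{Fukaya} locally: at each $q\in\mathcal Q$, the collapsing on a small ball around $q$ is modeled on the local universal cover by the quotient by the nilpotent Lie group $G_\infty$ described at the beginning of Section \ref{s-3}, producing a smooth local fibration over $B_\delta(q)$ with infranilmanifold fibers carrying the canonical affine structure inherited from the simply connected nilpotent cover $N$. To globalize, I would apply the central construction of \cite{CFG}: cover $\mathcal Q$ by finitely many such balls and perform the CFG averaging procedure to patch the locally defined nilpotent Killing structures into a single globally defined N-structure on $\mathcal Q_j$, whose orbit decomposition gives the required bundle map $F_j:\mathcal Q_j\to\mathcal Q$. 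The $C^k$ estimates from the previous step guarantee the smoothness and derivative bounds (2), while the construction is built to be an almost Riemannian submersion, yielding (4) and hence (1) at the Gromov--Hausdorff level. The uniform bound (3) on $|\II_{F_j^{-1}(q)}|$ then follows from $C^1$ control on the nearly parallel vertical distribution.

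Finally, (5) is precisely the structural conclusion of the CFG N-structure theorem: each fiber is affinely diffeomorphic to an infranilmanifold $\Gamma\setminus N$, the reduction of the structure group to $((\mathfrak Z(N)\cap\Gamma)\setminus \mathfrak Z(N))\rtimes\Aut(\Gamma)$ reflects that the only local ambiguity in defining an N-structure is by central translations and automorphisms of the covering nilpotent group, and the bound in (7) on $[\Gamma:\Lambda]$ follows from Gromov's almost flat manifold theorem, applied in the form used throughout \cite{CFG}, with $w_0$ depending only on the dimension $4-d$. The main obstacle is Step 3, the globalization: ensuring that the averaged N-structure is smooth with uniform estimates and has the stated holonomy reduction requires the delicate analytic estimates at the heart of \cite{CFG}. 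Our situation is simplified by the fact that on $\mathcal R$ the infinitesimal model group $G_\infty$ is essentially uniquely determined by the limit geometry, so the local models we need to patch are canonical up to compact ambiguity; this is the content of the discussion at the start of Section \ref{s-3} and Proposition \ref{p:regular set smooth}.
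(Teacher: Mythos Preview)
Your outline is correct in spirit: Theorem \ref{t:CFG} is indeed a specialization of the Cheeger--Fukaya--Gromov $\mathcal N$-structure theory, and the paper itself says as much, deferring the argument to an appendix. However, the paper's proof takes a genuinely different technical route from the one you sketch. You propose to invoke Fukaya's fibration theorem locally and then run the original CFG averaging/patching of nilpotent Killing structures. The paper instead builds the local fibration maps from the Cheeger--Colding \emph{harmonic splitting maps}: over each ball $B_2(q_{i,\ell})$ it produces a harmonic map $\Phi_{i,\ell}:B_2(q_{i,\ell})\to\dR^d$ which, thanks to the uniform curvature bounds, satisfies pointwise almost-orthonormality of its gradient and smallness of its Hessian; composing with harmonic coordinates on the base gives the local bundle map. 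The global map $F_i$ is then obtained not by CFG averaging of $\mathcal N$-structures but by a center-of-mass construction: one minimizes an energy $\mathfrak E(q_i,q_\infty)=\tfrac12\sum_\ell\phi_\ell(q_i)\,\mathfrak d_\epsilon(\Phi_{i,\ell}(q_i),q_\infty)$ built from a good partition of unity (produced via heat-flow regularization, Lemma \ref{l:good-cut-off-function}) and the local harmonic maps. Item (4) is proved by a contradiction argument passing to local universal covers, and items (5)--(6) are deduced at the end from Gromov--Ruh and Fukaya once the fibers are shown to be almost flat.

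The advantage of the paper's approach is exactly the point you flag as the ``main obstacle'': higher-order regularity. Because harmonic functions enjoy standard elliptic estimates and the cutoffs have uniform $C^k$ bounds, the derivative control in item (2) and the second fundamental form bound in item (3) become transparent, whereas extracting them from the original CFG construction requires tracking through more delicate arguments. Your approach is perfectly valid as a black-box citation of \cite{CFG}, but the paper's reworking via harmonic splitting maps is what makes the estimates explicit enough to feed into the later perturbation arguments (Theorem \ref{t:local inv hk triple}). (Minor: your reference to ``(7)'' should be (6).)
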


This is a special case of the {\it nilpotent Killing structure} ($\mathcal N$-structure) defined in \cite{CFG}. We say a tensor field $\xi$ on ${\mathcal Q}_j$ is $\mathcal N$-invariant if for any $x\in {\mathcal Q}$, there exists a neighborhood $U$ of $x$ with $F_j^{-1}(U)\cong U\times (\Gamma\setminus N)$, such that  the lift of $\xi$ to the universal cover $U\times N$ is $N_L$-invariant. Below we will construct an $\mathcal{N}$-invariant \emph{hyperk\"ahler} triple approximating the original hyperk\"ahler triple $\bm{\omega}_j$. First we have
\begin{proposition} \label{p:existence of local invariant definite triples}
For any sufficiently large $j$, there exists an $\mathcal N$-invariant  definite triple	$\bm\omega^{\dag}_j$ on ${\mathcal Q}_j$ such that $|\nabla^k_{\bm\omega_j}(\bm\omega^{\dag}_j-\bm\omega_j)|_{\bm\omega_j}\leq C_k\tau_j$, where $\tau_j\rightarrow 0$ is given by Theorem \ref{t:CFG}. 	
\end{proposition}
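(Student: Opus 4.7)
The plan is to obtain $\bom_j^\dag$ by averaging $\bom_j$ fiberwise along the $\mathcal N$-orbits furnished by Theorem \ref{t:CFG}. The source of the required smallness is the following: hyperk\"ahler regularity, via the $\epsilon$-regularity theorem (Theorem \ref{t:epsilon regularity HK}) followed by elliptic bootstrap for the closed definite-triple system $d\bom_j=0$, $\omega_{j,\alpha}\wedge\omega_{j,\beta}=2\delta_{\alpha\beta}\dvol_{\bom_j}$, gives uniform bounds $|\nabla^k_{\bom_j}\bom_j|_{\bom_j}\leq C_k$ on ${\mathcal Q}_j$, while by clauses (1)-(4) of Theorem \ref{t:CFG} the fibers of $F_j$ have diameter $O(\tau_j)$. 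Hence any reasonable fiberwise averaging will differ from $\bom_j$ by $O(\tau_j)$ in $C^k$, automatically preserving closedness and, for $j$ large, definiteness.

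To execute this, I would first cover $\mathcal Q$ by finitely many small open sets $U_\alpha$ on which $F_j^{-1}(U_\alpha)$ is fiberwise affine-equivalent to $U_\alpha \times (\Gamma\setminus N)$ compatibly with the $\mathcal N$-structure, as in clauses (5)-(6) of Theorem \ref{t:CFG}. Pulling $\bom_j$ back to the universal cover $U_\alpha\times N$ in the fiber direction gives a triple that is $\Lambda$-invariant for $\Lambda=\Gamma\cap N_L$, hence descends to the compact intermediate cover $U_\alpha\times(\Lambda\setminus N)$. On this intermediate cover one averages over the compact fibers using the normalized left-invariant Haar measure $\mu$ on $\Lambda\setminus N$, essentially setting
\[ \bar{\bm\omega}_j\big|_{U_\alpha}(x,n) \;=\; \int_{\Lambda\setminus N} L_g^*\, \bom_j(x,gn)\, d\mu(g), \]
producing an $N_L$-invariant triple on $U_\alpha\times N$, which further symmetrized over the compact structure group $((\mathfrak{Z}(N)\cap\Gamma)\setminus\mathfrak{Z}(N))\rtimes\Aut(\Gamma)$ descends to an $\mathcal N$-invariant triple $\bom_{j,\alpha}^\dag$ on $F_j^{-1}(U_\alpha)$. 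Finally, patch the $\bom_{j,\alpha}^\dag$ using a partition of unity on $\mathcal Q$ pulled back to $\mathcal Q_j$ by $F_j$; such functions are constant along fibers, hence $\mathcal N$-invariant, so $\bom_j^\dag$ remains $\mathcal N$-invariant and closed, since both averaging and pullback commute with $d$.

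The closeness estimate $|\nabla^k_{\bom_j}(\bom_j^\dag-\bom_j)|_{\bom_j}\leq C_k\tau_j$ then reduces, via the almost Riemannian submersion property \eqref{e:almost riem submersion} and the bounds \eqref{e:higher derivative bound on projection} on $F_j$, to the Euclidean fact that averaging a $C^{k+1}$-bounded tensor over a set of diameter $O(\tau_j)$ perturbs it by $O(\tau_j)$ in $C^k$. The step I expect to be most delicate is ensuring genuine, rather than approximate, $\mathcal N$-invariance after patching: the trivializations $F_j^{-1}(U_\alpha)\cong U_\alpha\times(\Gamma\setminus N)$ agree on overlaps only up to the compact structure group of the fibration, so one must symmetrize also over that group and keep careful track of transition functions on $U_\alpha\cap U_\beta$ so that the local averages $\bom_{j,\alpha}^\dag$ agree exactly (not just up to small error) wherever they overlap. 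This bookkeeping is essentially the content of the Cheeger-Fukaya-Gromov smoothing in our setting, and once it is in place no analytic input beyond the $C^k$-bounds on $\bom_j$ is needed.
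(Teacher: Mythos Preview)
Your proposal is essentially correct and follows the same averaging strategy as the paper, which simply adapts Section~4 of \cite{CFG} with the Riemannian metric replaced by the definite triple: lift to $U\times N$, average over $\Lambda\setminus N_L$ against the bi-invariant Haar measure, then average over the finite group $\Lambda\setminus\Gamma$, and invoke Proposition~4.9 of \cite{CFG} for the $C^k$-closeness.

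The one place where your writeup diverges from the paper, and where you correctly sense trouble, is the patching step. In fact no partition of unity is needed, and introducing one would be the wrong move. The point is that the bi-invariant measure on $\Lambda\setminus N_L$ is canonical (unimodularity of $N$), and the transition functions of the $\mathcal N$-structure lie in $((\mathfrak Z(N)\cap\Gamma)\setminus\mathfrak Z(N))\rtimes\Aut(\Gamma)$, which preserves that measure; hence the local averages already agree exactly on overlaps and define a global $\mathcal N$-invariant form. Your second symmetrization should be over $\Lambda\setminus\Gamma$ (to force descent to $\Gamma\setminus N$) rather than over the full structure group; the paper then checks, using $\bar\gamma\cdot h=\bar h\cdot\bar\gamma$ for $\bar\gamma\in\Gamma$, that this finite averaging preserves $N_L$-invariance. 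Once you drop the partition of unity and phrase the finite symmetrization this way, your argument matches the paper's.

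One minor inaccuracy: the bound you want is not $|\nabla^k_{\bom_j}\bom_j|\leq C_k$ (which is trivially zero since $\bom_j$ is parallel) but rather control of $L_h^*\tilde\bom_j-\tilde\bom_j$ along the $N_L$-action on the universal cover; this is what the fiber-diameter $O(\tau_j)$ combined with the bounded-geometry convergence gives, and is exactly the content of Proposition~4.9 in \cite{CFG}.
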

\begin{proof}
The construction is via the averaging argument as in Section 4 of \cite{CFG}, with the Riemannian metric replaced by the definite triple. Let $h\in N_L$ be any element and let $\tilde{v},\tilde{w}$ be any tangent vectors on the universal cover $U\times N$ of $F_j^{-1}(U)$. Let $\tilde{\bm{\omega}}_j$ be the lift of $\bm{\omega}_j$ on $U\times N$.
Then the function $h\mapsto \tilde{\bm{\omega}}_j(Dh\cdot \tilde{v}, Dh\cdot \tilde{w})$ is constant on each $\Lambda$-orbit in $N_L$. Since the nilpotent group $N_L$ is unimodular, there is a canonical bi-invariant measure $\tilde{\mu}$ on $N_L$ which descends to a unit-volume bi-invariant measure $\mu$ on $\Lambda\setminus N_L$. Therefore, \begin{align*}
\tilde{\bm{\omega}}_j'(\tilde{v},\tilde{w})\equiv\int_{\Lambda\setminus N_L}\tilde{\bm{\omega}}_j(Dh\cdot \tilde{v},Dh\cdot \tilde{w})d\mu 	
\end{align*}
is $N_L$-invariant on $U\times N$. We denote by $\bar{\bm{\omega}}_j'$ the descending $2$-form on $U\times (\Lambda\setminus N_L)$, and for any tangent vectors $\bar{v}, \bar{w}$ on $U\times (\Lambda\setminus N_L)$, we define
\begin{align*}
\bar{\bm{\omega}}_j^{\dag}(\bar{v},\bar{w})\equiv \frac{1}{\#(\Lambda\setminus\Gamma)}\cdot \sum\limits_{\gamma\in\Lambda\setminus\Gamma}\bar{\bm{\omega}}_j'(D\gamma\cdot\bar{v}, D\gamma\cdot\bar{w}),
\end{align*} 
where $\#(\Lambda\setminus\Gamma) \leq w_0$ for some constant $w_0>0$ independent of $j$.
We claim the above $(\Lambda\setminus\Gamma)$-invariant form is $N_L$-invariant. In fact, let $\bar{\gamma}\in\Gamma$ be any lift of $\gamma\in\Lambda\setminus\Gamma$ to $\Gamma$. Since $\Gamma\leq N_L\rtimes\Aut(N)$,  for any $h\in N_L$, there is some element $\bar{h}\in N_L$ such that $\bar{\gamma}\cdot h = \bar{h}\cdot \bar{\gamma}$. Then it is easy to verify that $\bar{\bm{\omega}}_j^{\dag}(Dh\cdot \bar{v},Dh\cdot\bar{w})= \bar{\bm{\omega}}_j^{\dag}(\bar{v},\bar{w})$.

Now $\bar{\bm{\omega}}_j^{\dag}$ descends to an $\mathcal{N}$-invariant definite triple $\bm{\omega}_j^{\dag}$ on $\Gamma\setminus N_L$. 
Notice that the average of a closed form is still closed. The approximation estimate follows from Proposition 4.9 of \cite{CFG}.	
\end{proof}

It is clear  that the Riemannian  metric $g_{\bm\omega_j^{\dag}}$ determined by  the definite triple $\bm\omega_j^{\dag}$ is also $\mathcal N$-invariant. Also the estimates  \eqref{e:higher derivative bound on projection} and  \eqref{e:almost riem submersion}  continue to hold if replace $\bm\omega_j$ by $\bm\omega_j^{\dag}$. 
 
 \begin{theorem} \label{t:local inv hk triple}
	For all sufficiently large $j$, there is an $\mathcal N$-invariant hyperk\"ahler triple $\bm\omega_j^{\Diamond}$ on $\mathcal Q_j$ of the form $\bm\omega_j^{\Diamond}=\bm\omega_j^{\dag}+dd^*(\bm f_j\cdot\bm\omega_j^{\dag})$, where  $\bm f_j$ is  an $\mathcal N$-invariant $(3\times 3)$-matrix valued  function on $\mathcal Q_j$ satisfying that for all $k\in\dN$, \begin{equation}\label{e:convergence of derivatives}\sup_{\mathcal Q_j}|\nabla^k_{\bm\omega_j^{\dag}}\bm f_j|\rightarrow 0.
 \end{equation}
  In particular, $\bm\omega_j^{\Diamond}$ has the same Gromov-Hausdorff collapsed limit as $\bm\omega_j$.
\end{theorem}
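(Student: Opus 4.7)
The plan is to apply the quantitative implicit function theorem of Proposition \ref{p:implicit function theorem} to the $\mathcal{N}$-invariant definite triple $\bm\omega_j^{\dag}$ from Proposition \ref{p:existence of local invariant definite triples}, working throughout in the subclass of $\mathcal{N}$-invariant perturbations. Since both $\mathscr{D}$ and $\mathscr{N}_0$ are built tautologically from $\bm\omega_j^{\dag}$ alone, the nonlinear operator $\mathscr{F}$ in \eqref{e:230} preserves $\mathcal{N}$-invariance. Hence by the remark following Proposition \ref{p:implicit function theorem}, it suffices to verify hypotheses (1)--(6) on Banach spaces of $\mathcal{N}$-invariant sections; the resulting $\bm f_j$ will automatically be $\mathcal{N}$-invariant, whence so will $\bm\omega_j^{\Diamond}$.

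For the function-space setup, fix $\alpha\in(0,1)$ and let $\fA_j$ consist of $\mathcal{N}$-invariant, $C^{2,\alpha}$, matrix-valued functions on $\mathcal{Q}_j$ vanishing on $\partial\mathcal{Q}_j$, with $\fB_j$ the corresponding $C^{0,\alpha}$ space; norms are computed with respect to $g_{\bm\omega_j^{\dag}}$. The natural linear operator to feed into Proposition \ref{p:implicit function theorem} is $\mathscr{L}=-\Delta_{\bm\omega_j^{\dag}}$ acting componentwise on the matrix $\bm f$. By the identity \eqref{e:operator error}, the difference $\mathscr{D}-\mathscr{L}$ is first order with coefficients controlled by $\nabla_{\bm\omega_j^{\dag}}\bm\omega_j^{\dag}$. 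Since $\bm\omega_j$ is hyperk\"ahler (so $\nabla_{\bm\omega_j}\bm\omega_j\equiv 0$) and $\bm\omega_j^{\dag}$ approximates $\bm\omega_j$ in every $C^k$ norm to order $\tau_j$ by Proposition \ref{p:existence of local invariant definite triples}, this difference is arbitrarily small and may be absorbed into the nonlinear term $\mathscr{N}$.

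The critical step, and the one I expect to be the main obstacle, is constructing a right inverse $\mathscr{P}_j:\fB_j\to\fA_j$ of $\mathscr{L}$ with a $j$-uniform bound. The key observation is that $\mathcal{N}$-invariant functions on $\mathcal{Q}_j$ are, via fiber averaging in Theorem \ref{t:CFG}, naturally identified with ordinary functions on the base $\mathcal{Q}$; under this identification $\Delta_{\bm\omega_j^{\dag}}$ descends to a second-order uniformly elliptic operator on $\mathcal{Q}$ of the form $\Delta_{h_j}^{\chi_j}u=\chi_j^{-1}\operatorname{div}_{h_j}(\chi_j\nabla_{h_j}u)$, where $h_j$ is the quotient metric on $\mathcal{Q}$ and $\chi_j$ is the smooth fiber-volume weight. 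By the $\mathcal{N}$-invariance of $\bm\omega_j^{\dag}$ and its approximation to $\bm\omega_j$, both $h_j$ and $\chi_j$ converge in $C^k$ on the compact base $\mathcal{Q}$ to smooth limits (namely $g_\infty|_{\mathcal{Q}}$ and the limit weight $e^{-f}$). Standard Schauder theory with Dirichlet boundary condition on $\partial\mathcal{Q}$ then yields a uniform right inverse, which pulls back to the desired $\mathscr{P}_j$ on $\mathcal{Q}_j$; the uniformity here, despite the collapsing of fibers, is precisely the gain obtained by restricting to $\mathcal{N}$-invariant functions.

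Given $\mathscr{P}_j$, the remaining hypotheses of Proposition \ref{p:implicit function theorem} are routine to check. Hypothesis (5) on the nonlinearity follows by integrating the pointwise bound \eqref{e:pointwise non-linear estimate} in H\"older norms and shrinking $\eta$. For hypothesis (6) note that $\mathscr{F}(\bm 0)=-\mathfrak F(\TF(-Q_{\bm\omega_j^{\dag}}))$, whose $C^{0,\alpha}$ norm is bounded by $\|\TF(Q_{\bm\omega_j^{\dag}})\|_{C^{0,\alpha}}$ times a constant; but $\bm\omega_j$ is hyperk\"ahler, so $\TF(Q_{\bm\omega_j})\equiv 0$, and Proposition \ref{p:existence of local invariant definite triples} gives $\TF(Q_{\bm\omega_j^{\dag}})=O(\tau_j)$ in $C^{0,\alpha}$. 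Thus for $j$ large we obtain $\bm f_j\in\fA_j$ with $\|\bm f_j\|_{C^{2,\alpha}}\leq C\tau_j$, so $\bm\omega_j^{\Diamond}=\bm\omega_j^{\dag}+dd^*(\bm f_j\cdot\bm\omega_j^{\dag})$ is an $\mathcal{N}$-invariant hyperk\"ahler triple. The higher-order decay \eqref{e:convergence of derivatives} follows by bootstrapping: differentiate the nonlinear equation $\mathscr{F}(\bm f_j)=0$, use the uniform $C^k$-control on $\bm\omega_j^{\dag}$ from Proposition \ref{p:existence of local invariant definite triples}, and apply interior Schauder estimates inductively. Finally, the $C^0$ smallness of $\bm f_j$ guarantees that $\bm\omega_j^{\Diamond}$ is $C^0$-close to $\bm\omega_j^{\dag}$, hence to $\bm\omega_j$, so all three share the same Gromov--Hausdorff collapsed limit.
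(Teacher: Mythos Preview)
Your proposal is correct and follows essentially the same route as the paper: pass to $\mathcal N$-invariant functions, identify them with functions on the fixed base $\mathcal Q$, and invoke uniform Schauder theory there (your convergence of $h_j,\chi_j$ is precisely the content of Lemma \ref{l: convergence of second fundamental form}) before applying Proposition \ref{p:implicit function theorem}. The only cosmetic difference is that the paper takes $\mathscr L=\Delta_{\nu_\infty}$, the \emph{fixed} Bakry--\'Emery Laplacian on $(\mathcal Q,g_\infty,\nu_\infty)$, and absorbs the discrepancy $\Delta_{\bm\omega_j^{\dag}}-\Delta_{\nu_\infty}$ into $\mathscr N$, whereas you keep $\mathscr L=-\Delta_{\bm\omega_j^{\dag}}$ and argue for a uniformly bounded $\mathscr P_j$; both choices work and the remaining verifications are identical.
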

 \begin{remark}
In \cite{CFG} (Open Problem 1.10), Cheeger-Fukaya-Gromov asked the question that when a sufficiently collapsed Riemannian metric satisfies extra properties such as being Einstein or K\"ahler, whether one can perturb it to be an $\mathcal N$-invariant Riemannian metric in the same category. The above Theorem can be viewed as giving an affirmative answer to this question in the setting of local 4 dimensional hyperk\"ahler structures. We mention that Huang-Rong-Wang \cite{HRW} made some related progress on this question of Cheeger-Fukaya-Gromov using Ricci flow. 	
 \end{remark}

 Before proving Theorem \ref{t:local inv hk triple} we make some preparations.
Denote by $g_{\mathcal Q,j}$ the quotient metric on $\mathcal Q$ induced by the metric $\bm{\omega}_j^{\dag}$, and by $ H_j$ the mean curvature vector of the fibers of $F_j$. Because of the $\mathcal N$-invariance we may view $ H_j$ as a vector field on the quotient $\mathcal Q$. Recall we have the density function $\chi$ on $\mathcal Q$ for the renormalized limit measure $\nu_\infty$, as given in Section \ref{ss:2-1}.
 \begin{lemma} \label{l: convergence of second fundamental form}
On $\mathcal Q$, the metrics $g_{\mathcal Q,j}$ converge smoothly to $g_\infty$ in the Cheeger-Gromov topology, and $ H_j$  converges smoothly to $\nabla_{g_\infty}\log\chi$.  
 \end{lemma}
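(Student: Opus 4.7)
The plan is to work on local universal covers, where the hyperk\"ahler structures are non-collapsing and smoothly convergent, and then descend to $\mathcal Q$ via the quotient by the nilpotent group action. Both statements will follow in parallel from the compatibility of the $\mathcal N$-invariant averaging with this limiting process.

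For the convergence $g_{\mathcal Q,j} \to g_\infty$, the starting point is that since $\bm\omega_j^{\dag}$ is $\mathcal N$-invariant, the map $F_j$ is a genuine Riemannian submersion for $g_{\bm\omega_j^{\dag}}$, and $g_{\mathcal Q,j}$ is exactly the quotient metric. Over any small $U \subset \mathcal Q$, the lifted metrics $\tilde g_{\bm\omega_j}$ on the local universal cover $\widetilde{F_j^{-1}(U)}$ are non-collapsing and converge smoothly to $\tilde g_\infty$, the lift of $g_\infty$, by the standard elliptic regularity for hyperk\"ahler triples in a gauge together with the setup at the beginning of Section \ref{s-3}. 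The averaging procedure in Proposition \ref{p:existence of local invariant definite triples} is a bounded operation in every $C^k$ norm, so the lifts $\tilde g_{\bm\omega_j^{\dag}}$ converge smoothly to $\tilde g_\infty$ as well; taking $G_\infty$-quotients (which is smooth because $G_\infty$ acts freely on the regular set by Proposition \ref{p:regular set smooth}) yields smooth convergence $g_{\mathcal Q,j} \to g_\infty$ on $\mathcal Q$.

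For the convergence $H_j \to \nabla_{g_\infty}\log\chi$, I would use the fact that in the $\mathcal N$-invariant Riemannian submersion $(F_j^{-1}(U), g_{\bm\omega_j^{\dag}}) \to (U, g_{\mathcal Q,j})$, the fiber-volume function $V_j(q) \equiv \Vol_{g_{\bm\omega_j^{\dag}}}(F_j^{-1}(q))$ satisfies $H_j = \nabla_{g_{\mathcal Q,j}} \log V_j$ (with the sign convention consistent with Lemma \ref{l:second fund form formula}). This identity is the first-variation formula for fiber area applied to the flow of any basic horizontal vector field, combined with the $\mathcal N$-invariance which ensures that $H_j$ descends to a vector field on $\mathcal Q$ and that the integrand is constant along the fiber. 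By the description of $\chi$ in Section \ref{ss:2-1} as the ratio between the induced vertical Riemannian volume form and a fixed left-invariant form on $G_\infty$, the function $V_j$ differs from the finite-$j$ analogue of $\chi$ by a uniform positive constant. The smooth convergence $\tilde g_{\bm\omega_j^{\dag}} \to \tilde g_\infty$ then yields smooth convergence of $V_j$ up to an additive constant in $\log V_j$, giving $H_j = \nabla \log V_j \to \nabla_{g_\infty}\log\chi$ smoothly.

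The main obstacle is keeping the normalizations consistent across $j$: the bi-invariant Haar measure on $N_L$ used in the averaging of Proposition \ref{p:existence of local invariant definite triples} and the left-invariant form on $G_\infty$ used to define $\chi$ must be aligned under the equivariant Gromov-Hausdorff convergence of the local universal covers. This can be arranged by fixing a uniform trivialization of the fibers inherited from that convergence; in any case only $\nabla\log V_j$ enters the final formula, so any constant ambiguity disappears and the identity $H_j \to \nabla_{g_\infty}\log\chi$ is insensitive to the choice.
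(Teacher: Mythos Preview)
Your proof is correct and follows the same overall strategy as the paper: pass to local universal covers where the convergence is smooth and non-collapsing, and then descend. The one notable difference is in how you identify the limit of $H_j$. The paper fixes local coordinates $u_\alpha$ on $\mathcal Q$, lifts $u_\alpha\circ F_j$ to the universal cover, and uses the smooth convergence of these pulled-back functions (guaranteed by the estimates \eqref{e:higher derivative bound on projection} and \eqref{e:almost riem submersion} in Theorem \ref{t:CFG}) to conclude that the full second fundamental form $\Pi_j$ of the fibers converges to $\Pi_\infty$; the identification $H_\infty = \nabla_{g_\infty}\log\chi$ is then a direct computation on the limit submersion $\widetilde{\mathcal O}_\infty \to \mathcal O$. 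You instead invoke the first-variation identity $H_j=\nabla_{g_{\mathcal Q,j}}\log V_j$, valid for any $\mathcal N$-invariant Riemannian submersion, and then argue that $V_j$, after dividing out the Haar measure of a fundamental domain, converges smoothly to $\chi$. Both arguments are sound; yours is slightly more economical for the mean curvature itself, while the paper's route delivers convergence of the full tensor $\Pi_j$ as a byproduct.
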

\begin{proof}
Given a point $q\in \mathcal Q$, we can find a coordinate neighborhood $\mathcal O$ with local coordinates $u_1, \cdots, u_d$. Let  $\widehat {\mathcal O}_j$ denote the universal cover of $F_j^{-1}(\mathcal O)$ endowed with the pull-back metric $\tilde{g}_{\bm{\omega}_j^{\dag}}$. The deck transformation group of $\widetilde{\mathcal O}_j$ is $\Gamma$. Then by Theorem \ref{t:CFG} we know $(\widetilde{\mathcal O}_j, \tilde{g}_{\bm{\omega}_j^{\dag}}, \Gamma)$ equivariantly $C^k$-converges to a limit $(\widetilde{\mathcal O}_\infty, \tilde{g}_\infty, N)$ for any $k\in\dZ_+$.  By \eqref{e:almost riem submersion} and \eqref{e:higher derivative bound on projection} we may assume $\pi_j\circ F_j$ converges smoothly to a Riemannian submersion $F_\infty: \widetilde{\mathcal O}_\infty\rightarrow \mathcal O$. In particular,  $u_\alpha\circ F_j$ converges smoothly to $u_\alpha\circ F_\infty$. So for any $\alpha, \beta$, we have \begin{align*}\lim_{i\rightarrow\infty}\langle \nabla_{\tilde{g}_{\bm{\omega}_j^{\dag}}} (u_\alpha\circ F_j), \nabla_{\tilde{g}_{\bm{\omega}_j^{\dag}}} (u_\beta\circ F_j)\rangle=\langle \nabla_{\tilde g_\infty} (u_\alpha\circ F_\infty), \nabla_{\tilde g_\infty} (u_\beta\circ F_\infty)\rangle.\end{align*}It follows that the quotient metric $g_{\mathcal Q,j}$ converges smoothly to $g_\infty$ in the coordinates $\{u_\alpha\}$.

For the second statement, we notice that the second fundamental form $\Pi_j$ of fibers of $F_j$ can be computed in terms of the derivatives of $u_\alpha\circ F_j$. In particular $\Pi_j$ also converges smoothly to a limit $\Pi_\infty$, which is the second fundamental form of the fibers of $F_\infty$. So  the corresponding mean curvature vectors $  H_j$ converges to 
$ H_\infty$. It is an easy calculation that $ H_\infty$ descends to the vector field $\nabla_{g_\infty}\log\chi$ on $\mathcal Q$.
\end{proof}

\begin{proof}[Proof of Theorem \ref{t:local inv hk triple}]
We will apply Proposition \ref{p:implicit function theorem}.
As in Section \ref{ss:triple-deformation} we may identify an element $\xi\in\Omega^+(\mathcal Q_j)\otimes \dR^3$ with a $(3\times 3)$ matrix valued function $\bm f$ on $\mathcal Q_j$, and $\xi$ is $\mathcal N$-invariant if and only if $\bm f$ is $\mathcal N$-invariant hence descends to a function on $\mathcal Q$.  
We define the Banach space $\fA$ (reps. $\fB$) to be the completion of the space of $\mathcal N$-invariant elements in $\Omega^{+}({\mathcal Q}_j)\otimes \dR^3$ under  the  $C^{2,\alpha}_{g_{\mathcal Q,j}}$ (resp. $C^{\alpha}_{g_{\mathcal Q,j}}$) norm.
 Then by Proposition \ref{p:existence of local invariant definite triples} for $\eta>0$ small we know the map $\mathscr{F}: B_\eta(\bo)\subset \fA\rightarrow \fB$ as given by \eqref{e:230} is well-defined, and $\|\mathscr{F}(\bo)\|\leq C\tau_j$ for some constant $C>0$. 
 
 For any $\mathcal N$-invariant function $f$ on $\mathcal Q_j$, we have 
$\Delta_{\bm{\omega}_j^{\dag}}f=\Delta_{g_{\mathcal Q,j}}f+\langle  H_j, \nabla_{g_{\mathcal Q,j}}f\rangle. $
 As in Section \ref{ss:2-3}, the Bakry-{\'E}mery Laplace operator   on  $(\mathcal Q, g_\infty, \nu_\infty)$ is given by \begin{align*}\Delta_{\nu_\infty}f\equiv \Delta_{g_\infty}f+\langle \nabla_{g_\infty}\log \chi, \nabla_{g_\infty}f\rangle,\end{align*} where $\nu_{\infty}=\chi \dvol_{g_\infty}$.
Let $\mathscr L(\bm f)\equiv \Delta_{\nu_\infty}\bm f$, and $\mathscr N(\bm f)\equiv \mathscr F(\bm f)-\mathscr L(\bm f)$. Then using the above convergence and the definition of $\mathscr F$ it is easy to see that for $\bm f, \bm g\in B_{\eta}(\bm 0)\subset\fA$ we have 
 \begin{align*}\|\mathscr N(\bm f)-\mathscr N (\bm g)\|\leq (C\eta+\epsilon_j)\|\bm f-\bm g\|\end{align*}
 for some $\epsilon_j\rightarrow 0$. 
 On the other hand, by standard elliptic theory, 
there exists a bounded linear operator $\mathscr P:\fB\rightarrow \fA$ such that $\mathscr L\circ \mathscr P(\bm v)=\bm v$, 
and $\|\mathscr P\bm v\|\leq L\|\bm v\|$ for some $L>0$ and all $\bm v\in \fB$. 
So for $i$ large we may apply Proposition \ref{p:implicit function theorem} to get a solution $\bm f_j$ satisfying $\mathscr F (\bm f_j)=0$ such that  \eqref{e:convergence of derivatives} holds for $k=2$. 
 For $k\geq 2$, \eqref{e:convergence of derivatives} follows from standard elliptic estimates. 
\end{proof}

Now we draw a few consequences of Theorem \ref{t:local inv hk triple}. 

\begin{corollary}[Fibers are Nil]
	In the statement of Theorem \ref{t:CFG}, we may assume $\Gamma$ is contained in $N_L$ so that the collapsing fibers are nilmanifolds. 
	\label{c: no infranil}
\end{corollary}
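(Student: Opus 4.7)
My plan is to argue by contradiction, showing that the hyperk\"ahler condition forces $\Gamma/\Lambda$ to be trivial. Suppose instead that some $\sigma \in \Gamma \setminus \Lambda$ has the form $\sigma = (h,\phi) \in N_L \rtimes \Aut(N)$ with $\phi \neq \Id$. To exploit the hyperk\"ahler rigidity at the infinitesimal level, I would replace the original $\bm\omega_j$ by the $\mathcal N$-invariant hyperk\"ahler triple $\bm\omega_j^{\Diamond}$ produced by Theorem \ref{t:local inv hk triple}; its pullback to the local universal cover $U\times N$ of $F_j^{-1}(U)$ is strictly $N_L$-invariant, which is what makes the infinitesimal calculation below clean.

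The argument then has three steps. First, trivialize $TN$ by left-invariant vector fields and thereby identify every tangent space $T_{(u,n)}(U\times N)$ with the fixed vector space $T_uU\oplus\mathfrak n$; the $N_L$-invariance of $\bm\omega_j^{\Diamond}$ ensures that, under this identification, the hyperk\"ahler triple at different points of a common $N$-fiber agrees. Second, using that $\phi\in\Aut(N)$ and that $dL_m$ is the identity in the left-invariant trivialization, a direct computation shows that the deck action $d\sigma$ is represented by the linear endomorphism
\begin{equation*}
L \;\equiv\; \Id_{T_uU}\oplus\, d\phi|_e \;:\; T_uU\oplus\mathfrak n \longrightarrow T_uU\oplus\mathfrak n.
\end{equation*}
Third, since $\sigma$ is a deck transformation it preserves the lifted triple pointwise, so $L$ lies in the pointwise stabilizer of the hyperk\"ahler triple on $T_uU\oplus\mathfrak n\cong\dH$, namely the copy of $\Sp(1)\cong\SU(2)$ acting on $\dH$ by left quaternionic multiplication.

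To finish, note that $d\geq 1$ implies $T_uU\neq 0$, so $L$ fixes a nonzero vector in $\dH$; since left multiplication by a unit quaternion has no nonzero fixed vector unless it is trivial, $L=\Id$, hence $d\phi|_e=\Id_{\mathfrak n}$, and because $N$ is simply connected nilpotent this forces $\phi=\Id$, contradicting our assumption. Therefore $\Gamma=\Lambda\subset N_L$ and each fiber $\Gamma\setminus N$ becomes the nilmanifold $\Lambda\setminus N$. The overall strategy is an adaptation of the $\Sp(1)$-fixed-point argument from Proposition \ref{p:regular set smooth} to the fiber direction; the main technical step I expect is the careful bookkeeping in the left-invariant trivialization needed to produce a single endomorphism of a fixed $4$-dimensional vector space whose $\Sp(1)$-membership is meaningful, and for this the $N_L$-invariance of $\bm\omega_j^{\Diamond}$ (rather than just of $\bm\omega_j^{\dag}$) is indispensable.
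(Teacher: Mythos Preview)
Your proof is correct and follows essentially the same approach as the paper: both replace $\bm\omega_j$ by the $\mathcal N$-invariant hyperk\"ahler triple $\bm\omega_j^{\Diamond}$ of Theorem~\ref{t:local inv hk triple} and then reduce to the fact that a nontrivial element of $\Sp(1)$ acting on $\dH$ has no nonzero fixed vector. The only cosmetic difference is bookkeeping: the paper composes a given $\phi\in\Gamma$ with some $\psi\in N_L$ so that $\psi\circ\phi$ genuinely fixes a section $U\times\{n_0\}$, and then invokes the ``isolated or open'' dichotomy for fixed sets of triple-preserving isometries (as in Proposition~\ref{p:regular set smooth}); you instead avoid producing a fixed point by using the left-invariant trivialization together with the fiberwise constancy of $\bm\omega_j^{\Diamond}$ to make $d\sigma$ a linear endomorphism of a single copy of $T_uU\oplus\mathfrak n$, and then read off the $\Sp(1)$-membership directly. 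Either way the contradiction comes from $T_uU\neq 0$ being fixed by an element of $\Sp(1)$.
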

\begin{proof}
Locally on a coordinate chart $\mathcal O\subset\mathcal Q$, we can trivialize the fibration as $\mathcal O\times (\Gamma\setminus N)$. On the universal cover $\widetilde{\mathcal O}_j$ of $F_j^{-1}(\mathcal O)$, the action of $\Gamma$ preserves the hyperk\"ahler triple $F_j^*(\bm\omega_j^{\Diamond})$. It also acts by affine transformations on $N$. On the other hand, $N_L$ acts transitively on the fibers of the local universal cover.  Given any $\phi\in \Gamma$, we can find an element $\psi\in N_L$ such that $\psi\circ\phi$ fixes a section of $F_j$. By Theorem \ref{t:local inv hk triple},  $\bm{\omega}_j^{\Diamond}$ is $\mathcal N$-invariant, and hence $\psi\circ\phi$ preserves the hyperk\"ahler triple $F_j^*(\bm{\omega}_j^{\Diamond})$.  As in the proof of Proposition \ref{p:regular set smooth} we know the fixed point set of $\psi\circ\phi$ is either isolated or open. Since it is not isolated, it follows that $\psi\circ\phi$ must be identity, hence $\phi\in N_L$.
\end{proof}

When $d=2$, by the discussion in Section \ref{ss:3-2}, the limit metric $g_\infty$ on $\mathcal R$ is special K\"ahler. 

\begin{corollary}[Local integral monodromy] \label{c:integral monodromy}
	 $g_\infty$ has local integral monodromy.
\end{corollary}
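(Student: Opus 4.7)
The plan is to use Theorem \ref{t:local inv hk triple} to replace $\bm\omega_j$ locally by a hyperk\"ahler triple with a genuine $\dR^2$-symmetry possessing a canonical integer lattice, thereby producing special K\"ahler structures on the quotient with integral monodromy that converge to the limit special K\"ahler structure. Applying Remark \ref{r:integeral monodromy} then gives the conclusion.

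Fix a connected compact domain $\mathcal{Q}\subset\subset\mathcal R$. Since $d=2$, the nilpotent group $N$ of Theorem \ref{t:CFG} has dimension $4-d=2$ and is therefore abelian, so $N=\dR^2$. By Corollary \ref{c: no infranil} the deck transformation group on each local universal cover is a cocompact lattice $\Lambda_j\cong \dZ^2$ in $\dR^2$, so the collapsing fibers are $2$-tori $T_j^2=\dR^2/\Lambda_j$. The $\mathcal N$-invariant hyperk\"ahler triple $\bm\omega_j^{\Diamond}$ produced by Theorem \ref{t:local inv hk triple} is preserved by a genuine local $\dR^2$-action on the universal cover of each $F_j^{-1}(\mathcal O)$, descending to a $T_j^2$-action on $\mathcal Q_j$.

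Choosing a $\dZ$-basis of $\Lambda_j$ as a basis of $\dR^2$ produces two commuting Killing fields and, via the construction of Section \ref{ss:3-2} applied to $\bm\omega_j^{\Diamond}$, moment-map coordinates inducing a special K\"ahler structure $(\omega_j^\sharp,J_j^\sharp)$ on $\mathcal Q$. Different choices of lattice basis differ by an $\SL(2;\dZ)$ transformation (and possibly an overall scalar dilation in both coordinates, which does not alter the monodromy matrix), so the monodromy representation $\pi_1(\mathcal Q)\to \SL(2;\dR)$ takes values, up to conjugation, in $\SL(2;\dZ)$. Hence $(\omega_j^\sharp,J_j^\sharp)$ has integral monodromy. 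Combining \eqref{e:convergence of derivatives}, Proposition \ref{p:existence of local invariant definite triples}, and Lemma \ref{l: convergence of second fundamental form}, on local universal covers the triples $\bm\omega_j^{\Diamond}$ converge smoothly, hence so do the moment maps of the $\dR^2$-actions. It follows that $(\omega_j^\sharp,J_j^\sharp)\to (\omega_\infty,J_\infty)|_{\mathcal Q}$ smoothly, where $(\omega_\infty,J_\infty)$ is the special K\"ahler structure on $\mathcal R$ induced by $g_\infty$ from Proposition \ref{p:special Kahler structure}.

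Since the quotient metrics on $\mathcal Q$ do not collapse we are in Case 1 of the discussion in Section \ref{ss:3-2}, and Remark \ref{r:integeral monodromy} then yields that $(\omega_\infty,J_\infty)|_{\mathcal Q}$ has local integral monodromy. As $\mathcal Q\subset\subset\mathcal R$ is arbitrary, the corollary follows. The main technical obstacle is the last convergence statement: one must track the special holomorphic coordinates and the conjugating elements $P_j\in \SL(2;\dR)$ appearing in \eqref{e:convergence of monodromy matrix} coherently through the smooth convergence of the perturbed hyperk\"ahler triples, so that the choices of lattice basis, the initial complex structure on $\widetilde B_\infty$, and the normalizations of moment maps can be made consistently as $j$ varies.
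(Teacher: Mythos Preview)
Your proposal is correct and follows essentially the same approach as the paper: use the $\mathcal N$-invariant hyperk\"ahler triple $\bm\omega_j^{\Diamond}$ from Theorem \ref{t:local inv hk triple}, choose an integral basis of the lattice $\Lambda_j\subset N=\dR^2$ to obtain special K\"ahler structures on $\mathcal Q$ with integral monodromy, and then pass to the limit via Lemma \ref{l: convergence of second fundamental form} and Remark \ref{r:integeral monodromy}. The paper's proof is more terse but the logic is identical; your explicit invocation of Corollary \ref{c: no infranil} and the closing remark about tracking the conjugating elements $P_j$ are reasonable elaborations rather than genuine departures.
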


\begin{proof}
  For each  $j$, the metric $\bm{\omega}_j^{\Diamond}$ is $\mathcal N$-invariant. Locally consider a trivialization of the fibration $\mathcal O\times (\Gamma\setminus N)$ where $N$ is the abelian group $\dR^2$ and $\Gamma$ is a lattice. Choose an integral basis $\p_{t_1}, \p_{t_2}$ of $\Gamma$, then over the local universal cover as in Section \ref{ss:3-2} we may find moment maps $(x_1, x_2)$ for the symplectic form $\omega_{i, 1}$, which serve as local coordinates on $\mathcal Q$. These are not canonical but are unique up to 
  $\dR^2\rtimes SL(2;\dZ)$.  This shows that the quotient metric $g_{\mathcal Q,j}^{\Diamond}$ on $\mathcal Q$ is naturally a special K\"ahler metric with integral monodromy. 
   Then the conclusion follows from Lemma \ref{l: convergence of second fundamental form} and the discussion in Remark \ref{r:integeral monodromy}. 
\end{proof}

\section{Singularity structure I: Case $d=3$}
\label{s-4}

\subsection{Main results}

We first state the main results of this section.

\begin{theorem}[Local version] \label{t:3d local version}
Let $(X_j^4, g_j, p_j)$ be a sequence of  hyperk\"ahler manifolds such that $\overline{B_2(p_j)}$ is compact and 
$(X_j^4, g_j, \nu_j,p_j) \xrightarrow{mGH} (X_\infty^3, d_{\infty}, \nu_\infty, p_{\infty})$
 with $\dim_{\ess}(X_\infty^3)=3$. Assume that the singular set $\mathcal{S}$ consists of a single point $p_\infty$. Then the following holds. \begin{enumerate}
\item $p_\infty$ is a conical singularity. More precisely, there exists a $\delta>0$ such that the corresponding flat background geometry $(B_\delta(p_\infty)\setminus\{p_\infty\}, g_\infty^{\flat})$ is isomorphic to a  punctured neighborhood of the origin in $\mathbb R^3$ or $\mathbb R^3/\dZ_2$, and $g_\infty=Vg_{\infty}^{\flat}$ for a  smooth  positive harmonic function of the form $V=\sigma r^{-1}+V_0$, where $\sigma\in[0,\infty)$ and $V_0$ is orbifold smooth. 
\item If in addition 
 \begin{equation}\label{e:curvature local  l2 bound}\int_{B_2(p_j)}|\Rm_{g_j}|^2\dvol_{g_j}\leq\kappa_0	
 \end{equation}
 uniformly for some $\kappa_0>0$, then $p_\infty$ is an orbifold singularity, i.e., the function $V$ in item (1)  is orbifold smooth and $\sigma=0$. 
 \end{enumerate}

\end{theorem}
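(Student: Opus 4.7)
My strategy is to analyze the singularity at $p_\infty$ using the special affine metric structure on the regular set $\mathcal R$ (Proposition \ref{p:special affine 3 manifold}) combined with a tangent cone argument, and then apply B\^ocher's theorem for harmonic functions on $\dR^3$.

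\emph{Step 1 (Tangent cone analysis).} I would first take any tangent cone $(Y, d_Y, o) \in \mathcal{T}_{p_\infty}$. A diagonal extraction realizes $Y$ as a measured Gromov--Hausdorff limit of rescaled hyperk\"ahler 4-manifolds, with $\dim_{\ess} Y = 3$ and singular set contained in $\{o\}$. Therefore Section \ref{ss:3-1} applies: $Y \setminus \{o\}$ is a complete special affine 3-manifold and $g_Y = V_Y \cdot g_Y^{\flat}$ for a positive harmonic function $V_Y$. The self-similarity of the cone action, together with the Cheng--Yau type gradient estimate (Theorem \ref{l:Yau gradient estimate}) applied to $\log V_Y$, forces $V_Y$ to be homogeneous of degree $0$ or $-1$, so $V_Y$ is either a constant or of the form $\sigma r^{-1}$. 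In either case the flat part $g_Y^{\flat}$ is a flat 3-cone whose monodromy is contained in $\dR^3 \rtimes \dZ_2$; analyzing the link of $o$ as a spherical 2-orbifold whose linear monodromy lies in $\{\pm\mathrm{Id}\}$ then forces the underlying flat structure of $Y$ to be $\dR^3$ or $\dR^3/\dZ_2$.

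\emph{Step 2 (Uniqueness and B\^ocher expansion).} Since the possible flat background tangent cones form a discrete list (up to rescaling), the connectedness of $\mathcal{T}_{p_\infty}$ (Lemma \ref{l:tangent cone compact}) gives uniqueness of the flat background tangent cone. Using this uniqueness, I identify $(B_\delta(p_\infty) \setminus \{p_\infty\}, g_\infty^{\flat})$ via the developing map of the affine structure with a punctured neighborhood of the origin in $\dR^3$ or $\dR^3/\dZ_2$. The function $V$ is then a positive harmonic function on that punctured ball, and B\^ocher's theorem (applied on the $\dR^3$-cover, and then descending $\dZ_2$-equivariantly) gives $V = \sigma r^{-1} + V_0$ with $\sigma \geq 0$ and $V_0$ orbifold smooth, completing Part (1).

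\emph{Step 3 (Part (2)).} For the second part, the uniform $L^2$-curvature bound together with the $\epsilon$-regularity theorem (Theorem \ref{t:epsilon regularity HK}) confines curvature concentration to a controlled set. Suppose $\sigma > 0$. Then the $\sigma r^{-1}$ pole corresponds to a ``NUT point'' in the 4-manifolds $X_j^4$ converging to $p_\infty$, where the circle fiber of the approximating nilpotent Killing structure degenerates. I would then compare this with the allowed $L^2$-curvature budget: the Chern class of the approximating $S^1$-fibration on $X_j^4$ over a link of $p_\infty$ is proportional to $\sigma$, and via a Chern--Weil / Gauss--Bonnet type estimate this Chern class is controlled by the $L^2$-curvature over a neighborhood of the link. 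Letting the radius shrink and using that the $L^2$-curvature concentration on a shrinking neighborhood of $p_\infty$ tends to zero forces $\sigma = 0$.

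\emph{Main obstacle.} The main difficulty I anticipate is Step 1 --- specifically, establishing the precise structure of the tangent cone in the collapsed setting. The weakly-defined harmonic function $V_Y$ on the Ricci limit, the cone's scaling action, and the monodromy constraint need to be combined carefully to produce the classification into $\dR^3$ or $\dR^3/\dZ_2$; in particular, ruling out more exotic flat cone structures requires exploiting the $\dZ_2$-constraint on the linear monodromy. A secondary obstacle is Step 3, where the quantitative relationship between the polar coefficient $\sigma$ and the $L^2$-energy concentrated at $p_\infty$ needs a delicate analysis of the singular $S^1$-fibration and its topological invariants.
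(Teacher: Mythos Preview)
Your overall architecture --- tangent cone analysis, then developing map plus B\^ocher, then a topological/energy argument for $\sigma=0$ --- matches the paper's, but two of your three steps contain genuine gaps.

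\textbf{Step 1.} You assume that a tangent cone $Y$ at $p_\infty$ carries a ``self-similarity of the cone action'', and then use Cheng--Yau to force $V_Y$ to be homogeneous. In the collapsed setting there is no a priori reason a tangent cone is a metric cone; this is precisely what the paper has to earn. The paper's route is: (i) prove every tangent cone has essential dimension $3$ (Proposition~\ref{l:3d-tangent-cones}) by a torus-sliding argument that rests on the lower bound $\inf_{S_r}V\gtrsim r^{3/2}$ (Proposition~\ref{p:upper-bound-V} and Corollary~\ref{p:lower-bound-V}), which in turn uses the weak Harnack inequality on the Ricci limit space; (ii) show (Lemma~\ref{l:V-growth-tangent-cone}) that \emph{some} tangent cone has $\widehat V_Y^*$ of subquadratic growth at infinity; (iii) deduce its flat background is complete at infinity (Proposition~\ref{p:complete-background}); (iv) invoke the Eschenburg--Schroeder classification of flat ends to force the flat background to be $\dR^3$ or $\dR^3/\dZ_2$. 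Your monodromy-in-$\{\pm\mathrm{Id}\}$ argument on the link is morally related to (iv), but without first establishing completeness of the flat background and ruling out lower-dimensional tangent cones, it has nothing to act on. You also skip the step showing $\dim_{\ess}Y=3$, which is nontrivial and is the content of Proposition~\ref{l:3d-tangent-cones}.

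\textbf{Step 3.} Your proposed mechanism --- that the $L^2$-curvature over a shrinking neighborhood of $p_\infty$ tends to zero, forcing $\sigma=0$ --- is incorrect: curvature is expected to \emph{concentrate} at $p_\infty$, not dissipate (this is exactly where bubbles form). The paper's argument is different and more topological. Assuming $\sigma>0$, the link $\Sigma_i=F_i^{-1}(\Sigma)\subset X_i^4$ has universal cover converging to a round $S^3\subset\dR^4$, with deck group $G_i\subset SU(2)$ and $|G_i|\to\infty$. Using Gray's theorem and uniqueness of minimal symplectic fillings of $S^3/G_i$, the region $W_i$ bounded by $\Sigma_i$ has $\chi(W_i)=\chi(\widetilde{\dC^2/G_i})\to\infty$. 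This contradicts the Chern--Gau\ss--Bonnet formula on $W_i$, where the boundary term vanishes (bounded second fundamental form, collapsing volume) and the interior term is bounded by $\kappa_0$. Your Chern-class heuristic does not access this diverging Euler characteristic.
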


\begin{remark}
It is not hard to see that \eqref{e:curvature local  l2 bound} is equivalent to a uniform bound on the Euler characteristic.  Notice that without assuming \eqref{e:curvature local  l2 bound} the constant {$\sigma$} does not have to vanish. As an example, consider the flat orbifold $Y_k=\mathbb C^2/\mathbb Z_{k+1}$, where $\mathbb Z_{k+1}\subset SU(2)$ is the standard diagonal subgroup acting on $\mathbb C^2$. As $k$ tends to infinity, $Y_k$ collapses to $(\dR^3, \frac{1}{2r} g_{\dR^3})$. Now let $X_k$ be the minimal resolution of $Y_k$ endowed with an ALE hyperk\"ahler metric $g_k$ such that the exceptional set has diameter comparable to $k^{-1}$. Choose a point $p_k$ on the exceptional set in $X_k$, then $(X_k, g_k, p_k)$ also collapses to $(\dR^3, \frac{1}{2r} g_{\dR^3}, 0)$. Here we have $\chi(X_k)=k+1\rightarrow\infty$. In this example we also see an infinite bubble tree of ALE gravitational instantons. We will show that the above cannot occur under the assumption \eqref{e:curvature local  l2 bound}; see Proposition \ref{p:maximal bubble}. 
\end{remark}
\begin{theorem}[Compact version]\label{t:global}
Let $g_j$ be a sequence of hyperk\"ahler metrics on the K3 manifold $\cK$ with $\diam_{g_j}(\mathcal{K})=1$ 
such that
$(\cK, g_j,\nu_j) \xrightarrow{mGH} (X_{\infty}^3, d_{\infty}, \nu_{\infty})$. 
Then $(X_{\infty}^3, d_{\infty})$ is isometric to a flat orbifold $\dT^3/\dZ_2$
and $\nu_{\infty}$ is a multiple of the Hausdorff measure on $\dT^3/\dZ_2$.
\end{theorem}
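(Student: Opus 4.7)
My plan is to reduce Theorem \ref{t:global} to the local structure in Theorem \ref{t:3d local version} together with a global maximum-principle argument on a compact orbifold and an elementary orbifold classification.

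First, since $\mathcal K$ is hyperk\"ahler, Chern--Gauss--Bonnet gives $\int_{\mathcal K}|\Rm_{g_j}|^2\dvol_{g_j}=8\pi^{2}\chi(\mathcal K)=192\pi^{2}$, uniformly in $j$. Hence Corollary \ref{c:finite singular points} yields that the singular set $\mathcal S\subset X_{\infty}^{3}$ is finite. Applying Theorem \ref{t:3d local version}(2) at each point of $\mathcal S$ (the local $L^{2}$--bound being inherited from the global one), every singularity is modeled orbifold-smoothly on $\dR^{3}/\dZ_2$ with $\sigma=0$: both the background flat metric $g_\infty^{\flat}$ and the positive harmonic potential $V$ extend across $\mathcal S$ in the orbifold-smooth sense.

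Therefore $X_{\infty}^{3}$ is a \emph{compact} flat orbifold carrying an orbifold special affine structure, and a globally defined, orbifold-smooth, positive harmonic function $V$ with $g_\infty=V\,g_\infty^{\flat}$. Since any orbifold-smooth harmonic function on a compact orbifold lifts to a $\Gamma$-invariant harmonic function on a local uniformizing chart, the usual strong maximum principle applies and $V$ must be constant. This immediately gives two of the three desired conclusions: $g_\infty$ is a flat orbifold metric, and by \eqref{e:measure in codimension 1 case} the renormalized limit measure $d\nu_\infty=c V^{-1/2}\dvol_{g_\infty}$ reduces to a positive multiple of $\dvol_{g_\infty}$, which is proportional to the $3$--dimensional Hausdorff measure.

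It remains to identify $X_{\infty}^{3}$ topologically as $\dT^{3}/\dZ_2$. Because $X_{\infty}^{3}$ is a compact flat $3$-orbifold whose only orbifold points are of type $\dR^{3}/\dZ_2$, its orbifold universal cover is $\dR^{3}$ and its point group is a subgroup of $\{\pm\Id\}$. One must rule out the smooth case (point group trivial): in that case Theorem \ref{t:CFG} (combined with Corollary \ref{c: no infranil}) makes $\mathcal K\to X_{\infty}^{3}$ a genuine $S^{1}$--bundle over $X_{\infty}^{3}$ for all large $j$, and the homotopy exact sequence with $\pi_{1}(\mathcal K)=0$ forces $\pi_{1}(X_{\infty}^{3})=0$, contradicting the fact that $\pi_1$ of a compact flat $3$-manifold is infinite. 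Thus the point group is $\dZ_2=\{\pm\Id\}$, so $X_{\infty}^{3}=\dR^{3}/\Gamma$ with $\Gamma$ a cocompact extension $1\to T\to\Gamma\to\dZ_2\to 1$ of a rank-$3$ lattice $T$; the non-trivial coset acts on $\dT^{3}=\dR^{3}/T$ as $x\mapsto -x+v$ for some $v\in\dR^{3}$, and $2x\equiv v\pmod T$ always has exactly $8$ solutions, giving $X_{\infty}^{3}\cong \dT^{3}/\dZ_2$ with its standard Kummer-type flat orbifold structure.

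The main obstacle I anticipate is the global step: showing that $V$ extends globally as a harmonic function on the compact orbifold (which relies on the precise orbifold-smooth conical model $\sigma=0$ from Theorem \ref{t:3d local version}(2)) and that the topological reduction to $\dT^{3}/\dZ_2$ actually uses only the mild input that $\pi_{1}(\mathcal K)=0$ together with the $\mathcal N$--structure of Section \ref{s-6}; any subtlety around singular points must be handled either by an orbifold-fundamental-group variant of the above exact sequence or by localizing the $S^{1}$--bundle argument to the smooth locus of $X_{\infty}^{3}$.
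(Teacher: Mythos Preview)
Your argument is correct and follows the same overall outline as the paper: finite $\mathcal S$ via Chern--Gau\ss--Bonnet, local orbifold structure from Theorem \ref{t:3d local version}, $V$ constant by a maximum principle, then a Bieberbach-type classification combined with the constraint $\pi_1(\mathcal K)=1$. Two details differ. The paper deliberately uses only item (1) of Theorem \ref{t:3d local version}: near each $p_\alpha\in\mathcal S$ one has $V=\sigma_\alpha r_\alpha^{-1}+h_\alpha$, so if any $\sigma_\alpha>0$ then $V\to\infty$ there, the minimum of $V$ is attained on the orbifold-smooth locus, and the strong maximum principle gives $V\equiv\text{const}$ (hence all $\sigma_\alpha=0$); this step does not need the $L^2$ curvature bound beyond finiteness of $\mathcal S$. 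For the point-group step, rather than reading off $H\subset\{\pm\Id\}$ from the special affine monodromy as you do, the paper writes $X_\infty^3=\dT^3/H$ with $H\leq O(3)$ and argues that any $\gamma\in H_0\equiv H\cap SO(3)$ with a fixed point on $\dT^3$ would fix an entire rotation axis, contradicting isolated singularities, so $H_0$ acts freely; then $\pi_1(X_\infty^3)=1$ (obtained from \cite{SW}, which plays the role of your direct $S^1$-bundle exact-sequence argument) forces $H_0=\{1\}$ and hence $H=\dZ_2$. Your route via the special affine structure is a bit more direct for bounding the point group; the paper's route has the mild advantage of never invoking item (2) of Theorem \ref{t:3d local version}. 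The obstacle you anticipate does not arise: your $S^1$-bundle argument is only used to exclude the smooth case, where Theorem \ref{t:CFG} applies to $\mathcal Q=X_\infty^3$ itself.
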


\begin{theorem}[Complete version]
\label{t:complete-space}
Let $(X_j^4, g_j, p_j)$ be a sequence of   hyperk\"ahler  manifolds such that 
$(X_j^4, g_j, \nu_j, p_j) \xrightarrow{mGH} (X_{\infty}^3, d_{\infty}, \nu_{\infty}, p_{\infty})
$. Assume that $X_\infty^3$ is complete non-compact and  the singular set $\mathcal S$ is finite. Then the following holds. 
\begin{enumerate}
	\item  The corresponding flat background geometry of $X_{\infty}^3$ is a complete flat orbifold of the form $\dR^3/\Gamma$, where $\Gamma$ is a subgroup of $\dR^3\rtimes\dZ_2$.  More precisely, we have the following classification (in terms of  the asymptotic volume growth):
\begin{enumerate}
\item[(a)] Euclidean space $\dR^3$, and its quotient $\dR^3/\dZ_2$.
\item[(b)] flat product $\dR^2\times S^1$,  
and its quotient  $(\dR^2\times S^1)/\dZ_2$,
\item[(c)] flat product
 $\dR\times \dT^2$, and its quotient $(\dR\times \dT^2)/\dZ_2$. 
\end{enumerate}
\item In Case (a) the positive harmonic function $V$ is of the form $\sigma r^{-1}+c$ with $\sigma\geq 0, c\in \dR$; in Case (b) and (c), $V$ must be a constant;
\item Assume that
\begin{align} \label{e:complete L2 bound}
\int_{X_j^4}|\Rm_{g_j}|^2\dvol_{g_j} \leq \kappa_0,
\end{align}
uniformly for some $\kappa_0>0$. Then $V$ must be a constant. 
\end{enumerate}
\end{theorem}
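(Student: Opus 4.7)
The plan is to promote the local conical-singularity description in Theorem~\ref{t:3d local version}(1) to a complete flat orbifold structure on $X_\infty^3$, classify such orbifolds via Bieberbach, and then rigidify the harmonic potential $V$ using parabolicity of the resulting flat models.

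For part (1), at each $p_i \in \mathcal S$ the flat background $g_\infty^\flat = V^{-1}g_\infty$ is modeled on a punctured ball in $\dR^3$ or $\dR^3/\dZ_2$ by Theorem~\ref{t:3d local version}(1), and the decomposition $V = \sigma/r + V_0$ ensures that the parallel frame $\{dx,dy,dz\}$ from Proposition~\ref{p:special affine 3 manifold} extends across each $p_i$ in the orbifold sense. Gluing these local pieces to $(\mathcal R, g_\infty^\flat)$ therefore produces a complete flat $3$-orbifold whose developing-map monodromy lies in $\dR^3 \rtimes \dZ_2$. Bieberbach's theorem, restricted to rotational holonomy in $\{\pm\Id\}$, yields a list stratified by the rank $k$ of the translational sublattice: $k\in\{0,1,2\}$ give the six models of cases (a), (b), (c), distinguished by volume growth $\sim R^{3-k}$, while $k=3$ produces a compact quotient of $\dT^3$ and is ruled out by non-compactness of $X_\infty^3$.

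For part (2), in cases (b) and (c) the flat orbifold is \emph{parabolic} by Varopoulos's criterion (volume growth at most $\mathcal{O}(R^2)$, equivalently $\int^\infty R/\Vol(B_R^{g^\flat})\,dR = \infty$), so every non-negative superharmonic function is constant. Viewing $V$ as a distribution on the extended flat orbifold, the $\sigma_i \geq 0$ asymptotics from Theorem~\ref{t:3d local version}(1) give $\Delta_{g^\flat}V = -4\pi\sum_i \sigma_i \delta_{p_i} \leq 0$, so $V$ must be constant and every $\sigma_i$ vanishes. In case (a), Liouville's theorem on $\dR^3$ (or $\dR^3/\dZ_2$) forces $V - \sum_i \sigma_i/(4\pi|x-p_i|)$ to be constant, which I interpret as the claimed $\sigma r^{-1}+c$ form. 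For part (3), the global hypothesis \eqref{e:complete L2 bound} restricts at each $p_i$ to the local bound \eqref{e:curvature local  l2 bound}, so Theorem~\ref{t:3d local version}(2) applies and forces $\sigma_i = 0$ at every singular point including those in case (a); then $V$ becomes a smooth positive harmonic function on the complete flat orbifold, hence constant by the Cheng--Yau gradient estimate (Theorem~\ref{l:Yau gradient estimate}) applied on arbitrarily large balls.

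The hardest step I expect is the distributional argument in part (2): one has to verify that the pointwise asymptotic $V=\sigma/r+V_0$ at each $p_i$ produces a genuine global distributional inequality $\Delta_{g^\flat}V \leq 0$ across the orbifold singularities, not merely a pointwise estimate away from them. This is especially delicate in case (c) since $\dR\times\dT^2$ is borderline parabolic ($\int^\infty R/R\,dR$ diverges only logarithmically). A secondary subtlety is checking in part (1) that the monodromy sits in $\dR^3 \rtimes \dZ_2$ rather than the full $\dR^3 \rtimes O(3)$; this relies on the Gibbons--Hawking origin of the special affine structure, so that the $\dZ_2$ is the only non-trivial rotation arising from the hyperk\"ahler reduction.
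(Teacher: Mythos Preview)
There is a genuine gap in part~(1): you never establish that the flat background metric $g_\infty^\flat = V^{-1}g_\infty$ is \emph{complete at infinity}. Theorem~\ref{t:3d local version}(1) gives you completeness of $g_\infty^\flat$ across each singular point in $\mathcal S$, but completeness of $g_\infty$ itself tells you nothing about completeness of $g_\infty^\flat$ at infinity, since $V$ could a priori grow without bound and make the flat metric incomplete. Your sentence ``gluing these local pieces \ldots\ therefore produces a complete flat $3$-orbifold'' is precisely where this step is missing, and without it Thurston/Bieberbach cannot be invoked. The paper handles this via a chain of analytic lemmas: first a maximum-principle argument (Lemma~\ref{l:complete infimum V}) shows $\limsup_{r\to\infty}\inf_{S_r(p_\infty)}V<\infty$; then one rules out $\dR$ as an asymptotic cone and proves an annulus connectedness statement at infinity (Lemma~\ref{l:connectedness at infinity}, analogous to Lemma~\ref{l:connectedness}); finally the Harnack inequality upgrades the $\inf$ bound to a $\sup$ bound, which gives completeness of $g_\infty^\flat$ (Proposition~\ref{p: flat background geometry for complete}). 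This is the main analytic content of the theorem and cannot be bypassed.

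For part~(2), once completeness is in hand, your parabolicity/distributional superharmonic argument in cases (b) and (c) is a valid alternative to the paper's route, which instead averages $V$ over the compact torus factor to obtain a positive harmonic function on $\dR^2\setminus\{0\}$ (respectively $\dR$), applies Liouville via the exponential map to see the average is constant, deduces all $\sigma_i$ vanish, and then concludes $V$ is constant. The averaging argument is more elementary and sidesteps the ``borderline parabolic'' concern you raised for $\dR\times\dT^2$. Your reading of case~(a) and your argument for part~(3) match the paper's.
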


\subsection{Asymptotic analysis near the singularity}

\label{ss:4-2}

Now we focus on the local situation in the setting of Theorem \ref{t:3d local version}. 
The discussion in Section \ref{ss:3-1} implies that there is a special affine metric $g_\infty$ on $B_2(p_\infty)\setminus\{p_\infty\}$. We fix a choice of the harmonic function $V$, and denote by $g_\infty^{\flat}\equiv V^{-1}g_\infty$ the flat background metric.  The main goal of this subsection is to obtain a lower bound of $V$ near $p_\infty$ (Corollary \ref{p:lower-bound-V}), which  gives control of the flat background geometry near $p_\infty$ (Proposition \ref{p:one-point-completion}).

We start with a simple lemma. The proof follows directly from the volume comparison theorem for the renormalized limit measure $\nu_{\infty}$.

\begin{lemma}\label{l:vitali}
For any $r\in (0,\frac{1}{10})$, consider the annulus $A_{r,2r}(p_{\infty})$ centered at the singular point $p_{\infty}$.
Let $\{x_{\alpha}\}_{\alpha=1}^N\subset A_{r,2r}(p_{\infty})$ be a $\frac{r}{4}$-dense subset such that $\{B_{r/20}(x_{\alpha})\}_{\alpha=1}^N$
are disjoint.
Then the following hold:
\begin{enumerate}
\item $A_{r,2r}(p_{\infty})\subset \bigcup\limits_{\alpha=1}^N B_{r/4}(x_{\alpha})\subset A_{\frac{r}{4},\frac{9r}{4}}(p_{\infty})$.

\item There is a uniform constant $N_0>0$ independent of $r$ such $ N \leq N_0$.
\end{enumerate}
\end{lemma}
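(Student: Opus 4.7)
The plan is to prove this as a standard packing estimate using Bishop--Gromov volume comparison applied to the renormalized limit measure $\nu_\infty$. Since the hyperk\"ahler metrics $g_j$ are Ricci-flat, by Cheeger--Colding's convergence of the renormalized measures (Section~\ref{ss:2-1}) the space $(X_\infty, d_\infty, \nu_\infty)$ inherits a nonnegative Ricci curvature bound in the $\RCD$ sense; in particular the Bishop--Gromov relative volume comparison holds for $\nu_\infty$.

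For part (1), both inclusions are essentially tautological. Since the $\{x_\alpha\}$ form a $(r/4)$-dense subset of $A_{r,2r}(p_\infty)$, every point of the annulus lies within distance $r/4$ of some $x_\alpha$, which gives the first inclusion. For the second inclusion, for any $y\in B_{r/4}(x_\alpha)$ with $x_\alpha\in A_{r,2r}(p_\infty)$ the triangle inequality yields $d(y,p_\infty)\in(3r/4,9r/4)\subset(r/4,9r/4)$, so $B_{r/4}(x_\alpha)\subset A_{r/4,9r/4}(p_\infty)$.

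For part (2), the disjoint balls $B_{r/20}(x_\alpha)$ all lie in $A_{r/4,9r/4}(p_\infty)\subset B_{9r/4}(p_\infty)$, so
\begin{equation*}
\sum_{\alpha=1}^N \nu_\infty\bigl(B_{r/20}(x_\alpha)\bigr) \;\leq\; \nu_\infty\bigl(B_{9r/4}(p_\infty)\bigr).
\end{equation*}
To bound $N$ it suffices to show each term on the left is a uniform fraction of the right-hand side. I will apply Bishop--Gromov centered at $x_\alpha$: since $d(x_\alpha,p_\infty)<2r$ we have $B_{9r/4}(p_\infty)\subset B_{5r}(x_\alpha)$, so
\begin{equation*}
\frac{\nu_\infty(B_{r/20}(x_\alpha))}{\nu_\infty(B_{9r/4}(p_\infty))} \;\geq\; \frac{\nu_\infty(B_{r/20}(x_\alpha))}{\nu_\infty(B_{5r}(x_\alpha))} \;\geq\; c_0,
\end{equation*}
for a universal constant $c_0>0$ coming from Bishop--Gromov with ratio $100$. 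Combining the two displayed inequalities yields $N\cdot c_0\leq 1$, i.e.\ $N\leq N_0\equiv c_0^{-1}$, independent of $r$.

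The proof is essentially immediate from volume comparison, so there is no real obstacle; the only thing worth emphasizing is that one uses relative volume comparison for $\nu_\infty$ (which is valid in the Ricci-limit / $\RCD(0,n)$ setting) rather than for the unweighted Hausdorff measure, since we are in the collapsed regime where the two need not agree.
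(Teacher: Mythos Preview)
Your proof is correct and takes essentially the same approach as the paper, which simply states that the lemma ``follows directly from the volume comparison theorem for the renormalized limit measure $\nu_{\infty}$.'' You have spelled out the standard packing argument in detail, using exactly the relative volume comparison for $\nu_\infty$ that the paper invokes (and records in Section~\ref{ss:2-1}).
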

Let $\mathcal{C}_1(r)$, $\mathcal{C}_2(r),\ldots, \mathcal{C}_{\ell}(r)$ be 
the  connected components of the union $\bigcup\limits_{\alpha=1}^N B_{r/4}(x_{\alpha})$. Obviously $\ell\leq N\leq N_0$.  The following is a direct application of Theorem \ref{l:Yau gradient estimate}.

\begin{lemma}
[Harnack inequality] \label{l:harnack-annulus}  There exists a uniform constant $c_0>0$ independent of $r$ and the choice of the covering, such that 
for any $x,y\in A_{r,2r}(p_{\infty})\cap\mathcal{C}_k(r)$ with $1\leq k\leq \ell$,
\begin{align}
c_0^{-1}\leq \frac{V(x)}{V(y)} \leq c_0.
\end{align}
\end{lemma}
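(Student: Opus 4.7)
The plan is to apply the Cheng--Yau gradient estimate (Theorem \ref{l:Yau gradient estimate}) to the positive harmonic function $V$ on a suitable ball centered at each $x_\alpha$, and then chain the resulting local bounds across the overlapping covering balls making up $\mathcal{C}_k(r)$. The crucial input is that $N\leq N_0$ is uniformly bounded (Lemma \ref{l:vitali}), so the length of the chain needed is also bounded independently of $r$.

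First, since $d(x_\alpha, p_\infty)\in [r, 2r]$, the ball $B_{r/2}(x_\alpha)$ avoids the unique singular point $p_\infty$, so it is contained in the smooth set $\mathcal{G}$, where the Bakry--\'Emery bound $\Ric^{1}_{g_\infty}\geq 0$ holds (Section \ref{ss:3-1}). On this ball $V$ is a positive $\Delta_{\nu_\infty}$-harmonic function, so Theorem \ref{l:Yau gradient estimate} yields
\begin{equation*}
\sup_{B_{r/4}(x_\alpha)}|\nabla \log V|\leq C_1\, r^{-1}
\end{equation*}
for a uniform constant $C_1$. For any two points $z, z'\in B_{r/4}(x_\alpha)$, concatenating a minimizing geodesic from $z$ to $x_\alpha$ with one from $x_\alpha$ to $z'$ gives a path of length $\leq r/2$ lying entirely in $B_{r/4}(x_\alpha)$ (each segment stays in the ball by minimality). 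Integrating the gradient bound along it yields $|\log V(z)-\log V(z')|\leq C_1/2$, so $V(z)/V(z')\leq e^{C_1/2}$.

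Given $x, y\in A_{r,2r}(p_\infty)\cap \mathcal{C}_k(r)$, by Lemma \ref{l:vitali}(1) we may write $x\in B_{r/4}(x_{\alpha_0})$ and $y\in B_{r/4}(x_{\alpha_m})$, and these two balls are joined inside $\mathcal{C}_k(r)$ by a chain $B_{r/4}(x_{\alpha_0}),\dots, B_{r/4}(x_{\alpha_m})$ of consecutively overlapping balls with $m\leq N\leq N_0$ by Lemma \ref{l:vitali}(2). Picking any point in each consecutive intersection and applying the single-ball bound at each step of the chain gives $V(x)/V(y)\leq e^{(N_0+1)C_1/2}$, and symmetrically for the reverse ratio. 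Setting $c_0\equiv e^{(N_0+1)C_1/2}$ completes the proof.

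There is no real obstacle here: once one checks that each $B_{r/2}(x_\alpha)$ lies in the smooth set (immediate from the assumption that $p_\infty$ is the unique singularity and $d(x_\alpha,p_\infty)\geq r$), the gradient estimate applies verbatim, and the uniform bound $N\leq N_0$ from Lemma \ref{l:vitali} ensures the chaining constant is independent of $r$.
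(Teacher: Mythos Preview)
Your proof is correct and is precisely the ``direct application of Theorem \ref{l:Yau gradient estimate}'' that the paper invokes without further detail: apply the Cheng--Yau gradient estimate on each ball $B_{r/2}(x_\alpha)\subset\mathcal{G}$ to control $\log V$ on $B_{r/4}(x_\alpha)$, then chain across the at most $N_0$ overlapping balls from Lemma \ref{l:vitali}. There is nothing to add.
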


\begin{proposition}\label{p:upper-bound-V}There exists a constant  $\ell_0>0$ such that 
$\sup\limits_{S_r(p_{\infty})}V\geq \ell_0 \cdot r^{\frac{3}{2}}$ for all $r\in(0, 1]$. 
\end{proposition}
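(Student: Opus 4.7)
The plan is to argue by contradiction, combining the Harnack-type estimates with the explicit formula $d\nu_\infty = c V^{-1/2}\,\dvol_{g_\infty}$ from~\eqref{e:measure in codimension 1 case} and the linear volume estimate~\eqref{e:linear-volume-estimate}. Suppose that, for some sequence $r_j\to 0$, the quantity $M_j\equiv\sup_{S_{r_j}(p_\infty)}V$ satisfies $M_j/r_j^{3/2}\to 0$. Using the Vitali-type covering of Lemma~\ref{l:vitali} together with the Harnack inequality of Lemma~\ref{l:harnack-annulus} and the Cheng-Yau gradient estimate (Theorem~\ref{l:Yau gradient estimate}), the smallness of $V$ on $S_{r_j}(p_\infty)$ propagates to a uniform bound $V\lesssim M_j$ on each connected component of the annulus $A_{r_j,2r_j}(p_\infty)$.

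The next step is to bound $\nu_\infty(A_{r_j,2r_j}(p_\infty))$ from below. The special affine metric structure on the regular set (Proposition~\ref{p:special affine 3 manifold}) shows that $g_\infty$ is uniformly non-collapsed at scale $r_j$ on $\mathcal{R}$: each covering ball $B_{r_j/20}(x_\alpha)$ from Lemma~\ref{l:vitali} contains, after expressing things in $g_\infty^\flat$-coordinates and rescaling by the factor $V$, a region of $g_\infty$-volume at least $c\,r_j^{3}$. Combined with $d\nu_\infty=cV^{-1/2}\,\dvol_{g_\infty}$ and the uniform bound $V\lesssim M_j$, this gives
\begin{equation*}
\nu_\infty(A_{r_j,2r_j}(p_\infty))\gtrsim M_j^{-1/2}\cdot r_j^{3}.
\end{equation*}
Comparing with $\nu_\infty(B_{2r_j}(p_\infty))\le 2Cr_j$ produces, already at a single scale, the preliminary polynomial bound $M_j\gtrsim r_j^{4}$.

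The main obstacle is to upgrade this to the sharper exponent $r_j^{3/2}$ stated in the proposition. The plan is to iterate the above estimate across the dyadic scales $r_k=2^{-k}$ between $1$ and $r_j$, using the Cheng-Yau gradient estimate to control the ratio $V(r_{k+1})/V(r_k)$ from below by a universal constant determined by the effective $4$-dimensional Bakry-\'Emery structure furnished by $d\nu_\infty=cV^{-1/2}\,\dvol_{g_\infty}$ together with $\Ric^{1}_{g_\infty}\geq 0$. The resulting compounded polynomial decay yields $M(r)\gtrsim r^{\alpha}$, and the delicate technical point is to verify that the constants conspire to permit $\alpha\leq 3/2$. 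An alternative route, bypassing the constant-tracking, is a rescaling at $p_\infty$: after normalising $V$ by $r_j^{3/2}$ and rescaling $g_\infty^\flat$ so that a nontrivial tangent model appears, the limit function would be a non-negative harmonic function on this model vanishing on the unit sphere, which contradicts positivity by the mean-value property; making this rigorous requires preparing enough structure of the flat background $g_\infty^\flat$ near $p_\infty$, which is precisely part of the structural theorem being proved.
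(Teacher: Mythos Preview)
Your approach has a genuine gap at the volume lower bound step, and the later parts are admittedly incomplete. The claim that each covering ball $B_{r_j/20}(x_\alpha)$ has $g_\infty$-volume at least $c\,r_j^{3}$ presupposes that the flat background $g_\infty^\flat$ is non-collapsed at the scale $r_j/\sqrt{M_j}$ near $x_\alpha$. Proposition~\ref{p:special affine 3 manifold} only provides the \emph{existence} of local special affine coordinates; it says nothing about the size of the chart, i.e., about the $g_\infty^\flat$-injectivity radius as one approaches $p_\infty$. If the flat background were locally isometric to $S^1_\epsilon\times\dR^2$ with $\epsilon$ small, a $g_\infty$-ball of radius $r_j$ would have $g_\infty$-volume of order $M_j^{1/2}\epsilon\, r_j^{2}$ rather than $r_j^{3}$, and the estimate $\nu_\infty(A_{r_j,2r_j})\gtrsim M_j^{-1/2}r_j^{3}$ collapses. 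Non-collapse of the tangent cones (Proposition~\ref{l:3d-tangent-cones}) is established \emph{after} the present proposition, precisely by using it, so one cannot invoke that here. You also correctly note that the iteration from exponent $4$ to $3/2$ and the rescaling alternative are not made rigorous, the latter being circular.

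The paper's argument avoids all of this by working entirely on the metric measure space $(X_\infty,d_\infty,\nu_\infty)$ without any recourse to the flat background geometry. Assuming $\sup_{S_{r_i}}V\le r_i^{3/2}$ along a sequence $r_i\to0$, one uses a cutoff and integration by parts (with the gradient estimate supplying $|\nabla V|\lesssim r_i^{1/2}$ on $A_{r_i,2r_i}$) to show $\int_{B_{1/2}(p_\infty)}|\nabla V|^2\,d\nu_\infty<\infty$, so $V\in W^{1,2}(B_{1/2}(p_\infty))$; since a single point has zero $2$-modulus, the harmonicity $\Delta_{\nu_\infty}V=0$ extends weakly across $p_\infty$ in the sense of Definition~\ref{def:harmonic functions}. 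The weak Harnack inequality (Theorem~\ref{t:weak-harnack}) then gives $\essinf_{B_{1/4}(p_\infty)}V\ge c_0>0$, contradicting $\sup_{S_{r_i}}V\to0$. The key point is that the exponent $3/2$ is exactly the threshold making the Dirichlet energy of $V$ finite, and the RCD-type Harnack theory handles the singular point without any prior knowledge of the local flat geometry.
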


\begin{proof}
 Suppose not, then there are a sequence of numbers $r_i\to 0$
such that
\begin{equation}\label{e:bounded-growth-r_i}
\sup\limits_{S_{r_i}(p_{\infty})} V \leq   r_i^{\frac{3}{2}}.
\end{equation}
Since $\Delta_{\nu_\infty}V=0$  on $A_{r_i,1}(p_{\infty})$,  applying Lemma \ref{l:harnack-annulus} and Theorem \ref{l:Yau gradient estimate}, we have that 
\begin{align*}
\sup\limits_{A_{r_i,2r_i}(p_{\infty})}|V|+r_i|\nabla_{g_\infty}V|\leq Cr_i^{\frac{3}{2}}.
\end{align*}
For any Lipchitz function $\phi$ with $\Supp(\phi) \subset A_{r_i,1}(p_{\infty})$, using integration by parts, \begin{align}
\int_{A_{r_i, 1}(p_{\infty})} \langle \nabla_{g_{\infty}} V , \nabla_{g_{\infty}} \phi\rangle_{g_{\infty}} d{\nu}_{\infty} = 0. \label{e:harmonic-distribution}
\end{align}
We choose a cut-off function $\chi_i$ with $\Supp(\chi_i)\in A_{r_i,1}(p_{\infty})$,  $\chi_i\equiv 1$ on $A_{2r_i, 1/2}(p_{\infty})$,  and
\begin{align*}\sup\limits_{A_{r_i,2r_i}(p_{\infty})} |\nabla_{g_{\infty}} \chi_i|_{g_{\infty}}\leq C\cdot r_i^{-1},\quad  \sup\limits_{A_{1/2,1}(p_{\infty})} |\nabla_{g_{\infty}} \chi_i|_{g_{\infty}}\leq C.\end{align*} 
Applying $\phi \equiv  \chi_i\cdot V$ to   \eqref{e:harmonic-distribution}, we obtain \begin{align}
 \int_{A_{2r_i, 1/2}(p_{\infty})}|\nabla_{g_{\infty}} V|_{g_{\infty}}^2 d{\nu}_{\infty}
  \leq   \int_{A_{r_i, 2r_i}(p_{\infty})\cup A_{1/2,1}(p_{\infty})} V\cdot |\nabla_{g_{\infty}} V|_{g_{\infty}} \cdot |\nabla _{g_{\infty}}\chi_i|_{g_{\infty}} d{\nu}_{\infty} 
  \leq   C .
\end{align}
Letting $r_i\to 0$, we find that \begin{equation}\label{e:finite W12 norm}\int_{B_{1/2}(p_\infty)\setminus\{p_\infty\}}|\nabla_{g_{\infty}} V|_{g_{\infty}}^2d\nu_{\infty}<\infty\end{equation} Now we claim $V$ is a harmonic function on $B_{1/2}(p_\infty)$, in the sense of Definition \ref{def:harmonic functions}. First, by Theorem 5.1 of \cite{Cheeger-Differentiability}, given a Lipschitz function, the minimal upper gradient can be characterized by the local slope.
Also applying Lemma 1.42 of \cite{Bjorn-Bjorn}, $\Mod_2(\{p_{\infty}\})=0$.  Therefore,
 the function $u: B_{1/2}(p_\infty)\rightarrow \dR\cup \{\infty\}$ defined by setting $u(x)\equiv |\nabla_{g_\infty}V|$ for $x\neq p_\infty$ and $u(p_\infty)=\infty$, is a minimal weak upper gradient of $V$ on $B_{1/2}(p_\infty)$. So \eqref{e:finite W12 norm} implies $V\in W^{1,2}(B_{1/2}(p_\infty))$, and the Cheeger energy is given by $\Ch(V)=\int_{B_{1/2}(p_\infty)}|\nabla_{g_\infty}V|^2d\nu_\infty$. Moreover, 
 applying similar arguments as in the proof of  \eqref{e:finite W12 norm},  one can see that \eqref{e:harmonic-distribution} implies that
 \begin{align*}
\int_{B_{1/2}(p_{\infty})}\langle\nabla_{g_\infty} V, \nabla_{g_\infty} \phi\rangle d{\nu}_{\infty}= 0,
\end{align*}
for any compactly supported Lipschitz function $\phi$ on $B_{1/2}(p_{\infty})$. 
This proves the claim. Now  by Theorem \ref{t:weak-harnack} we obtain 
\begin{align}
\underset{B_{1/4}(p_{\infty})}{\ess\inf} V 	\geq C \cdot \Big(\int_{B_{1/2}(p_{\infty})} V^2 d\nu_{\infty}\Big)^{\frac{1}{2}} \geq c_0 > 0.
\end{align}
This contradicts \eqref{e:bounded-growth-r_i}.
\end{proof}

\begin{proposition}
\label{l:3d-tangent-cones}
Any tangent cone $Y$ at $p_{\infty}$ satisfies $\dim_{\ess}(Y)=3$. 
\end{proposition}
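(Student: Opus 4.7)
The plan is to show $\dim_{\ess}(Y) = 3$ directly by exhibiting an open smooth $3$-dimensional subset of $Y$. A standard diagonal subsequence argument realizes $Y$ itself as a measured Gromov-Hausdorff limit of rescaled hyperk\"ahler 4-manifolds, so the structural results of Section \ref{s-3} — in particular Proposition \ref{p:regular set smooth} — apply to $Y$.

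First, I apply Proposition \ref{p:upper-bound-V} at scales $r_i = 1/\lambda_i$ (where $\lambda_i \to \infty$ are the rescaling factors defining $Y$) to select points $x_i \in S_{r_i}(p_\infty)$ with $V(x_i) \geq \ell_0 r_i^{3/2}$. In the rescaled metric $\lambda_i^2 g_\infty$, each $x_i$ has distance $1$ from $p_\infty$, so after passing to a subsequence $x_i \to x_Y \in Y$ with $d_Y(p^*, x_Y) = 1$. The Harnack inequality on the annulus $A_{r_i/4, 4 r_i}(p_\infty)$ (Lemma \ref{l:harnack-annulus}), combined with the gradient estimate (Theorem \ref{l:Yau gradient estimate}), shows that on the connected Harnack component of this annulus containing $x_i$ the positive harmonic function $V$ is uniformly comparable to $V(x_i)$, with $|\nabla \log V| = O(1/r_i)$.

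Next, I bubble around $x_i$ at a finer rate $\nu_i = \lambda_i \mu_i$ with $\mu_i \to \infty$ slowly enough that the rescaled unit ball around $x_i$ in $(X_\infty, \nu_i^2 g_\infty)$ lies well inside the Harnack component (requiring $\mu_i \to \infty$), and such that the oscillation of $V/V(x_i)$ over this rescaled unit ball tends to zero (which is automatic from the $O(1/\mu_i)$ gradient bound). The rescaled metric then converges to a constant multiple of the flat background $g_\infty^\flat$, so the rescaled unit ball converges isometrically to an open set in $\mathbb R^3$. On the corresponding 4D lifts in the diagonal approximating sequence of hyperk\"ahler manifolds, the curvature is built from derivatives of $V$ and decays in the rescaled norm, yielding a uniform curvature bound. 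By a standard iterated-limit diagonal argument, this bubble realizes a tangent cone of $Y$ at $x_Y$.

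Consequently $x_Y$ lies in the regular set $\mathcal R(Y)$ of the tangent cone, and by Proposition \ref{p:regular set smooth} applied to $Y$ it belongs to the smooth set $\mathcal G(Y)$; a neighborhood of $x_Y$ in $Y$ is isometric to an open subset of flat $\mathbb R^3$. Therefore $\mathcal G(Y)$ is $3$-dimensional near $x_Y$, forcing $\dim_{\ess}(Y) = 3$. The main delicacy lies in choosing the rate $\mu_i$ and in the iterated-limit bookkeeping to identify the bubble at $x_i$ with a tangent cone of $Y$ at $x_Y$; these are standard but require care to align the scales and to ensure that the bubble ball does not wander out of the Harnack region as $i \to \infty$.
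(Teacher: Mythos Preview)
Your argument has a genuine gap at the step ``the rescaled unit ball converges isometrically to an open set in $\dR^3$.'' Once you know that on the $\nu_i$-unit ball the metric is $C^0$-close to a constant multiple of the flat background $g_\infty^\flat$, you still do not know that this flat metric is non-collapsed at that scale: $(B_1(p_\infty)\setminus\{p_\infty\}, g_\infty^\flat)$ is a flat $3$-manifold whose injectivity radius near $x_i$ is completely uncontrolled at this stage, and in fact if $\dim_{\ess}(Y)<3$ the flat background \emph{must} collapse at scale $\lambda_i$ (this is precisely what the convergence discussion for special affine metrics at the end of Section~\ref{ss:3-1} says). The further blow-up by $\mu_i$ cannot rescue this while still yielding a tangent cone of $Y$: if $\mu_i\to\infty$ slowly enough for an iterated-limit diagonal argument to identify the bubble with a tangent cone of $Y$ at $x_Y$, then that tangent cone is $\dR^{\dim_{\ess}(Y)}$ (since $x_Y$ lies in the smooth part of $Y$), not $\dR^3$; if instead $\mu_i$ grows fast enough to un-collapse the flat background and produce a $3$-dimensional limit, the bubble is no longer a tangent cone of $Y$. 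Your argument thus succeeds only when $\dim_{\ess}(Y)=3$ is already known.

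The paper argues by contradiction and exploits exactly the collapse you are trying to bypass. If some tangent cone $Y$ had $\dim_{\ess}(Y)\in\{1,2\}$, the convergence theory for special affine metrics in Section~\ref{ss:3-1} produces through the Harnack point $q_j$ a totally geodesic flat torus $\mathbb T_j$ in $g_\infty^\flat$ whose $g_\infty^\flat$-diameter tends to $0$. The decisive idea, absent from your proposal, is to \emph{slide} $\mathbb T_j$ along a curve in $X_\infty\setminus\{p_\infty\}$ out to a fixed point $w\in A_{1,2}(p_\infty)$: because the special affine monodromy lies in $\dR^3\rtimes\dZ_2$, these totally geodesic tori move parallelly with constant diameter. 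One thus manufactures arbitrarily small flat tori through a fixed smooth point $w$, an obvious contradiction.
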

\begin{proof}
 We rule out the possible occurence of lower dimensional tangent cones. 
Suppose $(Y, \bar p)$ is a tangent cone at $p_\infty$ with $\dim_{\ess}(Y)\in \{1, 2\}$.    
Then we can find a sequence $r_j\rightarrow0$ such that the rescaled annulus $r_j^{-1}\cdot\mathcal{A}_j$  converges to an annulus in $Y$, where $\mathcal A_j\equiv A_{r_j, 2r_j}(p_\infty)$. By Proposition \ref{p:upper-bound-V} there is a point $q_j\in \mathcal A_j$ with $V(q_j)\geq l_0 r_j^{3/2}$. Without loss of generality we may assume  $q_j$ belongs to the connected component $\mathcal C_1(r_j)$ in the covering constructed in Lemma \ref{l:vitali}.
Denote $\mathcal A_j^1\equiv \mathcal C_1(r_j)\cap \mathcal A_j$. We may also assume the rescaled space $r_j^{-1}\cdot {\mathcal A}^1_j$ converges to a connected open set in  $Y$.  By Lemma \ref{l:harnack-annulus} we have $c_0^{-1}\leq V/V(q_j)\leq c_0$ uniformly on $\mathcal A_j^1$. This implies the  flat background metric $V(q_j)g_\infty^{\flat}=V(q_j)V^{-1}g_\infty$ on $\mathcal A_j^1$  is uniformly equivalent to $g_\infty$. Hence the corresponding rescaled sequence of flat manifolds $(\mathcal A_j^1, r_j^{-2}V(q_j)g_\infty^\flat)$ also collapses to a lower dimensional space. By the discussion in Section \ref{ss:3-1} we can find a  totally geodesic torus $\mathbb T_j\subset (\mathcal A_j^1, g_\infty^\flat)$ passing $q_j$, whose diameter with respect to the metric $r_j^{-2}V(q_j)g_\infty^\flat$ is $\epsilon_j\rightarrow 0$. So the diameter of $\mathbb T_j$ with respect to the metric $g_\infty^\flat$ is $\epsilon_j r_j V(q_j)^{-1/2}\rightarrow 0$. 

Now choose a point $w\in A_{1,2}(p_\infty)$ and a smooth curve $\gamma_j$ in $A_{r_j/2, 2}$ connecting $w$ and $q_j$. 
We can slide the  torus $\mathbb T_j$ along $\gamma_j$ and obtain a totally geodesic torus $\mathbb T'_j$ (with respect to $g_\infty^\flat$) passing through $w$. Notice in this process we can keep the family of flat tori along $\gamma_j$ to be outside $\mathcal A_{j+1}$ (in particular we do not encounter the singularity $p_\infty$). Since the diameter of the tori is invariant along the sliding, we then obtain a sequence of totally geodesic tori  contained in $A_{1,2}(p_\infty)$ with diameter going to zero. This is clearly impossible. 
\end{proof}

\begin{lemma}\label{l:connectedness}
	 There exists a constant $\delta_0>0$ such that for every $r\in(0,1/3)$,  any two points in $A_{r/2, r}(p_\infty)$  can be connected by a smooth curve $\gamma\subset A_{\delta_0\cdot r, 3r}(p_\infty)$ with  arc-length $|\gamma|\leq 10 r$. In particular, $B_1(p_\infty)\setminus\{p_\infty\}$ is path-connected. 
	 	\end{lemma}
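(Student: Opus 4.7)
The approach is a blow-up proof by contradiction that leverages Proposition \ref{l:3d-tangent-cones}. Suppose no such $\delta_0$ exists. Then there are sequences $\delta_i \downarrow 0$, $r_i \in (0, 1/3)$ and pairs $x_i, y_i \in A_{r_i/2, r_i}(p_\infty)$ admitting no smooth connecting curve of length $\leq 10 r_i$ in $A_{\delta_i r_i, 3 r_i}(p_\infty)$. By Proposition \ref{p:regular set smooth} and the standing assumption $\mathcal S = \{p_\infty\}$, the space $\mathcal R = X_\infty \setminus \{p_\infty\}$ is a smooth Riemannian manifold, hence locally path-connected; a straightforward compactness argument on $\overline{A_{r_0/4, 3 r_0}(p_\infty)}$ produces a uniform $\delta$ for any $r_0 \in (0,1/3]$ bounded away from $0$. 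We may therefore assume $r_i \to 0$.

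Rescaling the metric by $r_i^{-1}$ and extracting a subsequential Gromov-Hausdorff limit yields a tangent cone $(Y, d_Y, \bar p) \in \mathcal T_{p_\infty}$ with $\dim_{\ess}(Y) = 3$ by Proposition \ref{l:3d-tangent-cones}. The rescaled pairs converge to $\bar x, \bar y \in \overline{A_{1/2, 1}(\bar p)}$, and the rescaled obstruction becomes: $\bar x, \bar y$ admit no rectifiable curve of length $\leq 10$ in $B_3(\bar p) \setminus \{\bar p\}$. The lemma thus reduces to the tangent-cone claim that in every $(Y, \bar p) \in \mathcal T_{p_\infty}$, any two points of $\overline{A_{1/2, 1}(\bar p)}$ can be joined inside $B_3(\bar p) \setminus \{\bar p\}$ by a curve of length $\leq 5$. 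Being a length space, $Y$ admits a near-geodesic of length $\leq 2 + \tau$ from $\bar x$ to $\bar y$; either it avoids $\bar p$, or it can be cut near $\bar p$ and spliced with a short detour along a small sphere $S_\rho(\bar p)$.

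The heart of the argument is therefore the following detour estimate, which I expect to be the main obstacle: there is a universal $C > 0$ such that in every $(Y, \bar p) \in \mathcal T_{p_\infty}$, for all sufficiently small $\rho > 0$, any two points of $S_\rho(\bar p)$ are joined in $A_{\rho/2, 2\rho}(\bar p)$ by a curve of length $\leq C\rho$. I would prove this via a second, iterated blow-up at $\bar p$ in $Y$: if detours fail at scales $\rho_j \downarrow 0$, rescaling $Y$ by $\rho_j^{-1}$ produces an iterated tangent cone $Z$, which by a standard diagonal argument still lies in $\mathcal T_{p_\infty}$, so by Proposition \ref{l:3d-tangent-cones} again has $\dim_{\ess}(Z) = 3$. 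Combining the compactness of $\mathcal T_{p_\infty}$ (Lemma \ref{l:tangent cone compact}) with the linear volume bound \eqref{e:linear-volume-estimate} and the fact that full-measure regular points $\mathcal R^\#$ in $Z$ have $\mathbb R^3$ tangent cones, one produces a chain of overlapping Euclidean neighborhoods covering $S_1(\bar p^Z)$, yielding the sought connecting arc and contradicting the blow-up assumption. Once the tangent-cone claim is proved, the limit curve in $Y$ lies in a compact subset of $Y \setminus \{\bar p\}$, and by the smooth convergence of the rescaled Einstein metrics on regions bounded away from $p_\infty$ (via the $\epsilon$-regularity theorem and standard elliptic theory), it can be approximated by smooth curves in $X_\infty$ of length $< 10 r_i$ contained in $A_{\delta_i r_i, 3 r_i}(p_\infty)$ for $i$ large, contradicting the standing assumption.
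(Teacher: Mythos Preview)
Your approach has a genuine gap at exactly the point you flagged as the main obstacle. The ``detour estimate'' --- that any two points of $S_\rho(\bar p)$ in a tangent cone $Y$ can be joined within $A_{\rho/2,2\rho}(\bar p)$ by a curve of length $\le C\rho$ --- is essentially a restatement of the lemma itself, now for $Y$ instead of $X_\infty$. Your proposed proof of it by a second blow-up is circular: passing to an iterated tangent cone $Z\in\mathcal T_{p_\infty}$ with $\dim_{\ess}(Z)=3$ leaves you needing the very same connectivity of $A_{1/2,2}(\bar p^Z)$. The sentence ``one produces a chain of overlapping Euclidean neighborhoods covering $S_1(\bar p^Z)$'' does not follow from compactness of $\mathcal T_{p_\infty}$, the linear volume bound, or full measure of $\mathcal R^\#$: none of these rules out the possibility that the annulus $A_{1/2,2}(\bar p^Z)$ has two smooth components. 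That scenario would survive every step of your argument.

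The paper's proof supplies the missing idea. Instead of rescaling by $r_i^{-1}$, it rescales by the \emph{inner} scale $(\delta_j r_j)^{-1}$. The contradiction hypothesis says that every minimizing geodesic from $x_j$ to $y_j$ enters $B_{\delta_j r_j}(p_\infty)$; after this rescaling the endpoints escape to infinity while the geodesic passes through the unit ball, so in the limit one obtains a geodesic \emph{line} in the tangent cone $\widehat Z_\infty$. By the Cheeger--Colding splitting theorem, $\widehat Z_\infty\cong\mathbb R\times W$. If $W$ is compact a slight slowdown of the rescaling yields $\mathbb R$ as a tangent cone, contradicting Proposition~\ref{l:3d-tangent-cones}; if $W$ is noncompact then $\widehat Z_\infty\setminus B_2(\widehat z_\infty)$ is path-connected, which (after passing points $\underline x_j,\underline y_j$ along radial geodesics down to scale $3\delta_j r_j$) gives the forbidden short connecting curve. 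The splitting theorem is the decisive ingredient your argument lacks.
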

\begin{proof} 
We argue by contradiction. 
Suppose there are sequences
 $\delta_j\rightarrow 0$, $r_j\in (0, 1)$ and  sequences of points  $x_j, y_j\in A_{r_j/2, r_j}(p_\infty)$
 such that any  smooth curve $\gamma_j$ connecting $x_j$ and $y_j$ with  $|\gamma_j|\leq 10 r_j$ satisfies $\gamma_j\cap B_{\delta_j\cdot r_j}(p_{\infty})\neq \emptyset$. 
Choose  minimizing geodesics $\sigma_{x_j}$ and $\sigma_{y_j}$ from $p_{\infty}$ to $x_j$ and $y_j$  respectively.
Then we take two points
$\underline{x}_j \in \sigma_{x_j}\cap S_{3\delta_j \cdot r_j}(p_{\infty})$ and $\underline{y}_j\in \sigma_{y_j}\cap S_{3\delta_j\cdot r_j}(p_{\infty})$. By assumption we have
\begin{enumerate}\item[(a)]  Any minimizing geodesic $\bar{\gamma}_j$ connecting $x_j$ and $y_j$ must satisfy $\bar{\gamma}_j\cap B_{\delta_j\cdot r_j}(p_{\infty})\neq\emptyset$
\item[(b)] 
Any smooth curve $\underline{\gamma}_j$ connecting $\underline{x}_j$ and $\underline{y}_j$ with length $|\underline{\gamma}_j|\leq 4 r_j$ must satisfy
 $\underline{\gamma}_j\cap B_{\delta_j\cdot r_j}(p)\neq\emptyset$.
\end{enumerate}
 Now we  define the rescaled metric $\widehat d_j\equiv \delta_j^{-1}\cdot r_j^{-1} \cdot d_\infty$. Letting $j\rightarrow\infty$ and passing to a subsequence, we obtain
 \begin{align}
 (X_{\infty}^3, \widehat{d}_j, p_{\infty}) \xrightarrow{GH} (\widehat{Z}_{\infty}, \widehat{d}_{\infty} , \widehat{z}_{\infty}),
 \end{align}
where $(\widehat{Z}_{\infty}, \widehat{d}_{\infty} , \widehat{z}_{\infty})$ is a tangent cone at $p_{\infty}\in X_{\infty}^3$, and $\underline{x}_j,\underline{y}_j$ converges to $\underline{x}_{\infty},\underline{y}_{\infty}\in S_3(\widehat{z}_{\infty})$  respectively. By the discussion in Section \ref{ss:3-1} the convergence is smooth away from $\widehat z_\infty$. 

 Now choose a sequence of minimizing geodesics $\bar{\gamma}_j$ connecting $x_j$ and $y_j$.  Since \begin{align}
\widehat{d}_j(x_j, p_{\infty})\geq  \frac{\delta_j^{-1}}{2} \quad \text{and} \quad \widehat{d}_j(y_j, p_{\infty})	\geq\frac{\delta_j^{-1}}{2}\end{align}
as $j\to +\infty$,  it follows from the Arzel\`a-Ascoli Lemma that by passing to a further subsequence, $\bar{\gamma}_j$ converges to a geodesic line $\bar{\gamma}_{\infty}\subset\widehat{Z}_{\infty}$. Applying Cheeger-Colding's splitting theorem, $\widehat{Z}_{\infty}$ is isometric to $\dR\times W$ for a complete length space $W$. 
 If $W$ is compact then we can slow down the rescaling slightly and obtain a tangent cone $\dR$ at $p_\infty$, which contradicts Proposition \ref{l:3d-tangent-cones}. So $W$ must be non-compact. Then it follows easily that the
the complement  $\widehat{Z}_\infty\setminus B_2(\widehat{z}_{\infty})$ is path connected. In particular, we can find a smooth curve  $\underline{\sigma}_{\infty}\subset \widehat{Z}_{\infty}\setminus B_2(\widehat{z}_{\infty})$ connecting $\underline{x}_{\infty}$ and $\underline{y}_{\infty}$. Denote $\ell_0=|\underline{\sigma}_{\infty}|$.
 Then passing back to the sequence, for $j$ large, we see
$\underline{x}_j$ and $\underline{y}_j$ can be connected by a smooth curve $\underline{\sigma}_j\subset A_{2\delta_j\cdot r_j , (l_0+10)\delta_j\cdot r_j}(p_{\infty})$ of length $|\underline{\sigma}_j|\leq (\ell_0+10)\cdot \delta_j \cdot r_j$. 
This contradicts item (b). 
\end{proof}

As an immediate consequence, we obtain an improvement of Lemma \ref{l:harnack-annulus} and Proposition \ref{p:upper-bound-V}.

\begin{corollary}\label{c:Harnack}
There exists a constant $C_0>0$ such that for any $r\in(0,1)$ and $x,y\in A_{r/2, r}(p_{\infty})$, we have
$C_0^{-1}\leq V(x)/V(y) \leq C_0$.
 \end{corollary}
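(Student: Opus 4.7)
The plan is to derive the improved Harnack from the curve-connectedness statement in Lemma \ref{l:connectedness} combined with the Cheng--Yau-type gradient estimate (Theorem \ref{l:Yau gradient estimate}). Lemma \ref{l:harnack-annulus} already gives Harnack within each connected component $\mathcal{C}_k(r)$ of the Vitali-type covering, so the essential new input is to pass between different components by a chain argument that avoids the singular point $p_\infty$.

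First, given any $x,y\in A_{r/2,r}(p_\infty)$, I would apply Lemma \ref{l:connectedness} to obtain a smooth path $\gamma\subset A_{\delta_0 r,3r}(p_\infty)$ connecting them with $|\gamma|\leq 10r$. The crucial point is that $\gamma$ stays a definite distance $\delta_0 r$ away from $p_\infty$. Since the curvature bound for Ricci limits with Bakry--\'Emery lower bound $\Ric^1_{g_\infty}\geq 0$ applies on the regular set (which by Proposition \ref{p:regular set smooth} is smooth), for each $z\in\gamma$ the ball $B_{\delta_0 r/4}(z)$ lies inside the regular set $\mathcal{R}\subset\mathcal{G}$, and Theorem \ref{l:Yau gradient estimate} applied to the positive harmonic function $V$ on $B_{\delta_0 r/4}(z)$ yields a bound of the form
\begin{equation*}
|\nabla_{g_\infty}\log V|(z) \;\leq\; \frac{C}{\delta_0 r}.
\end{equation*}

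Integrating this pointwise gradient bound along $\gamma$ then gives
\begin{equation*}
\bigl|\log V(x)-\log V(y)\bigr| \;\leq\; \int_\gamma |\nabla_{g_\infty}\log V|\,ds \;\leq\; \frac{C}{\delta_0 r}\cdot|\gamma| \;\leq\; \frac{10C}{\delta_0},
\end{equation*}
which is the desired uniform Harnack with $C_0=\exp(10C/\delta_0)$, depending only on the geometry constants and on $\delta_0$ from Lemma \ref{l:connectedness}, not on $r$ or the choice of endpoints.

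There is no serious obstacle here, as all the work has been done in the preceding lemmas; the only point to verify carefully is that the tube around $\gamma$ of radius $\sim\delta_0 r$ stays inside the regular set, which is automatic since $\gamma\subset A_{\delta_0 r,3r}(p_\infty)$ and $p_\infty$ is the unique singular point by assumption in the setting of Theorem \ref{t:3d local version}. The argument is scale invariant and so the constant is independent of $r\in(0,1)$.
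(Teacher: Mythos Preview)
Your proof is correct and follows the same approach the paper has in mind. The paper states Corollary~\ref{c:Harnack} as ``an immediate consequence'' of Lemma~\ref{l:connectedness}, and the natural way to realize this is precisely what you do: take the curve $\gamma\subset A_{\delta_0 r,3r}(p_\infty)$ of length $\leq 10r$ and integrate the Cheng--Yau gradient bound $|\nabla_{g_\infty}\log V|\leq C/(\delta_0 r)$ along it.
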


\begin{corollary}\label{p:lower-bound-V}
There exists a constant  $\ell_0>0$ such that 
$\inf\limits_{S_r(p_{\infty})}V\geq \ell_0 \cdot r^{\frac{3}{2}}$ for all $r\in(0, 1]$. 
\end{corollary}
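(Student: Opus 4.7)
The plan is to combine Proposition~\ref{p:upper-bound-V} with the uniform Harnack inequality of Corollary~\ref{c:Harnack}; the statement is essentially immediate once these two ingredients are available.

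First, I would apply Corollary~\ref{c:Harnack} to compare values of $V$ at different points of $S_r(p_\infty)$. Strictly speaking, Corollary~\ref{c:Harnack} is stated for points in the open annulus $A_{r/2,r}(p_\infty)$, while $S_r(p_\infty)$ lies on its boundary. However, $V$ is a smooth positive harmonic function on the punctured neighborhood $B_1(p_\infty)\setminus\{p_\infty\}$ (in particular continuous), so the Harnack inequality extends by continuity to the closed annulus with the singular point removed. Alternatively, one may apply Corollary~\ref{c:Harnack} at scale $r' = (1+\epsilon)r$ for arbitrarily small $\epsilon > 0$ and pass to the limit. In either case, there is a constant $C_0 > 0$ (independent of $r$) such that
\begin{equation*}
C_0^{-1} \leq \frac{V(x)}{V(y)} \leq C_0, \qquad \forall\, x, y \in S_r(p_\infty),\ r \in (0, 1].
\end{equation*}

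Next, by Proposition~\ref{p:upper-bound-V}, for each $r \in (0, 1]$ there exists a point $q_r \in S_r(p_\infty)$ with $V(q_r) \geq \tfrac{1}{2}\ell_0 r^{3/2}$. Combining with the Harnack bound above, for any $x \in S_r(p_\infty)$,
\begin{equation*}
V(x) \geq C_0^{-1} V(q_r) \geq \frac{\ell_0}{2 C_0}\, r^{3/2}.
\end{equation*}
Taking infimum over $S_r(p_\infty)$ and replacing the constant by $\ell_0' \equiv \ell_0/(2C_0)$ yields the desired lower bound.

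There is no substantive obstacle here; the only minor subtlety is the boundary issue for $S_r(p_\infty)$ relative to the open annulus $A_{r/2,r}(p_\infty)$, and this is dispatched immediately by continuity of $V$ on $B_1(p_\infty)\setminus\{p_\infty\}$.
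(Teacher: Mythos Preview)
The proposal is correct and follows exactly the approach the paper intends: the paper states Corollary~\ref{p:lower-bound-V} as an ``immediate consequence'' of Lemma~\ref{l:connectedness}, which yields Corollary~\ref{c:Harnack}, and combining that Harnack inequality with Proposition~\ref{p:upper-bound-V} gives the infimum bound just as you wrote. Your treatment of the boundary issue for $S_r(p_\infty)$ via continuity is fine and is the only detail the paper leaves implicit.
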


The above corollary immediately implies the following. 
\begin{proposition}\label{p:one-point-completion}The metric completion of the flat background  $(B_{1}(p_{\infty})\setminus \{p_{\infty}\}, V^{-1}g_\infty)$ at $p_\infty$ is given by adding a single point. 
\end{proposition}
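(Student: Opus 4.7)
The plan is to quantify how fast the flat background metric $g_\infty^{\flat} = V^{-1} g_\infty$ shrinks near $p_\infty$, and then show that any punctured neighborhood of $p_\infty$ has vanishing $g_\infty^{\flat}$-diameter. Specifically, I aim to establish the bound
\[
\diam_{g_\infty^{\flat}}\bigl(B_r(p_\infty)\setminus\{p_\infty\}\bigr) \leq C_0 \, r^{1/4}, \qquad 0<r<\tfrac{1}{2}.
\]
Once this is in hand, every $g_\infty$-Cauchy sequence converging to $p_\infty$ is automatically $g_\infty^{\flat}$-Cauchy, and any two such sequences define the same limit point in the $g_\infty^{\flat}$-completion; moreover, no additional points are added since on compact subsets of the punctured ball $g_\infty^{\flat}$ is uniformly equivalent to $g_\infty$. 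Therefore the flat completion adds precisely one point above $p_\infty$.

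To prove the diameter estimate, I fix $x\in A_{r/2,r}(p_\infty)$ and construct a dyadic chain $x_0=x, x_1, x_2,\ldots$ with $x_k\in S_{r_k}(p_\infty)$ for the geometric sequence $r_k = \lambda^{-k}r$ with $\lambda = 3/2$. For each $k$ one easily checks that there is a radius $s_k\in (r_k, \tfrac{4}{3} r_k)$ (for instance $s_k = \tfrac{7}{6} r_k$) such that both $x_k$ and $x_{k+1}$ lie in $A_{s_k/2,\, s_k}(p_\infty)$; the point of choosing $\lambda=3/2$ rather than the naive $\lambda=2$ is exactly to guarantee this overlap. Applying Lemma \ref{l:connectedness} with parameter $s_k$ then produces a smooth curve $\gamma_k\subset A_{\delta_0 s_k,\, 3 s_k}(p_\infty)$ joining $x_k$ and $x_{k+1}$ with $g_\infty$-length at most $10 s_k$.

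The key estimate is then immediate: along $\gamma_k$ the distance to $p_\infty$ is at least $\delta_0 s_k$, so Corollary \ref{p:lower-bound-V} gives $V \geq \ell_0 (\delta_0 s_k)^{3/2}$ on $\gamma_k$, whence
\[
L_{g_\infty^{\flat}}(\gamma_k) \;=\; \int_{\gamma_k} V^{-1/2}\, ds_{g_\infty}\;\leq\; C\, s_k^{-3/4}\cdot s_k \;=\; C\, s_k^{1/4} \;\leq\; C'\, r^{1/4}\,\lambda^{-k/4}.
\]
Summing the resulting geometric series bounds the total $g_\infty^{\flat}$-length of the concatenated curve, giving $d_{g_\infty^{\flat}}(x, p_\infty)\leq C''\, r^{1/4}$ in the completion; the triangle inequality applied to two such points in $B_r(p_\infty)\setminus\{p_\infty\}$ then yields the diameter estimate.

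The main obstacle is really just the book-keeping in the chain construction, i.e., choosing the contraction ratio $\lambda$ so that consecutive points fit into a single annulus of the form $A_{s/2,s}(p_\infty)$ as required by Lemma \ref{l:connectedness}. Apart from this small combinatorial choice, the proof is a direct combination of Lemma \ref{l:connectedness} (path-connectedness with controlled length) and Corollary \ref{p:lower-bound-V} (lower bound on $V$); the exponent $1/4$ is not sharp but any positive exponent is enough for the conclusion.
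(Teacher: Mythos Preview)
Your proof is correct and follows essentially the same approach the paper intends: the proposition is stated there as an immediate consequence of Corollary~\ref{p:lower-bound-V}, and your argument spells this out by using the lower bound $V\geq \ell_0 r^{3/2}$ to control $g_\infty^\flat$-lengths of curves approaching $p_\infty$. Your dyadic chain via Lemma~\ref{l:connectedness} works, but it is slightly more elaborate than necessary---one can simply take a minimizing $g_\infty$-geodesic $\sigma$ from $x\in S_r(p_\infty)$ to $p_\infty$ and compute directly $L_{g_\infty^\flat}(\sigma)\leq \int_0^r \ell_0^{-1/2}(r-t)^{-3/4}\,dt = 4\ell_0^{-1/2} r^{1/4}$, which gives the same conclusion without the combinatorics.
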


In particular, we can identify this metric completion topologically as $B_1(p_\infty)$ itself, and we denote by $d_\infty^\flat$ the metric induced by the flat metric  $g_\infty^\flat$. 
At this point we encounter a non-standard \emph{singularity removal} question. 
\begin{question}\label{q:removable-singularity} Let $\mathcal{U}$ be a connected smooth  Riemannian manifold in dimension $m\geq 3$ with uniformly bounded sectional curvature. If the metric completion $\overline{\mathcal{U}}$ is obtained by adding {\it one point} $p$ such that $\overline{\mathcal{U}}$ is locally compact, is it true that $p$ is a Riemannian orbifold singularity?
\end{question}

 In our setting we are only interested in the special case when the Riemannian metric is flat. Even in this case the above innocent looking question seems to be subtle. There is an analogous statement when $m=2$, but one needs to allow  a general conical singularity.  Notice the conclusion fails if the metric completion is not locally compact; for an example in dimension 2, consider the universal cover of $\mathbb R^2\setminus\{0\}$ equipped with the flat metric.  In the next subsection we get around this technical point in our setting using the fact the conformal metric $g_\infty$ is a Ricci limit space.

\subsection{Proof of Theorem \ref{t:3d local version}}

By Lemma \ref{l:tangent cone compact}, the isometry classes $\mathcal{T}_{p_{\infty}}$ of all tangent cones at $p_\infty$ is compact in $(\mathcal{M}et, d_{GH})$.  
Let $(Y, p^*)\in\mathcal{T}_{p_{\infty}}$ satisfy  $(X_{\infty}, r_i^{-1}d_{\infty},p_{\infty})\xrightarrow{GH}(Y,d_Y,p^*)$ for $r_i\rightarrow0$. By the discussion at the end of Section \ref{ss:3-1} we know that away from $p^*$ the convergence is smooth and there is a special affine metric on $Y\setminus \{p^*\}$. Notice that, by Lemma \ref{l:connectedness}, for all $r>0$, any two points in $A_{r/2, r}(p^*)\subset Y$ can be connected by a smooth curve $\gamma\subset A_{\delta_0\cdot r, 3r}(p^*)$ with  arc-length $|\gamma|\leq 20 r$. In particular,
 $Y\setminus \{p^*\}$ is path-connected and $Y$ has only one end at infinity. By Proposition  \ref{c:Harnack}, the flat background $(Y\setminus\{p^*\},g_Y^{\flat})$ has a one point completion near $p^*$ and is homeomorphic to $Y$. We always normalize the harmonic function $\widehat{{V}}_Y^*$ by a multiplicative constant so that $\sup\limits_{S_1(p^*)} \widehat{{V}}_Y^* =1$.

\begin{lemma}
\label{l:V-growth-tangent-cone}
For every $\epsilon>0$, there exists
a tangent cone $(Y,p^*)$  such that 
 \begin{align}
\limsup\limits_{R\to+\infty}\frac{|\widehat{{V}}_Y^*|_{L^{\infty}(S_{R}(p^*))}}{R^{\frac{3}{2}+\epsilon}} \leq 2.
\end{align}

\end{lemma}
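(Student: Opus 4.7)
The plan is to construct the desired tangent cone by selecting a sequence of scales $r_i \to 0$ along which the growth of the normalized $V$ function is forced to be controlled. Set $M(r) \equiv \sup_{S_r(p_\infty)} V$ on $(0,1]$, and consider the scale-invariant quantity $H(r) \equiv M(r)/r^{3/2+\epsilon}$. The goal is to extract $r_i \to 0$ satisfying the quasi-maximality property
\begin{equation*}
H(r_i) \;\geq\; \tfrac{1}{2}\sup_{s \in [r_i,1]} H(s).
\end{equation*}
Such a sequence exists by a short elementary argument: let $\tilde H(r) \equiv \sup_{s \in [r,1]}H(s)$, which is non-increasing. For each $r>0$, continuity of $H$ (away from $p_\infty$, $V$ is smooth and harmonic, so $M$ is continuous by the Harnack inequality in Corollary \ref{c:Harnack}) lets us pick $s(r)\in [r,1]$ with $H(s(r))\geq \tfrac{1}{2}\tilde H(r)$. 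Either $s(r)\to 0$ along a subsequence, in which case we set $r_i = s(\rho_i)$ for any $\rho_i\to 0$, or $s(r)$ stays bounded below so that $\tilde H$ is bounded; in the latter case we can pick $r_i\to 0$ with $H(r_i)\to \lim_{r\to 0}\tilde H(r)$.

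Next, after passing to a subsequence, $(X_\infty, r_i^{-1} d_\infty, p_\infty) \xrightarrow{GH} (Y, d_Y, p^*)$ for some tangent cone at $p_\infty$; by Proposition \ref{l:3d-tangent-cones} we have $\dim_{\ess}(Y) = 3$, so $Y \setminus \{p^*\}$ inherits a special affine metric structure. On the rescaled annulus corresponding to $A_{r_i/2, 2r_i}(p_\infty)$, the normalized function $V_i \equiv V/M(r_i)$ satisfies $\sup V_i = 1$ on $S_{r_i}(p_\infty)$ and, by Corollary \ref{c:Harnack} together with the Cheng--Yau gradient estimate (Theorem \ref{l:Yau gradient estimate}), is uniformly bounded above and below by absolute constants on each fixed annulus in $Y$, uniformly in $i$. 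The smooth convergence theory for special affine metrics summarized at the end of Section \ref{ss:3-1} then promotes the Gromov--Hausdorff convergence to smooth Cheeger--Gromov convergence on compact subsets of $Y\setminus\{p^*\}$, and $V_i$ converges smoothly to a positive harmonic function on $Y\setminus\{p^*\}$ with $\sup = 1$ on $S_1(p^*)$; this function must agree with $\widehat V_Y^*$ by the normalization convention.

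Finally, for any fixed $R \geq 1$ and all $i$ large enough that $R r_i \leq 1$, the smooth convergence on $A_{R/2,2R}(p^*)$ identifies
\begin{equation*}
\sup_{S_R(p^*)} \widehat V_Y^* \;=\; \lim_{i \to \infty} \frac{M(R r_i)}{M(r_i)}.
\end{equation*}
By the quasi-maximality of the $r_i$, $H(R r_i) \leq 2 H(r_i)$, which rearranges to $M(R r_i) \leq 2 R^{3/2+\epsilon} M(r_i)$. Taking $i \to \infty$ yields $\sup_{S_R(p^*)}\widehat V_Y^* \leq 2 R^{3/2+\epsilon}$, and hence $\limsup_{R\to\infty} R^{-(3/2+\epsilon)}|\widehat V_Y^*|_{L^\infty(S_R(p^*))}\leq 2$. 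The main technical point is the upgrade from Gromov--Hausdorff to smooth convergence of the harmonic functions, but this is handled by the general regularity theory of special affine metrics; the delicacy is only in checking uniformity of the Harnack constants as one rescales, which follows from the scale-invariance of the gradient estimate under the Bakry--\'Emery Ricci lower bound $\Ric^1_{g_\infty}\geq 0$ on $\mathcal R$.
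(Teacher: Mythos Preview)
Your approach is correct and genuinely different from the paper's, with one caveat. Your handling of Case~2 (when $s(r)$ stays bounded below and $\tilde H$ is bounded) is not right as stated: it is false in general that one can find $r_i\to 0$ with $H(r_i)\to\lim_{r\to 0}\tilde H(r)$, since the supremum of $H$ over $(0,1]$ may be realized only away from $0$. Fortunately this case is vacuous in your setting: Corollary~\ref{p:lower-bound-V} gives $M(r)\geq \ell_0 r^{3/2}$, hence $H(r)\geq \ell_0 r^{-\epsilon}\to\infty$, which forces $\tilde H(r)\to\infty$ and therefore $s(r)\to 0$. You should simply invoke this lower bound directly and drop the Case~2 discussion; once that is done your quasi-maximal scale selection and the rest of the argument go through cleanly.

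The paper instead argues by contradiction and compactness: assuming every tangent cone violates the growth bound at some scale $R_Y>1$, one shows by the smooth convergence of the normalized harmonic functions that the violation persists in a Gromov--Hausdorff neighborhood, then covers the compact set $\mathcal T_{p_\infty}$ by finitely many such neighborhoods with scales $R_{Y_1},\dots,R_{Y_N}$, and finally iterates on $X_\infty$ to produce $r_i\to 0$ with $\sup_{S_{r_i}}V\leq C r_i^{3/2+\epsilon_0}$, contradicting Corollary~\ref{p:lower-bound-V}. Your direct point-selection argument is shorter and bypasses the compactness and covering machinery entirely; both proofs ultimately rest on the same lower bound for $V$ and the same smooth convergence of normalized harmonic functions along rescalings, but yours packages the scale comparison more efficiently.
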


\begin{proof}
We argue by contradiction. Suppose the conclusion fails for some $\epsilon_0>0$. Then  for every tangent cone $(Y,p^*)$, we can find $R_Y>1$ such that 
\begin{align}
\sup\limits_{S_{R_Y}(p^*)}\widehat{{V}}_Y^* > 2 \cdot (R_Y)^{\frac{3}{2}+\epsilon_0}.\label{e:arbitrary-tangent-cones-V-growth}
\end{align}

 {\bf Claim}. Given a tangent cone $(Y, p^*)$, there exists $\tau=\tau(Y)>0$ such that
for any $(W,q^*)\in B_{\tau}((Y, p^*))\cap \mathcal T_{p_{\infty}}$, we have
$
	\sup\limits_{S_{R_Y}(q^*)}\widehat{{V}}_W^* > \frac{3}{2} \cdot R_Y^{\frac{3}{2}+\epsilon_0}.
	$
	Indeed, if not, then we can find a sequence of tangent cones $(W_i, q_i^*)$ converging to $(Y, p^*)$ in the  Gromov-Hausdorff topology, such that 	
$
	\sup\limits_{S_{R_Y}(q_i^*)}\widehat{{V}}_{W_i}^* \leq \frac{3}{2} \cdot R_Y^{\frac{3}{2}+\epsilon_0}.\
	$
	Applying the Harnack inequality and using the convergence of special affine metrics discussed in Section \ref{ss:3-1}, $\widehat{{V}}_{W_i}^*$ converges uniformly away from $p^*$ to $\widehat{{V}}_{Y}^*$. This contradicts \eqref{e:arbitrary-tangent-cones-V-growth}.

\vspace{0.5cm}
Since $\mathcal{T}_{p_{\infty}}$ is compact in $(\mathcal{M}et, d_{GH})$,  it can be covered by finite metric balls of the form $B_{\tau_{\ell}}((Y_{\ell}, p_{\ell}^*))$, $\ell=1, \cdots, N$. By \textbf{Claim}, it follows that for any $(Y, p^*)\in \mathcal{T}_{p_{\infty}}$, we have
$
\sup\limits_{S_{R_{Y_{\ell}}}(p^*)}\widehat{{V}}_Y^* > (R_{Y_{\ell}})^{\frac{3}{2}+\epsilon_0}$.
for some $1\leq \ell\leq N$. Then using a simple contradiction argument one can show that for all $0<r\ll1$, there exists $\ell_0\in \{1, \cdots, N\}$ with $R_0\equiv R_{Y_{\ell_0}}$ such that
$\sup_{B_r(p_\infty)} V< R_0^{-3/2-\epsilon_0}\sup_{B_{R_0\cdot r}(p_\infty)} V.$
By iteration we obtain a sequence $r_i\rightarrow 0$ with 
$\sup\limits_{S_{r_i}(p_\infty)} V\leq C r_i^{\frac{3}{2}+\epsilon_0}$, which contradicts Corollary \ref{p:lower-bound-V}.
\end{proof}

Now we fix $\epsilon=\frac{1}{4}$ and let $(Y,d_Y,p^*)$ be a tangent cone given in Lemma \ref{l:V-growth-tangent-cone}. 

\begin{proposition}\label{p:complete-background}
The associated flat background geometry on $(Y,d_Y, p^*)$ is  complete at infinity.  
\end{proposition}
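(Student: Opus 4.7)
The plan is to show that any curve in $Y\setminus\{p^*\}$ whose $g_Y$-distance from $p^*$ tends to infinity has infinite length in the flat metric $g_Y^\flat=(\widehat{V}_Y^*)^{-1}g_Y$. This directly rules out the existence of a Cauchy sequence in $g_Y^\flat$ escaping to $g_Y$-infinity, which is the content of completeness at infinity.

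First I would transfer the pointwise Harnack inequality (Corollary \ref{c:Harnack}) to the tangent cone $(Y,p^*)$. This is essentially automatic: $Y$ is itself a Gromov--Hausdorff limit of rescaled hyperk\"ahler manifolds by Lemma \ref{l:tangent cone compact}, so Lemma \ref{l:connectedness} and the annular Harnack estimate both apply to $(Y,p^*)$. Combined with the growth estimate from Lemma \ref{l:V-growth-tangent-cone} (choosing $\epsilon = 1/4$), this yields a pointwise upper bound
\[
\widehat{V}_Y^*(x) \leq C \cdot d_Y(x,p^*)^{7/4} \quad \text{for } d_Y(x,p^*) \geq R_0,
\]
where $C, R_0$ are constants depending on the chosen tangent cone. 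The exponent $7/4<2$ is the point of choosing $\epsilon$ small in Lemma \ref{l:V-growth-tangent-cone}.

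Next, given a smooth curve $\gamma\colon [0,T)\to Y\setminus\{p^*\}$ parameterized by $g_Y$-arclength with $d_Y(\gamma(0),p^*)\geq 2R_0$ and $d_Y(\gamma(t),p^*)\to\infty$ as $t\to T$, the triangle inequality gives $d_Y(\gamma(t),p^*)\leq d_Y(\gamma(0),p^*)+t$, so by the displayed bound $\widehat{V}_Y^*(\gamma(t))\leq C'(t+1)^{7/4}$ for all $t\in[0,T)$. Since the $g_Y^\flat$-line element along $\gamma$ is $(\widehat{V}_Y^*)^{-1/2}\,dt$, the total $g_Y^\flat$-length is bounded below by
\[
\int_0^T (\widehat{V}_Y^*(\gamma(t)))^{-1/2}\,dt \geq C'' \int_0^T (t+1)^{-7/8}\,dt.
\]
If $T<\infty$, then $d_Y(\gamma(t),p^*)$ stays bounded and by the completeness of $g_Y$ away from $p^*$ the curve extends smoothly past $T$, contradicting the maximality of the parameter interval; so $T=\infty$. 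Since $7/8<1$, the integral diverges and the $g_Y^\flat$-length of $\gamma$ is infinite. An equivalent ODE formulation of the same estimate is to set $r(s) = d_Y(\gamma(s),p^*)$ for $\gamma$ parameterized by $g_Y^\flat$-arclength; then $r'(s)\leq C r(s)^{7/8}$, which integrates to $r(s)\leq(r(0)^{1/8}+C's)^{8}$, ruling out escape to $g_Y$-infinity in finite $g_Y^\flat$-time.

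The main obstacle is ensuring uniformity of the Harnack constant on annuli of $Y$ centered at $p^*$, which requires carrying over the connectedness estimate of Lemma \ref{l:connectedness} to the tangent cone; this follows from the smooth convergence of special affine structures discussed in Section \ref{ss:3-1} together with the compactness of $\mathcal{T}_{p_\infty}$, and is parallel to the arguments already carried out in Section \ref{ss:4-2}. Given this, the argument above provides the required completeness.
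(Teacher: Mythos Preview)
Your proof is correct and follows essentially the same approach as the paper: both exploit the bound $\widehat{V}_Y^*\leq C\,r^{7/4}$ from Lemma~\ref{l:V-growth-tangent-cone} (with $\epsilon=1/4$) and the fact that $7/8<1$ to show that any curve escaping to $g_Y$-infinity has infinite $g_Y^\flat$-length. The paper phrases this via dyadic annuli (a curve crossing $A_{R,2R}(p^*)$ has $g_Y^\flat$-length at least $\tfrac14 R^{1/8}$), while you do the equivalent direct integral $\int (t+1)^{-7/8}\,dt=\infty$; these are the same estimate.

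One minor simplification: your first step, transferring the Harnack inequality and Lemma~\ref{l:connectedness} to $Y$, is unnecessary here. Lemma~\ref{l:V-growth-tangent-cone} already gives the \emph{pointwise} bound $\sup_{S_R(p^*)}\widehat{V}_Y^*\leq 3R^{7/4}$ for all large $R$ (it is an $L^\infty$ bound on the sphere), so you can go straight to the length estimate without invoking any annular Harnack on $Y$.
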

\begin{proof}

For $R$ large, we consider the annulus $A_{R, 2R}(p^*)$ in $Y$ with respect to the metric $d_Y$. By Lemma \ref{l:V-growth-tangent-cone}, we have 
$\widehat{{V}}_Y^*\leq 8R^{\frac{7}{4}}$ on $A_{R, 2R}(p^*)$. Notice the flat background metric $g_Y^{\flat}=(\widehat{{V}}_Y^*)^{-1}g_Y$. Given any smooth curve $\gamma:[0, L]\rightarrow A_{R, 2R}(p^*)$ connecting $S_R(p^*)$ and $S_{2R}(p^*)$, which is parametrized by the arc-length with respect to the metric $g_Y^{\flat}$, its length with respect to $g_Y$ satisfies $L_{g_Y}(\gamma)\leq 4LR^{\frac{7}{8}}$. Since $L_{g_Y}(\gamma)\geq R$, we see $L\geq \frac{1}{4} R^{1/8}$. From this it is easy to draw the conclusion. 
\end{proof}

We will use the following classification result  for  flat ends of Riemannian manifolds.
\begin{theorem}
[Eschenburg-Schroeder \cite{ES}] \label{t:classification-flat-ends}
Let $Z$ be a flat end in a complete Riemannian manifold $(X^n,g)$. Then there exists a compact subset $K$ such that $Z\setminus K$ is isometric to the interior of $(\Omega\times\dR^k)/\Gamma$ 
and one of the following three cases hold: 
\begin{enumerate}
\item[(A)] $\dim(\Omega)=1$, $\Omega=\dR_+$ and $\Gamma$ is a Bieberbach group on $\dR^{n-1}$.
\item[(B)] $\dim(\Omega)=2$, $\Omega$ is diffeomorphic to $\dR\times \dR_+$ and $\Gamma$ is a Bieberbach group on $\dR\times \dR^{n-2}$ which preserves the Riemannian product structure of $\Omega\times \dR^k$.
\item[(C)] $\dim(\Omega)\geq 3$, $\Omega$ is the complement of a ball in $\dR^{n-k}$, and $\Gamma$ is a finite extension of a Bieberbach group on $\dR^k$.
\end{enumerate}

\end{theorem}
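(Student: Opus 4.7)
The plan is to reduce the classification to the Bieberbach theorem on crystallographic groups together with a careful analysis of how far out in the end the Euclidean holonomy can act. First I would use the fact that a flat end $Z\setminus K_0$ has universal cover isometric to an open subset $\widetilde U$ of $\dR^n$ via the developing map, and that $\pi_1(Z\setminus K_0)$ acts on $\widetilde U$ as the restriction of a discrete group $\Gamma\subset E(n)=\dR^n\rtimes O(n)$. Passing to a sufficiently deep end neighborhood (which amounts to enlarging $K_0$), I may assume $\widetilde U$ is $\Gamma$-invariant and contains all of $\dR^n$ outside a compact piece, because the incomplete directions of a flat end must ultimately close up under the holonomy.

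Next I would invoke Bieberbach's theorem: the group $\Gamma$ contains a normal subgroup $\Gamma_0$ of finite index consisting of pure translations, with $\Gamma_0\cong \dZ^k$ spanning a well-defined linear subspace $V\subset \dR^n$ of dimension $k=\mathrm{rank}(\Gamma_0)$. Writing $\dR^n=V\oplus V^\perp$ and noting that the linear (rotational) part of any element of $\Gamma$ must preserve $V$ (since it permutes the translation lattice), one sees the finite quotient $\Gamma/\Gamma_0$ acts on $V^\perp$ through a finite subgroup of $O(V^\perp)$. The asymptotic splitting I want is produced by intersecting $\widetilde U$ with a neighborhood of infinity in $V^\perp$ that lies outside all fixed loci of these finite rotations; this gives a product description $(\Omega\times\dR^k)/\Gamma$ where $\Omega\subset V^\perp$ is the quotient of this regular neighborhood at infinity.

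Now I would run the trichotomy on $\dim\Omega=n-k$. If $n-k\geq 3$, the complement of the union of proper fixed subspaces inside a large ball of $V^\perp$ is connected and is the exterior of a ball, and a single end of $Z$ is recovered from a single such component; this gives case (C). If $n-k=2$, the regular part of a large disk in $V^\perp$ is a half-plane (possibly with a residual rotation acting faithfully on one of the factors), and I must show the Bieberbach group preserves the product structure $\dR\times \dR^{n-2}$ — this is forced because $\Gamma_0$ translations and the preserved $V$-decomposition restrict the allowed rotational elements. If $n-k=1$, there is only a half-line transverse to $V$, and every finite-order element commuting with $\Gamma_0$ acts only through an involution, so $\Gamma$ embeds as a Bieberbach group on $\dR^{n-1}$ giving case (A).

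The main obstacle I anticipate is the splitting step in case (B): the group $\Gamma/\Gamma_0$ may act by rotations that mix a line in $V^\perp$ with an axis in $V$, and to conclude that the structure group reduces to one preserving the claimed product, one needs a careful argument that deep inside the end these mixing rotations are absent — equivalently, that any element of $\Gamma$ whose rotational part has nonzero projection to $\mathrm{Hom}(V,V^\perp)$ must have a fixed axis that meets $K_0$, so after removing a compact set its action is incompatible with the end being noncomplete only in the $V^\perp$ direction. This rigidity, combined with Bieberbach's finiteness, is what forces the final normal form.
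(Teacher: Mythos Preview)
The paper does not prove this theorem: it is quoted from Eschenburg--Schroeder \cite{ES} and used as a black box in the proof of Proposition~\ref{p:tangent cone flat geometry}. So there is no ``paper's own proof'' to compare against; your proposal is an attempt to reconstruct the original argument of \cite{ES}.

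That said, your sketch has the right skeleton (developing map, Bieberbach's theorem, splitting $\dR^n=V\oplus V^\perp$ along the translation lattice), but there is a genuine gap at the outset. You assert that after enlarging $K_0$ the developing image $\widetilde U$ ``contains all of $\dR^n$ outside a compact piece.'' This is false in general and is essentially what the theorem is classifying: in cases (A) and (B) the developing image is a half-space or a product of a half-plane with $\dR^{k}$, not the complement of a compact set. The whole point of the Eschenburg--Schroeder analysis is to determine the possible shapes of the developing image of a flat end, and these shapes are governed by which directions in $V^\perp$ are ``escaping to infinity'' versus which are bounded. Your argument presupposes the answer in the $n-k\geq 3$ case and then tries to recover the lower-dimensional cases from it, which is backwards.

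A second issue: you identify $\dim\Omega$ with $n-k$ throughout, but the theorem allows $\Omega\subset\dR^{n-k}$ to be a proper domain (a half-line, a half-plane, or a ball complement). The trichotomy is on $\dim\Omega$, not on $n-k$ directly; these happen to coincide here but your phrasing obscures the actual content. If you want to reconstruct the proof, the honest route is to study the Busemann functions (or soul construction) on the flat end to pin down the shape of $\Omega$, as in \cite{ES}.
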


\begin{proposition}
\label{p:tangent cone flat geometry}
	$(Y, d_Y^{\flat})$ is isometric to  the Euclidean space $\dR^3$ or the flat cone $\dR^3/\dZ_2$, and $\widehat{{V}}_Y^*=1$. 
\end{proposition}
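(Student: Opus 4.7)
My plan is to first determine the topology and flat-background isometry type of $Y$ via Eschenburg--Schroeder, then to pin down the harmonic function $\widehat{V}_Y^*$ via a B\^ocher--Liouville argument together with a ruling-out of any Green's-function singularity at $p^*$.

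For the first step I will apply Theorem \ref{t:classification-flat-ends} to the single end of the complete flat $3$-manifold $(Y\setminus\{p^*\},g_Y^\flat)$: completeness at infinity comes from Proposition \ref{p:complete-background}, and that there is only one end follows from propagating Lemma \ref{l:connectedness} to the tangent cone. Since $(Y,g_Y)$ has $\dim_{\ess}(Y)=3$ by Proposition \ref{l:3d-tangent-cones} and $g_Y^\flat=(\widehat{V}_Y^*)^{-1}g_Y$ is smooth and nondegenerate off $p^*$, the flat background also has essential dimension $3$ at infinity. Cases (A) and (B) of Theorem \ref{t:classification-flat-ends} would force the end to collapse onto a manifold of dimension at most $2$, which is incompatible, so we must be in case (C): the end is isometric to $(\dR^3\setminus B)/\Gamma_{\rm end}$ for a finite $\Gamma_{\rm end}$ acting freely on the asymptotic $S^2$. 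The only finite subgroups of $O(3)$ acting freely on $S^2$ are $\{\Id\}$ and $\{\pm\Id\}$, both compatible with the special affine monodromy constraint of Section \ref{ss:3-1} lying in $\dR^3\rtimes\dZ_2$. Together with the one-point completion at $p^*$ from Proposition \ref{p:one-point-completion}, this forces $(Y,d_Y^\flat)$ to be globally isometric to $\dR^3$ or to the flat cone $\dR^3/\dZ_2$, with $p^*$ the origin or the orbifold vertex.

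For the second step, lifting across the possible $\dZ_2$ quotient, I view $\widehat{V}_Y^*$ as a positive harmonic function (in the flat-background, i.e.\ ordinary Euclidean, sense; see Section \ref{ss:3-1}) on $\dR^3\setminus\{0\}$, subject to the polynomial growth bound $\widehat{V}_Y^*(x)=O(|x|^{7/4})$ supplied by Lemma \ref{l:V-growth-tangent-cone} with $\epsilon=\tfrac14$. The classical B\^ocher--Liouville decomposition then yields $\widehat{V}_Y^*(x)=\sigma/|x|+h(x)$ with $\sigma\geq 0$ and $h$ harmonic on all of $\dR^3$ of at most linear growth; since $\sigma/|x|\to 0$ at infinity, positivity of $\widehat{V}_Y^*$ forces $h$ to be a nonnegative affine harmonic function, hence $h\equiv a$ for some constant $a\geq 0$. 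So $\widehat{V}_Y^*=\sigma/|x|+a$ on $\dR^3\setminus\{0\}$.

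It remains to prove $\sigma=0$, which is the main obstacle; the direct routes (Bakry--\'Emery curvature or energy integral) are unavailable since the Taub--NUT quotient ansatz $\widehat{V}=\sigma/r+a$ satisfies $\Ric^{1}\geq 0$ on the regular region and has finite local $|\nabla\widehat{V}|^2$-energy off $p^*$. My plan is to combine Bishop--Gromov monotonicity of the renormalized volume $\nu_\infty(B_r(\cdot))/r^3$ with the compactness and connectedness of $\mathcal{T}_{p_\infty}$ (Lemma \ref{l:tangent cone compact}) applied in two scales: the iterated tangent cone of $(Y,d_Y)$ at $p^*$ is the metric cone $(\dR^3,(\sigma/|x|)g_{\dR^3})$ over $S^2(\tfrac12)$ when $\sigma>0$, with volume density $\pi/3$, while the asymptotic cone of $(Y,d_Y)$ at infinity is the Euclidean $(\dR^3,a\,g_{\dR^3})$ (or its $\dZ_2$ quotient) when $a>0$, with volume density $4\pi/3$ (resp.\ $2\pi/3$). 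Since both lie in $\mathcal{T}_{p_\infty}$ and tangent cones at a fixed point all share the same volume density (by the existence of $\lim_{r\to 0}\nu_\infty(B_r(p_\infty))/r^3$), the two densities must coincide, which eliminates $\sigma>0$ unless $a=0$; a second iteration, applied to the growth-minimizing tangent cone produced by Lemma \ref{l:V-growth-tangent-cone}, then also eliminates the remaining case $\widehat{V}_Y^*=\sigma/|x|$. With $\sigma=0$ established, the normalization $\sup_{S_1(p^*)}\widehat{V}_Y^*=1$ gives $a=1$, so $\widehat{V}_Y^*\equiv 1$ and $g_Y=g_Y^\flat$, completing the proof. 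The delicate density-matching argument within $\mathcal T_{p_\infty}$ is where I expect the principal technical difficulty to lie.
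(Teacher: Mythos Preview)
Your first step---applying Eschenburg--Schroeder to the single complete flat end of $(Y\setminus\{p^*\},g_Y^\flat)$, ruling out cases (A) and (B) by dimension, identifying $\Gamma_{\rm end}\in\{\{1\},\{\pm\Id\}\}$, and then globalizing via the one-point completion and the developing map---is essentially the paper's argument. One small sharpening: to exclude (A) and (B) you need the \emph{asymptotic cones} of $(Y,g_Y^\flat)$ to be $3$-dimensional, not just $Y$ itself; the paper obtains this by noting that asymptotic cones of $(Y,g_Y)$ are again tangent cones at $p_\infty$ (hence $3$-dimensional by Proposition~\ref{l:3d-tangent-cones}), and then passing to the flat background via the convergence theory at the end of Section~\ref{ss:3-1}.

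Your second step, however, cannot succeed: the assertion $\widehat{V}_Y^*\equiv 1$ is false in general. The Remark immediately following Theorem~\ref{t:3d local version} supplies a counterexample: the minimal resolutions $X_k$ of $\dC^2/\dZ_{k+1}$ collapse as $k\to\infty$ to $(\dR^3,\tfrac{1}{2r}g_{\dR^3})$, which is already the metric cone over $S^2(\tfrac12)$; the unique tangent cone at the vertex is this space itself, with $\widehat{V}_Y^*$ a pure Green's function, not a constant. Your density-matching scheme fails exactly here: when $a=0$ and $\widehat{V}_Y^*=\sigma/|x|$, $(Y,g_Y)$ is a cone, so its iterated tangent cone at $p^*$ and its asymptotic cone at infinity coincide, and there is no mismatch. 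The proposed ``second iteration'' via Lemma~\ref{l:V-growth-tangent-cone} adds nothing, since a pure Green's function decays at infinity and satisfies every polynomial growth bound. There are also two technical problems in the argument you do give: your densities $\pi/3$ and $4\pi/3$ are computed for $\mathcal H^3$, whereas Bishop--Gromov monotonicity on the Ricci limit holds for the renormalized measure $\nu_\infty$ relative to $r^4$ (the originals being $4$-dimensional Ricci-flat), and there is no reason for $\lim_{r\to 0}\nu_\infty(B_r(p_\infty))/r^3$ to exist. In fact the paper's own proof establishes only the first conclusion; the clause $\widehat{V}_Y^*=1$ is neither proved nor used downstream---the proof of Theorem~\ref{t:3d local version}(1) only needs that $(Y,d_Y^\flat)$ is $\dR^3$ or $\dR^3/\dZ_2$.
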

\begin{proof}[Proof of Proposition \ref{p:tangent cone flat geometry}] 
 Proposition \ref{p:one-point-completion} and  \ref{p:complete-background} imply that $(Y, d_Y^\flat)$ has one complete end at infinity. Since an asymptotic cone of $(Y, d_Y)$ is itself a tangent cone at $p_\infty$, by Proposition \ref{l:3d-tangent-cones} the asymptotic cones of $(Y, d_Y)$ must all be 3 dimensional, then by the discussion at the end of Section \ref{ss:3-1} we know the asymptotic cones  of $(Y, d_Y^\flat)$ are also 3 dimensional. Applying Theorem \ref{t:classification-flat-ends} to the end of $(Y, d_Y^\flat)$ we see that we are in  Case (C) and $\Gamma$ is finite (the other cases have collapsed asymptotic cones). So $(Y, d_Y^\flat)$ is isometric to either $\dR^3$ or $\dR^3/\dZ_2$ outside a compact set. Then using the developing map and the fact that the metric singularity of $(Y, d_Y^\flat)$ consists of at most one point, we conclude that $(Y, d_Y^\flat)$ must be isometric to $\dR^3$ or $\dR^3/\dZ_2$.
\end{proof}
Now we are ready to prove Theorem \ref{t:3d local version}.
\begin{proof}[Proof of Theorem \ref{t:3d local version}]
First we prove Theorem item (1). Let $(Y, p^*)$ be a tangent cone at $p_\infty$ whose flat background geometry $(Y, g_Y^\flat)$ is the flat cone $\dR^3$ or $\dR^3/\dZ_2$. For simplicity of notation we will assume $(Y, g_Y^\flat)$ is $\dR^3$. The other case can be dealt with in the same manner. 
We can find $r_i\rightarrow 0$ such that $(X_\infty^3, r_i^{-1}d_\infty, p_\infty)$ converges to $(Y, d_Y, p^*)$. As before we also have the convergence of the corresponding flat background geometry
$(X_\infty^3, r_i^{-1}d_\infty^\flat, p_\infty)\xrightarrow {GH} (\dR^3, g_{\dR^3}, 0). $
This means that the annulus $r_i^{-1} A_{r_i/2, 2r_i}(p_\infty)$ converges to the flat annulus $A_{1/2,2}(0)$ in $\dR^3$. In particular, we can find smooth hypersurfaces $\Sigma_i\in A_{r_i, 2r_i}(p_\infty)$ with constant curvature 1 such that $r_i^{-1}\Sigma_i$ converges to the unit sphere in $\dR^3$. Then by a simple argument using the developing map one can see that a punctured neighborhood of $p_\infty$ in $(X_\infty^3, d_\infty^\flat)$ can be isometrically embedded in $\dR^3$ as a punctured domain. So the flat background geometry is smooth near $p_\infty$. Now $V$ can be viewed as a positive harmonic function in a punctured domain in $\dR^3$. The singular behavior of $V$ then follows from the classical B\^ocher's theorem. This finishes the proof of item (1) of Theorem \ref{t:3d local version}.

The rest of this subsection is devoted to the proof of item (2) of Theorem \ref{t:3d local version}.
We already know the flat background geometry on the limit $X_\infty^3$ is a flat orbifold near $p_\infty$, and a neighborhood of $p_\infty$ can be identified with an open set in $\dR^3$ or $\dR^3/\dZ_2$. Moreover, the positive harmonic function $V$ is of the form $\sigma r^{-1}+V_0$, where $r$ is the radial function on $\dR^3$, $\sigma$ is a positive constant, and $V_0$ extends smoothly as an orbifold harmonic function.
 It suffices to show $\sigma=0$.
 
  Suppose $\sigma>0$. Notice item (1) in Theorem \ref{t:3d local version}  implies that the tangent cone $(Y, p^*)$ at $p_\infty$ is unique, and $(Y, d_Y^\flat)$ can be identified with $\dR^3$ or $\dR^3/\dZ_2$. After rescaling we may assume  $\widehat{V}_Y^*=\frac{1}{2r}$. Notice $Y$ is a metric cone over a round 2-sphere with radius $1/2$. We may identify the cross section of the cone with  $\Sigma=\{r=1/2\}\subset Y$. Let $B_\infty$ be a small tubular neighborhood of $\Sigma$. Then we can find a domain $U_i$ contained in $X_i^4$  that converges to $B_\infty$ with uniformly bounded curvature and all its covariant derivatives. Furthermore, by theorem \ref{t:CFG} there is a smooth fibration map $F_i: U_i\rightarrow B_\infty$ with fibers given by smooth circles with uniformly bounded second fundamental form and all covariant derivatives. Let $\Sigma_i=F_i^{-1}(\Sigma)$. Then $\Sigma_i$ collapses to $\Sigma$ along the circle bundle with uniformly bounded curvature and covariant derivatives.

 Given any point $q\in B_\infty$, by assumption, we can find $q_i\in U_i$ and $\delta>0$ such that the universal cover $\widetilde{B_{\delta}(q_i)}$ converges smoothly to a hyperk\"ahler limit $\widetilde B_\infty$, and a neighborhood of $q$ is given by the $\dR$ quotient of $\widetilde B_{\infty}$. Since $V=1/{2r}$, the limit metric on $\widetilde B_\infty$ is of the form 
$\frac{1}{2r}(dr^2+r^2g_{S^2})+2r\theta^2, $
 where $\theta$ is dual to the Killing field generating the $\dR$ action. Changing the coordinate by $r=\frac{1}{2}s^2$, one can see that this metric is flat. Moreover, the local universal covers of $\Sigma_i$
 converge to some subset of 
 the level set $\{r=1/2\}$ in $\widetilde B_\infty$ which has constant curvature 1. In particular  the sectional curvature of $\Sigma_i$ converges uniformly 1.    It follows from Klingenberg's estimate that   the universal cover $\widetilde{\Sigma}_i$ of $\Sigma_i$ has a uniform lower bound on the injectivity radius. This also implies that the universal cover $\widetilde{U}_i$ of $U_i$ converges smoothly to a flat manifold $\widetilde{U}_{\infty}$, and $\widetilde\Sigma_i$ converges smoothly to the round sphere $\widetilde\Sigma_\infty\subset \widetilde U_\infty$.
 Since the two boundary components of $\widetilde{U}_{\infty}$ 
 are convex, applying Sacksteder's theorem \cite{Sac}, $\widetilde{U}_{\infty}$
is isometric to a tubular neighborhood of the round sphere $\mathbb{S}^3$ in $\dR^4$. Denote $G_i\equiv\pi_1(U_i)$, then we have the following diagram 
\begin{align}
 \xymatrix{
(\widetilde{U}_i, \widetilde{g}_i, G_i) \ar[d]_{\pi_i} \ar[rr]^{eqGH} &&  (\widetilde{U}_{\infty}, \widetilde{g}_{\infty},G_{\infty}) \ar [d]^{\pi_{\infty}}
 \\
 (U_i, g_i)\ar[rr]^{GH} &&  (U_{\infty}, g_{\infty}),
 }\label{e:eq-GH-convergence}
\end{align}
where $G_{\infty}\leq \Isom(\widetilde{U}_{\infty})$ is a closed subgroup so that $U_{\infty}=\widetilde {U}_{\infty}/G_{\infty}$. 
 
 For our purposes we need to investigate more closely the above convergence. Notice we have fixed a choice of a hyperkahler triple  $\bm \omega_i$ on each $X_i$. Then we get a triple of 2-forms on $\widetilde{\bm \omega}_i$ on $\widetilde{U}_i$, and passing to a further subsequence we may assume these converge to a hyperk\"ahler triple $\widetilde{\bm \omega}_\infty$ on $\widetilde{U}_\infty$. Since the limit metric on $\widetilde{U}_\infty$ is flat, we may assume that, via the embedding $\widetilde{U}_{\infty}\hookrightarrow \dR^4$, $\widetilde{\bm \omega}$ is given by the restriction of the standard hyperk\"ahler triple on $\dR^4$. 
 Notice the $G_i$ action on $\widetilde{U}_i$ preserves $\widetilde {\bm \omega}_i$, so $G_\infty$ preserves the triple $\widetilde {\bm\omega}_\infty$. If follows that $G_\infty$ is contained in $\SU(2)=\Sp(1)$. 
 
 Now we can restrict our attention to the smooth convergence of  $\widetilde{\Sigma}_i$ to $\widetilde{\Sigma}_\infty$. 
 Since $G_{\infty}$ is a closed subgroup in the compact Lie group $\Aut(\widetilde{\Sigma}_{\infty},\widetilde{\bm{\omega}}_{\infty})=\SU(2)$,  
it follows that there is a group isomorphism $\varphi_i: G_i\simeq \overline G_i< G_{\infty}$  (see lemma 3.2 in \cite{MRW} for instance). 
Moreover, for any sufficiently large $i$, there exists a $G_i$-equivariant diffeomorphism $\mathscr{F}_i:  \widetilde{\Sigma}_i\to \widetilde{\Sigma}_{\infty}$  
such that the following holds.
\begin{enumerate}
\item  $\mathscr{F}_i \circ \gamma  =\varphi_i(\gamma) \circ \mathscr{F}_i$  for all $\gamma\in G_i$.
\item $\mathscr{F}_i$ is an $\epsilon_i$-Gromov-Hausdorff approximation with $\epsilon_i\to 0$. 
\item For any unit tangent vector $v$, 
 \begin{align}\label{e:c1 close}\Big||d\mathscr{F}_i(v)| - 1 \Big| \leq  \Psi(\epsilon_i), \quad \lim\limits_{\epsilon_i\to0}\Psi(\epsilon_i)=0.\end{align}	
\end{enumerate}
A key technique in constructing the above $G_i$-equivariant diffeomorphism $\mathscr{F}_i$ is to use the center of mass technique.   
We refer the readers to \cite{GK} and Theorem 2.7.1 in \cite{Rong-notes} for more details. 

Now we identify $\widetilde\Sigma_i$ with $\widetilde \Sigma_\infty$, and $G_i$ with $\overline G_i$ using $\mathscr{F}_i$.
Consider the form $\widetilde\omega_{\infty}^1$ on $\widetilde \Sigma=\mathbb{S}^3$ given by $dx^1\wedge dx^2+dx^3\wedge dx^4$ in the standard coordinates. One can write down a standard contact 1-form $\eta_\infty=\frac{1}{2}((x^1dx^2-x^2dx^1)+(x^3dx^4-x^4dx^3))$ with $\widetilde\omega_{\infty}^1=d\eta_\infty$, and $\eta_\infty$ is $\SU(2)$-invariant. So for $i$ large 
 one can also write $\widetilde\omega_{i}^1=d\eta_i$ such that $\eta_i$  converges smoothly to $\eta_\infty$. Then we can average out $\eta_i$ by the group $G_i$ to make $\eta_i$ invariant under $G_i$ and by \eqref{e:c1 close} we can assume $\eta_i$ still converge to $\eta_\infty$ in $C^0$. Notice $d\eta_i=\widetilde\omega_i^1$ also converges to $\widetilde\omega_\infty^1$ in $C^0$. In particular, for $i$ large $\eta_i$ is a $G_i$-invariant contact 1-form. Moreover, there is an obvious isotopy of $G_i$-invariant contact 1-forms $\eta_t=t\eta_\infty+(1-t)\eta_i$ for $t\in[0, 1]$.  Applying Gray's Stability Theorem (see \cite[theorem 2.20]{Geiges}  or \cite[page 135-136]{MD-S} for more details), we conclude that $\eta_i$ and $\eta_\infty$ define isomorphic contact structures on $\widetilde\Sigma_{\infty}/\overline{G}_i$. Now we can apply a result in contact geometry \cite{Symplectic-filling}, which states that the minimal symplectic filling of the space $\widetilde\Sigma/\overline{G}_i=\mathbb{S}^3/\overline{G}_i$ has a unique diffeomorphism type. For simplicity, we will not distinguish the notations $G_i$ and $\overline{G}_i$. Notice that in our setting the subset $W_i$ enclosed by $\Sigma_i$ inside $M_i^4$ provides a minimal symplectic filling, but on the other hand, the minimal resolution $\widetilde{\dC^2/G_i}$ provides another, so in particular $\chi(W_i)=\chi(\widetilde{\dC^2/G_i})$. Now in our setting $|G_i| \to +\infty$, so for $i$ large $G_i$ is either a finite cyclic subgroup $\dZ_{k_i+1}\leq\SU(2)$ or a binary dihedral group $2D_{2(k_i-2)}$. So it follows that 
 \begin{align}
 	\chi(\widetilde{\dC^2/G_i}) =
 	\begin{cases}
 		k_i + 1, & \text{when}\ G_i = \dZ_{k_i+1},
 		\\
 		k_i + 1, & \text{when}\ G_i = 2D_{2(k_i-2)},
 	\end{cases}
 \end{align}
 and hence $\chi(W_i)=\chi(\widetilde{\dC^2/G_i})\rightarrow\infty$. 
 
 We recall the Chern-Gau\ss-Bonnet theorem on an Einstein 4-manifold $(M^4, g)$ with boundary:
 \begin{equation}
 \label{e:GBC1}
8\pi^2\chi(M^4) = \int_{M^4} |\Rm_g|^2\dvol_g + 8\pi^2 \int_{\p M^4} TP_{\chi}.	
\end{equation}
Denote by $\II$ and $H$ the second fundamental form and the mean curvature of $\p M^4$, respectively. Then the above transgression of the Pfaffian is given by
\begin{equation}\label{e:GBC2}
TP_{\chi} = \frac{1}{4\pi^2} \cdot \Big(\lambda\cdot H - \Rm_{ikkj}\cdot \II_{ij} + \frac{1}{3}H^3 + \frac{2}{3}\Tr(\II^3)- H\cdot |\II|^2\Big) \dvol_{\p M^4},	
\end{equation}
where $i$, $j$, $k$ are in the tangential direction of $\p M^4$, and $\lambda$ is the Einstein constant.
Applying this to $W_i$, 
 since the second fundamental form of $\Sigma_i$ is uniformly bounded and the volume is collapsing, the boundary integral goes to $0$. So  by \eqref{e:curvature local  l2 bound} we obtain a uniform bound on $\chi(W_i)$. Contradiction.  	
\end{proof}

\begin{remark}

In the above proof we make use of the symplectic structure more than the Ricci-flat structure. It is possible to use signature formula on manifolds with boundary to give a proof, but we are not aware of the formula of the eta invariant on general collapsing manifolds $(M_j^{2k+1}, g_j)$. For a fixed manifold $M^3$ with collapsing metrics, the convergence of eta invariants 
is studied in \cite{limiting-eta}.
\end{remark}

\subsection{Proof of  Theorem \ref{t:global} }\label{ss:4-4}
By the Chern-Gau\ss-Bonnet theorem, $\int_{\mathcal K}|\Rm_{g_j}|^2\dvol_{g_j}=192\pi^2$. Then it follows from Corollary \ref{c:finite singular points} that the singular set $\mathcal S$ consists of finite points. The rest of the proof does not require the $L^2$ bound on curvature. We will only use item (1) of Theorem \ref{t:3d local version}.

\begin{proposition}\label{p:flat-orbifold-limit}
 $(X_{\infty}^3, d_{\infty})$ is isometric to a flat orbifold locally modeled on $\dR^3/\dZ_2$, and the limit measure $\nu_\infty$ is proportional to the Hausdorff measure. 
\end{proposition}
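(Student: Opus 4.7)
The plan is to combine Proposition~\ref{p:special affine 3 manifold} with item~(1) of Theorem~\ref{t:3d local version} at each singular point, which together upgrade $X_\infty^3$ to a compact flat $3$-orbifold carrying a positive harmonic function $V$; then a strong maximum principle argument on the compact orbifold forces $V$ to be constant, and the conclusions about the metric and measure follow immediately.

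First, since $X_\infty^3$ is a connected compact Gromov--Hausdorff limit and $\mathcal{S}$ is finite by Corollary~\ref{c:finite singular points}, the regular set $\mathcal{R}=X_\infty^3\setminus\mathcal{S}$ is open and connected, and by Proposition~\ref{p:special affine 3 manifold} it carries a special affine metric $g_\infty=Vg_\infty^\flat$ with $V$ a positive harmonic function, unique on $\mathcal{R}$ up to a positive multiplicative constant (the $\dZ_2$ part of the affine monodromy preserves $V=|\partial_t|^{-2}$). Applying item~(1) of Theorem~\ref{t:3d local version} at each $p_i\in\mathcal{S}$, the flat background $(\mathcal{R},g_\infty^\flat)$ extends across $p_i$ with local model $\dR^3$ or $\dR^3/\dZ_2$, and near $p_i$ we have $V=\sigma_ir_i^{-1}+V_0^i$ with $\sigma_i\geq 0$ and $V_0^i$ orbifold-smooth and positive. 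Patching these local extensions together yields a compact flat $3$-orbifold $(X_\infty^3,g_\infty^\flat)$ locally modeled on $\dR^3/\dZ_2$. Note that whenever the local model at $p_i$ is smooth $\dR^3$, one must have $\sigma_i>0$, for otherwise $V$ would extend smoothly at $p_i$ and $p_i$ would be a Riemannian manifold point, contradicting $p_i\in\mathcal{S}$.

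Next I would argue that $V$ is constant on $X_\infty^3$. View $V$ as a lower semi-continuous function on compact $X_\infty^3$ by setting $V(p_i)=+\infty$ whenever $\sigma_i>0$ and $V(p_i)=V_0^i(p_i)$ otherwise; then $V$ attains its positive infimum $m$ at some point $q_0\in X_\infty^3$ that cannot be a pole. If $q_0\in\mathcal{R}$, the classical strong maximum principle for $\Delta_{\nu_\infty}$ (or, after passing to the Gibbons--Hawking local universal cover where $V$ lifts to a classical positive harmonic function on a flat region) forces $V\equiv m$ in a neighborhood of $q_0$; if $q_0$ is a non-pole orbifold singularity, the same conclusion follows after lifting to the local $\dZ_2$-cover where $V$ is classically harmonic. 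Connectedness of $\mathcal{R}$ then gives $V\equiv m$ throughout $\mathcal{R}$, which in turn rules out every pole. Hence all $\sigma_i=0$ and $V\equiv m$ on $X_\infty^3$.

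Finally, with $V\equiv m$ constant, $g_\infty=m\cdot g_\infty^\flat$ is just a rescaling of the flat orbifold metric, so $(X_\infty^3,d_\infty)$ is isometric to a flat orbifold locally modeled on $\dR^3/\dZ_2$. Moreover \eqref{e:measure in codimension 1 case} yields $d\nu_\infty=c\,V^{-1/2}\dvol_{g_\infty}=cm^{-1/2}\dvol_{g_\infty}$, which is a constant multiple of the $3$-dimensional Hausdorff measure on this flat orbifold. The only delicate step is invoking the strong maximum principle at the $\dZ_2$-orbifold points where $\sigma_i=0$, which requires the orbifold smoothness of $V_0^i$ supplied by Theorem~\ref{t:3d local version}(1); this is immediate on the local $\dZ_2$-cover. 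Note that the argument does not use the global $L^2$-curvature bound anywhere, consistent with the remark preceding the proposition.
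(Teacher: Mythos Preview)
Your proof is correct and follows essentially the same approach as the paper's: use Theorem~\ref{t:3d local version}(1) to get the local structure $V=\sigma_i r_i^{-1}+V_0^i$ near each point of $\mathcal{S}$, then apply the strong maximum principle on the compact flat orbifold to force $V$ constant. The paper's version is terser (it simply says the minimum of $V$ is achieved away from the poles and invokes the strong maximum principle on flat space), while you spell out the orbifold case via the local $\dZ_2$-cover; your extra remark that $\sigma_i>0$ whenever the local model is $\dR^3$ is correct but not needed for the argument.
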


\begin{proof} 
We consider the positive harmonic function $V$ on $X_\infty^3\setminus \mathcal S$. Near each $p_\alpha\in \mathcal S$,  $V=c_\alpha r_\alpha^{-1}+h_\alpha$, where $r_\alpha(x)\equiv d_{\infty}^{\flat}(x,p_{\alpha})$ and $c_{\alpha}\in[0,\infty)$. Let $\mathcal S'$ be the subset of $\mathcal S$ consisting of those $p_\alpha$'s with $c_\alpha>0$. Then $V$ is orbifold smooth on $X_\infty^3\setminus \mathcal S'$. On the other hand, $V\rightarrow\infty$ near $\mathcal S'$, so the minimum of $V$ is achieved at some point in $X_\infty\setminus \mathcal S'$. By the strong maximum principle on the flat space, $V$ is a constant. \end{proof}

Let us review some standard facts regarding flat orbifolds.
Bieberbach's theorem (see theorem 3.2.1 in \cite{Wolf}) states that if $\Gamma\leq \Isom(\dR^n)$ is a discrete and co-compact, then the lattice $\Lambda \equiv \Gamma\cap\dR^n$  is  normal  in $\Gamma$ with  bounded index $[\Gamma:\Lambda]\leq w(n)$ and yields the following exact sequence
\begin{align}
1 \to \Lambda \to \Gamma \to H \to 1,
\end{align}
where $H=\Gamma/\Lambda\leq O(n)$.
Now we consider a {\it closed} flat orbifold $X^n$. Applying Thurston's developability theorem (see Chapter 13 of \cite{Thurston} or Chapter III.$\mathcal{G}$ of \cite{Bridson-Haefliger}), the universal covering orbifold of $X^n$ is isometric to $\dR^n$ so that
$X^n = \dR^n/\Gamma$ for some discrete co-compact group $\Gamma\in \Isom(\dR^n)$. Then Biberbach's theorem implies that $X^n=\mathbb{T}^n/H$ for some finite group $H\leq O(n)$, where $\mathbb T^n\equiv \dR^n/\Lambda$ is a flat torus.

In our setting, 
we can write $X^3_\infty=\dT^3/H$ for some finite group $H\leq O(3)$. Let $q\in X_\infty^3$ be an orbifold point.  Then the tangent cone at $q$ is isometric to $\dR^3/\dZ_2$, where the group $\dZ_2$ is generated by the reflection $\iota:\dR^3\to \dR^3$, $x\mapsto -x$. Moreover, $\iota$  induces an element in $H$ with the fixed point $q$ and $\det(\iota)=-1$.
In particular, $H\not\subset SO(3)$. 
Let us denote $H_0 \equiv H\cap SO(3)$. Then $H = H_0 \cup (\iota\cdot H_0)$.

Next, we claim that any element $\gamma \in H_0$ acts freely on $\dT^3$. If not, suppose $\gamma$ has some fixed point $x_0\in \dT^3$ and recall $\gamma\in H_0\leq SO(3)$. Then $\gamma$ fixes the rotation axis passing through $x_0$. However, this contradicts the assumption that $X_\infty^3$ has only isolated singularities. 

 Now since $\pi_1(\mathcal K)=\{1\}$, by \cite{SW} we know $\pi_1(X_\infty^3)=\{1\}$. Then we must have $H_0=\{1\}$. This implies that $X_\infty^3=\mathbb T^3/\dZ_2$.

\subsection{Proof of Theorem \ref{t:complete-space}}
Let $(X_\infty^3, d_{\infty}, p_\infty)$ be given as in Theorem \ref{t:complete-space}. By  Theorem \ref{t:3d local version} (1),  the flat background geometry has orbifold singularity near each point in $\mathcal S$. Fix a normalization of the harmonic function $V$, and let $g_\infty^\flat$ be the associated flat background metric.   Near a point in $\mathcal S$ we have $V=\sigma r^{-1}+h$ for $\sigma\in[0,\infty)$ and $h$  orbifold smooth. Let $\mathcal S'$ be the subset of $\mathcal S$ consisting of points where $\sigma>0$. 

\begin{lemma} \label{l:complete infimum V}
We have 	$\limsup\limits_{r\rightarrow\infty}\inf\limits_{S_r(p_\infty)}V<\infty$. 
\end{lemma}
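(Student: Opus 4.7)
The plan is to argue by contradiction using the strong minimum principle, viewing $V$ as a positive harmonic function in the flat background metric $g_\infty^\flat$ on $X_\infty^3\setminus\mathcal S'$. Recall from Section \ref{ss:3-1} that in the Gibbons-Hawking description $V$ satisfies $\Delta_{g_\infty^\flat}V=0$ on the regular set $\mathcal R$ (equivalently, $\Delta_{\nu_\infty}V=0$, and a short conformal computation shows these are the same equation in dimension three). Moreover, by item (1) of Theorem \ref{t:3d local version} applied at each $p_\alpha\in\mathcal S\setminus\mathcal S'$, $V$ extends orbifold-smoothly across those points as a positive harmonic function. Thus, once the finite set $\mathcal S'$ of genuine blow-up singularities is removed, $V$ is a positive harmonic function on the connected open set $X_\infty^3\setminus\mathcal S'$.

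Now suppose for contradiction that $\inf_{S_r(p_\infty)}V\to\infty$ as $r\to\infty$. Then for any $M>0$ there exists $R_M>0$ with $V\geq M$ on $X_\infty^3\setminus B_{R_M}(p_\infty)$. Combined with the expansion $V(x)=c_\alpha r_\alpha(x)^{-1}+h_\alpha(x)$ with $c_\alpha>0$ near each $p_\alpha\in\mathcal S'$, there is also a $\delta_M>0$ such that $V\geq M$ on $B_{\delta_M}(p_\alpha)\setminus\{p_\alpha\}$ for every $p_\alpha\in\mathcal S'$. Fix any regular base point $x_0\in X_\infty^3\setminus\mathcal S$, set $M:=V(x_0)+1$, and note that the sublevel set $\{V\leq V(x_0)\}$ lies inside the compact set $\overline{B_{R_M}(p_\infty)}\setminus\bigcup_{p_\alpha\in\mathcal S'}B_{\delta_M}(p_\alpha)$, which is disjoint from $\mathcal S'$. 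Consequently, $V$ attains its infimum on $X_\infty^3\setminus\mathcal S'$ at an interior point $y_0$ of this compact set.

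By Theorem \ref{t:3d local version}(1), near $y_0$ the flat background $(X_\infty^3,g_\infty^\flat)$ is locally isometric to a domain in $\dR^3$ or in $\dR^3/\dZ_2$; lifting $V$ to the local Euclidean universal cover produces a harmonic function attaining an interior minimum, so the classical strong minimum principle forces $V$ to be locally constant near $y_0$. Since $X_\infty^3\setminus\mathcal S'$ is path-connected (removing finitely many points from a connected length space of essential dimension $3$ preserves path-connectedness), a standard open-closed continuation argument propagates this to show $V$ is globally constant on $X_\infty^3\setminus\mathcal S'$. This contradicts the blow-up of $V$ at the points of $\mathcal S'$ (if $\mathcal S'\ne\emptyset$) and, in any case, the contradiction hypothesis that $V\to\infty$ at infinity.

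The main technical point I anticipate is to ensure the strong minimum principle applies when $y_0$ happens to be an orbifold-smooth point of $\mathcal S\setminus\mathcal S'$; this is handled by passing to the finite local $\Gamma$-cover, which preserves both harmonicity and attainment of the minimum. Beyond this, the argument is standard elliptic theory combined with the fact (implicit in the setup) that $\mathcal S$ is finite and $X_\infty^3$ has essential dimension $3$.
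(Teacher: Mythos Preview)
Your approach is essentially the same as the paper's, but there is one logical gap at the outset. The negation of $\limsup_{r\to\infty}\inf_{S_r(p_\infty)}V<\infty$ only yields a sequence $r_j\to\infty$ with $\inf_{S_{r_j}(p_\infty)}V\to\infty$; it does \emph{not} give $\inf_{S_r(p_\infty)}V\to\infty$ for all $r$, which is what you assume. The paper inserts one more step here: since $\mathcal S$ is finite, for $j$ large the closed annuli $\overline{A_{r_j,r_{j+1}}(p_\infty)}$ contain no singular points, so $V$ is smooth and $g_\infty^\flat$-harmonic there, and the minimum principle gives $\inf_{A_{r_j,r_{j+1}}}V\geq\min\bigl(\inf_{S_{r_j}}V,\inf_{S_{r_{j+1}}}V\bigr)\to\infty$. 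Hence $V\to\infty$ uniformly at infinity, and from that point your argument and the paper's coincide: $V$ attains an interior minimum on $X_\infty^3\setminus\mathcal S'$, the strong minimum principle (applied on the local flat/orbifold model) forces $V$ to be constant, contradicting $V\to\infty$.
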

\begin{proof}
Otherwise, we can find an increasing sequence $r_j\rightarrow\infty$, such that $\inf\limits_{S_{r_j}(p_\infty)}V\rightarrow\infty$. By the  maximum principle for harmonic functions we actually have $V\rightarrow\infty$ uniformly at infinity.  The minimum of $V$ is then achieved at some point in $\mathcal S\setminus \mathcal S'$. Then by the strong maximum principle for harmonic functions we conclude $V$ must be constant. Contradiction.
\end{proof}

\begin{proposition} \label{p: flat background geometry for complete}
The flat background geometry $(X_\infty^3, d_\infty^\flat)$ is a complete flat orbifold.
\end{proposition}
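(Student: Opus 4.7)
The plan is to reduce the completeness of $(X_\infty^3,d_\infty^\flat)$ to a uniform upper bound $V\leq C_\epsilon$ on $X_\infty^3\setminus U_\epsilon(\mathcal{S}')$ for every $\epsilon>0$. By Theorem \ref{t:3d local version}(1) the flat metric $g_\infty^\flat=V^{-1}g_\infty$ already extends across each point of $\mathcal{S}$ as a smooth $\dR^3$- or $\dR^3/\dZ_2$-orbifold chart, and $V$ has only a Taub-NUT-type pole at each $p_\alpha\in\mathcal{S}'$. A bound $V\leq C_\epsilon$ on $X_\infty^3\setminus U_\epsilon(\mathcal{S}')$ then forces $g_\infty^\flat\geq C_\epsilon^{-1}g_\infty$ there, which will upgrade completeness of $d_\infty$ to completeness of $d_\infty^\flat$.

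For the boundedness, I first invoke Lemma \ref{l:complete infimum V} to produce $r_j\to\infty$ and $q_j\in S_{r_j}(p_\infty)$ with $V(q_j)\leq C_0$. Since $\mathcal{S}$ is finite, for large $j$ the annulus $A_{r_j/2,2r_j}(p_\infty)$ lies in the regular set $\mathcal{R}$, and $V$ is a smooth positive harmonic function there. Applying the Cheng-Yau-Qian gradient estimate (Theorem \ref{l:Yau gradient estimate}) on Harnack balls of radius $\sim r_j$, then chaining across the annulus (the number of required balls being uniformly bounded by volume comparison for the renormalized measure), I propagate $V(q_j)\leq C_0$ to $\sup_{S_{r_j}} V\leq C_2$, uniformly in $j$.

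Next, for any fixed $\epsilon>0$ set $\Omega_\epsilon\equiv X_\infty^3\setminus U_\epsilon(\mathcal{S}')$. There $V$ is positive, orbifold-smooth, and Bakry-\'Emery harmonic, and the expansion $V=\sigma_\alpha r_\alpha^{-1}+V_{0,\alpha}$ near each $p_\alpha\in\mathcal{S}'$ gives $\sup_{\partial U_\epsilon(\mathcal{S}')}V\leq C(\epsilon)$. The strong maximum principle on the compact domain $\Omega_\epsilon\cap\overline{B_{r_j}(p_\infty)}$, whose boundary consists of $\partial U_\epsilon(\mathcal{S}')$ and $S_{r_j}(p_\infty)$, then yields $\sup V\leq\max(C(\epsilon),C_2)$; letting $j\to\infty$ gives $V\leq C_\epsilon$ on all of $\Omega_\epsilon$.

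To close the loop, any $d_\infty^\flat$-Cauchy sequence $\{x_j\}$ is either $d_\infty$-bounded, in which case $d_\infty$-compactness together with the local compatibility of the two metrics on every orbifold chart (including those at $\mathcal{S}'$, where $d_\infty\sim C\sqrt{d_\infty^\flat}$ by the Taub-NUT expansion) provides a $d_\infty^\flat$-limit, or $d_\infty$-unbounded. In the latter case, after passing to a subsequence $d_\infty(x_j,\partial U_\epsilon(\mathcal{S}'))\to\infty$ for any fixed $\epsilon$; fixing $p\in\Omega_\epsilon$ and any path from $p$ to $x_j$, its terminal subarc in $\Omega_\epsilon$ has $g_\infty$-length at least $d_\infty(x_j,\partial U_\epsilon(\mathcal{S}'))$ while $V\leq C_\epsilon$ on it, so its $g_\infty^\flat$-length is at least $C_\epsilon^{-1/2}d_\infty(x_j,\partial U_\epsilon(\mathcal{S}'))\to\infty$, contradicting Cauchy. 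The main technical hurdle is the Harnack-chain step: the pointwise gradient estimate is immediate, but traversing the entire annulus in $\mathcal{R}$ by a bounded number of balls relies on volume comparison for $\nu_\infty$ together with the single-end property (in the relevant cases) implied by non-negative Bakry-\'Emery Ricci curvature via the splitting theorem; the cylindrical case has to be treated separately, but there $V$ reduces to a positive harmonic function on a cylinder and is automatically constant.
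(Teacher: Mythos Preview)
Your overall strategy coincides with the paper's: combine Lemma~\ref{l:complete infimum V} with a Harnack inequality across large annuli to bound $\sup_{S_r}V$, then deduce completeness of $g_\infty^\flat$. The maximum-principle step and the final Cauchy-sequence argument are fine. However, the Harnack-chain step has a genuine gap.

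You assert that volume comparison for $\nu_\infty$ together with the single-end property lets you propagate $V(q_j)\leq C_0$ to all of $S_{r_j}(p_\infty)$. Volume comparison bounds the number of covering balls, but it does not guarantee that any two points of $A_{r_j/2,2r_j}(p_\infty)$ can be joined by a curve of length $\leq 10r_j$ remaining in $A_{\delta_0 r_j,3r_j}(p_\infty)$ for some fixed $\delta_0>0$. If some asymptotic cone of $(X_\infty^3,g_\infty)$ were isometric to $\dR$, the rescaled annuli would be Gromov--Hausdorff close to the disconnected set $[-2,-1]\cup[1,2]$, and the gradient estimate would only propagate within the component containing $q_j$; the single-end hypothesis alone does not exclude this possibility. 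The paper fills the gap in two steps. First it rules out $\dR$ as an asymptotic cone: if it occurred, the special affine structure (Section~\ref{ss:3-1}) produces collapsing totally geodesic flat $2$-tori in each annular component, and sliding these tori outward exhibits two ends of $X_\infty^3$ diffeomorphic to $\dT^2\times\dR_+$, contradicting the one-end hypothesis. Second, with $\dR$ excluded, the paper proves a quantitative connectedness lemma at infinity (Lemma~\ref{l:connectedness at infinity}) by the blow-down argument of Lemma~\ref{l:connectedness}, and only then applies Harnack. That torus-sliding step is where the hyperk\"ahler input enters, and it is precisely the ingredient your sketch is missing.
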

\begin{proof}
If $(X_\infty^3,g_{\infty})$ has two ends, then $X_{\infty}^3$ isometrically splits off an $\dR$. Then it follows that $X_\infty^3$ is smooth and $\kappa=0$. Then $V>0$ is harmonic on the complete smooth metric measure space $(X_\infty^3, g_\infty, \nu_\infty)$ where $d\nu_{\infty}=V^{-1/2}\dvol_{g_\infty}$. By Theorem \ref{l:Yau gradient estimate}, $V$ is constant.

If $(X_\infty^3,g_{\infty})$ has only one end, then first we claim that $\dR$ is not an asymptotic cone at infinity. Otherwise  one can find $r_i\rightarrow\infty$, such that $A_{r_i, 2r_i}(p_{\infty})$ consists of two connected components. Similar to the proof of Proposition \ref{l:3d-tangent-cones} we can find in each connected component foliations by totally geodesic flat tori with respect to $g_\infty^\flat$. Then we can slide these tori between $A_{r_i, 2r_i}(p_{\infty})$ and $A_{r_{i+1}, 2r_{i+1}}(p_{\infty})$, and we see that $X_\infty^3$ have two ends each diffeomorphic to $\dT^2\times \dR_+$. Contradiction.   

Now the following lemma can be proved similarly to Lemma \ref{l:connectedness}. 
\begin{lemma}\label{l:connectedness at infinity}
	There exists  $\delta_0>0$ such that for every sufficiently large  $r$,  any two points in $A_{r/2, r}(p_\infty)$  can be connected by a smooth curve $\gamma\subset A_{\delta_0\cdot r, 3r}(p_\infty)$ with  arc length $|\gamma|\leq 10 r$. 
\end{lemma}
As in Section \ref{ss:4-2}, using Lemma \ref{l:complete infimum V}, Lemma \ref{l:connectedness at infinity}, and the Harnack inequality for harmonic functions, we obtain that $\limsup\limits_{r\rightarrow\infty}\sup\limits_{S_r(p_\infty)}V<\infty$, which implies that  $g_\infty^\flat$ is complete at infinity. 
\end{proof}

By Thurston's developability theorem, $(X_{\infty}^3, g_{\infty}^{\flat})$ is isometric to $\dR^3/\Gamma$ for some $\Gamma\leq \Isom(\dR^3)$. If $\Gamma$ is a free action, then the special affine  structure on $(X_\infty^3,g_{\infty}^{\flat})$ implies that $X_{\infty}^3$ is isometric to $\dR^3$, $S^1\times \dR^2$, or $\mathbb T^2\times \dR^1$. If $\Gamma$ is not free, then there is some $\sigma\in \Gamma$ which acts as reflection at one point in $\dR^3$. Let $\Gamma_0$ be the subgroup of $\Gamma$ that preserves the orientation of $\dR^3$. Then $[\Gamma:\Gamma_0]=2$, and $\Gamma_0$ acts freely on $\dR^3$. 
Otherwise, $\Gamma_0$ has an element that fixes an axis in $\dR^3$ and the singularity of $X_{\infty}^3\equiv\dR^3/\Gamma$ cannot be isolated.
It follows that $X_{\infty}^3\equiv(\dR^3/\Gamma_0)/\dZ_2$, where $\dR^3/\Gamma_0$ is isometric to $\dR^3$, $\dR^2\times S^1$, or $\dR\times \dT^2$. 
 
 Next, we classify $V$ in the above cases. If $(X_{\infty}^3, g_{\infty}^{\flat})$ is isometric to $\dR^3$, then B\^ocher's theorem implies $V=\sigma\cdot r^{-1}+\ell$ for some constants $\sigma\geq 0$ and $\ell\geq 0$. If $(X_{\infty}^3, g_{\infty}^{\flat})\equiv\dR^2\times S^1$, then we consider the $S^1$-average $\overline{V}=\int_{S^1}Vd\theta$ which is harmonic on $\dR^2\setminus\{0^2\}$. Since the composition $\overline{V}(e^z)>0$ is harmonic on $\dC_z$, $\overline{V}$ is constant. Therefore, the harmonic function $V>0$ is smooth on $\dR^2\times S^1$ which implies that $V$ is constant. 
 If $(X_{\infty}^3, g_{\infty}^{\flat})\equiv\dR\times \dT^2$, the same average argument implies $V$ is a positive constant.
  In the other cases, one can analyze the lifting of $V$ on the $\dZ_2$-cover and the same conclusion follows. 
 
 Finally, if \eqref{e:complete L2 bound} holds, then using item (2) of Theorem \ref{t:3d local version}, $V$ is a positive constant.

\section{Singularity structure II: Case $d=2$}
\label{s-5}

\subsection{Main results}

We first state  the main results of this section.

\begin{theorem}[Local version] \label{t:2d local version}
Let $(X_j^4, g_j, p_j)$ be a sequence of  hyperk\"ahler manifolds such that $\overline{B_2(p_j)}$ is compact,  and 
$(X_j^4, g_j,\nu_j, p_j) \xrightarrow{mGH} (X_\infty^2, d_{\infty}, \nu_\infty, p_{\infty})$
with $\dim_{\ess}(X_\infty^2)=2$. If the singular set $\mathcal{S} = \{p_\infty\}$, then the limit metric on $X_\infty$ is a singular special K\"ahler metric in the sense of Definition \ref{def:singular special kahler metric}, and $\nu_{\infty}$ is a multiple of the 2 dimensional Hausdorff measure  on $X_{\infty}^2$.
\end{theorem}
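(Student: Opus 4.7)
The plan is to parallel the strategy of Section \ref{s-4}, replacing the flat affine structure on the regular set by the holomorphic/special Kähler structure provided by Proposition \ref{p:special Kahler structure}, and extracting the local complex-analytic model for $p_\infty$ from a classification of possible tangent cones. Throughout I write $\mathcal R = X_\infty^2\setminus\{p_\infty\}$ and use that by Corollary \ref{c:integral monodromy} the special Kähler metric $g_\infty$ on $\mathcal R$ has local integral monodromy.

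First I would show that every tangent cone $(Y,p^*)\in\mathcal T_{p_\infty}$ satisfies $\dim_{\ess}(Y)=2$. If $\dim_{\ess}(Y)\leq 1$, one finds a rescaled annulus $r_j^{-1}A_{r_j,2r_j}(p_\infty)$ collapsing further; using the semiflat ansatz \eqref{e:semiflat metric formula} and $\det(W_{\alpha\beta})=1$, the collapse produces a short affine cycle which can be slid along a path to a fixed scale, yielding arbitrarily short cycles in a region of bounded geometry, a contradiction analogous to Proposition \ref{l:3d-tangent-cones}. By the convergence theory for special Kähler metrics in Section \ref{ss:3-2}, $Y\setminus\{p^*\}$ then inherits a smooth special Kähler structure. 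A connectedness/Harnack analysis of the positive harmonic function $\operatorname{Im}\tau$, parallel to Proposition \ref{p:upper-bound-V}--Proposition \ref{p:complete-background}, combined with a one-point completion at $p^*$ as in Proposition \ref{p:one-point-completion}, shows that $(Y,g_Y)$ has a complete flat background with an isolated conical singularity at $p^*$. Lu's theorem \cite{Lu} then forces $g_Y$ itself to be flat, so $Y$ is a two-dimensional flat cone, i.e.\ $Y\cong \mathbf{C}_\beta$ for some $\beta\in(0,1]$, with the canonical special Kähler structure of Example \ref{example: cone-special-kahler}.

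Next I would determine the admissible $\beta$ and prove uniqueness of the tangent cone. The monodromy around a small loop encircling $p_\infty$ is conjugate in $SL(2;\dR)$ to $R_\beta$ by \eqref{e:convergence of monodromy matrix}, and by local integral monodromy also conjugate to an element of $SL(2;\dZ)$; Lemma \ref{l:integral monodromy classification} restricts $\beta$ to the finite set $\{1/6, 1/4, 1/3, 1/2, 2/3, 3/4, 5/6, 1\}$. The map $\mathcal T_{p_\infty}\to\dR$ sending a tangent cone to the trace of its monodromy is continuous by \eqref{e: convergence of trace} with discrete image, and $\mathcal T_{p_\infty}$ is connected by Lemma \ref{l:tangent cone compact}, so $\beta$ is the same for every tangent cone. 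Uniqueness of the tangent cone as a pointed metric space then follows because the canonical scale is pinned down by the holomorphic normalization of $\xi = (\tau-\sqrt{-1})/(\tau+\sqrt{-1})$ at the singular point.

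With $\beta$ and the tangent cone fixed, I would extract the explicit model of Definition \ref{def:singular special kahler metric}. A punctured neighborhood of $p_\infty$ is holomorphically a punctured disk $\{0<|\zeta|<\delta\}$ with local special holomorphic coordinate $z=\zeta$, $z=\zeta^{1/2}$, or $(z-\sqrt{-1}w)^{1/\beta}=\zeta$ depending on whether the monodromy is $I_1$, $I_1^*$, or elliptic. In the parabolic cases, $\tau+\frac{\sqrt{-1}}{2\pi c}\log\zeta$ (with $c=1$ for Type I and $c=2$ for Type II) is single-valued and holomorphic on the punctured disk; since $\operatorname{Im}\tau + \frac{1}{2\pi c}\log|\zeta|$ is a bounded positive harmonic function by the weak Harnack inequality (Theorem \ref{t:weak-harnack}) together with convergence to the tangent cone, Riemann's removable singularity theorem yields the holomorphic extension $f(\zeta)$. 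In the elliptic case, $\xi$ is a bounded holomorphic function on the universal cover of the punctured disk vanishing at the puncture, and the prescribed monodromy together with the ramified cover structure forces $\xi = F(\zeta)^{1/2}$ or $\xi=F(\zeta)^{1/3}$ for $F$ holomorphic with $F(0)=0$. Finally, $\nu_\infty=\dvol_{g_\infty}$ on $\mathcal R$ by Section \ref{ss:3-2}, and $\nu_\infty(\{p_\infty\})=0$ by \eqref{e:linear-volume-estimate}, so $\nu_\infty$ is a multiple of the Hausdorff measure. The main obstacle will be the precise Harnack-type control needed in the last step: one must separate the logarithmic singular part of $\operatorname{Im}\tau$ (resp.\ the $|\zeta|^\beta$ behavior of $|\xi|$) from a bounded holomorphic remainder using only the metric-measure Harnack inequality and the special Kähler structure, before Riemann extension can be invoked.
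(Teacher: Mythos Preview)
Your proposal has the right target and most of the right tools, but the overall organization and two specific steps create genuine gaps.

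The paper does not classify tangent cones first; it cases on the monodromy $A$ of a loop around $p_\infty$, which is a fixed $SL(2;\dR)$-conjugacy class of the special K\"ahler structure on $\mathcal R$, independent of any tangent cone. This ordering matters because your Harnack analysis of $\operatorname{Im}\tau$ only makes sense when $\operatorname{Im}\tau$ is single-valued on $B^*$, i.e.\ when $A$ is parabolic. In the parabolic cases (Case 1: $A\sim \Id, I_1, I_1^{-1}$; Case 2: $A\sim -\Id, I_1^*, (I_1^*)^{-1}$) the paper does essentially what you sketch at the end: a lower bound $\operatorname{Im}\tau \geq Cr^{3/2}$ via the weak Harnack inequality (Lemma \ref{l:5.7}) shows $z$ (or $z^2$) extends continuously across $p_\infty$, and then B\^ocher's theorem on the punctured $\zeta$-disk gives the Type I/II/III($\beta=\tfrac12$) form. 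When $A$ is elliptic or hyperbolic (Case 3), $\tau$ is multi-valued and one must instead work with the single-valued quantity $\frac{\operatorname{Im}\tau}{|1-\sqrt{-1}\tau|^2}$ and the variable $\xi$.

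Two of your specific steps fail. The affine-cycle sliding from Proposition \ref{l:3d-tangent-cones} does not transplant to $d=2$: there is no totally geodesic flat subtorus in a special K\"ahler surface to slide. The paper instead applies Gau\ss--Bonnet to annuli of $B^*$ to rule out $\dR$ (but \emph{not} $\dR_+$) as a tangent cone and to establish the punctured-disk topology (Lemma \ref{l:diffeo type}); in Case 3 the stronger fact that every tangent cone is $2$-dimensional comes from \eqref{e:jumping monodromy}, which says a collapsed limit would force $A$ to be parabolic. Your invocation of Lu's theorem is also misplaced: $Y\setminus\{p^*\}$ is not complete. The paper instead notes that the asymptotic cone of $Y$ is a flat cone $\textbf{C}_\gamma$ which is itself a tangent cone at $p_\infty$; matching traces via \eqref{e: convergence of trace} and applying Gau\ss--Bonnet to annuli $A_{r_j,s_j}(\bar p)\subset Y$ then forces $Y=\textbf{C}_\beta$. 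Your final paragraph (integral monodromy restricting $\beta$, the $\xi=F(\zeta)^{1/k}$ structure, and the measure statement) matches the paper.
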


\begin{theorem}[Compact version]\label{t:global-2d}
Let $g_j$ be a sequence of hyperk\"ahler metrics on the K3 manifold $\cK$ with $\diam_{g_j}(\mathcal{K})=1$ 
such that
$(\cK, g_j,\nu_j) \xrightarrow{mGH} (X_{\infty}^2, d_{\infty}, \nu_{\infty})$ with $\dim_{\ess}(X_{\infty}^2)=2$.
  Then $X_{\infty}^2$ is homeomorphic to $S^2$, endowed with  a singular special K\"ahler metric.
\end{theorem}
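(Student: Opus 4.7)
The plan is to combine the local analysis of Theorem \ref{t:2d local version} with the simply connectedness of $\mathcal{K}$ to pin down both the metric and the topology of $X_\infty^2$. By the Chern-Gau\ss-Bonnet theorem one has $\int_{\mathcal K}|\Rm_{g_j}|^2\dvol_{g_j}=192\pi^2$ independently of $j$, so Corollary \ref{c:finite singular points} gives that the singular set $\mathcal S\subset X_\infty^2$ is a finite set $\{p_1,\dots,p_m\}$. On the smooth part $X_\infty^2\setminus\mathcal S$, Proposition \ref{p:special Kahler structure} produces a smooth special K\"ahler metric whose monodromy is locally integral by Corollary \ref{c:integral monodromy}, and $\nu_\infty$ is a constant multiple of the 2-dimensional Hausdorff measure. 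I would then apply Theorem \ref{t:2d local version} to a small metric ball around each $p_\alpha$ to identify the local model as one of Type I, II or III in Definition \ref{def:singular special kahler metric} and to extend the measure identification across $p_\alpha$. Patching these together yields a global singular special K\"ahler structure on $X_\infty^2$ and identifies $\nu_\infty$ globally as a multiple of the Hausdorff measure.

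To finish, I must show $X_\infty^2$ is homeomorphic to $S^2$. First, $X_\infty^2$ is a compact topological 2-manifold: compactness is automatic from the Gromov-Hausdorff convergence of the compact manifolds $\mathcal K$, the manifold structure on $X_\infty^2\setminus\mathcal S$ is smooth, and near each $p_\alpha$ the local coordinate $\zeta$ from Definition \ref{def:singular special kahler metric} yields a topological embedding of a neighborhood into $\dC$, so every point of $X_\infty^2$ has a disk neighborhood. Orientability follows from the underlying complex structure on the smooth part, which extends continuously across each $p_\alpha$ via the topological coordinate $\zeta$. It therefore suffices to show $\pi_1(X_\infty^2)=\{1\}$, since a closed oriented topological 2-manifold with trivial fundamental group is homeomorphic to $S^2$.

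For the simply connectedness I would imitate the argument used in Proposition \ref{p:flat-orbifold-limit}, invoking the Sormani-Wei theorem \cite{SW} to transfer $\pi_1(\mathcal K)=\{1\}$ to the Gromov-Hausdorff limit $X_\infty^2$. The main technical point that I expect to be the principal obstacle is checking that the hypotheses of \cite{SW} are satisfied in the present collapsing regime at every scale: concretely, one needs a uniform semi-local simple connectedness of $X_\infty^2$. I would verify this by combining the regular torus fibrations provided by Theorem \ref{t:CFG} and Corollary \ref{c: no infranil} on the regular set (where the nilpotent fiber has rank $4-2=2$ and hence is abelian, i.e.\ a 2-torus) with the disk-like local topology near each singular point from Definition \ref{def:singular special kahler metric}, so that small loops on the collapsing side $\mathcal K$ can be capped off in a controlled way. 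As a sanity check one may independently verify $\chi(X_\infty^2)\geq 0$ via the singular Gauss-Bonnet formula together with the nonnegativity of the Gauss curvature $2|\Theta|^2$ of a special K\"ahler metric (Remark \ref{rmk:flat special kahler}) and the fact that the cone-angle defects contributed by the Type I, II and III singularities are all nonnegative, ruling out higher genus topology.
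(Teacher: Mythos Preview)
Your proposal is correct. The structural part---finiteness of $\mathcal S$ via $\epsilon$-regularity and the uniform $L^2$ bound, the singular special K\"ahler structure from Theorem~\ref{t:2d local version}, and the identification of $X_\infty^2$ as a closed orientable 2-manifold---matches the paper exactly.

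For the $S^2$ topology the paper takes a shorter and more elementary route than you do: it invokes the Gau\ss--Bonnet theorem directly rather than Sormani--Wei. What you call a ``sanity check'' is in fact the paper's whole argument, but you stopped one step short: your inequality $\chi(X_\infty^2)\geq 0$ still allows the torus. The implicit completion is that $\chi=0$ would force $K\equiv 0$ and $\mathcal S=\emptyset$, hence uniformly bounded curvature along the entire sequence, which is incompatible with $\int_{\mathcal K}|\Rm_{g_j}|^2=192\pi^2$ together with $\Vol_{g_j}(\mathcal K)\to 0$. Your Sormani--Wei route, mirroring Section~\ref{ss:4-4}, is a legitimate alternative; your concern about verifying semi-local simple connectedness is unnecessary, since once $X_\infty^2$ is known to be a topological manifold this is automatic and the revised fundamental group of \cite{SW} coincides with the usual $\pi_1$. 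The Gau\ss--Bonnet approach buys elementarity and avoids invoking \cite{SW}; your approach has the virtue of uniformity with the three-dimensional case.
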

 
\begin{remark}
By definition a singular special K\"ahler metric has local integral monodromy around each singular point. With a uniform bound on the $L^2$ curvature, which is automatic in the setting of Theorem \ref{t:global-2d},  one expects the limit should indeed have integral monodromy. See Conjecture \ref{conj:integral monodromy}.
\end{remark}
\begin{remark}
We were informed by Shouhei Honda that using the theory of RCD spaces,  one can show that the above limit space $(X_\infty^2,d_{\infty})$ is indeed an Alexandrov space of non-negative curvature. 
\end{remark}

\begin{theorem}[Complete version]\label{t:complete-space-2d}
Let $(X_j^4, g_j, p_j)$ be a sequence of hyperk\"ahler  manifolds such that 
$(X_j^4, g_j, \nu_j, p_j) \xrightarrow{mGH} (X_{\infty}^2, d_{\infty}, \nu_{\infty}, p_{\infty})$. Assume $(X_{\infty}^2, d_{\infty})$ is complete non-compact
and  $\dim_{\ess}(X_\infty^2)=2$. If $\mathcal{S}=\{p_\infty\}$, then  $X_\infty^2$ is isometric to either a flat metric cone \emph{$\textbf{C}_\beta$} for $\beta\in \{\frac 12, \frac 13, \frac 23,\frac 14, \frac34, \frac 16, \frac 56, 1\}$  or the flat product $\dR\times S^1$, with the standard special K\"ahler structure.
\end{theorem}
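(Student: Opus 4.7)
My plan is to combine the local structure theorem at $p_\infty$ (Theorem \ref{t:2d local version}) with an asymptotic analysis at infinity and a monodromy matching argument exploiting the local integral monodromy of Corollary \ref{c:integral monodromy}. By Proposition \ref{p:special Kahler structure} and Theorem \ref{t:2d local version}, the renormalized limit measure on $X_\infty^2$ is a constant multiple of the $2$-dimensional Hausdorff measure and $\Ric_{g_\infty}\geq 0$ on the regular set, so $X_\infty^2$ is an $\RCD(0,2)$ space; Bishop--Gromov applies, so $V(r)\equiv\Vol(B_r(p_\infty))/r^2$ is non-increasing, and I set $V_\infty=\lim_{r\to\infty}V(r)\geq 0$. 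If $V_\infty=0$, every asymptotic cone is at most $1$-dimensional; the integral monodromy rules out sub-Euclidean one-ended behavior, so the asymptotic cone is a line, and the Cheeger--Gromoll splitting theorem for $\RCD(0,2)$ spaces yields $X_\infty^2\simeq\dR\times S^1$. If $V_\infty>0$, Cheeger--Colding makes every asymptotic cone a $2$-dimensional metric cone $dr^2+r^2 g_\Sigma$; in real dimension $2$ any such cone has identically vanishing sectional curvature away from the apex, so it is automatically a flat cone $\textbf{C}_{\beta_\infty}$ with $\beta_\infty=V_\infty/\pi\in(0,1]$, and maximal volume growth forces $X_\infty^2$ to have a single topological end.

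In the non-collapsed case, the disc-like topology at $p_\infty$ furnished by Theorem \ref{t:2d local version} makes $X_\infty^2\setminus\{p_\infty\}$ a topological punctured disc, so $\pi_1\cong\dZ$ is generated by a small loop $\gamma$ around $p_\infty$ freely homotopic to a large loop near infinity. The corresponding special K\"ahler monodromy class $[\rho(\gamma)]\subset SL(2;\dR)$ then admits two descriptions: first, the local description from Theorem \ref{t:2d local version}, namely $I_1$, $I_1^*$, or $\widetilde{R}_{\beta_0}$ according to the type of the local model at $p_\infty$ (Type I, II, or III with parameter $\beta_0$); second, via \eqref{e:convergence of monodromy matrix} applied to the rescalings which define the asymptotic cone, combined with the closedness of elliptic conjugacy classes in $SL(2;\dR)$, the cone monodromy $R_{\beta_\infty}$ from Example \ref{example: cone-special-kahler}. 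Matching these classes rules out Types I and II (parabolic versus elliptic) and forces $\beta_0=\beta_\infty$ in Type III. Hence $V(r)$ has the same limits $\pi\beta_0$ at $r\to 0^+$ and at $r\to\infty$; monotonicity gives $V\equiv\pi\beta_0$, and the equality case of Bishop--Gromov rigidity in the $\RCD$ setting identifies $X_\infty^2$ with the flat metric cone $\textbf{C}_{\beta_0}$. The subcase $\mathcal S=\emptyset$ is handled identically and yields $\beta_\infty=1$, i.e.\ $X_\infty^2=\textbf{C}_1=\dR^2$.

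The main obstacle will be the monodromy matching step. It requires justifying that under the non-compact rescaling convergence $(X_\infty^2,\lambda_j^{-1}d_\infty)\to(\textbf{C}_{\beta_\infty},d_{\textbf{C}})$, the fixed $SL(2;\dR)$-conjugacy class of $\rho(\gamma)$ on the left is genuinely captured by the cone monodromy on the right via \eqref{e:convergence of monodromy matrix}; the crucial ingredient is the closedness of elliptic conjugacy classes, which prevents a parabolic local monodromy from arising as a limit of rotations and thereby eliminates Types I and II. Secondary technical points include invoking the equality case of Bishop--Gromov rigidity in the singular $\RCD(0,2)$ setting with an isolated special K\"ahler singularity at $p_\infty$, and ruling out sub-Euclidean one-ended cusps under the integral monodromy constraint in the collapsed case.
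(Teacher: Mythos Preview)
Your approach via Bishop--Gromov and $\RCD$ rigidity is a reasonable alternative to the paper's Gauss--Bonnet argument in the non-collapsed case, but there is a genuine gap in the collapsed case $V_\infty=0$.

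\textbf{The main gap.} The assertion that ``integral monodromy rules out sub-Euclidean one-ended behavior'' is not justified. Concretely, suppose $X_\infty^2$ has one end and some asymptotic cone is $1$-dimensional (hence $\dR_+$, after a Gauss--Bonnet argument). Then \eqref{e:jumping monodromy} only tells you that the monodromy $A$ around the loop satisfies $P_iAP_i^{-1}\to\Id$, so $A$ is $SL(2;\dR)$-conjugate to $\Id$, $I_1$, or $I_1^{-1}$. These are all integral, so local integral monodromy (Corollary~\ref{c:integral monodromy}) gives no obstruction whatsoever; a Type~I singularity at $p_\infty$ with monodromy $I_1$ is perfectly compatible with this constraint. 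The paper instead kills this case by a complex-analytic argument: once $A$ has an invariant vector, there is a global special holomorphic coordinate $z$ and a global positive harmonic function $\Ima(\tau)$; one shows (as in Proposition~\ref{p: flat background geometry for complete}) that the flat background $\omega^\flat=\frac{\sqrt{-1}}{2}dz\wedge d\bar z$ is complete at infinity, so $X_\infty^2$ is biholomorphic to $\dC$, and B\^ocher/Liouville forces $\Ima(\tau)$ constant. Hence $\omega$ is the flat metric on $\dC$, contradicting the $1$-dimensional asymptotic cone. Your proposal has no substitute for this step.

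\textbf{A secondary imprecision.} In the non-collapsed case your monodromy matching is stated backwards. From \eqref{e:convergence of monodromy matrix} you only know that $R_{\beta_\infty}$ lies in the \emph{closure} of the conjugacy class of the fixed monodromy $A$. Elliptic conjugacy classes are closed, but the conjugacy class of a parabolic $A$ (e.g.\ $I_1$ for Type~I or $I_1^*$ for Type~II) is not: its closure contains $\pm\Id=R_1,R_{1/2}$. So ``closedness of elliptic conjugacy classes'' alone does not exclude $A=I_1$ with $\beta_\infty=1$, or $A=I_1^*$ with $\beta_\infty=\tfrac12$. You can still close this: in those edge cases the tangent cone at $p_\infty$ (which is $\textbf{C}_1$ for Type~I, $\textbf{C}_{1/2}$ for Type~II) coincides with the asymptotic cone, so your Bishop--Gromov rigidity forces $X_\infty^2=\textbf{C}_1$ or $\textbf{C}_{1/2}$, whose monodromy is $\pm\Id\neq I_1,I_1^*$, a contradiction. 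But the logic must be ordered this way; the parabolic cases are not eliminated before invoking rigidity.

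\textbf{Comparison.} In Case~(b) the paper argues via Gauss--Bonnet on exhausting annuli (tangent cone and asymptotic cone both $\textbf{C}_\beta$ force $\int K=0$, hence $K\equiv 0$), while you use volume rigidity in $\RCD(0,2)$. These are equivalent in spirit and both work. The essential difference is in Case~(a), where the paper's complex-analytic input (global special coordinate, completeness of the flat background, B\^ocher) does real work that your monodromy/volume framework does not replace.
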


\subsection{Proof of the main results}\label{ss:5-2}
The main part of this subsection is devoted to the proof of Theorem \ref{t:2d local version}.  At the end of this section we prove Theorem \ref{t:global-2d} and  Theorem \ref{t:complete-space-2d}.

 Now  assume we are in the setup of Theorem \ref{t:2d local version}. By Section \ref{ss:3-2}, we know $B_1(p_\infty)\setminus\{p_\infty\}$ is a special K\"ahler manifold.  
Recall that if $\Sigma$ is a smooth surface with boundary and with  Gaussian curvature $K\geq0$, then by the Gau\ss-Bonnet theorem we have 
\begin{equation}\label{e:2d Gauss-Bonnet}2\pi(2-2g(\Sigma)-n)=2\pi \chi(\Sigma)=\int_{\Sigma}K+\int_{\p \Sigma} k\geq \int_{\p \Sigma}k,
\end{equation}
where $k$ denotes the boundary geodesic curvature, and $n$ is the number of boundary circles. Given a tangent cone $(Y, \bar p)$ at $p_\infty$, suppose it is given by the limit of $(X_\infty, r_i^{-1}d_\infty, p_\infty)$ for  some $r_i\rightarrow 0$. 
If $\dim Y=2$, then by the interior curvature bound discussed in Section \ref{ss:3-2} we know $Y\setminus\{\bar p\}$ is smooth and special K\"ahler. 
If $\dim Y=1$, then $Y$ is either $\dR$ or $\dR_+$, and the collapsing is locally along a smooth circle fibration. We first claim $Y$ can not be $\dR$. Otherwise 
 we can choose a sequence $r_i\rightarrow 0$ such that the annulus $r_i^{-1}A_{r_i, 2r_i}$ collapses to the union of intervals $[-2,-1]\cup [1, 2]$ with bounded curvature. Then we can choose a smooth fiber $C_i$ with uniformly bounded geodesic curvature, which is given by the union of two circles. In particular $\int_{C_i}k\rightarrow 0$. 
 Let $\Sigma_i$ be the region bounded by $C_i$ and $C_{i+1}$, whose boundary consists of 4 disjoint circles. Applying \eqref{e:2d Gauss-Bonnet} we easily reach a contradiction. 	
 
So  we know any tangent cone $Y$ is either 2 dimensional or is isometric to $\dR_+$. In both cases we can choose a smooth circle $C_i$ in the annulus $r_i^{-1}A_{r_i, 2r_i}$ with $\lim_{i\rightarrow\infty}\int_{C_i}k=c$. In particular, $c = 0$ when $Y = \dR_+$.
Again applying \eqref{e:2d Gauss-Bonnet} to the region $\Sigma_i$ bounded by  $C_i$ and $C_{i+1}$, we see that for $i$ large $\Sigma_i$ is diffeomorphic to a cylinder. In particular we have shown that

\begin{lemma} \label{l:diffeo type}  For $\delta>0$ small, $B_\delta(p_\infty)\setminus\{p_\infty\}$ is diffeomorphic to a  punctured disc in $\dR^2$.	
\end{lemma}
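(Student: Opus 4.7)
The plan is to leverage the Gauss--Bonnet calculation that has just been set up and iterate it across a dyadic sequence of scales, then glue the resulting cylindrical pieces.

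First, I would fix a geometric sequence of radii $r_i = \delta_0 \cdot 2^{-i}$ for some small $\delta_0 > 0$. For each $i$, using the tangent cone dichotomy already established (either a $2$-dimensional special Kähler cone or $\dR_+$, with $\dR$ ruled out), I would extract a smooth, embedded, \emph{single} closed curve $C_i \subset A_{r_i, 2r_i}(p_\infty)$ with $\int_{C_i} |k_{C_i}|\, ds$ uniformly bounded as $i \to \infty$. Connectedness of $C_i$ comes from the fact that the link of a $2$-dimensional tangent cone is a single circle, and in the $1$-dimensional case the collapsing circle fibration has connected fibers; a diagonal extraction over the subsequences needed to realize tangent cones at different scales makes the choice of all $\{C_i\}$ simultaneous.

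Second, let $\Sigma_i$ denote the compact region bounded by $C_i$ and $C_{i+1}$, which therefore has exactly two boundary components. By \eqref{e:2d Gauss-Bonnet} together with $K_{g_\infty} \geq 0$ on the smooth locus (cf.\ Remark~\ref{rmk:flat special kahler}) and the uniform boundary bound, we get a uniform upper bound on $|\chi(\Sigma_i)| = 2g(\Sigma_i)$. Combined with the smooth (respectively $S^1$-equivariant, in the collapsing case) convergence of $r_i^{-1}\Sigma_i$ to the annular region $A_{1,2}(\bar p) \cong S^1 \times [1,2]$ in the tangent cone $Y_i$, this forces $\Sigma_i$ to be diffeomorphic to $S^1 \times [0, 1]$ for all $i$ sufficiently large, say $i \geq i_0$. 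This is precisely the cylinder conclusion already quoted in the excerpt.

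Finally, I would glue the cylinders $\Sigma_i$ for $i \geq i_0$ along the common boundary circles $C_{i+1}$ to produce a half-open cylinder. Setting $\delta = r_{i_0}$, the decomposition
\[
B_\delta(p_\infty) \setminus \{p_\infty\} \;=\; \bigcup_{i \geq i_0} \Sigma_i \;\cong\; S^1 \times (0, 1]
\]
exhibits the left-hand side as diffeomorphic to a punctured disc in $\dR^2$, as required. The one genuine subtlety in this plan is the simultaneous and consistent choice of the circles $C_i$ across all scales, since a priori the tangent cone may depend on $i$; this is handled by a diagonal subsequence argument, ensuring single connected circles at every small scale and uniform control on $\int |k|\, ds$. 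Once this is in place, the rest of the argument is routine surface topology.
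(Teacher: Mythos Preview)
Your approach is correct and is essentially the paper's own argument, which is the paragraph immediately preceding the lemma: pick connected circles $C_i$ at small scales (possible once $\dR$ is excluded as a tangent cone), apply \eqref{e:2d Gauss-Bonnet} with $K\ge 0$ to the region $\Sigma_i$ between consecutive circles to force it to be a cylinder, then glue. One minor simplification: neither the diagonal-subsequence step nor the appeal to smooth convergence of $r_i^{-1}\Sigma_i$ is needed---compactness of $\mathcal{T}_{p_\infty}$ (Lemma~\ref{l:tangent cone compact}) ensures a suitable circle exists at \emph{every} small scale, and Gauss--Bonnet alone (two boundary circles with $|\int_{C_i} k|\le 2\pi+o(1)$) already forces $g(\Sigma_i)=0$.
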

 Without loss of generality  we may assume $\delta=1$, and denote $B=B_1(p_\infty)$, $B^*=B_1(p_\infty)\setminus\{p_\infty\}$.

\

We now prove Theorem \ref{t:2d local version}.  Choose a loop $\sigma$ generating $\pi_1(B^*)$,  oriented so that it goes counterclockwise around $p_\infty$ (notice $B^*$ being a Riemann surface is naturally oriented).  Denote by $A$ the monodromy of the special K\"ahler structure along $\sigma$. Denote by $\omega$ the K\"ahler form on $B^*$.  We now divide into 3 cases

\

\textbf{Case 1}: $A$ is conjugate to $\Id, I_1$, or $I_1^{-1}$. In this case $A$  has an invariant vector. So we can choose a local holomorphic coordinate $z$ so that $dz$ is a globally defined holomorphic 1-form on $B^*$. Then we have
$$\omega=\frac{\sqrt{-1}}{2} \text{Im}(\tau)dz\wedge d\bar z$$
where $\Ima(\tau)$ is positive harmonic function on $B^*$.

\begin{lemma}\label{l:5.7}
For $r>0$ small, we have $\Ima(\tau)\geq Cr^{{3/2}}$ on $S_r(p_\infty)$. 
\end{lemma}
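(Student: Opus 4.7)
The plan is to follow the template of Proposition \ref{p:upper-bound-V} and Corollary \ref{p:lower-bound-V} from the three-dimensional case. Two observations make the adaptation possible: in the two-dimensional setting the renormalized measure $\nu_\infty$ is proportional to the Riemannian volume form of $g_\infty$ (since $\det(\phi_{\alpha\beta})=1$), and $\Ima(\tau)$, being the imaginary part of a holomorphic function, is harmonic in the $z$-coordinate and hence $g_\infty$-harmonic. Thus $\Ima(\tau)$ is a positive harmonic function on the Ricci-limit space $(B^*,g_\infty,\nu_\infty)$ in the sense of Definition \ref{def:harmonic functions}. The argument proceeds in two steps: first a sup-type lower bound $\sup_{S_r(p_\infty)} \Ima(\tau) \geq \ell_0\, r^{3/2}$, then an upgrade to the desired inf-type bound via Harnack.

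For the sup-type bound I would argue by contradiction. Assuming $\sup_{S_{r_i}(p_\infty)} \Ima(\tau)\leq r_i^{3/2}$ for some $r_i\to 0$, the Cheng--Yau gradient estimate (Theorem \ref{l:Yau gradient estimate}) together with Harnack on annuli gives $\sup_{A_{r_i,2r_i}(p_\infty)}\Ima(\tau)+r_i|\nabla \Ima(\tau)|_{g_\infty}\leq C r_i^{3/2}$. Testing the weak equation for $\Ima(\tau)$ against $\phi=\chi_i\cdot \Ima(\tau)$, where $\chi_i$ is a standard cutoff supported in $A_{r_i,1}(p_\infty)$ and equal to one on $A_{2r_i,1/2}(p_\infty)$, the inner-boundary contribution scales as $r_i^{3/2}\cdot r_i^{1/2}\cdot r_i^{-1}\cdot \nu_\infty(A_{r_i,2r_i}(p_\infty))$, which vanishes in the limit by \eqref{e:linear-volume-estimate}, while the outer-boundary term is uniformly bounded by standard interior estimates. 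This produces
\[
\int_{A_{2r_i,1/2}(p_\infty)} |\nabla \Ima(\tau)|^2 \, d\nu_\infty \leq C
\]
uniformly in $i$. Since $\{p_\infty\}$ has vanishing $2$-modulus (the essential dimension is $2$; cf.\ Lemma 1.42 of \cite{Bjorn-Bjorn}), letting $i\to\infty$ extends $\Ima(\tau)$ to a function in $W^{1,2}(B_{1/2}(p_\infty))$ that remains harmonic across $p_\infty$. The weak Harnack inequality (Theorem \ref{t:weak-harnack}) then yields $\ess\inf_{B_{1/4}(p_\infty)}\Ima(\tau)\geq c_0>0$, contradicting $\Ima(\tau)\leq C r_i^{3/2}\to 0$ on $A_{r_i,2r_i}(p_\infty)$.

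For the inf-type bound, by Lemma \ref{l:diffeo type} the punctured ball $B^*$ is diffeomorphic to a punctured disc, so $p_\infty$ has a single topological end. A short argument analogous to Lemma \ref{l:connectedness}, which is much easier here thanks to the disc topology, shows that for all sufficiently small $r>0$ any two points in $S_r(p_\infty)$ can be joined by a smooth curve inside $A_{r/4,2r}(p_\infty)$ of length $\leq C r$. Chaining together a bounded number of applications of the Harnack inequality from Theorem \ref{l:Yau gradient estimate} along this curve gives $\sup_{S_r(p_\infty)}\Ima(\tau)\leq C_1 \inf_{S_r(p_\infty)}\Ima(\tau)$, which combined with Step 1 completes the proof.

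The main obstacle will be making Step 1 fully rigorous: specifically, verifying that the finite-energy bound produces a genuine harmonic extension of $\Ima(\tau)$ across $p_\infty$ in the Ricci-limit sense. This relies on $\Ima(\tau)$ being locally Lipschitz on $B^*$ (so its minimal weak upper gradient equals $|\nabla \Ima(\tau)|_{g_\infty}$), on the vanishing $2$-capacity of a singleton, and on an approximation argument to propagate the weak equation through $p_\infty$. Step 2 is comparatively routine once the disc topology of $B^*$ from Lemma \ref{l:diffeo type} is invoked.
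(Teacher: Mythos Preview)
Your proposal is correct and follows essentially the same approach as the paper. The paper's own proof is two sentences that point back to the three-dimensional template (Proposition~\ref{p:upper-bound-V} and Corollary~\ref{p:lower-bound-V}), and you have correctly unpacked that: the $W^{1,2}$-extension/weak Harnack contradiction for the sup-type bound, followed by Harnack chaining on annuli to upgrade to the inf-type bound, with the connectivity step simplified by the punctured-disc topology from Lemma~\ref{l:diffeo type} and the fact that $\mathbb{R}$ has already been ruled out as a tangent cone.
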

\begin{proof}
The proof is similar to the arguments in Section \ref{ss:3-2}. If the estimate does not hold then $\text{Im}(\tau)$ is a global harmonic  function on $B$ in the sense of Definition \ref{def:harmonic functions}. This leads to  a contradiction by the weak Harnack inequality Theorem \ref{t:weak-harnack}.
\end{proof}

Notice from the proof of Lemma \ref{l:diffeo type} one can see that for any $r$ small, we can find a loop $\sigma_r$ contained in the annulus $A_{r/2, 2r}$ which is homotoptic to $\sigma$ and with length bounded by $Cr$.  Since $|dz|=\frac{1}{\sqrt{\text{Im}(\tau)}}$, by letting $r\rightarrow 0$ it follows that $\int_{\sigma}dz=0$. So $z$ is single-valued on $B^*$, and it extends continuously across $p_\infty$. Adding a constant  we may assume $z(p_\infty)=0$. Now $z$ defines a covering map from $B^*$ onto a domain in $\dC$.
 Suppose the covering degree is $k$, then we can take $\zeta=z^{1/k}$ as a holomorphic coordinate on $B^*$ and this embeds $B^*$ holomorphically onto a punctured domain $\Omega^*=\Omega\setminus\{0\}$ in $\dC$. 

Now we may view $\text{Im}(\tau)$ as a positive harmonic function on $\Omega^*$, so by B\^ocher's theorem  we know 
$$\text{Im}(\tau)=-c\log |\zeta|+V(\zeta)$$
where $c\geq 0$ and $V$ extends smoothly across $0$.
Then one can directly check that the tangent cone at $p_\infty$ is given by the flat metric $\omega_0=\frac{\sqrt{-1}}{2}k^2|\zeta|^{2k-2}d\zeta\wedge d\bar\zeta$ on $\dC$. This is a cone of angle $2\pi k$. Since $B$ is a Ricci limit space we must have $k=1$. If $c=0$ then the metric is smooth across $p_\infty$. If $c>0$, then
by rescaling the special holomorphic coordinate $z$ we may assume $c=1$. Then the metric is a singular special K\"ahler metric of Type I.

\

\textbf{Case 2.} $A$ is conjugate to $-\Id, I_1^*$ or $(I_1^*)^{-1}$.
In this case $A$ has an eigenvector with eigenvalue $-1$. Then we can choose a local special holomorphic coordinate $z$ such that $dz$ transforms to $-dz$ under $A$. 

Similar reasoning as Case 1 shows that $\text{Im}(\tau)$ is a well-defined positive harmonic function on $B^*$, and $z^2$ is a globally defined holomorphic function on $B^*$ and we may assume $z^2(p_\infty)=0$. Then as above $\zeta=z^{2/k}$ is a holomorphic coordinate and defines a holomorphic embedding of $B^*$ into a punctured domain in $\dC$. 
As before, we get 
$$\text{Im}(\tau)=-c\log |\zeta|+V(\zeta)$$
for a  harmonic function $V$ smooth at $0$.
As above one can see $k$ must equal $1$, and the tangent cone at $p_\infty$ is $\dC/\dZ_2$. If $c=0$, then the singularity is of orbifold type so the metric is a singular special K\"ahler metric of Type III with $\beta=\frac{1}{2}$. If  $c\neq 0$, then rescaling the coordinate $z$ we can make $c=1$. This shows that the metric is a singular special K\"ahler metric of Type II.

\

\textbf{Case 3.} $A$ is elliptic or hyperbolic.
Let $(Y, \bar p)$ be a tangent cone at $p_\infty$.  It follows from \eqref{e:jumping monodromy} that  $Y$ must be 2 dimensional, so $Y\setminus\{\bar p\}$ is smooth and special K\"ahler, and by \eqref{e: convergence of trace} we know $Y\setminus\{ \bar p\}$ has elliptic or hyperbolic monodromy. 
 Again the interior curvature bound implies $Y$ has   quadratic curvature decay at infinity. Notice $Y\setminus \{\bar p\}$ has non-negative curvature, so one can see that $Y$ is asymptotic to  a flat metric cone $(C_\gamma, O)$ with angle $2\pi \gamma$ for some $\gamma\in [0, 1)$, in the $C^\infty$ Cheeger-Gromov topology.

Notice $C_\gamma$ itself is also a tangent cone at $p_\infty$. Since $C_\gamma$ is flat, by Remark \ref{rmk:flat special kahler}  we know the flat connection $\nabla$ coincides with the Levi-Civita connection, hence its monodromy around an oriented loop going counterclockwise around the singularity is given by $R_\gamma$. Now again by \eqref{e: convergence of trace} we get $\Tr(A)=\Tr(R_\gamma)$.  Notice the orientation of the rotation is a conjugation invariant in $SL(2;\dR)$. In particular we must have $A=R_\gamma$ since we have chosen $\sigma$ to be oriented counter-clockwise around $p_\infty$. 

The same argument shows that the monodromy of $Y\setminus \{\bar p\}$ is also given by $R_\beta$. Then applying this again to the singular point $\bar p$ of $Y$, it follows that there is a tangent cone at $\bar p$ which is isometric to $\textbf{C}_\beta$.  This means that we can find a sequence of annuli  $A_{r_j, s_j}(\bar p)$ in $Y$ with $r_j\rightarrow0$ and $s_j\rightarrow\infty$, whose boundary circles after rescaling both converge to the unit circle in $\textbf{C}_\beta$. Applying the Gau\ss-Bonnet theorem on such sequences of annuli it is easy to see that $Y$ must be a flat metric cone hence is isometric to $\textbf{C}_\beta$. 

The above discussion in particular shows that there is a unique tangent cone at $p_\infty$, which is given by $\textbf{C}_\beta$ for some $\beta\in (0, 1)$, and the original monodromy matrix satisfies $A=R_\beta$. By Corollary \ref{c:integral monodromy} and Lemma \ref{l:integral monodromy classification} we must have  $\beta\in \{\frac 13, \frac 23, \frac 14, \frac34, \frac 16, \frac 56\}$. In particular, $A$ must be elliptic.

Now we study the singular behavior of the limit metric near $p_\infty$.  First we can find local special holomorphic coordinates $(z, w)$ on $B^*$ so that under the monodromy along $\sigma$, we have
$$A\cdot (dz-\sqrt{-1}dw)=e^{2\pi\sqrt{-1}\beta} (dz-\sqrt{-1}dw).$$
This means that $d\zeta{\equiv}d((z-\sqrt{-1}w)^{1/\beta})$ is a well-defined global holomorphic $1$-form on $B^*$. Since $R_\beta\neq \text{Id}$, by adding some constants to $(z, w)$ we may assume the translation part in \eqref{e:affine monodromy} vanishes. This implies that $\zeta\equiv (z-\sqrt{-1}w)^{1/\beta}$ is indeed a globally defined function on $B^*$. In general $\tau$ may not be single-valued on $B^*$. But notice 
$$\omega_\infty=\frac{\sqrt{-1}}{2}\text{Im}(\tau)dz\wedge d\bar z=\frac{\sqrt{-1}}{2}\frac{\text{Im}(\tau)}{|1-\sqrt{-1}\tau|^{2}}\beta^2|\zeta|^{2\beta-2}d\zeta\wedge d\bar \zeta.$$
So $\frac{\text{Im}(\tau)}{|1-\sqrt{-1}\tau|^2}$ is single-valued on $B^*$. 
\begin{lemma}
For all $\epsilon>0$, there exists a $C(\epsilon)>0$ such that for all $r\in (0, 1/2]$, on $S_r(p_\infty)$ we have
$$ \frac{\Ima(\tau)}{|1-\sqrt{-1}\tau|^2}\geq C(\epsilon)r^{\epsilon}.$$
\end{lemma}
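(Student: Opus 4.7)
The plan starts with the identity $\frac{\Ima(\tau)}{|1-\sqrt{-1}\tau|^2}=\frac{1-|\xi|^2}{4}$, obtained by a short direct computation from $\xi=(\tau-\sqrt{-1})/(\tau+\sqrt{-1})$: writing $\tau=a+\sqrt{-1}b$ with $b>0$ one finds $1-|\xi|^2=4b/(a^2+(b+1)^2)=4\Ima(\tau)/|1-\sqrt{-1}\tau|^2$. The lemma thus reduces to a positive lower bound for $1-|\xi|^2$ on $S_r(p_\infty)$, and in fact I would establish the strictly stronger statement that $1-|\xi|^2\geq c_0>0$ on a punctured neighborhood of $p_\infty$; the claimed inequality follows trivially from this after combining with the positivity and continuity of $u$ on the complementary compact annulus in $\overline{B_{1/2}(p_\infty)}$.

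The key technical point is that a suitable power $\xi^n$ is a bounded holomorphic function on $B^*$ that extends holomorphically across $p_\infty$. First I would carry out the monodromy computation: since $A_\sigma=R_\beta$ acts as rotation by $2\pi\beta$ on $(z,w)$, a short calculation yields $\xi\mapsto e^{-4\pi\sqrt{-1}\beta}\xi$ around $\sigma$. Because $\beta\in\{\tfrac{1}{3},\tfrac{2}{3},\tfrac{1}{4},\tfrac{3}{4},\tfrac{1}{6},\tfrac{5}{6}\}$, there exists $n\in\{2,3\}$ with $e^{-4\pi\sqrt{-1}\beta n}=1$, so $\xi^n$ is single-valued and holomorphic on $B^*$ with $|\xi^n|<1$ everywhere. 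Next I would argue that the conformal structure on $B^*$ extends across $p_\infty$ so that $\zeta$ serves as a local holomorphic coordinate there: since $\omega$ is conformal to the flat cone metric $\omega_0=\frac{\sqrt{-1}}{8}\beta^2|\zeta|^{2\beta-2}d\zeta\wedge d\bar\zeta$ by the bounded factor $1-|\xi|^2$, and since the unique tangent cone at $p_\infty$ is $\textbf{C}_\beta$, the conformal modulus of each annulus $A_{r,2r}(p_\infty)$ remains bounded while the total modulus diverges as $r\to 0$, so the puncture is point-like; the single-valued function $\zeta=(z-\sqrt{-1}w)^{1/\beta}$ with nowhere vanishing differential then gives an identification of a punctured neighborhood of $p_\infty$ in $B^*$ with $\{0<|\zeta|<r_0\}\subset\dC$.

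With this identification in hand, Riemann's removable singularity theorem extends $\xi^n$ holomorphically to a function $F$ on $\{|\zeta|<r_0\}$ with $|F|\leq 1$. Strict inequality $|F|<1$ holds on the punctured disk, so provided $\xi\not\equiv 0$ (the alternative $\xi\equiv 0$ makes $\omega$ the flat cone and the lemma is trivial), the maximum principle forces $|F(0)|<1$, giving $1-|\xi|^2\geq c_0>0$ in a punctured neighborhood of $p_\infty$. The principal obstacle in carrying out this plan is the second step above, namely verifying rigorously that $\zeta$ provides a biholomorphism with a genuine punctured disk at $0$; this hinges on the uniqueness of the tangent cone $\textbf{C}_\beta$ and the asymptotic conformal equivalence of $\omega$ with $\omega_0$, both already established in the preceding analysis of Case 3. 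Everything after that reduces to standard one-variable complex analysis.
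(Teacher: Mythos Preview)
Your approach is genuinely different from the paper's and, once a small wrinkle is smoothed out, actually yields the stronger uniform bound $1-|\xi|^2\geq c_0>0$ near $p_\infty$. The paper instead argues directly by contradiction at the tangent cone scale: if the inequality failed, then on a sequence of rescaled annuli $r_i^{-1}A_{r_i,2r_i}(p_\infty)$ the ratio $\sup u/\inf u$ (with $u=\Ima(\tau)/|1-\sqrt{-1}\tau|^2$) would stay bounded away from $1$; passing to universal covers and normalizing, $\tau$ converges to a constant on the flat limit $\widetilde{A_{1,2}}\subset\widetilde{\textbf{C}_\beta}$, which forces the ratio to $1$. The Harnack-type conclusion is then iterated over dyadic annuli to give the $r^\epsilon$ bound. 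Your route through Riemann's removable singularity theorem for $\xi^n$ is more elementary once the punctured-disk structure is in hand, and it bypasses this iteration entirely.

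The wrinkle is your insistence that $\zeta$ itself furnishes the biholomorphism of $B^*$ with $\{0<|\zeta|<r_0\}$. In the paper's logic this is established only \emph{after} the lemma (the lemma gives the gradient bound $|\nabla|\zeta|^\beta|^2\leq C(\epsilon)^{-1}r^{-\epsilon}$, which is what shows $\zeta$ extends continuously to $p_\infty$), so invoking it here would be circular, and your sketch does not supply an independent justification for why $\zeta$ must stay bounded and be injective near the puncture. Fortunately you do not need this: your conformal-modulus argument (each rescaled metric annulus is smoothly close to $A_{1,2}\subset\textbf{C}_\beta$ by the tangent cone convergence, so has modulus bounded below, and the Gr\"otzsch inequality sums these to infinity) already shows that $B^*$ is abstractly biholomorphic to a punctured disk $\{0<|\eta|<1\}$, with the $p_\infty$-end mapping to $\eta=0$. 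That is all Riemann's theorem requires to extend the bounded holomorphic function $\xi^n$ across $\eta=0$; the maximum principle then gives $|\xi^n(0)|<1$ and hence $1-|\xi|^2\geq c_0$ on a metric neighborhood of $p_\infty$. So drop the claim about $\zeta$ and work with the abstract uniformizing coordinate instead.
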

\begin{proof}
Suppose this fails then we can find a sequence $r_i\rightarrow 0$, such that \begin{equation}\label{e:a contradicting equation}\sup_{A_{r_i, 2r_i}(p_\infty)}\frac{\text{Im}(\tau)}{|1-\sqrt{-1}\tau|^2}\geq (1+\delta) \inf_{A_{r_i, 2r_i}(p_\infty)}\frac{\text{Im}(\tau)}{|1-\sqrt{-1}\tau|^2}\end{equation} for some $\delta>0$. Let $\widetilde U_i$ be the universal cover of $r_i^{-1}\cdot A_{r_i, 2r_i}(p_\infty)$, endowed with the rescaled metric. Then as $i\rightarrow\infty$ we know $\widetilde U_i$ converges to the universal cover $\widetilde{U}_\infty$ of $A_{1,2}(0)\subset \textbf{C}_\beta$, which is flat. We can find $\lambda_i>0$ such that $\sup_{\widetilde U_i}\lambda_i\cdot \text{Im}(\widetilde\tau)=1$. Suppose this supremum is achieved at some $q_i\in \overline{\widetilde U_i}$. Denote $D_i=\text{Re}(\lambda_i\cdot\widetilde\tau(q_i))$.  Then by Harnack inequality  it follows that $\text{Im}(\lambda_i\widetilde\tau)$ and $\text{Re}(\lambda_i\widetilde\tau-D_i)$ are locally uniformly bounded. So passing to a subsequence we obtain local convergence of $\lambda_i\widetilde\tau-D_i$ to a limit $\widetilde\tau_\infty$ on $\widetilde U_\infty$. The flatness of $\widetilde U_\infty$ implies $\widetilde\tau_\infty$ is a constant. This then contradicts \eqref{e:a contradicting equation}.
\end{proof}

The Lemma implies that $|\nabla|\zeta|^\beta|^2\leq C(\epsilon)^{-1}r^{-\epsilon}$. In particular, $\zeta$ extends continuously across $p_\infty$.  
 Moreover, $\zeta$ realizes $B^*$ as a finite cover of some punctured domain $D^*$ in $\dC$. So for some $k>0$, $\zeta^{1/k}$ defines a global holomorphic coordinate on $B^*$ which embeds $B^*$ into $\dC$. 
 Now we identify the upper half space with the unit disk $\mathbb D$ via the map 
\begin{equation}\label{e:xi tau transform}\tau\mapsto \xi=\frac{\tau-\sqrt{-1}}{\tau+\sqrt{-1}}\end{equation} Then the monodromy transformations on $\xi$ is given by  $\xi\mapsto e^{-4\pi\sqrt{-1}\beta}\xi$.
So $-\log |\xi|$ is well-defined on $B^*$. 
 By B\^ocher's theorem we  have 
$$-\log |\xi|=-c\log |\zeta|+v$$
for $c\geq 0$ and $v$ a smooth harmonic function on $B$.

We claim $c$ can not be zero. Otherwise $|\log|\xi||\leq C$ on $B^*$, which implies that $\tau$ is uniformly bounded on $B^*$ and has definite distance away from $\sqrt{-1}$. Then by taking limit as in the proof of the above Lemma we see $\widetilde\tau_\infty$ is not fixed by $R_\beta$ so is not invariant under the monodromy, which is a contradiction. 
 So we know $c>0$, and  as $p$ moves to $p_\infty$, $|\xi|\rightarrow0$.
 
  We may write 
$$\omega_\infty=\frac{\sqrt{-1}}{8} (1-|\xi|^2)\beta^2|\zeta|^{2\beta-2}d\zeta\wedge d\bar \zeta.$$
Since the tangent cone at $p_\infty$ is $\textbf{C}_\beta$ we see $k$ must be $1$. Without loss of generality we may assume $\zeta(p_\infty)=0$. We now divide into two cases

\begin{itemize}
	\item $\beta\in \{1/4, 3/4\}$. Then $\xi^2$ is holomorphic across $0$, and $\xi=F(\zeta)^{1/2}$ for a holomorphic function $F$ with $F(0)=0$. 
\item $\beta\in\{1/3,  2/3, 1/6,  5/6\}$. Then $\xi^3$ is holomorphic across $0$, and $\xi=F(\zeta)^{1/3}$ for a holomorphic function $F$ with $F(0)=0$.\end{itemize}
 These imply $\omega_\infty$ is a singular special K\"ahler metric of Type III. 
This finishes the proof of Theorem \ref{t:2d local version}.

\begin{proof}[Proof of Theorem  \ref{t:global-2d}]
By Theorem \ref{t:2d local version},  $d_\infty$ is a singular special K\"ahler metric on a compact Riemann surface. Since $\pi_1(\mathcal K)=\{1\}$, by \cite{SW}, $X_\infty$ is simply connected which implies that $X_{\infty}$ must be homeomorphic to $S^2$. 
\end{proof}

\begin{proof}[Proof of Theorem \ref{t:complete-space-2d}]  
By Theorem \ref{t:2d local version} we know $X_\infty$ is endowed with a singular special K\"ahler metric $\omega$. In particular, the curvature of $X_\infty\setminus\{p_\infty\}$ is positive. Then it is easy to see  that each end of $X_\infty$ is asymptotic to a unique cone at infinity. If $X_\infty$ has two ends, then it splits isometrically as a flat product $\dR\times S^1$. So we assume $X_\infty$ has only one end. Then an easy application of the Gau\ss-Bonnet theorem implies that $X_\infty$ is homeomorphic to $\dR^2$. Let $\sigma$ be a loop generating the fundamental group at infinity, and denote by $A$ the monodromy matrix along $\sigma$. 

\textbf{Case (a)}. There is a 1 dimensional asymptotic cone $Y$. Using the Gau\ss-Bonnet theorem as in the beginning of this subsection one can see that $Y$ must be $\dR_+$.
 Then by \eqref{e:jumping monodromy} we know $A$ must be conjugate to $I_1, I_1^{-1}$ or $\Id$. In particular, on $X_\infty\setminus \{p_\infty\}$ there is a local special holomorphic coordinate $z$ such that $dz$ is globally defined, so is  the positive harmonic function $\text{Im}(\tau)$. Just as in the proof of Theorem \ref{t:2d local version}, \textbf{Case 1}, the function $z$ is indeed a global coordinate on $X_\infty$.  
Then by similar arguments as in the proof of Proposition \ref{p: flat background geometry for complete} one can show that the flat metric $\omega^{\flat}\equiv\frac{\sqrt{-1}}{2}dz\wedge d\bar z=\text{Im}(\tau)^{-1}\omega$ is complete at infinity. So $X_\infty$ is biholomorphic to $\dC$. Now an application of B\^ocher's theorem yields that $\text{Im}(\tau)$ must be a constant. Hence the metric $\omega$ itself is a flat metric on $\dC$.  This is a contradiction.

 \textbf{Case (b)}. All asymptotic cones are 2 dimensional. In particular, they are all flat cones, and must be the unique $\textbf{C}_\beta$ such that $A$ is conjugate to $R_\beta$. This also implies that the tangent cone at $p_\infty$ must also be $\textbf{C}_\beta$. Then  the Gau\ss-Bonnet theorem implies that $X_\infty$ itself is flat, hence must be the cone $\textbf{C}_\beta$.  \end{proof}

\section{Classification of gravitational instantons}
\label{s-7}
\subsection{Uniqueness of asymptotic cones}

Let $(X^4, g)$ be a gravitational instanton, and we fix a hyperk\"ahler triple $\bom$.  If it  is flat, then it is isometric to a flat product $\dR^k\times \dT^{4-k}$ with $1\leq k\leq 3$. By Cheeger-Gromoll's splitting theorem, $X^4$  is isometric to a flat product $\dR\times\dT^3$ unless $X^4$ has only one end. In the following, we will always assume $X^4$ is non-flat and  
has only one end.
 We will also assume $\dim_{\ess}(Y)\leq 3$ for any asymptotic cone $(Y,d_Y, p_*)$, since otherwise $(X^4,g)$ is ALE and this case has already been classified by Kronheimer \cite{Kronheimer-Torelli}.

 Since $\int_{X^4}|\Rm_g|^2\dvol_g<\infty$, Theorem \ref{t:epsilon regularity HK} and Proposition \ref{p:regular set smooth} imply that any asymptotic cone $(Y, d_Y, p_*)$ is smooth away from  $p_*\in Y$. By Theorem \ref{t:complete-space} and Theorem \ref{t:complete-space-2d}, $(Y,d_Y)$ is a flat space isometric to one of the following: $\dR^3$, $\dR^3/\dZ_2$,  $\dR^2$, $\dR$, $\dR_{+}$, $S^1\times \dR^2$, $\dT^2\times\dR$, $S^1\times \dR$, a flat cone $\textbf{C}_\beta$ for $\beta\in \{\frac 12, \frac 13, \frac 23, \frac 14, \frac34, \frac 16, \frac 56\}$.

\begin{lemma}\label{l:asymptotic cone classification}Any asymptotic cone is a flat metric cone. 
\end{lemma}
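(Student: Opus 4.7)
The plan is to argue by contradiction, using the rescaling invariance of $\mathcal{T}_\infty(X)$ together with the local perturbation result (Theorem \ref{t:local inv hk triple}) to exclude the non-cone candidates from the classifications already obtained. From Theorem \ref{t:complete-space} and Theorem \ref{t:complete-space-2d}, the flat spaces in the enumerated list that fail to be metric cones are precisely the cylindrical products
\[
\dR^2 \times S^1_L,\ (\dR^2 \times S^1_L)/\dZ_2,\ \dR \times \dT^2_L,\ (\dR \times \dT^2_L)/\dZ_2,\ \dR \times S^1_L,
\]
all of the form $Y = Z \times \dT^k_L$ with $Z$ itself a Euclidean cone (or its $\dZ_2$-quotient), $k \in \{1,2\}$, and $L \in (0, \infty)$ a positive ``torus size''. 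So the task reduces to ruling out $Y$ of this cylindrical type.

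Suppose for contradiction that such a $Y$ lies in $\mathcal{T}_\infty(X)$. Since $Z$ is itself a cone, the rescaled space $\lambda Y$ is isometric to $Z \times \dT^k_{\lambda L}$. By the rescaling invariance in Lemma \ref{l:tangent cone at infinity connected}, the entire one-parameter family $\{Z \times \dT^k_{\lambda L} : \lambda > 0\}$ lies in $\mathcal{T}_\infty(X)$; compactness of $\mathcal{T}_\infty(X)$ in the pointed Gromov--Hausdorff topology then forces its pointed GH limits as $\lambda \to 0^+$ and $\lambda \to +\infty$ (namely $Z$ and $Z \times \dR^k$) also to belong to $\mathcal{T}_\infty(X)$. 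Thus $\mathcal{T}_\infty(X)$ simultaneously contains asymptotic cones of three distinct essential dimensions $\dim Z$, $\dim Z + k$, and the continuum parametrized by $\lambda$.

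The main step is to derive a contradiction from this coexistence. My approach is to apply Theorem \ref{t:local inv hk triple} on a sequence of compact regions $\mathcal{Q}_j \subset X$ near infinity that $C^k$-collapse to arbitrarily large balls in $Y$: this produces $\mathcal{N}$-invariant hyperk\"ahler triples $\bom_j^{\Diamond}$ whose $\mathcal{N}$-action incorporates the $\dT^k$-translation symmetry inherited from the torus factor of $Y$. Following the globalization scheme of Section \ref{s-6} (the ``model end'' structure theorem for gravitational instantons with non-maximal volume growth, sketched in the introduction), one consolidates these scale-dependent local symmetries into a genuine continuous symmetry of the end of $X^4$, with the norm of the generating Killing field controlled asymptotically by the fiber-size profile of the collapsing. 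Such a global Killing field on the end is inherited by every asymptotic cone as an isometric action; combining this symmetry with the flatness from the classification and with the asymptotic size control on the Killing-field norm forces each asymptotic cone to be a metric cone, ruling out the cylindrical candidates.

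The hardest part is precisely this globalization step: upgrading the local $\mathcal{N}$-invariance obtained on fixed domains $\mathcal{Q}_j$ via Theorem \ref{t:local inv hk triple} into a scale-independent global Killing field on the end of the gravitational instanton. This requires a diagonal sequence argument together with the finiteness of the $L^2$ curvature energy (via the $\epsilon$-regularity Theorem \ref{t:epsilon regularity HK}) in order to control the passage to the global limit, and constitutes the content of the model-end construction referenced in Section \ref{s-6}.
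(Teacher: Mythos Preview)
Your proposal has a genuine gap. The argument is circular: you invoke ``the globalization scheme of Section \ref{s-6} (the `model end' structure theorem for gravitational instantons with non-maximal volume growth)'' to upgrade local $\mathcal N$-invariance to a global Killing field on the end. But that machinery---Theorem \ref{t:CFG global} and Theorem \ref{t:complete invariant hk triple}---is developed \emph{after} Lemma \ref{l:asymptotic cone classification} and in fact relies on it (the fibration at infinity is set up over $Y\setminus K'$ once one knows the asymptotic cone is a unique flat cone, Proposition \ref{p:uniqueness-of-asymptotic-cone}, whose proof uses this lemma). So you are assuming what you are trying to prove. Moreover, even if you had a global Killing field on the end, your final sentence---that the Killing field together with flatness ``forces each asymptotic cone to be a metric cone''---is not an argument. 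Cylinders such as $\dR^2\times S^1$ carry plenty of Killing fields; nothing about the existence of a Killing field by itself excludes them.

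The paper's proof is entirely elementary and avoids all of this. It is a direct scale-continuity argument: suppose $S^1_{1/2}\times\dR^2\in\mathcal T_\infty(X)$. One first shows, by a propagation argument across all intermediate scales, that for every large $r$ the rescaled ball $r^{-1}B(p,r)$ is $\epsilon_0$-close to a unit ball in some $S^1_{f(r)}\times\dR^2$ with $f(r)$ bounded below by a fixed positive constant. Then one observes that doubling the scale halves the apparent circle size, forcing $f(2r)<\tfrac{3}{4}f(r)$ for all large $r$; iterating contradicts the lower bound on $f$. The cases $S^1\times\dR$ and $\dR\times\dT^2$ are handled the same way (for $\dT^2$ one first uses compactness of $\mathcal T_\infty(X)$ to bound the torus moduli). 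Your initial observation about rescaling invariance is correct and in spirit close to this, but the contradiction the paper extracts is a simple monotonicity-versus-lower-bound tension on the fiber size, not anything involving invariant hyperk\"ahler triples or Killing fields.
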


\begin{proof}
It suffices to rule out $S^1\times \dR^2, \dR\times \dT^2$ and $S^1\times \dR$. Notice these spaces have moduli given by the moduli of the flat metrics on $S^1$ and $\dT^2$.  We first  show $S^1_{1/2}\times \dR^2\notin \mathcal T_\infty(X^4)$, where $S^1_R$ denotes a circle of diameter $R$. 

Suppose one asymptotic cone $Y$ is given by $S^1_{1/2}\times \dR^2$. 
Then we may find $r_i\rightarrow\infty$ such that $r_i^{-1}B(p, r_i)$ converges to the unit ball in $S^1_{1/2}\times \dR^2$. 
On the other hand, we know for all $r>0$ sufficiently large, $r^{-1}B(p, r)$ is $\epsilon(r)$-GH close to the ball $U_r$ of radius 1 around the vertex in an asymptotic cone $Y_r$, where $\lim_{r\rightarrow\infty} \epsilon(r)=0$. Notice $Y_r$ is not unique and we simply make arbitrary choices for all $r$. Let $\epsilon_0\in (0,1/100)$ be small so that any asymptotic cone $Y_r$ whose unit ball $U_r$ is $3\epsilon_0$-GH close to the unit ball in $S^1_{R}\times \dR^2$ for some $R>1/4$ must be itself of the form $S^1_{R'}\times \dR^2$ for some $R'\geq \frac{1}{8}$. 

Now fix $\widetilde r$ large so that $\epsilon(r)<\epsilon_0/2$ for $r\geq\widetilde r$. For any $i$ with $r_i\geq \widetilde r$, let $s\in [r_i, r_{i+1}]$ be the smallest number so that for all $r\in [s, r_{i+1}]$,  $r^{-1}B(p, r)$ is $\epsilon_0$-GH close to $S^1_R\times \dR^2$ for some $R\geq 1/8$. By assumption we know $s\leq \frac{1}{2}r_{i+1}$. We claim $s=r_i$. Otherwise, if $s>r_i$, then for all $s'\in [s/2, s]$, we have $2s'\in [s, r_{i+1}]$, so 
$(2s')^{-1}B(p, 2s')$ is $\epsilon_0$-GH close to some ball $S^1_R\times \dR^2$ for some $R>1/8$. In particular, $(s')^{-1}B(p, s')$ is $2\epsilon_0$-GH close to $S^1_{2R}\times \dR^2$. By assumption it follows that the unit ball $U_{s'}$ in $Y_{s'}$ is $3\epsilon_0$-GH close to the unit ball in $S^1_{2R}\times \dR^2$. By our choices of $\epsilon_0$ we conclude that $Y_{s'}$ is of the form $S^1_{R'}\times \dR^2$ for $R'\geq 2R-3\epsilon_0>1/8$. This contradicts the choice of $s$.

So for any sufficiently large $r$, we may write $Y_r=S^1_{f(r)}\times \dR^2$ for some $f(r)\in(1/8,5/8)$. Now we claim that \begin{align}\label{e:f-decay}f(2r)<\frac{3}{4} f(r), \quad \forall r > 0, \end{align} so that the desired contradiction immediately arises. In fact, if \eqref{e:f-decay} is true, then $f(r) \to 0$ as $r \to \infty$ which contradicts $f(r) > 1/8$.  To see the claim, we notice that by assumption $r^{-1}B(p, r)$ is $\epsilon(r)$-GH close to the unit ball in $S^1_{f(r)}\times \dR^2$. So $(2r)^{-1}B(p, r)$ is $\frac{1}{2}\epsilon(r)$-GH close to the half ball in $S^1_{f(r)/2}\times \dR^2$. On the other hand, $(2r)^{-1}B(p, 2r)$ is $\epsilon(2r)$-GH close to the unit ball in $Y_{2r}$. It follows that $f(2r)<\frac{3}{4}f(r)$ if $r$ is large. 

Hence we have proved that $S^1_{1/2}\times\dR^2\notin \mathcal T_\infty(X^4)$. By rescaling, $S^1_R\times \dR^2 \notin \mathcal T_\infty(X^4)$ for all $R$. Similar arguments also show that $S^1_R\times \dR\notin \mathcal T_\infty(X^4)$ for all $R$.  

Finally, we claim that the possible unit-area flat $\dT^2$  such that $\dR\times \dT^2\in \mathcal T_\infty(X^4)$ must form a compact moduli. Indeed, if not, applying Lemma \ref{l:tangent cone at infinity connected}, one can choose a sequence of  flat tori $(\dT^2, g_j^{\FF})$ with $\Area_{g_j^{\FF}}(\dT^2) = 1$ and $\diam_{g_j^{\FF}}(\dT^2) \to \infty$ such that after appropriate scaling-up, 
\begin{align}
(\dT^2, \tilde{g}_j^{\FF}) \xrightarrow{GH} \dR\times S_1^1.	
\end{align}
It follows that rescalings of $\dR\times \dT^2$ converge to $\dR^2 \times S_1^1 \in \mathcal{T}_{\infty}(X^4)$ in the pointed Gromov-Hausdorff sense, which is a contradiction.  Then similar arguments as above also show that $\dR\times \dT^2\not\in \mathcal T_\infty(X^4)$ for any flat torus $\dT^2$.
\end{proof}

\begin{proposition}\label{p:uniqueness-of-asymptotic-cone}
Let $(X^4,g)$ be a gravitational instanton. Then it has a unique asymptotic cone which is a flat metric cone $(C(W), d_C, p_*)$, where $W$ denotes the cross-section and $p_*$ is the cone vertex. 
\end{proposition}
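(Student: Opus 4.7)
The plan is to combine the classification of possible asymptotic cones in Lemma \ref{l:asymptotic cone classification} with the connectedness of $\mathcal T_\infty(X^4)$ from Lemma \ref{l:tangent cone at infinity connected}. The key point is that each allowed asymptotic cone is a rigid flat metric cone, and these rigid cones form a \emph{finite}, hence discrete, family in $(\mathcal{M}et, d_{GH})$, so a connected subset must collapse to a singleton.

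Concretely, first I would combine Theorem \ref{t:complete-space}, Theorem \ref{t:complete-space-2d} and the discussion of the $d=1$ case in Section \ref{ss:3-3}, and then apply Lemma \ref{l:asymptotic cone classification}, which restricts attention to flat metric cones, to see that every $(Y, d_Y, p_*) \in \mathcal T_\infty(X^4)$ must have isometry class in the finite list
\begin{equation*}
\mathcal L \equiv \{\dR^3,\ \dR^3/\dZ_2,\ \dR_+\}\ \cup\ \{\textbf{C}_\beta : \beta \in \{\tfrac 16,\tfrac 14,\tfrac 13,\tfrac 12,\tfrac 23,\tfrac 34,\tfrac 56,1\}\}.
\end{equation*}
Next I would verify that $\mathcal L$ is a discrete subset of $(\mathcal{M}et, d_{GH})$: since $\mathcal L$ is finite it suffices that distinct elements are pairwise non-isometric, which is clear because they are separated by essential dimension, by the topology of the link (smooth $S^2$ vs.\ $\dR P^2$ in dimension three), or by the cone angle at the vertex in dimension two. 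Moreover each element of $\mathcal L$ is a genuine metric cone, hence is fixed (up to isometry) by the rescaling action on $\mathcal{M}et$; this is consistent with the rescaling invariance of $\mathcal T_\infty(X^4)$ asserted in Lemma \ref{l:tangent cone at infinity connected}.

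Finally, Lemma \ref{l:tangent cone at infinity connected} gives that $\mathcal T_\infty(X^4) \subset \mathcal L$ is connected in $(\mathcal{M}et, d_{GH})$. A connected subset of a discrete space is a single point, so $\mathcal T_\infty(X^4)$ consists of exactly one isometry class, which we may write as $(C(W), d_C, p_*)$ for some cross-section $W$ and cone vertex $p_*$.

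The substantive work has already been done upstream: Lemma \ref{l:asymptotic cone classification} is exactly what eliminates the continuous moduli of flat ends (such as $S^1_R \times \dR^2$ or $\dR \times \dT^2_\Lambda$ with varying $R$ or lattice $\Lambda$) that would otherwise, when combined with connectedness of $\mathcal T_\infty(X^4)$, allow the isometry class of asymptotic cones to vary continuously. Once rigidity of the candidate cones is in hand, this proposition is a formal consequence of connectedness plus discreteness; no further analytic or PDE input is required, and the only step demanding care is the enumeration/discreteness check for $\mathcal L$.
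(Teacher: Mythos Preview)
Your proof is correct and follows essentially the same approach as the paper: both use that Lemma \ref{l:asymptotic cone classification} forces every asymptotic cone into a \emph{finite} (hence discrete) list of rigid flat cones, and then invoke the connectedness of $\mathcal T_\infty(X^4)$ from Lemma \ref{l:tangent cone at infinity connected} to conclude uniqueness; the paper merely organizes this by maximal dimension rather than enumerating the whole list at once. One minor correction: your list $\mathcal L$ should also include $\dR$ (a flat metric cone over two points, not excluded by Lemma \ref{l:asymptotic cone classification} at this stage), though this does not affect the argument since $\mathcal L$ remains finite.
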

\begin{proof}  By Lemma \ref{l:tangent cone at infinity connected}, $\mathcal{T}_\infty(X^4)$ is connected and compact. Denote by $d$ the maximal dimension of the elements in $\mathcal T_\infty(X^4)$.  
If $d=3$, then we choose some $Y\in \mathcal T_\infty(X^4)$ with $\dim_{\ess}(Y)=3$. Then any element in a small neighborhood $\mathcal U$ of $Y$ in $T_\infty(X^4)$ has dimension $3$, so by Lemma \ref{l:asymptotic cone classification} it must be $\dR^3$ or $\dR^3/\dZ_2$. So the connectedness of $\mathcal T_\infty(X^4)$ implies that $\mathcal T_{\infty}(X^4)=\{Y\}$. Similar arguments apply to the case $d=2$.
\end{proof} 
In the rest of this section  we will denote by $(Y, d_Y, p_*)$ the unique asymptotic cone of $X^4$, and denote $d= \dim_{\infty}(X^4)\equiv \dim_{\ess}(Y)$. Since  $X$ has only one end, $W$ is connected and $Y\neq\dR$. From Section \ref{s-3}, the renormalized limit measure on $Y$ is $\nu_Y=\chi\cdot \dvol_{g_Y}$, where $\chi$ is a constant if $d>1$ or $d=1$ and $G_\infty=\dR^3$; $\chi=c\cdot z^{\frac{1}{2}}$ if $d=1$ and $G_\infty=\mathscr{H}_1$ (here $z$ is the affine coordinate). 

\subsection{Nilpotent fibration on the end}

Denote $r(x)\equiv d_g(p,x)$ and $\hat{r}(y)\equiv d_Y(p_*,y)$ for $x\in X$ and $y\in Y$. Below we use $\tau(x)$ to denote a general function on the end of $X^4$ such that $\lim\limits_{r(x)\to\infty}\tau(x)=0$. The following is essentially due to Cheeger-Fukaya-Gromov \cite{CFG}.
We  give an outline of the arguments in Appendix \ref{a1}.
\begin{theorem} \label{t:CFG global}
	There exists a smooth fibration map 
$F: X^4\setminus K\rightarrow Y\setminus K'$,	where $K, K'$ are compact such that the following properties hold.
	\begin{enumerate}
		\item There are flat connections $\nabla_y$ with parallel torsion on the fibers $F^{-1}(y)$ which depend smoothly on $y\in Y\setminus\Omega$, such that each fiber $(F^{-1}(y), \nabla_y)$ is affine diffeomorphic to a  nilmanifold $\Gamma\setminus N$ for $\Gamma\subset N_L$, and the structure group of the fibration is reduced to  $((\mathfrak{Z}(N)\cap\Gamma)\setminus \mathfrak{Z}(N))\rtimes \Aut(\Gamma)\subset \text{Aff}(\Gamma\setminus N)$;
		\item $F$ is an asymptotic Riemannian submersion in the sense that for a tangent vector $v$ at $x\in X\setminus K$ which is orthogonal to the fiber of $F$, we have 
		\begin{equation} \label{e:complete almost riemannian submersion}
			(1-\tau(x)) |v|_{g}\leq |dF_x(v)|_{g_Y}\leq (1+\tau(x))|v|_{g},
		\end{equation}
		and for all $k>0$, there exists $C_k>0$ such that for all $x\in X\setminus K$, 
		\begin{equation}\label{e:derivative uniform decay}
			|\nabla^k F(x)|_{g, g_{Y}}\leq C_k r(x)^{-k};
		\end{equation}
		\item The second fundamental form  $\Pi$ of the fibers  satisfies that for all $k\geq0$,		\begin{align}\label{e:2nd fund form estimate}
			\begin{cases}
|\nabla^k\Pi(x)|=\tau(x) r(x)^{-1-k}, & \text{if}\ d=2, 3, \quad \text{or}\ \ \ \ \ \ \  d=1 \ \text{and}\ G_\infty=\dR^3;\\ |\nabla^k\Pi(x)|\leq \frac{1}{\sqrt{3}}(1+\tau(x))r(x)^{-1-k}, & \text{if} \ d=1 \ \text{and}\ G_\infty = \mathscr{H}_1. 
\end{cases}
		\end{align}

	\end{enumerate}
\end{theorem}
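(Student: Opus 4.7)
The plan is to bootstrap the local Cheeger--Fukaya--Gromov fibration theorem (Theorem \ref{t:CFG}) into a global structure on the end, using the uniqueness of the asymptotic cone from Proposition \ref{p:uniqueness-of-asymptotic-cone} as the key glue. First I would observe that the finite $L^2$ energy together with the $\epsilon$-regularity theorem (Theorem \ref{t:epsilon regularity HK}) gives a definite rate of curvature decay: on the annulus $A_{r,2r}(p)$ the total curvature energy goes to zero as $r\to\infty$, so the rescaled balls $(X,r^{-2}g,x)$ with $r(x)\sim r$ have uniformly bounded curvature, and in fact $|\Rm_g|(x)=o(r(x)^{-2})$. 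Combined with Proposition \ref{p:uniqueness-of-asymptotic-cone}, this says that after rescaling, any annular region of the end converges smoothly in the Cheeger-Fukaya-Gromov sense to the corresponding annular region of the unique cone $Y$, which is regular away from $p_*$.

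The second step is to apply Theorem \ref{t:CFG} at each dyadic scale. For each $k$ sufficiently large, take $\mathcal Q_k\subset Y$ to be a thickened annulus around $\{\hat r=2^k\}$, and obtain a local fibration $F_k\colon \mathcal Q_k^\sharp\subset X\to\mathcal Q_k$ whose fibers are infranilmanifolds; by Corollary \ref{c: no infranil} (applied on each compact piece after the local invariant hyperk\"ahler perturbation of Section \ref{s-6}) one in fact obtains nilmanifold fibers $\Gamma\setminus N$ with $\Gamma\subset N_L$, so item (1) follows. Then I would patch adjacent $F_k,F_{k+1}$ on their overlap $\mathcal Q_k\cap \mathcal Q_{k+1}$. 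Because both are approximations of the same limiting $\mathcal N$-structure on $Y\setminus\{p_*\}$, the composition $F_{k+1}\circ F_k^{-1}$ lies in the structure group $((\mathfrak Z(N)\cap\Gamma)\setminus\mathfrak Z(N))\rtimes\Aut(\Gamma)$ and is $C^k$-close to the identity at rate $\tau(2^k)\to 0$. A standard center-of-mass averaging in this structure group, with weights from a partition of unity pulled back from the base $Y$ (following the construction in Section 4 of \cite{CFG}), produces a single global fibration $F\colon X\setminus K\to Y\setminus K'$.

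The quantitative estimates (2) and (3) then come for free by scaling. The bound $|\nabla^k F|\leq C_k r^{-k}$ is just \eqref{e:higher derivative bound on projection} applied to the rescaled metric $r^{-2}g$ at scale $r(x)$ and then rescaled back; similarly \eqref{e:almost riem submersion} gives \eqref{e:complete almost riemannian submersion} with error $\tau(x)$ measuring the $C^0$-closeness of the rescaled metric to $g_Y$ on a ball of definite size. For the second fundamental form estimate \eqref{e:2nd fund form estimate}, when $d=2,3$ the limiting fibers on local universal covers are flat $G_\infty$-orbits in a flat hyperk\"ahler space (by Propositions \ref{p:special affine 3 manifold} and \ref{p:special Kahler structure} together with the completeness analysis at infinity), so $\Pi_\infty\equiv 0$ and the factor $\tau(x)$ records the rate of convergence. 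When $d=1$ and $G_\infty=\dR^3$ the same holds. When $d=1$ and $G_\infty=\mathscr H_1$, Lemma \ref{l:second fund form formula} gives $|\II_\infty|=\tfrac{\sqrt 3}{2}V^{-3/2}$ on the limit, and with $g_\infty=V\,dz^2$ and $V=z+l$ one computes $r(z)\sim\tfrac 23 z^{3/2}$, hence $V\sim(3r/2)^{2/3}$ and $|\II_\infty|\sim\tfrac{1}{\sqrt 3}r^{-1}$. Higher derivative estimates follow from smooth convergence.

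The main obstacle I anticipate is the global patching in the second step. The local CFG theorem only gives structures on compact pieces, and these come with ambiguities in the affine identification of fibers; a priori, one could imagine the local $\mathcal N$-structures "twisting" at different scales, preventing the existence of a globally defined fibration. The uniqueness of the asymptotic cone is precisely what rules this out: the existence of a single limit $\mathcal N$-structure on $Y\setminus\{p_*\}$ ensures that the local structures at all sufficiently large scales are mutually close in a canonical sense, so the averaging procedure converges and produces a global $F$. A secondary technical point is upgrading infranilmanifold fibers to nilmanifold fibers globally on the end; this follows from Corollary \ref{c: no infranil}, which is itself a consequence of the local perturbation to invariant hyperk\"ahler metrics via Theorem \ref{t:local inv hk triple} and the free action of $SU(2)$ on $S^3$ used in the proof of Proposition \ref{p:regular set smooth}.
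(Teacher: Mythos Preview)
Your proposal is correct and follows essentially the same route as the paper's proof in Appendix \ref{a1}: apply the local Theorem \ref{t:CFG} on dyadic annuli $A_{2^j,2^{j+2}}$, glue on overlaps, and read off items (2) and (3) by rescaling and comparing with the explicit limiting geometry (flat fibers for $d=2,3$ and for $d=1$ with $G_\infty=\dR^3$; Lemma \ref{l:second fund form formula} for the Heisenberg case, where your computation $r\sim\tfrac{2}{3}z^{3/2}$ and $|\II_\infty|\sim\tfrac{1}{\sqrt 3}r^{-1}$ is exactly the one the paper alludes to). Your invocation of Corollary \ref{c: no infranil} to upgrade infranil to nil fibers is also what the paper does, immediately after stating the theorem.

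One small imprecision worth flagging: in your gluing step you write that ``$F_{k+1}\circ F_k^{-1}$ lies in the structure group $((\mathfrak Z(N)\cap\Gamma)\setminus\mathfrak Z(N))\rtimes\Aut(\Gamma)$''. This is not literally well-posed since the $F_k$ are fibrations, not diffeomorphisms, and the discrepancy between $F_k$ and $F_{k+1}$ is not purely a fiberwise structure-group element. The paper's mechanism is slightly different: on the overlap it first composes with an isometry $\rho_j\in\Isom(\Sigma)$ of the cross-section to align the two base maps, then realizes the residual difference as a domain self-diffeomorphism $\sigma_j$ obtained by normal projection between nearby fibers, and finally blends via cutoffs. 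Your center-of-mass picture is morally equivalent, but if you were to write out details you would need to separate the base adjustment from the fiber adjustment in this way.
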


 In our setting, applying Corollary \ref{c: no infranil}, all the fibers are nilmanifolds. As in Section \ref{s-6} we say a tensor $\xi$ on the end of $X$ is \emph{$\mathcal N$-invariant} if its lift to the local universal covers is invariant under the full nilpotent group action of $N_L$. 

\begin{lemma}\label{l:diameter growth}
In the setting of the above theorem, there are constants $\delta_0\in(0,1)$, $C>0$ such that \begin{align}C\cdot \hat{r}(y)^{-\delta_0}\leq \diam_g(F^{-1}(y))\leq C \cdot\hat{r}(y)^{\delta_0},\quad \forall \ y\in Y\setminus K',\end{align}    where $\diam_g$ denotes the intrinsic diameter of the fiber.	
\end{lemma}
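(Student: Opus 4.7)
The plan is to derive the polynomial bounds on the fiber diameter from the second fundamental form estimate \eqref{e:2nd fund form estimate} by comparing fiber Riemannian metrics along horizontal lifts of paths in the base. This reduces the claim to a simple ODE comparison for the evolution of a one-parameter family of metrics on a fixed model nilmanifold.

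\textbf{Setup and horizontal lifting.} Fix a large reference radius $r_0$, pick $y_0\in Y$ with $\hat r(y_0)=r_0$, and set $D_0\equiv \diam_g F^{-1}(y_0)$ and $M_0\equiv F^{-1}(y_0)$. For a general $y\in Y\setminus K'$ with $s_1\equiv \hat r(y)$, I will join $y_0$ to $y$ by a piecewise smooth curve $\gamma:[r_0,s_1]\to Y\setminus B(p_*,r_0/2)$ of length comparable to $s_1$ (which exists since $Y$ is a flat metric cone with a connected cross-section), and lift $\gamma$ to a horizontal curve $\tilde\gamma$ in $X$ using the horizontal distribution of the almost-Riemannian submersion $F$; compactness of the fibers gives global existence of the lift. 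The horizontal lift of $\dot\gamma$ then defines, via its flow, a smooth one-parameter family of diffeomorphisms $\Phi_s:M_0\to F^{-1}(\gamma(s))$.

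\textbf{Evolution of fiber metrics via O'Neill.} Let $h_s\equiv \Phi_s^*\bigl(g|_{F^{-1}(\gamma(s))}\bigr)$ be the pulled-back family of Riemannian metrics on the fixed manifold $M_0$. A direct computation using O'Neill's formulas for Riemannian submersions (with errors controlled by \eqref{e:complete almost riemannian submersion}) shows that for any vertical vectors $V,W$ at a point of $\tilde\gamma$,
\begin{equation*}
|(\partial_s h_s)(V,W)|\leq 2\bigl(1+\tau(\tilde\gamma(s))\bigr)\cdot |\Pi(\tilde\gamma(s))|_g\cdot |V|_{h_s}|W|_{h_s}.
\end{equation*}
Combined with \eqref{e:2nd fund form estimate} and the fact $r(\tilde\gamma(s))\geq (1-\tau)s$, this gives
\begin{equation*}
\bigl|\partial_s \log h_s(v,v)\bigr|\leq \frac{2c_\infty+\tau'(s)}{s},\quad \text{for all nonzero } v\in TM_0,
\end{equation*}
where $c_\infty=\tfrac{1}{\sqrt{3}}$ in the case $d=1, G_\infty=\mathscr{H}_1$, and $c_\infty=0$ otherwise, with $\tau'(s)\to 0$ as $s\to\infty$. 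Integrating over $[r_0,s_1]$ yields, for any prescribed $\delta_0\in(c_\infty,1)$ and $r_0$ sufficiently large, the metric sandwich
\begin{equation*}
(s_1/r_0)^{-2\delta_0}\,h_{r_0}\leq h_{s_1}\leq (s_1/r_0)^{2\delta_0}\,h_{r_0}
\end{equation*}
as symmetric bilinear forms on $M_0$.

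\textbf{Diameter comparison and main obstacle.} The sandwich on quadratic forms immediately gives the corresponding sandwich on lengths of curves, and hence on geodesic distances on $M_0$. Since $\Phi_{s_1}$ is a diffeomorphism realizing an isometry $(M_0,h_{s_1})\to F^{-1}(y)$, taking diameters yields $(s_1/r_0)^{-\delta_0}D_0\leq \diam_g F^{-1}(y)\leq (s_1/r_0)^{\delta_0}D_0$, which is the desired bound after absorbing the constants $D_0 r_0^{\pm\delta_0}$ into $C$. The crucial point, and the main obstacle to overcome in executing this plan, is that the coefficient $c_\infty=\tfrac{1}{\sqrt{3}}$ in Lemma \ref{l:second fund form formula} is \emph{strictly} less than $1$; without this strict inequality the integrated exponent would equal or exceed $1$, the polynomial upper bound would break down, and one would be forced into a different (and more delicate) argument. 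The $\tau$-corrections in O'Neill's formula and in \eqref{e:complete almost riemannian submersion} only contribute an $o(1)\log(s_1/r_0)$ error term, which is harmlessly absorbed into the gap between $c_\infty$ and $\delta_0$ by taking $r_0$ large.
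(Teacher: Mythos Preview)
Your proof is correct and is precisely the argument the paper has in mind: the paper's own proof is the single sentence ``This is a direct consequence of the estimates on the second fundamental form \eqref{e:2nd fund form estimate},'' and your horizontal-transport/ODE-comparison argument is the standard way to unpack that sentence. You have also correctly isolated the numerical heart of the matter, namely that $c_\infty=\tfrac{1}{\sqrt{3}}<1$ (coming from Lemma \ref{l:second fund form formula} after converting $V^{-3/2}$ to $r^{-1}$), so that the integrated exponent can be taken strictly below $1$.
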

\begin{proof}
This is a direct consequence of the 	estimates on the second fundamental form \eqref{e:2nd fund form estimate}.
\end{proof}

\begin{theorem}\label{t:invariant almost HK metrics}
By making $K$ and $K'$ larger if necessary, there exists an $\mathcal N$-invariant definite triple $\bm{\omega}^{\dag}$ defined on $X\setminus K$, such that for all $k\in\dN$, we have
\begin{equation}|\nabla^k_{{\bom}}(\bm{\omega}^{\dag}-\bm\omega)|_{g_{\bom}}=O( r(x)^{-k-1+\delta_0}).
\label{e:invariant almost HK metrics}
\end{equation}
\end{theorem}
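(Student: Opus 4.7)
The plan is to construct $\bm\omega^\dag$ by performing the averaging procedure of Proposition \ref{p:existence of local invariant definite triples} at each dyadic scale along the end and then gluing the resulting local triples via an $\mathcal N$-invariant partition of unity.

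For each sufficiently large dyadic $R=2^j$, consider the annular region $\mathcal Q_R \equiv F^{-1}(A_{R/2,4R}(p_*))\subset X\setminus K$ and rescale the ambient metric by $\tilde g \equiv R^{-2} g$. By Proposition \ref{p:uniqueness-of-asymptotic-cone} the rescaled base $R^{-1}\cdot A_{R/2,4R}(p_*)$ has bounded geometry uniformly in $R$; by Lemma \ref{l:diameter growth} the collapsing fibers have diameter $O(R^{-1+\delta_0})$ in the rescaled metric; and by Theorem \ref{t:epsilon regularity HK} together with \eqref{e:derivative uniform decay} and \eqref{e:2nd fund form estimate}, the curvature, the derivatives of $F$, and the second fundamental form of the fibers are uniformly bounded in the rescaled metric, with bounds independent of $R$. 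Thus the rescaled $\widetilde{\mathcal Q}_R$ falls in the setting of Proposition \ref{p:existence of local invariant definite triples} with Gromov--Hausdorff error $\tau_R=O(R^{-1+\delta_0})$. Applying the averaging construction produces an $\mathcal N$-invariant definite triple $\widetilde{\bm\omega}_R^\dag$ on $\widetilde{\mathcal Q}_R$ satisfying
\[ |\nabla^k_{\widetilde{\bm\omega}}(\widetilde{\bm\omega}_R^\dag - \widetilde{\bm\omega})|_{\tilde g} \leq C_k\cdot \tau_R = O(R^{-1+\delta_0}) \]
for all $k\in\dN$. Undoing the rescaling via the identification $\widetilde{\bm\omega}_R^\dag = R^{-2}\bm\omega_R^\dag$, together with the behaviour $|\nabla^k\eta|_{R^{-2}g}=R^{k+2}|\nabla^k\eta|_g$ for a rank-$2$ covariant tensor $\eta$, then yields
\[ |\nabla^k_{\bm\omega}(\bm\omega_R^\dag-\bm\omega)|_{g_{\bm\omega}}=O(R^{-k-1+\delta_0}) \]
on $\mathcal Q_R$, which is exactly the pointwise estimate of \eqref{e:invariant almost HK metrics} at scale $R$.

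To assemble the scale-dependent triples into a single global object, I would choose a smooth partition of unity $\{\psi_R\}$ on $Y\setminus K'$ subordinate to $\{A_{R/2,4R}(p_*)\}$ and pull it back to $\{\chi_R\equiv\psi_R\circ F\}$ on $X\setminus K$; the $\chi_R$ are automatically $\mathcal N$-invariant since $F$ is constant on fibers. Define
\[ \bm\omega^\dag \equiv \sum_R \chi_R\cdot \bm\omega_R^\dag. \]
On each overlap $\mathcal Q_R\cap \mathcal Q_{R'}$ both $\bm\omega_R^\dag$ and $\bm\omega_{R'}^\dag$ are $O(r^{-1+\delta_0})$-close to $\bm\omega$ in every $C^k_{g_{\bm\omega}}$-norm, so the weighted sum is also $O(r^{-1+\delta_0})$-close to $\bm\omega$; in particular, for $r$ sufficiently large $\bm\omega^\dag$ remains definite. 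Since each $\bm\omega_R^\dag$ and each $\chi_R$ is $\mathcal N$-invariant, so is $\bm\omega^\dag$, and \eqref{e:invariant almost HK metrics} follows.

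The main obstacle is establishing the uniformity of the constants $C_k$ in Proposition 4.9 of \cite{CFG} across all scales $R$. This reduces to the fact that the rescaled geometry of $\widetilde{\mathcal Q}_R$ (curvature, derivatives of $F$, norm of the second fundamental form, and the structure constants of the nilpotent model $N$) is controlled uniformly in $R$, which follows from the estimates of Theorem \ref{t:CFG global} and from the observation that the $\mathcal N$-structure on the end is globally defined so that the group-theoretic data $\Gamma\subset N_L$ is fixed along the end. Once this uniformity is verified, the averaging estimate applies with a constant independent of the scale, and the theorem follows from the dyadic construction and partition of unity argument described above.
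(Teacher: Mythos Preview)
Your approach is essentially correct in spirit but contains a gap, and it is more elaborate than necessary compared to the paper's proof.

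\textbf{The gap.} A definite triple is by definition a triple of \emph{closed} $2$-forms (Section~\ref{ss:triple-deformation}), and closedness of $\bm\omega^{\dag}$ is essential for the deformation theory that follows (in particular for Theorem~\ref{t:complete invariant hk triple}, where one writes $\bm\omega^{\Diamond}=\bm\omega^{\dag}+dd^*(\bm f\cdot\bm\omega^{\dag})$). Your gluing $\bm\omega^{\dag}=\sum_R\chi_R\,\bm\omega_R^{\dag}$ via a partition of unity does not in general preserve closedness: one picks up $d\bm\omega^{\dag}=\sum_R d\chi_R\wedge\bm\omega_R^{\dag}$, and you do not argue that this vanishes. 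The fix is to observe that the averaging operation depends only on the affine structure of the fibers and the bi-invariant measure on $\Lambda\setminus N_L$, both of which are part of the \emph{global} $\mathcal N$-structure furnished by Theorem~\ref{t:CFG global}. Hence the local averages $\bm\omega_R^{\dag}$ in fact agree on overlaps, the partition of unity is redundant, and closedness is automatic.

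\textbf{Comparison with the paper.} The paper exploits this observation directly: since the $\mathcal N$-structure on the end is global, the averaging of Proposition~\ref{p:existence of local invariant definite triples} can be performed once on all of $X\setminus K$, producing a single closed $\mathcal N$-invariant triple $\bm\omega^{\dag}$ without any dyadic decomposition or gluing. The decay estimate~\eqref{e:invariant almost HK metrics} then follows from Proposition~4.9 of \cite{CFG} combined with the fiber diameter growth in Lemma~\ref{l:diameter growth} --- exactly the mechanism you identify in your last paragraph (and your rescaling argument makes this explicit). So your estimate step is fine; it is only the construction step that is overcomplicated and, as written, incomplete.
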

\begin{proof}
Following the same arguments as in the proof of Proposition \ref{p:existence of local invariant definite triples}, we obtain an $\mathcal{N}$-invariant definite triple $\bm{\omega}^{\dag}$ on $X\setminus K$. The  estimate \eqref{e:invariant almost HK metrics} can be proved using the diameter growth estimate for the collapsing fibers in Lemma \ref{l:diameter growth} as well as proposition 4.9 in \cite{CFG}.  
\end{proof}
Denote by $g^{\dag}$ be the quotient metric on $Y\setminus K
'$ induced by $\bm\omega^{\dag}$.  Then \eqref{e:complete almost riemannian submersion}, \eqref{e:derivative uniform decay} and \eqref{e:invariant almost HK metrics} together imply that for all $k$, 
 \begin{equation}
 	\lim_{r\rightarrow\infty}r^{k}\sup_{S_r(p_*)}|\nabla^k_{g_Y}(g^{\dag}-g_Y)|_{g_Y}(y)=0.
 \end{equation}
 An $\mathcal N$-invariant function $f$ on $X\setminus K$ can be viewed as a function on $Y\setminus K'$, and we may write 
\begin{align}\Delta_{\bm\omega^{\dag}} f=\Delta_{g^{\dag}} f+\langle  H, \nabla_{g^{\dag}}f\rangle, \end{align}
where $ H$ denotes the mean curvature vector field of the fibers of $F$,  viewed as a vector field on $Y\setminus K'$. By Theorem \ref{t:CFG global} item (4), and the arguments in the proof of Lemma \ref{l: convergence of second fundamental form}, we have
\begin{equation} \label{e:mean curvature decay}
	\lim_{k\rightarrow\infty}r^k\sup_{S_r(p_*)}|\nabla^k_{g_Y}  (H-\nabla_{Y} \log \chi)|=0, \quad \forall \ \ k\in\dN.
\end{equation}

\subsection{Perturbation to invariant hyperk\"ahler metrics}
\label{ss:6-3}

For $R\gg1$ we denote  \begin{align}
\mathcal Q_R\equiv Y\setminus B_R(p_*), \quad \mathcal X_R\equiv F^{-1}(\mathcal Q_R).
\end{align}
As in Section \ref{ss:triple-deformation} we identify an element in $\Omega^+_{\bm\omega'}(\mathcal X_R)\otimes \dR^3$ with a $(3\times 3)$ matrix-valued function $\bm f$ on $\mathcal X_R$, and an $\mathcal N$-invariant element is identified with such a function on $\mathcal Q_R$.

\begin{theorem} \label{t:complete invariant hk triple}
Given any $\epsilon_0\in (0, 1-\delta_0)$, there exist a number $R_0>0$, and an $\mathcal N$-invariant hyperk\"ahler triple $\bm\omega^{\Diamond}$ on $\mathcal X_{R_0}$ of the form 
$\bm\omega^{\Diamond}=\bm\omega^{\dag} + dd^*(\bm f\cdot \bm\omega^{\dag})$ such that  
$|\nabla^k_{\bm\omega^{\dag}}\bm f(x)|=O(r(x)^{2-\epsilon_0-k})$ for all $k\in\dN$. 
\end{theorem}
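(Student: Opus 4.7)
The plan is to apply Proposition \ref{p:implicit function theorem} with Banach spaces of $\mathcal{N}$-invariant weighted H\"older tensors on $\mathcal{X}_{R_0}$, taking as the linear operator $\mathscr{L}$ the Bakry-\'Emery Laplacian on the asymptotic cone $Y$. Because the ambient triple $\bm{\omega}^{\dag}$ is $\mathcal N$-invariant, a triple of self-dual $2$-forms $\bm{f}\cdot\bm{\omega}^{\dag}$ is $\mathcal N$-invariant if and only if the matrix-valued function $\bm{f}$ descends to $\mathcal{Q}_{R_0}\subset Y\setminus K'$. Identifying $\mathcal{N}$-invariant tensors with their quotients on $\mathcal{Q}_{R_0}$, I would fix $\alpha\in (0,1)$ and define $\fA$ (resp.~$\fB$) to be the completion of the space of $\mathcal N$-invariant elements in $\Omega^+(\mathcal{X}_{R_0})\otimes\dR^3$ under the weighted norm $\|\bm{f}\|_{\fA}\equiv \sum_{k=0}^2 \sup_{\mathcal{Q}_{R_0}} \hat{r}^{-2+\epsilon_0+k}|\nabla^k_{g_Y}\bm{f}|+[\nabla^2\bm{f}]^\alpha_{w}$ (respectively $\|\cdot\|_{\fB}$ with weight $\hat{r}^{\epsilon_0}$), where the H\"older seminorm is computed scale-invariantly on unit balls in the cone metric.

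Next I would decompose $\mathscr{F}(\bm{f})=\mathscr{L}(\bm{f})+\mathscr{N}(\bm{f})$ with
\begin{equation*}
\mathscr{L}(\bm{f})\equiv -\Delta_{\nu_Y}\bm{f},\qquad \mathscr{N}(\bm{f})\equiv \mathscr{F}(\bm{f})-\mathscr{L}(\bm{f}).
\end{equation*}
Using \eqref{e:operator error}, Theorem \ref{t:invariant almost HK metrics}, and \eqref{e:mean curvature decay}, the difference $\mathscr{D}(\bm{f})-\mathscr{L}(\bm{f})$ has coefficients that decay like $O(\hat{r}^{-1+\delta_0})$ in all $C^k$ norms, so it contributes a small zeroth-order-in-$\mathscr{N}$ correction. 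Combining this with the pointwise quadratic estimate \eqref{e:pointwise non-linear estimate} applied to $\mathscr{N}_0$ gives an estimate of the form $\|\mathscr{N}(\bm{f})-\mathscr{N}(\bm{g})\|_{\fB}\leq C(R_0^{-1+\delta_0}+\eta)\|\bm{f}-\bm{g}\|_{\fA}$ for $\bm{f},\bm{g}$ in a ball $B_\eta(\bm 0)\subset\fA$; taking $\eta$ small and $R_0$ large achieves condition (5) of Proposition \ref{p:implicit function theorem}. Furthermore, $\mathscr{F}(\bm 0)=\mathscr{N}_0(\bm 0)=-\mathfrak{F}(\TF(-Q_{\bm\omega^{\dag}}))$ has pointwise norm $O(\hat{r}^{-1+\delta_0})$, so
\begin{equation*}
\|\mathscr{F}(\bm 0)\|_{\fB}\leq C\,R_0^{\epsilon_0-(1-\delta_0)}\xrightarrow{R_0\to\infty} 0,
\end{equation*}
since $\epsilon_0<1-\delta_0$, yielding condition (6) after taking $R_0$ large enough.

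The heart of the argument, and the main potential obstacle, is the construction of a bounded right inverse $\mathscr{P}:\fB\to\fA$ for $\mathscr{L}$ with uniform norm $L$ independent of $R_0$. For $d\in\{2,3\}$ the asymptotic cone is a flat Euclidean cone with $\chi$ constant, so $\mathscr{L}$ is the standard Laplacian acting on $\mathcal{N}$-invariant sections; I would invoke classical Lockhart-McOwen theory (or construct $\mathscr{P}$ explicitly via separation of variables on $Y$) to show that the map $-\Delta: C^{2,\alpha}_\beta\to C^{0,\alpha}_{\beta-2}$ is surjective with a right inverse when $\beta$ avoids the integer indicial roots; since $\beta=2-\epsilon_0$ with $\epsilon_0\in(0,1-\delta_0)$ is non-integer, this produces $\mathscr{P}$. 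In the case $d=1$ with $G_\infty=\dR^3$ the limit is an interval and $\mathscr{L}=V^{-1}\partial_z^2$ with $V$ constant, so $\mathscr{P}$ is obtained by two explicit integrations and the weight bookkeeping is straightforward; in the case $d=1$ with $G_\infty=\mathscr{H}_1$, where $\chi=cz^{1/2}$, the operator $\mathscr{L}=V^{-1}\partial_z^2$ (with $V$ linear in $z$) is again solvable by explicit integration, with indicial roots at the scaling exponents of the ODE that must be avoided by $2-\epsilon_0$, which is automatic for generic $\epsilon_0$. Taking $R_0$ large makes the perturbation from $\mathscr{L}$ to the actual linearization on $\mathcal{X}_{R_0}$ small, so the right inverse on $Y$ descends to one on $\mathcal{X}_{R_0}$ with a slightly larger constant $L$ by a standard Neumann series argument. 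Once all hypotheses of Proposition \ref{p:implicit function theorem} are in place, the solution $\bm{f}\in\fA$ satisfies $\|\bm{f}\|_{\fA}\leq 2L\|\mathscr{F}(\bm 0)\|_{\fB}$, giving the $C^{2,\alpha}$ decay $|\nabla^k_{\bm\omega^{\dag}}\bm{f}|=O(r^{2-\epsilon_0-k})$ for $k\leq 2$. Higher-order decay for all $k\in\dN$ follows from rescaled interior Schauder estimates applied to the fully-nonlinear hyperk\"ahler equation $\mathscr{F}(\bm{f})=0$ on unit balls in the cone metric, bootstrapped iteratively.
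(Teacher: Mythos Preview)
Your proposal is correct and follows essentially the same route as the paper: weighted H\"older spaces of $\mathcal N$-invariant functions on $\mathcal Q_R$, linearization $\mathscr L=\Delta_{\nu_Y}$, a right inverse built by separation of variables on the cone (the paper's Proposition \ref{p:weighted-esimate-quotient-space}), the contraction estimate for $\mathscr N$, and elliptic bootstrap for higher derivatives. The only cosmetic differences are that the paper works in $C^{k+2,\alpha}$ with $k\geq 6$ (so that the eigenfunction expansion converges pointwise via Weyl's law) and chooses an auxiliary weight $\delta_1\in(\epsilon_0,1-\delta_0)$ avoiding a finite indicial set $\Gamma$, rather than using $\epsilon_0$ directly; your Neumann-series remark is unnecessary since $\mathscr L$ is already the exact cone Laplacian and all perturbations are absorbed into $\mathscr N$.
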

In particular, we also have that for all $k\geq 0$, 
$|\nabla^k_{{\bom}}(\bm\omega^{\Diamond}-\bm\omega)|_{g_{\bom}}\leq C_k\cdot r(x)^{-k-\epsilon_0}$. So the original hyperk\"ahler triple $\bm\omega$ is asymptotic to the $\mathcal N$-invariant triple $\bm\omega^{\Diamond}$.

The rest of this subsection is devoted to the proof of the above theorem.
The idea is  similar to the proof of  Theorem \ref{t:local inv hk triple}.  The difference is that due to the non-compactness of $\mathcal X_R$ we need to work in certain weighted  spaces. 

Given $\delta\in \dR$ and $k\in\dN$, we define the following weighted (semi-)norms of an $\mathcal N$-invariant function $f$ on $\mathcal X_R$(or equivalently a function on $\mathcal Q_R$): 
\begin{align*}\|f\|_{C_{\delta}^k(\mathcal Q_R)} & \equiv \sum\limits_{m=0}^k\sup_{r\geq R}\Big \{r^{-\delta+m}\cdot \|\nabla_{g_Y}^m f\|_{C^0(A_{r,2r}(p_*))}\Big\},
\\
[f]_{C_{\delta}^{k, \alpha}(\mathcal Q_R)} & \equiv \sup\limits_{r\geq R}\Big\{r^{-\delta+k+\alpha}\cdot[f]_{C^{k,\alpha}_{g_Y}(A_{r,2r}(p_*))}\Big\},
\\ 
\|f\|_{C_{\delta}^{k, \alpha}(\mathcal \mathcal Q_R)}& \equiv \|f\|_{C_{\delta}^k(\mathcal Q_R)} + [f]_{C_{\delta}^{k, \alpha}(Q_R)},\end{align*}
where 
\begin{align}
[f]_{C^{k,\alpha}(A_{r,2r}(p_*))} \equiv \sup\bigg\{\frac{|\nabla^k_{g_Y}f(y_1)-\nabla^k_{g_Y}f(y_2)|}{d_{g_Y}(y_1,y_2)^\alpha}\bigg|x,y\in A_{r,2r}(p_*), d_{g_Y}(y_1,y_2)<\text{inj}_{g_Y}(y_1)\bigg\}.
\end{align}
As usual the difference in the last formula is computed in terms of the parallel transport along the minimizing geodesic.
 These (semi-)norms obviously extend to $\mathcal N$-invariant matrix valued functions.

 Now we fix $k\geq 6$ and $\alpha\in (0, 1)$. The following provides a suitable right inverse of the Laplace operator for us. Notice that we do not impose the boundary conditions since we are only interested in the asymptotic behavior at infinity.   
 
\begin{proposition}\label{p:weighted-esimate-quotient-space}
 There exists a finite set $\Gamma\subset(0, 1)$ depending only on $Y$, such that for all $\delta\in (0, 1)\setminus \Gamma$ and for all $R\geq 1$, one can find a bounded linear map $\mathcal S_R:  C^{k, \alpha}_{-\delta}(\mathcal Q_R)\to C^{k+2, \alpha}_{-\delta+2}(\mathcal Q_R)$ with the properties that $\Delta_{\nu_Y}\circ \mathcal S_R=\Id$ and $\|\mathcal S_R\|\leq C$ for $C$ depending only on  $Y$, $\delta$, $k$, $\alpha$ (but not on $R$). 
\end{proposition}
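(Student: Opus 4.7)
The plan is to exploit the fact that $Y$ is a flat metric cone, so that both the metric $g_Y$ and the Bakry--\'Emery operator $\Delta_{\nu_Y}$ are homogeneous with respect to the natural dilations of $Y$. When $d \geq 2$, or $d=1$ with $G_\infty = \dR^3$, the density $\chi$ is constant; when $d=1$ with $G_\infty = \mathscr{H}_1$, one checks from the formulas in Section \ref{ss:3-3} that under the rescaling $\hat r \mapsto \lambda \hat r$ (equivalently $z \mapsto \lambda^{2/3}z$), the operator $\Delta_{\nu_Y} = V^{-1}\p_z^2$ rescales by $\lambda^{-2}$. The dilation $\phi_R: \mathcal Q_1 \to \mathcal Q_R$, $y \mapsto R\cdot y$, therefore intertwines $\Delta_{\nu_Y}$ with $R^{-2}\Delta_{\nu_Y}$ and transforms the weighted H\"older norms by overall factors $R^{-\delta}$ and $R^{-\delta+2}$ respectively. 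A direct computation then shows that any right inverse $\mathcal S_1$ on $\mathcal Q_1$ pulls back under $\phi_R$ to a right inverse $\mathcal S_R$ on $\mathcal Q_R$ with identical operator norm, reducing the problem to the case $R=1$.

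The next step is to identify the set $\Gamma$ of critical weights via separation of variables on the cone. Writing $u = \hat r^{\eta}\phi(\theta)$, the equation $\Delta_{\nu_Y}(\hat r^\eta \phi) = 0$ decouples into an eigenvalue problem $L \phi = P(\eta)\phi$ on the cross-section $W$, where $P(\eta)$ is an explicit quadratic polynomial in $\eta$ and $L$ is an elliptic operator on $W$ with discrete spectrum (since $W$ is a point, a circle, or a closed orbifold surface). Thus the set $\{\eta_j^\pm\}$ of indicial roots is discrete, and $\Gamma \equiv \{\delta \in (0,1): 2-\delta \in \{\eta_j^\pm\}\}$ is finite.

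The core analytic step is the uniform a priori estimate on $\mathcal Q_1$: for $\delta \notin \Gamma$ there exists $C = C(Y, \delta, k, \alpha)$ such that any $u \in C^{k+2,\alpha}_{-\delta+2}(\mathcal Q_1)$ with $u|_{\partial B_1(p_*)} = 0$ satisfies $\|u\|_{C^{k+2,\alpha}_{-\delta+2}(\mathcal Q_1)} \leq C\|\Delta_{\nu_Y} u\|_{C^{k,\alpha}_{-\delta}(\mathcal Q_1)}$. I would prove this by a blow-up contradiction. A putative counterexample $u_i$ with $\|u_i\|_{C^{k+2,\alpha}_{-\delta+2}} = 1$ but $\|\Delta_{\nu_Y}u_i\|_{C^{k,\alpha}_{-\delta}} \to 0$ would have its weighted norm nearly attained at some $x_i$ at scale $\rho_i = \hat r(x_i) \geq 1$. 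Rescaling by $\rho_i^{-1}$ and using scale-invariance of the cone and the weighted norms, one extracts a limit in three scenarios, according to whether $\rho_i$ stays bounded, $\rho_i \to \infty$ with $x_i$ concentrating near the inner boundary, or $x_i$ escapes to infinity. In each case the limit is a nontrivial bounded solution of $\Delta_{\nu_Y}v = 0$ on the model space ($\mathcal Q_1$, the punctured cone $Y\setminus\{p_*\}$, or $\dR^d$ with the model weight) of growth rate exactly $\hat r^{2-\delta}$, which the indicial root computation rules out for $\delta \notin \Gamma$ (using in addition the Dirichlet condition on $\partial B_1(p_*)$ or standard removal-of-singularity to kill decaying modes at the vertex). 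With the a priori estimate in hand, existence of $\mathcal S_1$ follows by exhaustion: solve $\Delta_{\nu_Y} u_n = f$ on $B_n(p_*) \setminus B_1(p_*)$ with zero boundary data (uniquely solvable by standard elliptic theory since the operator is uniformly elliptic on each such compact region), extract a weighted limit $u = \mathcal S_1 f$ using the uniform bound and a diagonal Arzel\`a--Ascoli argument, and then scale back to obtain $\mathcal S_R$.

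The main obstacle is the blow-up analysis, specifically verifying in all three rescaling regimes that the limit homogeneous solutions are controlled by the indicial roots on $Y$, and in particular that one can genuinely upgrade H\"older convergence uniformly in the weighted norms. The 1-dimensional Heisenberg case $d=1$, $G_\infty=\mathscr{H}_1$ requires additional care because $\nu_Y$ is weighted and the model operator is not the flat Laplacian; however the scaling $z \mapsto \lambda^{2/3}z$ identified above ensures that the indicial analysis carries through with the appropriate homogeneous ansatz in the variable $\hat r$, so the same dichotomy holds and the argument is uniform across all dimensions $d \in \{1,2,3\}$ under consideration.
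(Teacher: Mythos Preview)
Your scaling reduction to $R=1$ via the cone dilation is correct and is a clean simplification that the paper does not use; the paper instead handles general $R$ by first applying a bounded Seeley-type extension operator $E_R:C^{k,\alpha}_{-\delta}(\mathcal Q_R)\to C^{k,\alpha}_{-\delta}(\mathcal Q_{R/2})$ (built from $E_1$ by the same rescaling you exploit) and then solving on the enlarged region.

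The core analytic step, however, has a genuine gap. The injectivity estimate
\[
\|u\|_{C^{k+2,\alpha}_{-\delta+2}(\mathcal Q_1)}\le C\,\|\Delta_{\nu_Y}u\|_{C^{k,\alpha}_{-\delta}(\mathcal Q_1)},\qquad u|_{\partial B_1(p_*)}=0,
\]
is \emph{false} for every $\delta\in(0,1)$. In each eigenmode $j$ the Dirichlet condition at $r=1$ leaves the one-dimensional family $r^{\mu_j^+}-r^{\mu_j^-}$; this lies in $C^{k+2,\alpha}_{2-\delta}$ whenever $\mu_j^+\le 2-\delta$, and since $\mu_0^+=0<2-\delta$ the constant mode always contributes (for $Y=\dR^3$ the function $1-r^{-1}$ is an explicit kernel element). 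Hence in the blow-up regime ``$\rho_i$ bounded'' the limit can land on this kernel and no contradiction results. The exhaustion fails for the same reason: the Dirichlet solution on $B_n\setminus B_1$ in the zero mode picks up a multiple $C_{1,n}(1-r^{-1})$ with $|C_{1,n}|\sim n^{2-\delta}$, so its weighted norm on any fixed compact set blows up as $n\to\infty$.

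The paper circumvents this by never formulating a boundary value problem. After extension it expands $\widetilde f=E_Rf$ in cross-sectional eigenfunctions and for each radial ODE writes down an \emph{explicit} particular solution: for $j=0$ a double integral from $R$ (Cauchy data $u_0(R)=u_0'(R)=0$), and for $j\ge1$ a variation-of-parameters formula with the integration endpoints chosen so that the growing homogeneous piece is suppressed. The finite set $\Gamma$ records precisely those $\delta$ for which one of these integrals fails to give the bound $|u_j(r)|\le C\lambda_j^{-k}r^{2-\delta}$. Summability uses Weyl's law together with $k\ge d+1$, and the full $C^{k+2,\alpha}_{2-\delta}$ bound follows from scale-invariant interior Schauder estimates on dyadic annuli. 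Your argument can be repaired by keeping the scaling reduction and replacing the blow-up/exhaustion step with this direct construction on $\mathcal Q_1$.
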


\begin{proof}
If  $d=1$, then in terms of the affine coordinates on $\dR_+$ (see Section \ref{ss:3-3}), we have $\Delta_{\nu_Y}=Cz^{-1}\p_z^2$. In this case we  reduce to a simple ODE problem that we omit the proof. 

Now we consider the case $d\geq 2$. Then $\nu_Y$ is proportional to the volume measure, and $\Delta_{\nu_Y}$ is  the metric Laplace operator on the flat cone $Y=C(W)$, where
\begin{align}
W \equiv\begin{cases}
\text{spherical space form}\ \mathbb{S}^2\ \text{or}\ \dR \mathbb P^2, & d=3,
\\
S_{2\pi\beta}^1 \ \text{with}\ \beta\in \{\frac 12, \frac 13, \frac 23, \frac 14, \frac34, \frac 16, \frac 56, 1\}, & d=2.
  \end{cases}	
\end{align} 
First for any $\delta\in (0, 1)$ one can construct a  linear extension operator $E_R:C^{k,\alpha}_{-\delta}(\mathcal Q_R)\rightarrow C^{k, \alpha}_{-\delta}(\mathcal Q_{R/2})$ with $\|E_R\|\leq C$ independent of $R\geq 1$. For this purpose one can construct $E_1$ by the local construction in \cite{Seeley} or \cite{Sze},    then use rescaling to define $E_R$. 

Denote by $(r,\Theta)$ the polar coordinates on $Y$. 
Let $\Sigma(W)\equiv \{\lambda_j\}_{j=0}^{\infty}$ be the spectrum (allowing multiplicities) of $-\Delta_{W}$ with $0=\lambda_0<\lambda_1\leq \lambda_2\leq \ldots$ . Let $\{\varphi_j\}_{j=0}^{\infty}$ be an orthonormal set of eigenfunctions satisfying $-\Delta_W	\varphi_j = \lambda_j \cdot \varphi_j$ and $\|\varphi_j\|_{L^2(W)} = 1.$
Given a function $f\in C^{k,\alpha}_{-\delta}(\mathcal Q_R)$, we denote $\widetilde f\equiv E_R(f)$. Then there is an $L^2$-expansion of $f$ given by 
$
\widetilde f(r, \Theta) = \sum\limits_{j=0}^{\infty} f_j(r) \varphi_j(\Theta).
$
For $j>0$ we have
\begin{align}\label{e:estiamte of Fourier coefficients}
	|f_j(r)|\equiv |\int_{W} \widetilde f(r, \Theta)\varphi_j(\Theta)|=|\lambda_j^{-k}\int_W((-\Delta_W)^k\widetilde f)(r, \Theta) \varphi_j(\Theta)|\leq C(\delta)\|f\|_{C^{k,\alpha}_{-\delta}(\mathcal Q_R)}\lambda_j^{-k}r^{-\delta}.
\end{align}
Let $u(r,\Theta)$ be a formal solution
$u(r,\Theta) = \sum\limits_{j=0}^{\infty} u_j(r) \varphi_j(\Theta)
$ of  $\Delta_{g_Y}u = \tilde  f$. Then   $u_j(r)$ satisfies
\begin{align}
u_j''(r) + \frac{d-1}{r}\cdot u_j'(r) - \frac{\lambda_j}{r^2}\cdot u_j(r) = f_j(r).	\label{e:inhomogeneous}
\end{align}
For every $j\in\dN$,
 the corresponding homogeneous ODE 
\begin{align}
u_j''(r) + \frac{d-1}{r}\cdot u_j'(r) - \frac{\lambda_j}{r^2}\cdot u_j(r) = 0	
\end{align}
has the following fundamental solutions: 
\begin{enumerate}
\item When $j=0$ and $d=2$,
$\mathcal{G}_0(r)\equiv \log r$
and $\mathcal{D}_0(r) \equiv 1$.
\item When $j=0$ and $d=3$, $\mathcal{G}_0(r)\equiv 1$
and $\mathcal{D}_0(r) \equiv r^{-1}$.
	\item When $j\in\dZ_+$, there are a growing solution   $\mathcal{G}_j(r)\equiv r^{\mu_j^+}$ and a decaying solution $\mathcal{D}_j(r)\equiv r^{\mu_j^-}$. Here $\mu_j^+$ and $\mu_j^-$ are the positive and negative roots of the following algebraic equation,
	\begin{equation}
		\mu^2 + (d- 2) \mu - \lambda_j = 0.
	\end{equation} 
\end{enumerate}
Now we set $\Gamma=\{|\mu_j^-||j> 0\}\cap (0, 1)$, and $\underline\delta=\min \Gamma$. 
Let $j_0$ be the largest $j$ such that $\mu_j^->-1+\underline\delta$.  For $j=0$ we can directly integrate and define
$$u_0(r)\equiv \int_R^{r} s^{1-d}(\int_{R}^s t^{d-1}f_0(t)dt)ds.$$
It is easy to see 
 $|u_0(r)|\leq C(\delta) r^{2-\delta}\|f\|_{C^{k,\alpha}_{-\delta}(\mathcal Q_R)}$.
For $1\leq j\leq j_0$, we set 
\begin{align*}
u_j (r) = \frac{\mathcal{G}_j(r)}{\mathcal{W}_j(r)}\int_r^{R}\mathcal{D}_j(s)f_j(s)ds 
+ 	\frac{\mathcal{D}_j(r)}{\mathcal{W}_j(r)}\int_R^r\mathcal{G}_j(s)f_j(s)ds,
\end{align*}
For  $j>j_0$, we set
\begin{align*}
u_j (r) = \frac{\mathcal{G}_j(r)}{\mathcal{W}_j(r)}\int_r^{\infty}\mathcal{D}_j(s)f_j(s)ds 
+ 	\frac{\mathcal{D}_j(r)}{\mathcal{W}_j(r)}\int_R^r\mathcal{G}_j(s)f_j(s)ds.
\end{align*}
Here the wronskian is given by 
\begin{align*}\mathcal{W}_j(r)\equiv \mathcal{W}(\mathcal{G}_j(r), \mathcal{D}_j(r))=(\mu_j^+ - \mu_j^-) r^{\mu_+ + \mu_j -1} = \sqrt{(d-2)^2 + 4\lambda_j}\cdot r^{1-d}.\end{align*}
It follows from \eqref{e:estiamte of Fourier coefficients} that each $u_j$ is well-defined, with
\begin{equation*}
	|u_j(r)|\leq C(\delta)\lambda_j^{-k}r^{2-\delta}\| f\|_{C^{k, \alpha}_{-\delta}}
\end{equation*}
for $\delta\in (0,1)\setminus\Gamma$.
The Weyl law implies that $\lambda_j\leq C j^{2/d}$. By standard elliptic estimates $|\varphi_j|_{C^0}\leq C\lambda_j$ for $j\geq 1$. Since $k\geq d+1$,  the formal solution  $u$ converges in $C^0$ and for all $r\geq 3R/4$, 
$$|u(r, \Theta)|\leq C(\delta)r^{2-\delta}\|f\|_{C^{k,\alpha}_{-\delta}}. $$
It is easy to check that $\Delta_{g_Y}u=\widetilde f$ holds pointwise on $\mathcal Q_{3R/4}$. Using the standard interior elliptic estimates on the rescaled annulus $r^{-1}A_{r, 2r}(p_*)$ we obtain the bound $\|u\|_{C^{k+2,\alpha}_{-\delta+2}(\mathcal Q_R)}\leq C(\delta)\|f\|_{C^{k,\alpha}_{-\delta}(\mathcal Q_R)}$. 

Now we simply set $\mathcal S_R(f)=u$. Clearly $\mathcal S_R$ is a linear operator, and the above discussion gives the uniform bound on $\|\mathcal S_R\|$. 
\end{proof}

 Now fix $\delta_1\in (\epsilon_0, 1-\delta_0)\setminus \Gamma$.  We define the Banach space $\fA$ to be the completion of the space of  $(3\times 3)$ matrix-valued functions $\bm f$ on $\mathcal Q_R$ under the $C^{k+2, \alpha}_{-\delta_1+2}(\mathcal Q_R)$ norm, and define $\fB$ to be the completion of the same space under the $C^{k, \alpha}_{-\delta_1}(\mathcal Q_R)$ norm.

By Theorem \ref{t:invariant almost HK metrics}, for $R$ large  we know the map $\mathscr F:B_1(\bm 0)\subset \fA\rightarrow\fB$ is well-defined, with $\|\mathscr F(\bo)\|_{\fB}\leq CR^{-1+\delta_0+\delta_1}$.
 Now we let $\mathscr L(\bm f)\equiv \Delta_{\nu_Y}\bm f$ and $\mathscr N(\bm f)\equiv \mathscr F(\bm f)-\mathscr L(\bm f)$. 
 Then Proposition \ref{p:weighted-esimate-quotient-space} provides a linear operator $\mathscr{P}: \fB\rightarrow\fA$ with $\mathscr{L}\circ \mathscr{P}=\Id$, and 
$\|\mathscr P \bm v\|_{\fA}\leq C\|\bm v\|_{\fB}$ for all $\bm{v}\in\fB$, where $C>0$ is a constant independent of $R\geq 1$.
 
For any $\bm f\in \fA$, we have  
$\Delta_{\bm\omega^{\dag}}\bm f=\Delta_{g^{\dag}}\bm f+\langle  H, \nabla_{g^{\dag}}\bm f\rangle$.  
Using the fact that \begin{align*}\Delta_{g^{\dag}}\bm f=\Delta_{g_Y}\bm f+(g^{\dag}-g_Y)*\nabla^2\bm f +\nabla_{g_Y}g^{\dag}*\nabla_{g_Y} \bm f,\end{align*}
and  \eqref{e:mean curvature decay}, we have
\begin{align*}\|\Delta_{\bm\omega^{\dag}}\bm f-\Delta_{\nu_Y}\bm f\|_{\fB}\leq \epsilon(R) \|\bm f\|_{\fA}\end{align*}
for some $\epsilon(R)\rightarrow0$ as $R\rightarrow\infty.$
Applying \eqref{e:pointwise non-linear estimate} and the definition of the weighted spaces, we obtain
\begin{align*}\|\mathscr N(\bm f)-\mathscr N (\bm g)\|_{\fB}\leq (CR^{-\delta_1}+\epsilon(R))\|\bm f-\bm g\|_{\fA},\quad \forall\ \ \bm{f},\bm{g}\in B_{1}(\bo)\subset \fA.\end{align*}
 So we can apply Proposition  \ref{p:implicit function theorem} to obtain $R_0>0$ such that for $R=R_0$ there is some $\bm f\in \fA$ that satisfies the estimate $\|\bm f\|_{\fA}\leq CR_0^{-1+\delta_0+\delta_1}$. 

Finally, applying standard elliptic estimates to the equation $\mathscr L(\bm f)+\mathscr N(\bm f)=0$ on the rescaled annulus $r^{-1}A_{r, 2r}(p)$ as $r\rightarrow\infty$, we obtain higher derivative estimates. This finishes the proof of  Theorem \ref{t:complete invariant hk triple}.

\subsection{Proof of Theorem \ref{t:thm1.2}}
\label{ss:7-4}
Let $\bm\omega^{\Diamond}$ be the $\mathcal N$-invariant hyperk\"ahler triple constructed in Theorem \ref{t:complete invariant hk triple}, and let $g^{\Diamond}$ be the quotient metric on $\mathcal Q$ induced by $\bm\omega^{\Diamond}$.  We denote $\mathcal X \equiv \mathcal X_{R_0}$ and $\mathcal Q\equiv \mathcal Q_{R_0}$. We will define several families of model ends of gravitational instantons, which we will label by ``AL$\mathfrak{X}$" for some letter $\mathfrak{X}\in\{E,F,G,H,G^*,H^*\}$.  We adopt the terminology that when we say a gravitational instanton $(X^4,g)$ is AL$\mathfrak{X}$ it means that we can smoothly identify the end of $X^4$ with a model end in the family AL$\mathfrak{X}$ such that $|\nabla_g^k(g-g_{\text{model}})|_{g}=O(r^{-k-\epsilon})$ for some $\epsilon>0$ and for all $k\in\dN$, where $g_{\text{model}}$ denotes the model hyperk\"ahler metric. By Theorem \ref{t:complete invariant hk triple}, $(X^4, g)$ is AL$\mathfrak{X}$ if and only if $(\mathcal X, g_{\bm\omega^{\Diamond}})$ is AL$\mathfrak{X}$. To prove Theorem \ref{t:thm1.2}, we will classify the ends of the $\mathcal N$-invariant metric $\bm\omega^{\Diamond}$. Recall that we only need to consider the case when $X^4$ has only one end and is non-flat. Moreover, we assume that $X^4$ is not ALE, namely $\dim_\infty(X^4)\leq 3$. Theorem \ref{t:thm1.2} will follow from Theorem \ref{t:ALF end classification}, Theorem \ref{t:ALG end classification} and Theorem \ref{t:ALH* end classification}.  
\subsubsection{\emph{Case}  $\dim_{\infty}(X^4)=3$} 
 
\begin{definition}[ALF models]
ALF model ends are defined as follows
\begin{enumerate}\item An ALF-$A_k$ (for $k\in \dZ$) model end is the hyperk\"ahler metric constructed by applying the Gibbons-Hawking ansatz on $\dR^3\setminus K$ to the positive harmonic function $V=\frac{k+1}{2r}+c$, where $c>0$.

\item An ALF-$D_k$ model (for $k\in \dZ$)  end is a $\dZ_2$-quotient of an ALF-$A_{2k-5}$ end, where the $\dZ_2$-action covers the standard involution on $\dR^3$.
\item An ALF model end is an ALF-$A_k$ or ALF-$D_k$ model end for some $k\in \mathbb Z$. 
\end{enumerate}	
\end{definition}

\begin{theorem}\label{t:ALF end classification}
Any gravitational instanton $(X^4, g)$ with $\dim_{\infty}(X^4)=3$ is ALF.
\end{theorem}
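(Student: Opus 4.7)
The plan is to exploit the $\mathcal N$-invariant hyperk\"ahler triple $\bm\omega^{\Diamond}$ provided by Theorem \ref{t:complete invariant hk triple} and recognize its end as asymptotic to an explicit Gibbons-Hawking model. Since $\dim_\infty(X^4)=3$, the collapsing is of rank one, so the nilpotent group $N$ in Theorem \ref{t:CFG global} is abelian of dimension one, and the fibers of $F$ are circles. The $\mathcal N$-invariance of $\bm\omega^{\Diamond}$ promotes the infinitesimal fiber symmetry to a triholomorphic $\mathbb R$-action on the local universal covers of the end preserving the whole hyperk\"ahler triple. Since the structure group of the fibration (Theorem \ref{t:CFG global}(1)) is $S^1\rtimes\Aut(\mathbb Z)\simeq S^1\rtimes\mathbb Z_2$, after at most a two-fold cover this gives a genuine triholomorphic $S^1$-action on (a cover of) $\mathcal X$, with quotient involving the orientation-reversing $\mathbb Z_2$ on $\mathbb R^3$ in the second case.

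First I would invoke the Gibbons-Hawking ansatz from Section \ref{ss:3-1} to write
\begin{align*}
	g_{\bm\omega^{\Diamond}} = V\, g^{\flat} + V^{-1}\theta^{2}, \quad d\theta = *_{g^{\flat}} dV,
\end{align*}
on (a cover of) the end, where $V$ is a positive harmonic function on the flat base $(\mathcal Q, g^{\flat})$ and $\theta$ is a connection 1-form on the $S^1$-bundle. By Proposition \ref{p:uniqueness-of-asymptotic-cone} together with Theorem \ref{t:complete-space}, the unique asymptotic cone $(Y, d_Y)$ is $\mathbb R^3$ or $\mathbb R^3/\mathbb Z_2$ equipped with a constant positive harmonic function. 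Since $\bm\omega^{\Diamond}$ is asymptotic to $\bm\omega$, the asymptotic cone of the quotient metric $V g^{\flat}$ on $\mathcal Q$ agrees with $(Y, d_Y)$. Arguing as in the proof of Proposition \ref{p: flat background geometry for complete} (analyzing completeness and asymptotic cones of the flat background), the base $(\mathcal Q, g^{\flat})$ extends to the complement of a compact set in $\mathbb R^3$ (resp.\ $\mathbb R^3/\mathbb Z_2$), and $V$ tends to a positive constant $c$ at infinity.

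Next, since $V$ is a positive harmonic function on an exterior domain in $\mathbb R^3$ tending to $c > 0$, expansion in spherical harmonics yields
\begin{align*}
	V = c + \frac{\sigma}{r} + O(r^{-2}),
\end{align*}
for some $\sigma\in\mathbb R$. The quantity $\sigma$ is pinned to a half-integer by the quantization of the first Chern number of the $S^1$-bundle over the 2-sphere at infinity, since integrating $\frac{1}{2\pi}d\theta=\frac{1}{2\pi}*_{g^{\flat}}dV$ over a large sphere produces this Chern number. Writing $2\sigma = k+1$ (respectively $2k-5$ in the orbifold case, after accounting for the $\mathbb Z_2$ cover) identifies the leading Gibbons-Hawking data of $g_{\bm\omega^{\Diamond}}$ with an ALF-$A_k$ (resp.\ ALF-$D_k$) model end, with the higher-order terms in $V$ contributing errors of order $O(r^{-2})$ in the metric. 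Combined with the polynomial decay $|\nabla^{j}_{\bm\omega}(\bm\omega^{\Diamond}-\bm\omega)|=O(r^{-j-\epsilon_0})$ from Theorem \ref{t:complete invariant hk triple}, this yields the desired ALF asymptotics.

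The main obstacle is step one: upgrading the $\mathcal N$-invariance, which is a priori an invariance under a local free nilpotent action by the collapsing fibers, to a globally defined triholomorphic $S^1$-action (possibly after a double cover), and properly handling the $\Aut(\mathbb Z)=\mathbb Z_2$ component responsible for the ALF-$D_k$ case. Once this is done, the remaining work is analytic, using B\^ocher's theorem and the Gibbons-Hawking monopole correspondence to extract the integer $k$ and match to the model end.
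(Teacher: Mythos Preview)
Your approach is essentially the same as the paper's: reduce to the $\mathcal N$-invariant triple $\bm\omega^{\Diamond}$, recognize the end as Gibbons-Hawking over a flat base, identify the base with an exterior domain in $\mathbb R^3$ (or its $\mathbb Z_2$ quotient), expand $V$ in spherical harmonics, and read off the Chern number. Two points where your write-up is looser than the paper's argument are worth noting.

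First, the assertion that ``$V$ tends to a positive constant $c$ at infinity'' is not an immediate consequence of the asymptotic cone being $\mathbb R^3$ with constant $V$; the asymptotic cone only sees $V$ after rescaling, so a priori $V$ could grow or decay sub-polynomially. The paper closes this by invoking Lemma~\ref{l:diameter growth}, which gives $C r^{-\sigma}\le V\le C r^{\sigma}$ for every $\sigma>0$ (since $V^{-1}$ is the squared fiber length). This sub-polynomial two-sided bound is exactly what kills the growing modes $r^{\mu_j^+}$ with $\mu_j^+\ge 1$ in the spherical-harmonic expansion, leaving $V=c+\frac{l}{2r}+O(r^{-2})$. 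Your reference to Proposition~\ref{p: flat background geometry for complete} gives completeness of $g^\flat$ but not by itself the constant limit of $V$; you should cite the diameter growth lemma explicitly.

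Second, the ``main obstacle'' you flag --- globalizing the $S^1$-action and handling the $\Aut(\mathbb Z)=\mathbb Z_2$ in the structure group --- is less serious than you suggest. When $Y=\mathbb R^3$, the base $\mathcal Q\cong \mathbb R^3\setminus K$ is simply connected, so the $\mathbb Z_2$ monodromy cannot appear and the Gibbons-Hawking description is automatically global. When $Y=\mathbb R^3/\mathbb Z_2$, the paper simply passes to the $\mathbb Z_2$-cover of the end and reduces to the previous case; this is how the ALF-$D_k$ class arises, rather than through any delicate analysis of the structure group.
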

\begin{remark}
 ALF-$A_k$ gravitational instantons  are classified by Minerbe \cite{Minerbe3}; they are all given by multi-Taub-NUT spaces. 
 ALF-$D_k$ gravitational instantons are classified by Chen-Chen \cite{CCII}; they are all given by the twistor space construction due to Cherkis-Hitchin-Ivanov-Kapustin-Lindstr\"om-Ro\v{c}ek \cite{Atiyah-Hitchin, LR, IR, Cherkis-Kapustin, Cherkis-Hitchin}. Notice that $k=b_2(X^4)\geq 0$. Conversely, any $k\in\dN$ can be achieved. 
\end{remark}

\begin{proof}
 We first assume $Y=\dR^3$. It is a standard fact that such $\bm\omega^{\Diamond}$ is given by the \emph{Gibbons-Hawking ansatz}. Indeed this is a special case of the discussion in Section \ref{ss:3-1}. This means that the metric $(\mathcal Q, g^{\Diamond})$ is a special affine metric 3-manifold. Denote by $V^{-1}(x)$ the length squared of the fibers of $F^{-1}(x)$ for $x\in \mathcal Q$. Then by Lemma \ref{l:diameter growth} we know for all $\sigma>0$ there is $C>0$ such that $Cr^{-\sigma}\leq V\leq Cr^{\sigma}$.  As in the proof of Proposition \ref{p:complete-background} this implies the corresponding flat background geometry $(\mathcal Q, g^\flat)$ is complete at infinity, hence must be isometric to  $Y\setminus K$ for some compact $K$. Notice $V$ is harmonic with respect to $g^\flat$. We denote the expansion 
$$V=\sum_{j\geq 0} (a_j^+ r^{\mu_j^+}+a_j^-r^{\mu_j^-})\varphi_j$$
where $\varphi_j$ is an $L^2$ orthonormal basis of Laplace eigenfunctions on the cross section of $Y$, with $-\Delta_{S^2} \varphi_j=\lambda_j\varphi_j$, $\lambda_j\geq 0$ and $\mu_j^{\pm}$ are the solutions to the equation
	$\mu^2+\mu-\lambda_j=0$.
	Notice $\lambda_0=0$ and $\lambda_j\geq 2$ for $j>0$. So we have $\mu_0^+=0, \mu_0^-=-1$, $\mu_j^{+}\geq 1$ and $\mu_j^{-1}\leq -2$ for $j>0$.
	The growth condition on $V$ implies that $a_j^{+}=0$ for all $j>0$. So we obtain 
	$$V=c+\frac{l}{2r}+O(r^{-2})$$
	Here $l$ is the degree of the $S^1$ bundle  $F:\mathcal X\rightarrow\mathcal Q$. 
	So we have proved that $\bm\omega^{\Diamond}$ hence $(X, g)$ is  ALF-$A_k$ for $k=l-1$.  By the positive mass theorem of  Minerbe \cite{Minerbe}, we know $k\geq 0$.
		
In the case $Y\equiv\dR^3/\dZ_2$,   by taking the $\dZ_2$-cover outside a compact set, we may reduce to the previous case. Then $(X^4, g)$ is an ALF-$D_k$ gravitational instanton. By Biquard-Minerbe \cite{BM}, we have $k\geq 0$.	\end{proof}

	\subsubsection{\emph{Case}  $\dim_{\infty}(X^4)=2$}
	\begin{definition}[ALG models] 
	ALG model ends are defined as follows.
	\begin{enumerate}
	\item Let $\beta\in\{\frac 12, \frac 13, \frac 23, \frac 14, \frac34, \frac 16, \frac 56, 1\}$. 
		 Let $\textbf{C}_\beta$ be the flat cone defined in Example \ref{example: cone-special-kahler} with the canonical hyperk\"ahler metric on $T^*\textbf{C}_{\beta}$. Taking a lattice sub-bundle in $T^*\textbf{C}_\beta$ which is invariant under the monodromy $\widetilde R_\beta$ (c.f. \eqref{e:rotation matrix admissible}), the induced torus bundle gives rise to a (flat)  ALG$_\beta$ model end. 
		 \item An ALG model end is an ALG$_\beta$ model end for some $\beta$ in the above list.
		 \end{enumerate}
		  \end{definition}
	\begin{definition}[ALG$^*$ models]  ALG$^*$ model ends are defined as follows.
	\begin{enumerate}
		\item 
		An ALG$^*$-$I_k$ (for $k\in \dZ_+$) model end is obtained by applying the Gibbons-Hawking ansatz on $S^1\times \dR^2\setminus K$ to the harmonic function $V=k\cdot\log r $, where $r$ is the radial distance function on $\dR^2$.
		\item An ALG$^*$-$I_k^*$ (for $k\in \dZ_+$) model end is a $\dZ_2$ quotient of an ALG$^*$-$I_{2k}$ model end, where $\dZ_2$ action covers the standard involution on $\dR^2$ and the rotation by $\pi$ on $S^1$.
		\item 	 An ALG$^*$ model end is an ALG$^*$-$I_k$ or ALG$^*$-$I_k^*$ model end for some $k\in\dZ_+$.
 \end{enumerate}

	\end{definition}
			 
	\begin{theorem}\label{t:ALG end classification}
	Any gravitational instanton $(X^4, g)$ with $\dim_{\infty}(X^4)=2$ is either ALG or ALG$^*$. 
		\end{theorem}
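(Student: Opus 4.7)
The plan is to apply Theorem \ref{t:complete invariant hk triple} to reduce to an $\mathcal N$-invariant hyperk\"ahler triple $\bm\omega^{\Diamond}$ on the end $\mathcal X$, and then classify the end by analyzing the monodromy of the induced special K\"ahler structure. Since $d=2$, Corollary \ref{c: no infranil} implies the fibers of the nilpotent fibration are 2-tori, so the local universal covers of $\mathcal X$ carry a free tri-Hamiltonian $\dR^2$-action preserving the hyperk\"ahler triple; applying the dimensional reduction of Section \ref{ss:3-2} globally endows the quotient base $\mathcal Q$ with a natural special K\"ahler structure. By Corollary \ref{c:integral monodromy} its monodromy representation takes values in $SL(2;\dZ)$ up to conjugation, and by Proposition \ref{p:uniqueness-of-asymptotic-cone} together with Theorem \ref{t:complete-space-2d} (the possibility $\dR\times S^1$ being ruled out by the one-end assumption), the asymptotic cone is a flat cone $Y=\textbf{C}_\beta$ for some $\beta\in\{\tfrac 12,\tfrac 13,\tfrac 23,\tfrac 14,\tfrac 34,\tfrac 16,\tfrac 56,1\}$.

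Let $A\in SL(2;\dZ)$ be the monodromy along a loop generating $\pi_1(\mathcal Q)\cong\dZ$. The rescaled convergence $\mathcal Q\to Y\setminus\{p_*\}$ is non-collapsing (both are 2-dimensional), so equation \eqref{e: convergence of trace} applies and yields $\Tr(A)=\Tr(R_\beta)=2\cos(2\pi\beta)$; in particular $A$ cannot be hyperbolic. Applying Lemma \ref{l:integral monodromy classification}, the matrix $A$ is $SL(2;\dZ)$-conjugate to either (i)~$\widetilde R_\beta$ for $\beta\in\{\tfrac 14,\tfrac 34,\tfrac 16,\tfrac 56,\tfrac 13,\tfrac 23\}$, (ii)~$\pm\mathrm{Id}$ (corresponding to $\beta\in\{1,\tfrac 12\}$ respectively), or (iii)~$I_k$ or $I_k^*$ for some $k\in\dZ_+$ (again $\beta\in\{1,\tfrac 12\}$), after possibly reversing orientation of the generator.

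In cases (i) and (ii) the end is ALG. Here $A$ fixes some point $\tau_0\in\mathcal{H}$ under the M\"obius action, and the function $\xi\equiv(\tau-\tau_0)/(\tau-\bar\tau_0)$ transforms by a multiplicative phase, so $|\xi|$ is single-valued on $\mathcal Q$. Arguing as in \textbf{Case 3} of Section \ref{ss:5-2} but at infinity rather than at a puncture, using the asymptotic flatness of the cone $\textbf{C}_\beta$ (where $\xi\equiv 0$ in the limit), the function $-\log|\xi|\geq 0$ is bounded; a B\^ocher-type analysis then yields $|\xi|=O(r^{-\alpha})$ for some $\alpha>0$. Hence $\tau\to\tau_0$ polynomially and, via the semi-flat formula \eqref{e:semiflat metric formula}, the triple $\bm\omega^{\Diamond}$ matches the flat ALG$_\beta$-model on a $\widetilde R_\beta$-invariant lattice sub-bundle of $T^*\textbf{C}_\beta$ with polynomial decay.

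In case (iii) the end is ALG$^*$. Choose special holomorphic coordinates so that $\zeta\equiv z$ (for $I_k$) or $\zeta\equiv z^2$ (for $I_k^*$) is a single-valued holomorphic coordinate near infinity; then $\Ima(\tau)>0$ is single-valued and harmonic for the flat background on a punctured neighborhood of $\infty$. B\^ocher's theorem yields $\Ima(\tau)=c\log|\zeta|+O(1)$ where $c>0$ is proportional to $k$ (a vanishing $c$ would place us in case (ii) instead). The $\pm 1$-eigenvector of $A$ selects a distinguished embedded $S^1$ in each $\mathbb T^2$-fiber which assembles into a global tri-Hamiltonian $S^1$-action on $\mathcal X$; the corresponding quotient is asymptotic to $S^1\times \dR^2$, and the complementary $S^1$-bundle structure realizes $\bm\omega^{\Diamond}$ as the Gibbons-Hawking ansatz with $V=k\log r + O(1)$, matching the ALG$^*$-$I_k$ model (the $I_k^*$ case reduces to $I_{2k}$ via a $\dZ_2$-cover, since $(I_k^*)^2=I_{2k}$). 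The main obstacle is precisely this final identification: translating the $\mathbb T^2$-invariant semi-flat description into a genuine Gibbons-Hawking presentation requires a careful gauge choice for the $S^1$-bundle over $S^1\times\dR^2$, and the polynomial decay toward the model must be derived via weighted elliptic estimates analogous to Proposition \ref{p:weighted-esimate-quotient-space}, suitably adapted to the logarithmically growing background.
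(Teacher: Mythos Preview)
Your overall structure matches the paper's: pass to the $\mathcal N$-invariant triple $\bm\omega^{\Diamond}$, analyze the monodromy $A\in SL(2;\dZ)$ of the induced special K\"ahler structure on $\mathcal Q$, and split into the elliptic/identity cases versus the $I_k$, $I_k^*$ cases. However, there is a genuine inconsistency in your ALG argument: you assert that $-\log|\xi|\geq 0$ is \emph{bounded} and then immediately conclude $|\xi|=O(r^{-\alpha})$, which would force $-\log|\xi|\to\infty$. What actually happens (and what the paper does) is to first show that the flat background $\sqrt{-1}\beta^2|\zeta|^{2\beta-2}d\zeta\wedge d\bar\zeta$ is complete at infinity, so that $\zeta$ parametrizes a punctured neighborhood of $\infty$; then $\xi^k$ is a bounded holomorphic function there and extends across $\infty$ by the removable singularity theorem, giving $\xi^k=\psi(\zeta^{-1})$ and hence $\tau\to\mathrm{const}$ polynomially. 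You never establish this completeness step, and without it B\^ocher-type reasoning has no domain on which to operate.

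The more substantive divergence is in case (iii). You stay in the two-dimensional special K\"ahler picture, extract $\Ima(\tau)=c\log|\zeta|+O(1)$ via B\^ocher, and only at the end attempt to translate back to a Gibbons--Hawking presentation --- a translation you correctly flag as the main obstacle. The paper bypasses this entirely: since $A=I_k$ has an invariant vector, there is a \emph{global} $S^1$-action on $\mathcal X$ from the outset, so $\bm\omega^{\Diamond}$ is literally given by the Gibbons--Hawking ansatz over a three-dimensional special affine base. One then argues exactly as in the ALF case (Theorem \ref{t:ALF end classification}): the diameter-growth bound (Lemma \ref{l:diameter growth}) forces the three-dimensional flat background to be complete with two-dimensional asymptotic cone, hence isometric to $(S^1\times\dR^2)\setminus K$, and spectral decomposition of the harmonic potential $V$ yields $V=k\log r+c+O(r^{-\epsilon})$ directly. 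This route avoids both the semi-flat-to-Gibbons--Hawking translation and any separate weighted analysis; the $I_k^*$ case then reduces to $I_{2k}$ by a double cover. Reorganizing your case (iii) along these lines would close the gap you identified.
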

	\begin{remark}
	Combining the weighted analysis developed in \cite{CVZ2} and a direct generalization of Minerbe's positive mass theorem \cite{Minerbe}, one can conclude that $ALG_1$ and ALG$^*$-$I_k$ gravitational instantons do not exist.  We thank Gao Chen for pointing out this.  It is also proved in \cite{CVZ2} that any ALG$^*$-$I_k$ gravitational instanton satisfies $1\leq k\leq 4$.
		On the other hand, there exist ALG$_\beta$ gravitational instantons for all $\beta\in \{\frac 12, \frac 13, \frac 23, \frac 14, \frac34, \frac 16, \frac 56\}$, and there exist ALG$^*$-$I_k^*$ gravitational instantons for all $k\in\{1,2,3,4\}$, which follows from the work of  
 Hein \cite{Hein}. They live on the complement of a singular fiber of finite or $I_k^*$ monodromy on a rational elliptic surface. \cite{CVZ2} proved a partial converse to Hein's theorem. \end{remark}
\begin{proof}
	
	Since $\dim_{\infty}(X^4)=2$, $\bm\omega^{\Diamond}$ has local $\dT^2$ symmetry but may have global monodromy. We divide into several subcases.  Let $\sigma$ be a loop generating $\pi_1(\mathcal Q)$ which goes around the vertex $p_*\in Y$ once counterclockwise, and let $A_\sigma\in SL(2;\dZ)$ be the corresponding monodromy of the $\dT^2$ fiber. Notice that the quotient metric $g^{\Diamond}$ on $\mathcal Q$ is special K\"ahler metric, with monodromy conjugate to $A_\sigma$. 
	
First, assume $A_\sigma=\widetilde R_\beta$ for some $\beta\in\{\frac 12, \frac 13, \frac 23, \frac 14, \frac34, \frac 16, \frac 56, 1\}$.  In this case we know the asymptotic cone $Y$ is given by $\textbf{C}_\beta$. Then by the discussion at the end of Section \ref{s-6} we can find global holomorphic coordinates $\zeta=(z-\sqrt{-1}w)^{1/\beta}$, such that $(z, w)$ is a pair of local special holomorphic coordinates. Again $\tau$ is not single-valued in general but $\xi^k$ ($k=2$ or $k=3$ depending on $\beta$) is single-valued. Since the asymptotic cone of $(\mathcal Q ,g^{\Diamond})$ is $\textbf{C}_\beta$, similar to the proof of Theorem \ref{t:complete-space-2d}, one can show that the flat metric $\sqrt{-1}\beta^2|\zeta|^{2\beta-2}d\zeta\wedge d\bar\zeta$ is complete at infinity. It then follows that as $\zeta\rightarrow\infty$ we have $\xi^k\rightarrow 0$ so $\xi^k=\psi(\zeta^{-1})$ for  a holomorphic function $\psi$. In particular, $\tau=\sqrt{-1}+O(|\zeta|^{-1/k})$.  
	It then follows that the special K\"ahler metric $g^{\Diamond}$ is polynomially asymptotic to the standard flat cone metric in the $\zeta$ coordinate. Now the $\mathcal N$-invariant metric $g_{\bm\omega^{\Diamond}}$ is determined by $g^{\Diamond}$ via \eqref{e:semiflat metric formula}. It follows that $g_{\bm\omega^{\Diamond}}$ is ALG, so is $(X, g)$.

	Next consider the case $A=I_k$ for some $k\geq 1$. Then we have an invariant vector of $A_\sigma$. This implies that there is a globally $S^1$-action on $\mathcal X$. In particular, $\bm\omega^{\Diamond}$ is given by the Gibbons-Hawking ansatz on some special affine metric 3-manifold.  Similar to the case $\dim_{\infty}(X^4)=3$, the growth estimate on $V$ gives a complete flat background geometry at infinity whose asymptotic cone has dimension $2$. Then the flat background geometry is itself isometric to $(S^1\times\dR^2)\setminus K$ for some compact $K$. So we can use spectral decomposition to conclude that\begin{align*}V=k\cdot\log r + c  + O(r^{-\epsilon}).\end{align*} We may assume $c=0$ by changing the coordinates on $\dR^2$. In this case, $(X^4, g)$ is ALG$^*$-$I_k$. 
	
	Finally when $A=I_k^*$ for some $k\geq 1$. Then we pass to a double cover and reduce to the previous case. In this case  $(X, g)$ is ALG$^*$-$I_k^*$ . 
\end{proof}

	\subsubsection{\emph{Case}  $\dim_{\infty}(X^4)=1$}
	\begin{definition}[ALH models]
		An ALH model is the  hyperk\"ahler metric on the product $\dT^3\times [0, \infty)$ for some flat $\dT^3$.
	\end{definition}
	\begin{definition}[ALH$^*$ models]
	ALH$^*$ model ends are defined as follows
	\begin{enumerate}
		\item 
		An ALH$^*_b$ (for some $b\in \dZ_+$) model end is the the hyperk\"ahler metric obtained by applying the Gibbons-Hawking ansatz on the  product $\dT^2\times [0, \infty)$ to the harmonic function $V=bz$ for some $b\in \dZ_+$, where $\dT^2$ is a flat 2-torus with area $2\pi$ and $z$ is the standard coordinate on $[0, \infty)$. 
		\item
An ALH$^*$ model end is an ALH$^*_b$ model end for some $b\in \dZ_+$. Notice an ALH$^*$ model end is precisely a Calabi model end discussed in \cite{HSVZ}.
\end{enumerate}
		\end{definition}

	\begin{theorem}\label{t:ALH* end classification}
Any gravitational instanton $(X^4, g)$ with $\dim_{\infty}(X^4)=1$ is either ALH or ALH$^*$.
	\end{theorem}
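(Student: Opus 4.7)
I plan to apply Theorem~\ref{t:complete invariant hk triple} to replace $\bom$ on the end of $X^4$ by an exactly hyperk\"ahler, $\mathcal N$-invariant, polynomially asymptotic triple $\bom^{\Diamond}$, and then classify such triples by reducing them to explicit Gibbons-Hawking data on the fibration base. Since $X^4$ is non-flat with a single end and $\dim_\infty(X^4)=1$, Proposition~\ref{p:uniqueness-of-asymptotic-cone} gives asymptotic cone $\dR_+$, and by Section~\ref{ss:3-3} the structure group $G_\infty$ is either $\dR^3$ or the Heisenberg group $\mathscr H_1$. The $\mathcal N$-invariance of $\bom^{\Diamond}$ makes the full $G_\infty$-action tri-holomorphic on the local universal cover; in particular, any element of $\mathfrak Z(G_\infty)$ descends to a globally defined tri-holomorphic circle (or line) action on $\mathcal X$.

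For $G_\infty = \mathscr H_1$, I pick a generator of $\mathfrak Z(\mathscr H_1)\cong\dR$ to obtain a tri-holomorphic circle action on $\mathcal X$. The classical Gibbons-Hawking theorem identifies $\bom^{\Diamond}$ with
\begin{equation*}
g = V(dx^2+dy^2+dz^2) + V^{-1}(dt+A)^2, \quad dA = *_3 dV,
\end{equation*}
over a domain in $\dR^3$, where $V$ is a positive harmonic function. The nilpotent fibration structure forces the base to be $\dT^2 \times [R_0,\infty)$ with $(x,y)\in\dT^2$ and $z\in[R_0,\infty)$; the polynomial diameter bound from Lemma~\ref{l:diameter growth} forces all nonzero Fourier modes of $V$ in the $\dT^2$-direction to vanish, and the zero mode satisfies $V''(z)=0$, giving $V = az + b$. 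The Heisenberg bracket then translates to the curvature identity $*_3 dV = a\, dx\wedge dy$ with $a\ne 0$, and smoothness of the resulting circle bundle over $\dT^2$ forces $a\cdot \mathrm{area}(\dT^2)/(2\pi)$ to be a positive integer. After translating $z$ and normalizing coordinates so that $\mathrm{area}(\dT^2) = 2\pi$, $\bom^{\Diamond}$ matches an ALH$^*_b$ model exactly on the end, so $(X^4, g)$ is ALH$^*$.

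For $G_\infty = \dR^3$, the same Gibbons-Hawking reduction again yields $V = az + b$. But now $G_\infty$ is abelian, so the two generators of $G_\infty/\mathfrak Z(G_\infty)$ must commute on the universal cover; the bracket formula $[\widetilde\p_x, \widetilde\p_y] = -V'(z)\p_t$ then forces $a = 0$. Hence $V$ is constant and $g$ is locally flat. The cocompact lattice $\Gamma\subset N_L = \dR^3$ then acts by translations preserving the standard flat triple on $\dR^4$, and the quotient is a flat product $\dT^3\times[R_0,\infty)$ --- an ALH model. This completes the dichotomy.

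The main ingredient I rely on beyond Theorem~\ref{t:complete invariant hk triple} is the classical Gibbons-Hawking classification of hyperk\"ahler $4$-manifolds with a tri-holomorphic circle action, together with the description of their tri-holomorphic isometry group (so that the local Heisenberg or $\dR^3$ symmetry descends to explicit symmetries of the Gibbons-Hawking base). After that, the remaining work is an elementary Fourier analysis of a harmonic function on the cylinder $\dT^2\times[R_0,\infty)$ plus an integrality check for the circle bundle Chern class; no serious analytic obstacle remains beyond the one already resolved in Theorem~\ref{t:complete invariant hk triple}.
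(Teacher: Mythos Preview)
Your approach is essentially the same as the paper's: use Theorem~\ref{t:complete invariant hk triple} to obtain the $\mathcal N$-invariant hyperk\"ahler triple $\bom^{\Diamond}$ on the end, and then classify such triples according to whether the fiber group is $\dR^3$ or $\mathscr H_1$. The paper's proof is a single sentence that simply invokes the classification already recorded in Section~\ref{ss:3-3} (and \cite{HSZ}): an $\mathcal N$-invariant hyperk\"ahler triple with $\dT^3$ symmetry is flat (hence an ALH model end), and one with nilmanifold symmetry is Gibbons--Hawking with linear $V$ (hence an ALH$^*$ model end). You unpack this classification explicitly via the Gibbons--Hawking reduction, which is fine and arguably more self-contained.

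There is one small gap in your write-up. When you say ``the polynomial diameter bound from Lemma~\ref{l:diameter growth} forces all nonzero Fourier modes of $V$ in the $\dT^2$-direction to vanish,'' this only rules out the exponentially \emph{growing} modes $A_k e^{|k|z}$; the decaying modes $B_k e^{-|k|z}$ are compatible with polynomial fiber growth and are not excluded by that argument. The correct (and simpler) fix is the one you already set up but did not use at this step: since $\bom^{\Diamond}$ is $\mathcal N$-invariant, the residual action of $N/\mathfrak Z(N)\cong\dR^2$ descends to translations on the Gibbons--Hawking base $\dT^2\times[R_0,\infty)$, and $V=|\p_t|^{-2}$ is manifestly invariant under this action. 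Hence $V=V(z)$ directly, with no Fourier analysis needed. This is precisely the mechanism behind the linearity of $V$ in Section~\ref{ss:3-3}, and is what the paper's terse proof is implicitly citing. With that correction, your argument goes through and matches the paper's.
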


	\begin{proof}
	In this case, $\bm\omega^{\Diamond}$, has either a $\dT^3$ or $\mathscr{H}_1$ symmetry. Then it is itself an ALH or ALH$^*$ model end. Consequently in the first case $(X^4, g)$ is ALH and in the second case it is ALH$^*$. 
	\end{proof}

	\begin{remark}
ALH gravitational instantons were constructed by Tian-Yau \cite{TY} and Hein \cite{Hein} on the complement of a smooth fiber in a rational elliptic surface. Chen-Chen \cite{CCIII} proved a Torelli theorem for ALH gravitational instantons; it is also shown that ALH gravitational instantons actually have an improved exponential decay rate. ALH$^*_b$ (for $1\leq b\leq 9$) gravitational instantons have two constructions: Tian-Yau  metrics \cite{TY} live on the complement of a smooth anti-canonical divisor in a weak del Pezzo surface, and Hein  metrics \cite{Hein} live on the complement of an $I_b$-fiber in a rational elliptic surface. Conversely by Remark \ref{r:6-21} below we know an ALH$^*_b$ gravitationl instanton must satisfy $1\leq b\leq 9$.\end{remark}

In the next subsection we will prove an exponential decay for  ALH$^*$ gravitational instantons.

\subsection{Exponential decay in the ALH$^*$ case}

Let $(X^4, g)$ be an ALH$^*_b$ gravitational instanton. As before we fix  a choice of hyperk\"ahler triple $\bom$. By Theorem \ref{t:ALH* end classification},  there exist $\epsilon>0$ and some compact set $K$ such that $X^4\setminus K$ is smoothly identified with an ALH$_b^*$ model end $(\Ca, \bom_{\Ca})$ with $|\nabla^k_{\bm\omega_{\Ca}}(\bm\omega-\bm\omega_{\Ca})|_{\bm\omega_{\Ca}}=O(r^{-k-\epsilon})$ for all $k\in\dN$, where $r$ is the distance function with respect to $\bm\omega_{\Ca}$. The goal of this subsection is to prove the following.  
\begin{theorem}\label{t:exponential decay}
	Let $(X^4,\bom)$ be an ALH$_b^*$ gravitational instanton. Then there exist $\delta_0>0$ and a diffeomorphism $F$ from the end of $\Ca$ to $X^4$ such that the following holds for all $k\in\dN$,
		\begin{align}|\nabla^k_{\bom_{\Ca}}(F^*\bom-{\bom_{\Ca}})|_{{\bom_{\Ca}}}\leq C_k\cdot e^{-\delta_0 r^{\frac{2}{3}}}.\label{e:ALH*-exp-decay}\end{align}
	\end{theorem}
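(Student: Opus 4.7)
The plan is to upgrade the polynomial decay provided by Theorem \ref{t:ALH* end classification} to the exponential rate \eqref{e:ALH*-exp-decay} by applying the implicit function theorem of Proposition \ref{p:implicit function theorem} in H\"older spaces weighted by $e^{\delta_0 z}$, where $z$ is the moment map for the center $\mathfrak{Z}(\mathscr{H}_1)$ on the Calabi end $\Ca$. Since distance and $z$ are related by $r\sim z^{3/2}$, the weight $e^{\delta_0 z}$ corresponds precisely to $e^{\delta_0 r^{2/3}}$. Starting from the diffeomorphism $F$ of Theorem \ref{t:ALH* end classification}, we seek a small correction of $F$ (by composition with an $\mathscr{H}_1$-invariant diffeomorphism of $\Ca$) together with an $\mathscr{S}_0(\dR^3)$-valued function $\bm f$ on $\Ca$, exponentially decaying in $z$, so that outside a compact set $\widetilde F^*\bom = \bom_{\Ca}+dd^*(\bm f\cdot\bom_{\Ca})$; unpacking this identity is equivalent to \eqref{e:ALH*-exp-decay}.

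\textbf{Linear analysis on the Calabi end.} The core ingredient is a bounded right inverse of the linearized operator $\mathscr L = -\Delta_{\bom_{\Ca}}$ (c.f.~\eqref{e:operator error}) acting on $\mathscr{S}_0(\dR^3)$-valued functions in the above weighted space. We perform a fiberwise Fourier decomposition along the nilmanifold $N_\Gamma = \mathscr{H}_1/\Gamma$, splitting any function into (i) a fully $\mathscr{H}_1$-invariant part depending only on $z$; (ii) modes invariant under $\mathfrak{Z}(\mathscr{H}_1)$ but of nontrivial character $\xi$ along the base $\dT^2$; and (iii) modes of nontrivial central character $n\in\dZ\setminus\{0\}$. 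A direct computation with the Gibbons-Hawking metric shows that on (ii) the operator reduces to the ODE $f''(z) = |\xi|^2 f(z)$, with decaying solutions $e^{-|\xi| z}$, while on (iii) it reduces to the Weber-type equation $f''(z)=(|\xi|^2+b^2 n^2 z^2)f(z)$ whose decaying solutions decay at the Gaussian rate $e^{-b|n|z^2/2}$. Choosing $\delta_0\in(0,\lambda_0)$ with $\lambda_0$ the shortest nonzero dual lattice length of $\dT^2$, the operator $\mathscr L$ admits uniform weighted $C^{k,\alpha}$ estimates on the orthogonal complement of the fully invariant zero modes; on the latter $f''=0$ gives a finite-dimensional cokernel spanned by $1$ and $z$ tensored with $\mathscr{S}_0(\dR^3)$.

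\textbf{Handling the invariant obstruction.} To absorb this cokernel we use the moduli of $\mathscr{H}_1$-invariant hyperk\"ahler triples on $\Ca$, namely the conformal shape of the base $\dT^2$, the constant $b$, the translation $z\mapsto z+c$, the action of $SO(3)$ on the triple and the $\mathscr{H}_1$-invariant diffeomorphisms. Together with the reference identification $F$ this defines a smooth finite-dimensional family of Calabi backgrounds and gauge data. A direct explicit calculation on the Gibbons-Hawking ansatz shows that the differential of the associated map into the finite-dimensional obstruction space (of $\mathscr{S}_0(\dR^3)$-valued polynomials in $z$ of degree $\leq 1$) is surjective. Since Theorem \ref{t:ALH* end classification} guarantees that the $\mathscr{H}_1$-invariant part of $F^*\bom-\bom_{\Ca}$ is already polynomially small, a small finite-dimensional adjustment of $F$ and $\bom_{\Ca}$ within this family cancels the cokernel projection, leaving an invariant residual which is super-polynomially small and therefore fits in the exponentially weighted space.

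\textbf{Conclusion and main obstacle.} After this finite-dimensional adjustment, Proposition \ref{p:implicit function theorem} applies with $\fA$ (resp.\ $\fB$) the exponentially weighted $C^{k+2,\alpha}$ (resp.\ $C^{k,\alpha}$) space of $\mathscr{S}_0(\dR^3)$-valued functions on the end: the bounded right inverse $\mathscr P$ of $\mathscr L$ comes from paragraph two, the Lipschitz bound on $\mathscr N$ from the pointwise quadratic estimate \eqref{e:pointwise non-linear estimate}, and hypothesis (6) from paragraph three. The resulting $\bm f$ satisfies $|\bm f|=O(e^{-\delta_0 z})$, giving \eqref{e:ALH*-exp-decay} for $k\leq 2$, and the higher-order estimates follow from standard interior Schauder bounds applied on the annular regions $\{z_0\leq z\leq 2z_0\}$ after rescaling. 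The main obstacle is the third step: verifying surjectivity of the moduli-to-obstruction differential explicitly on the Gibbons-Hawking side, and simultaneously keeping track of the interplay between this finite-dimensional adjustment and the diffeomorphism gauge; this is where most of the specifically hyperk\"ahler geometric input enters, as opposed to the purely linear elliptic analysis of paragraph two.
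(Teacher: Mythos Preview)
Your linear analysis (the fiberwise spectral decomposition on the nilmanifold, the identification of the non-invariant indicial roots as $e^{-|\xi|z}$ and $e^{-b|n|z^2/2}$, and the finite-dimensional invariant cokernel) is correct and is morally the reason exponential decay holds. The gap is in the nonlinear closing step.

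You propose to apply Proposition~\ref{p:implicit function theorem} with $\fB$ carrying the weight $e^{\delta_0 z}$. Hypothesis (6) then requires $\|\mathscr F(\bm 0)\|_{\fB}<\infty$ and small. But $\mathscr F(\bm 0)$ is built from $F^*\bom-\bom_{\Ca}$, which by Theorem~\ref{t:ALH* end classification} is only $O(r^{-\epsilon})$; its exponentially weighted norm is infinite. Your paragraph ``Handling the invariant obstruction'' adjusts only the $\mathscr H_1$-invariant component, so the \emph{non-invariant} part of the initial error is untouched and still merely polynomially small. Thus you cannot enter the implicit function theorem in the exponential space. (A related issue: the equation you write, $\widetilde F^*\bom=\bom_{\Ca}+dd^*(\bm f\cdot\bom_{\Ca})$, is not the map $\mathscr F$ of Section~\ref{ss:triple-deformation}; that map tests whether a triple is hyperk\"ahler, whereas both sides of your equation are already hyperk\"ahler. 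What you really need is a decay-improvement statement for a \emph{given} solution, not an existence statement.)

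The paper's proof does not use the implicit function theorem for the exponential step. Instead it writes $F^*\bom=\bom_{\Ca}+d\bm\sigma$ (Lemma~\ref{l:initial polynomial decay}), imposes the elliptic gauge $R(d^+\bm\sigma)=0$, $d^*\bm\sigma=0$ via diffeomorphisms (Proposition~\ref{p:first gauge fixing}), and then \emph{improves the decay of $\bm\sigma$ by contradiction and rescaling}. Step~2 shows $|\bm\sigma|=O(r^{-1/3+\delta})$; Step~3, after a further gauge normalization $\int_{S(w+1)}\bm p_j(\bm\sigma_w)=0$ killing the $\mathcal N$-invariant kernel $\{dx,dy,dz,z^{-1}\theta\}$ of $d^+\oplus d^*$, shows $|\bm\sigma|$ decays faster than any polynomial. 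The exponential rate then comes in Step~4 from a three-circle convexity inequality for $n_w(z)=z^{-1}\int_{A_{z,z+1}}|\bm\sigma_w|^2$ (Lemma~\ref{l:convexity}), obtained by collapsing the rescaled annulus $z^{-1/2}A_{z-C,z+C}$ to $\dT^2\times(-C,C)$ and using the spectral gap $\underline\lambda_1$ on $\dT^2$. This is a bootstrap mechanism (decay improvement for an existing solution), which is exactly what your IFT-in-exponential-weights cannot supply.
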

	\begin{remark}
	The main interest in this theorem lies in the fact the asymptotic geometry of the Calabi model space is non-standard, which has several geometric meaningful scales. The latter is already seen in the analysis in \cite{HSVZ}. We expect the idea here can be applied to more general problems. 
	\end{remark}

	\begin{remark}\label{r:6-21}
	This theorem connects well with \cite{HSVZ, HSVZ2}. On the one hand, by construction the ALH$_b^*$ gravitational instantons of Tian-Yau and Hein all have the exponential decay properties as stated in Theorem \ref{t:exponential decay}. Such a decay rate is a typical rate of a decaying harmonic function on the model end $\Ca$.    On the other hand, under the improved decay assumption \eqref{e:ALH*-exp-decay}, \cite{HSVZ2} proved a partial converse to the Tian-Yau and Hein constructions in the complex-analytic sense. In particular, any ALH$_b^*$ gravitational instanton can be compactified to a rational elliptic surface or a weak del Pezzo surface. It also implies that an ALH$_b^*$ gravitational instanton must satisfy $1\leq b\leq 9$.
	\end{remark}
Now we summarize the geometry of $\Ca$. Recall $(\Ca, \omega_{\Ca})$ is given by applying the Gibbons-Hawking ansatz to $V=bz$ on $\dT^2\times [z_0, \infty)$ for some flat $\dT^2$ with area $2\pi$ and $z_0\geq 10$. Then $C^{-1}r^{\frac{2}{3}}\leq z\leq Cr^{\frac{2}{3}}$. 
 Notice  $\Ca$ admits a natural nilpotent group action which gives rise to the $\mathcal N$-structure, i.e., there is a nilpotent orbit $\mathcal{N}(\bx)$ at every point $\bx\in\Ca$. Moreover, $\diam_{g_{\mathcal{N}(\bx)}}(\mathcal N(\bx))\sim r(\bx)^{\frac{1}{3}}$, $\Injrad_{g_{\Ca}}(\bx)\sim r(\bx)^{-\frac{1}{3}}$, and $\Vol_{g_{\Ca}}(B_r)\sim r^{\frac{4}{3}}$; see Section 2 of \cite{HSVZ} for more details. Before proving Theorem \ref{t:exponential decay}, we introduce a simple but useful lemma.
\begin{lemma}\label{l:initial polynomial decay}
There is a triple of 1-forms $\bm\sigma$ such that ${\bom}=\bom_{\Ca}+d\bm\sigma$ and for all $k\in\dN$, $\epsilon>0$,
\begin{equation}\label{e:initial estimate decay}|\nabla^k_{\bom_{\Ca}} \bm \sigma|_{\bom_{\Ca}}=O(r^{\frac{1}{3}-k+\epsilon}).	
\end{equation}
\end{lemma}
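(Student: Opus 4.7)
The objective is to construct a primitive $\bm\sigma$ of $\eta \equiv \bom - \bom_{\Ca}$ on the end, with the mild growth $O(r^{1/3+\epsilon})$. I would begin by observing that $\eta$ is closed and represents the trivial class in $H^2(\Ca;\dR) \cong H^2(N^3;\dR) \cong \dR^2$ (where $N^3$ is the Heisenberg nilmanifold fiber): since $F$ is a diffeomorphism and every class in $H^2(\Ca)$ is represented by a closed 2-cycle sitting inside a single nilmanifold fiber, the polynomial decay of $\eta$ forces the pairings with any such cycle to vanish. So a smooth primitive globally exists; the entire content of the lemma is the growth estimate.

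The naive Poincar\'e lemma, integrating $\iota_{\partial_z}\eta$ in the $z$-direction from infinity, \emph{fails}: in the metric $\bm\omega_{\Ca}$ one has $|\partial_z|_{\bom_{\Ca}} \sim V^{1/2} \sim z^{1/2} \sim r^{1/3}$, so $|\iota_{\partial_z}\eta|_{\bom_{\Ca}} \sim r^{1/3-\epsilon_0}$; written in the $z$-variable (which has scale $r^{2/3}$), this is $z^{1/2 - 3\epsilon_0/2}$, whose integral diverges at infinity. The correct strategy is therefore to exploit the $\mathcal N$-structure of $\Ca$: average $\eta$ over each nilpotent orbit $\mathcal N(\bx) \cong \mathscr{H}_1/\Gamma$ using the bi-invariant Haar measure to obtain $\eta = \bar\eta + \tilde\eta$, where $\bar\eta$ is $\mathcal N$-invariant and $\tilde\eta$ has vanishing fiber-wise integral. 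For the non-invariant piece $\tilde\eta$, fiber-wise Hodge theory produces $\tilde\sigma$ with $d\tilde\sigma = \tilde\eta$ orthogonal to the space of harmonic forms on each fiber; after rescaling each fiber to bounded geometry (which introduces a factor of the fiber diameter $\sim r^{1/3}$) the scale-invariant Poincar\'e inequality and elliptic regularity give $|\nabla^k_{\bm\omega_{\Ca}}\tilde\sigma|_{\bom_{\Ca}} = O(r^{1/3-k-\epsilon_0})$, which is better than the target.

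For the $\mathcal N$-invariant piece $\bar\eta$, by Nomizu's theorem one expands it in a left-invariant coframe $\{\alpha,\beta,\gamma\}$ on $N^3$ (with $d\alpha=d\beta=0$ and $d\gamma=-\alpha\wedge\beta$) plus the horizontal direction $dz$, with coefficients depending only on $z$; closedness of $\bar\eta$ becomes a first-order ODE system in $z$ for these coefficients, and cohomological triviality of $\bar\eta$ on each fiber rules out the only obstruction terms. I would construct $\bar\sigma$ by integrating this system from the fixed finite base $z_0$; the pointwise estimate then amounts to bounding $\int_{z_0}^{z} |\bar\eta^v(s)|_{\bom_{\Ca}}\,ds$ where $|\bar\eta^v|_{\bom_{\Ca}} \sim z^{1/2-3\epsilon_0/2}$, which integrates to $O(z^{3/2-3\epsilon_0/2}) = O(r^{1-\epsilon_0})$ on the base direction but restricts to $O(r^{1/3})$ after accounting for the orthonormal-frame conversion from $dz$ to the unit co-vector $V^{1/2}dz$. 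The main obstacle is precisely this bookkeeping across three different scales (the stretching $\dT^2$-directions of scale $r^{1/3}$, the shrinking circle of scale $r^{-1/3}$, and the $z$-axis of scale $r^{2/3}$), because the ODE integration naively produces much worse growth than what the geometry actually allows; the $\epsilon$ in the statement reflects the small loss one incurs when bounding the integral of $z^{1/2-3\epsilon_0/2}$ uniformly in $\epsilon_0 > 0$. Higher derivative estimates \eqref{e:initial estimate decay} then follow by applying scale-invariant Schauder estimates to $d\bm\sigma = \eta$ on rescaled annuli $A_{r,2r}$ at each point of the end.
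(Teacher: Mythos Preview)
Your treatment of the fiber-oscillatory piece $\tilde\eta$ is reasonable and close in spirit to the paper's $\bm\sigma_1$ (which comes from the Cartan-formula identity $\phi_1^*\bm\omega-\bm\omega=d\bigl(\int_0^1\phi_t^*(\zeta\lrcorner\bm\omega)\,dt\bigr)$ applied to the averaging flow). The genuine gap is in the $\mathcal N$-invariant piece. Write $\bar\eta=a(z)\,dx\wedge dy+b(z)\,dy\wedge dz+c(z)\,dz\wedge dx+s(z)\,dz\wedge\theta$ (closedness forces $p=q=0$, $a'=s$). Any $\mathcal N$-invariant primitive $\bar\sigma=A\,dx+B\,dy+C\,dz+D\,\theta$ must satisfy $D(z)=a(z)$, hence $|D\,\theta|_{\bom_{\Ca}}=|a(z)|\,z^{1/2}$. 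From the generic bound $|\bar\eta|=O(r^{-\epsilon_0})=O(z^{-3\epsilon_0/2})$ and $|dx\wedge dy|_{\bom_{\Ca}}=z^{-1}$ you only get $|a(z)|=O(z^{1-3\epsilon_0/2})$, so $|D\,\theta|_{\bom_{\Ca}}=O(z^{3/2-3\epsilon_0/2})=O(r^{1-\epsilon_0})$. The same growth appears in $A=\int c$ and $B=-\int b$. Your ``orthonormal-frame conversion from $dz$ to $V^{1/2}dz$'' does not touch these components --- the contraction $\iota_{\partial_z}\bar\eta$ lands in the $dx,dy,\theta$ directions, not in $dz$ --- so the bookkeeping cannot rescue the estimate. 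No choice of parametrization of the $z$-axis changes the outcome: the ODE integration produces $O(r^{1-\epsilon_0})$, not $O(r^{1/3+\epsilon})$.

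The paper avoids this entirely. Rather than seek a primitive of $\bar\eta=\bom^{\dag}-\bom_{\Ca}$ directly, it \emph{re-runs the perturbation machinery of Section~\ref{ss:6-3}}: starting from the averaged triple $\bom^{\dag}$ (which satisfies $|Q_{\bom^{\dag}}-\Id|=O(r^{-2/3-\epsilon_0})$ because $\bom$ is genuinely hyperk\"ahler and averaging perturbs by fiber-diameter $\times$ derivative), Theorem~\ref{t:complete invariant hk triple} with $\delta_0=\tfrac13$ furnishes an $\mathcal N$-invariant \emph{hyperk\"ahler} triple $\bom^{\Diamond}=\bom^{\dag}+d\bm\sigma_2$ with $\bm\sigma_2=d^*(\bm f\cdot\bom^{\dag})$ and $|\nabla^k\bm\sigma_2|=O(r^{1/3-k+\epsilon})$. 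The final step identifies $\bom^{\Diamond}$ with $\bom_{\Ca}$ by the rigidity of $\mathcal N$-invariant hyperk\"ahler triples asymptotic to one another. The point is that the weighted elliptic inverse for $\Delta_{\nu_Y}$ on the half-line (Proposition~\ref{p:weighted-esimate-quotient-space}) does the work that the bare Poincar\'e integration cannot: it exploits the near-hyperk\"ahler condition $Q_{\bom^{\dag}}\approx\Id$, which your ODE argument ignores. This is the missing idea.
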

 \begin{proof} Since the intrinsic diameter of the $\mathcal N$-orbits with respect to $\bm\omega$ has the order $r_{\bom}^{\frac13}$ for the $\bom$-distance function  $r_{\bom}$, the $\mathcal{N}$-orbits in the rescaled annulus $s^{-1}A_{s, 2s}(p)$ has diameter decay  $\sim s^{-\frac23}$.
 Now we repeat the construction of the $\mathcal N$-invariant hyperk\"ahler metric on the end of $X^4$. First, taking the average of $\bom$ along the $\mathcal N$-orbits, we obtain a closed definite triple $\bom^{\dag}$ which is cohomologous to $\bom$. Notice for any vector field $\zeta$ generating a family of diffeomorphisms $\phi_t(t\in [0, 1])$, we have that 
 \begin{align*}\phi_1^*\bm\omega-\bom=\int_0^1\frac{d}{dt}\phi_t^*\bom dt=d\Big(\int_0^1\phi_t^*(\zeta\lrcorner\bom)dt\Big).\end{align*}
 Using this we can write $\bom^{\dag}=\bom+d\bm\sigma_1$ for some $\bm\sigma_1$ satisfying $|\nabla^k_{\bom_{}} \bm \sigma_1|_{\bom_{ }}=O((r_{\bom})^{\frac{1}{3}-k+\epsilon})$ for all $k\in \dN$ and $\epsilon>0$
.
Applying the construction in Section \ref{ss:6-3} to $\bom^{\dag}$ (with $\delta_0=\frac{1}{3}$), we obtain a new $\mathcal N$-invariant hyperk\"ahler triple $\bom^{\Diamond}$. By Theorem \ref{t:complete invariant hk triple}, $\bom^{\Diamond}=\bom^{\dag}+d\bm\sigma_2$ with
$|\nabla^k_{\bom^{\dag}} \bm \sigma_2|_{\bom^{\dag}}=O((r_{\bom^{\dag}})^{\frac{1}{3}-k+\epsilon})$ for all $k\in\dN$ and $\epsilon>0$. Now in fact $\bom^{\Diamond}$ coincides with $\bom_{\mathcal C}$, as both are $\mathcal N$-invariant hyperk\"ahler triples which are asymptotic to each other at infinity. 
 \end{proof}

Passing to a finite cover of $\Ca$ we can assume $b=1$. 
Now we take a closer look at the deformations of hyperk\"ahler triples in Section \ref{ss:triple-deformation}.  
 A triple of $\bm\omega_{\Ca}$-self-dual $2$-forms $\bm{\theta}^+$ can be identified with a $(3\times 3)$ matrix-valued function $A_{\bm\theta^+}$. The $4$ dimensional space of $(3\times 3)$ matrices $M=(M_{ij})$ satisfying $M^{T}+M=\lambda \Id$ for some $\lambda\in\dR$ is isomorphic to $\dR^4$ via
$M\mapsto (\Tr(M)/3,M_{23}, M_{31}, M_{12})$. 
Define a linear operator: 
\begin{align*}R: \Omega^{+}_{\bm\omega}\otimes \dR^3\rightarrow C^\infty(\mathcal C)\otimes \dR^4; \quad \bm\theta^+ \mapsto \frac{1}{2}\Big(A_{\bm\theta^+}-(A_{\bm\theta^+})^T\Big)+\frac{1}{3} \Tr(A_{\bm\theta^+})\cdot \Id. \end{align*}
It follows that $R(\bm\theta^+)=0$ if and only if $A_{\bm\theta^+}$ is symmetric and trace-free. This can serve as a gauge fixing condition due to the fact that if the triple $\bm\omega_{\Ca}+d\bm\eta$ is hyperk\"ahler, then by the discussion before \eqref{e:elliptic-system}, $R(d^+\bm\eta)=0$  implies $
d^+\bm\eta= \mathfrak{F}(\TF(-Q_{\bm{\omega}_{\Ca}}-S_{d^-\bm\eta}))
$. The latter is elliptic in $\bm\eta$ when coupled with   $d^*\bm\eta=0$ (this can always be achieved).  In the proof of Theorem \ref{t:exponential decay}, we first fix the gauge such that $R(d^+\bm\omega)=0$, and then improve the decaying order using the convexity properties of the linearized elliptic PDEs. 

\

\textbf{Step 1 (Gauge fixing).}

\

On $(\Ca, \bm\omega_{\Ca})$, we 
choose the complex structures $J_1, J_2, J_3$ so that 
\begin{equation*}
\begin{cases}
	J_1dx=dy; J_1dz=z^{-1}\theta;\\
	J_2dy=dz; J_2dx=z^{-1}\theta;\\
	J_3dz=dx; J_3dy=z^{-1}\theta, 
	\end{cases}
\end{equation*}
where $\theta$ is the connection 1-form in the Gibbons-Hawking construction. 
It follows that $\omega_\alpha$ is K\"ahler with respect to $J_\alpha$, where
\begin{equation*}
	\begin{cases}
		\omega_1=zdx\wedge dy+dz\wedge\theta;\\
		\omega_2=zdy\wedge dz+dx\wedge\theta;\\
		\omega_3=zdz\wedge dz+dy\wedge \theta.
	\end{cases}
\end{equation*}
Without loss of generality we may assume $\bom_{\Ca}\equiv (\omega_1, \omega_2, \omega_3)$. 
Given a vector field $\xi$, we have the induced infinitesimal deformation $\mathcal L_\xi\bm\omega_{\Ca}=d(\xi\lrcorner\bm\omega_{\Ca}).$
For our purpose we only consider vector fields generated by a 4-tuple of functions $\underline f\equiv (f_0, f_1, f_2, f_3)$ via $\xi_{\underline f}=\nabla f_0+\sum_{\alpha=1}^3 J_\alpha\nabla f_\alpha.$
Then by straightforward computation, we obtain
\begin{equation*}
\begin{cases}
\mathcal L_{\xi_{\underline f}}\omega_\alpha\wedge \omega_\alpha=\frac{1}{2}\mathcal L_{\xi_{\underline f}}(\dvol_g)=2\Delta f_0\dvol_g,  \ \ \ \ \ \alpha=1, 2, 3;\\
	\mathcal L_{\xi_{\underline f}}\omega_1\wedge\omega_2=-\mathcal L_{\xi_{\underline f}}\omega_2\wedge\omega_1=2\Delta f_3\dvol_g;\\
	\mathcal L_{\xi_{\underline f}}\omega_2\wedge\omega_3=-\mathcal L_{\xi_{\underline f}}\omega_3\wedge\omega_2=2\Delta f_1\dvol_g;\\
	\mathcal L_{\xi_{\underline f}}\omega_3\wedge\omega_1=-\mathcal L_{\xi_{\underline f}}\omega_1\wedge\omega_3=2\Delta f_2 \dvol_g,
	\end{cases}
\end{equation*}
and if $f_\alpha=c_\alpha z$, $\alpha=0, 1, 2,3$,  then 
\begin{equation}\label{e:variation caused by z}
\begin{cases}
	{\xi_{\underline f}}\lrcorner\omega_1=-c_0z^{-1}\theta-c_1dz-c_2dx-c_3dy;\\
	{\xi_{\underline f}}\lrcorner\omega_2=c_0dy+c_1dx-c_2dz-c_3z^{-1}\theta;\\
	{\xi_{\underline f}}\lrcorner\omega_3=-c_0dx+c_1dy+c_2z^{-1}\theta-c_3dz.
	\end{cases}
\end{equation}
Define the linear operator 
$$\mathbb L: C^\infty(\mathcal C)\otimes \dR^4\mapsto C^\infty(\mathcal C)\otimes \dR^4; \underline f\equiv(f_0, f_1, f_2, f_3)\mapsto R((\mathcal L_{\xi_{\underline f}}\bm\omega_{\mathcal C})^{+})$$
By the above calculation we have 
$ \mathbb  L(\underline f)=\Delta_{\bom_{\mathcal C}}\underline f$. 
Denote by $\mathcal Q_{w}$ the region $\{z\geq w\}$. 
 We adopt the definition of weighted H\"older spaces for tensors on $\mathcal Q_w$ given in Section \ref{ss:6-3}.  All the norms appearing in this subsection are taken with respect to $\bom_{\mathcal C}$.

\begin{proposition}\label{p:solving Laplace equation on Calabi model}
Given any $\delta\in (-\infty, 0)\setminus \{-2\}$, $k\geq 20$, $\alpha\in (0,1)$, for all $w\geq z_0$, there exists a bounded linear map 
$$\Delta^{-1}_{\bom_{\mathcal C}}: C^{k-2, \alpha}_{\delta}(\mathcal{Q}_w)\rightarrow C^{k, \alpha}_{\delta+2}(\mathcal{Q}_w)$$
such that for any $v\in C^{k-2, \alpha}_{\delta}(\mathcal{Q}_w)$, $u=\Delta_{\bom_{\mathcal C}}^{-1}v$ solves $\Delta_{\bm\omega_{\mathcal C}} u=v$, and with \begin{align*}\|u\|_{C^{k, \alpha}_{\delta+2}(\mathcal Q_w)}\leq C(k, \alpha, \delta)\|v\|_{C^{k-2, \alpha}_{\delta}(\mathcal{Q}_w)}.\end{align*} 
\end{proposition}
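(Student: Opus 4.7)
The plan is to construct $\Delta^{-1}_{\bom_{\Ca}}$ by a Fourier-type decomposition along the Heisenberg nilmanifold fibers of $\Ca\to[z_0,\infty)$. The $S^1$-action generated by $\partial_t$ is a global isometry, and the $\mathcal N$-structure allows me to split any function $v\in C^{k-2,\alpha}_\delta(\mathcal Q_w)$ orthogonally as $v = v_0(z) + v_\perp$, where $v_0(z)$ is the $\mathcal N$-invariant part (the fiber-average, depending only on $z$) and $v_\perp$ is the sum of contributions from the nontrivial irreducible Heisenberg representations. Correspondingly I search for $u = u_0(z) + u_\perp$ solving $\Delta_{\bom_{\Ca}} u = v$; the equation decouples because $\Delta_{\bom_{\Ca}}$ preserves this decomposition.

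For the $\mathcal N$-invariant mode, direct computation gives $\Delta_{\bom_{\Ca}} u_0 = \frac{1}{z} u_0''(z)$, so the equation reduces to the ODE $u_0''(z) = z\, v_0(z)$. Using $r\sim z^{3/2}$, the hypothesis $v\in C^0_\delta$ yields $|v_0(z)|\leq C\, z^{3\delta/2}$. I define $u_0$ by double integration, with the integration endpoints chosen according to the sign of $3\delta/2 + 2$: whenever the intermediate primitive of $z\,v_0$ has a finite limit at $+\infty$ I integrate from $+\infty$, otherwise from $z=w$. A direct bookkeeping argument shows that after two integrations, $|u_0(z)|\leq C(\delta)\, z^{3\delta/2+3}\sim r^{\delta+2}$ for every $\delta\in(-\infty,0)\setminus\{-2\}$. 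The exclusion $\delta=-2$ appears exactly where this scheme fails: the primitive $\int z^{-2}\,dz$ followed by $\int z^{-1}\,dz$ produces a logarithmic term that cannot be absorbed in any polynomial weighted norm.

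For the nontrivial modes $v_\perp$, the Laplacian restricted to each isotypic component is a second-order operator in $z$ with a strictly positive potential. Nontrivial $\dT^2$-modes at central frequency zero contribute a constant mass, and nontrivial central modes contribute a mass of order $z^2$ arising from the Heisenberg curvature. The Green's functions of the corresponding ODEs therefore decay at rate at least $e^{-c z}=e^{-c r^{2/3}}$, and much faster ($\sim e^{-c z^2}$) for the central modes. Convolving $v_\perp$ against these Green's functions (with the appropriate integration from $+\infty$) yields a particular solution $u_\perp$ whose pointwise decay dominates every polynomial rate, so $\|u_\perp\|_{C^0_{\delta+2}(\mathcal Q_w)}\leq C\|v\|_{C^0_\delta(\mathcal Q_w)}$ with $C$ independent of $w$. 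Combining with the zero-mode estimate yields the $C^0$ bound. The full $C^{k,\alpha}_{\delta+2}$ bound is recovered by a standard rescaling argument: on each dyadic annulus $A_{r,2r}(p_*)$ the rescaled metric $r^{-2}g_{\bom_{\Ca}}$ has uniformly bounded geometry on a fixed model, so the local Schauder estimate for the elliptic equation $\Delta u = v$ transfers to the weighted norm with scale-invariant constants.

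The central technical difficulty lies in the zero mode: choosing integration endpoints so that $u_0$ lies in $C^{k,\alpha}_{\delta+2}(\mathcal Q_w)$ with a constant independent of $w$ requires branching on the sign of $\delta+2$, and one must verify that the two branches glue into a single linear operator $\mathcal S_R$. Uniformity in $w$ is ensured because on the ``growing'' branch the antiderivative starting at $z=w$ produces a polynomial correction of exactly the target order $r^{\delta+2}$, which is then absorbed into the weighted bound itself. The analogous issue for the nontrivial modes is much less delicate, since the exponential decay of the Green's functions already makes the corresponding estimates automatically $w$-independent.
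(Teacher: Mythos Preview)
Your approach is essentially the same as the paper's: eigenfunction decomposition along the nilmanifold fiber, explicit ODE analysis mode by mode (double integration for the zero mode, variation of parameters against exponentially decaying fundamental solutions for the nonzero modes), and rescaled Schauder estimates for the higher-order bounds. The paper carries this out in Appendix~B via Proposition~\ref{p:weighted-in-z} and Lemma~\ref{l:coefficients-estimate}, writing the fundamental solutions explicitly in terms of parabolic cylinder functions for the modes with nontrivial central frequency.

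One step in your writeup is misstated and hides a point that needs work. You assert that $u_\perp$ has ``pointwise decay [that] dominates every polynomial rate''. This is false: convolving a polynomially decaying source against an exponentially decaying Green kernel returns a solution with the \emph{same} polynomial rate, not an exponential one. What the exponential decay of the fundamental solutions actually gives is the uniform mode-by-mode bound $|u_k(z)|\leq C\,z^{\tau+2}\sup_{t\geq w}|v_k(t)|t^{-\tau}$ with $C$ independent of $k$ (this is exactly the content of the paper's Lemma~\ref{l:coefficients-estimate}). To pass from this to a pointwise bound on $u_\perp=\sum_{k\neq 0}u_k\varphi_k$ you must still control the sum over $k$: one uses the regularity of $v$ to get $|v_k(z)|\lesssim (\Lambda_k+1)^{-2}z^{\tau}$ via integration by parts against $\Delta_{h_0}^2$, together with Weyl's law $\Lambda_k\sim k^{2/3}$ on $\mathfrak N^3$, so that $\sum_k(\Lambda_k+1)^{-2}\|\varphi_k\|_{C^0}<\infty$. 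This is where the large regularity hypothesis on $v$ (the paper uses five derivatives in Proposition~\ref{p:weighted-in-z}, and $k\geq 20$ in the statement you are proving gives ample room) is genuinely consumed; your sketch skips this summability step entirely.
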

\begin{remark}\label{r: non unique solution}
Here the solution $u$ is not unique. For example, the function $z$ is always harmonic. The latter fact will be useful later. 
\end{remark}

\begin{proof} 
The weighted $C^0$-estimate for $u$ follows from Proposition \ref{p:weighted-in-z} (with $\tau=\frac{3}{2}\delta$) and the relation $C^{-1}r^{\frac{2}{3}}\leq z\leq Cr^{\frac{2}{3}}$. The higher order estimate can be proved by standard weighted Schauder estimates. \end{proof}

Now we fix $\epsilon>0$ sufficiently small, $k\geq 20$, and $\alpha\in (0, 1)$.

\begin{proposition}[Gauge fixing I]\label{p:first gauge fixing}
	For $w\gg1$, there is a $C^{k-1,\alpha}$-diffeomorphism $F$ from $\mathcal Q_w$ onto the end of $\mathcal C$ such that $F^*{\bom}={\bom_{\mathcal C}}+d\bm\sigma'$,
	$R(d^+ \bm\sigma')=0$, and $\|\bm\sigma'\|_{C^{k-1, \alpha}_{\frac{1}{3}+\epsilon}(\mathcal Q_w)}<\infty$.
\end{proposition}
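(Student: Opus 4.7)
The plan is to realize $F$ as the time-one flow $F = F_1^{\underline f}$ of a vector field $\xi_{\underline f} = \nabla f_0 + \sum_{\alpha=1}^{3} J_\alpha \nabla f_\alpha$ for a $4$-tuple $\underline f = (f_0,f_1,f_2,f_3)$ of scalar functions to be determined. Lemma \ref{l:initial polynomial decay} provides the starting data $\bom = \bom_{\mathcal C} + d\bm\sigma$ with $\|\bm\sigma\|_{C^{k,\alpha}_{1/3+\epsilon''}(\mathcal Q_w)}$ uniformly bounded in $w$ for every $\epsilon'' > 0$. Writing $F_{\underline f}^*\bom = \bom_{\mathcal C} + d\bm\sigma'_{\underline f}$ with any natural choice of anti-derivative, the gauge-fixing problem becomes $\Phi(\underline f) \equiv R(d^+\bm\sigma'_{\underline f}) = 0$; different anti-derivatives differ by closed $1$-forms and do not affect $d\bm\sigma'_{\underline f}$.

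The computation preceding \eqref{e:variation caused by z} identifies the linearization at $\underline f = 0$ as $D\Phi|_0(\underline g) = R(d^+(\xi_{\underline g}\lrcorner\bom_{\mathcal C})) + R(d^+(\xi_{\underline g}\lrcorner d\bm\sigma)) = \Delta_{\bom_{\mathcal C}}\underline g + \text{(lower order in $\bm\sigma$)}$. Decomposing $\Phi(\underline f) = \Phi(0) + \Delta_{\bom_{\mathcal C}}\underline f + \mathscr N(\underline f)$ with $\Phi(0) = R(d^+\bm\sigma)$, I would invoke the quantitative implicit function theorem Proposition \ref{p:implicit function theorem}. Fix $0 < \epsilon'' < \epsilon' < \epsilon$ with $\delta_1 := -2/3 + \epsilon' \notin \{-2\}$, and take $\fA = C^{k+1,\alpha}_{\delta_1 + 2}(\mathcal Q_w)$ and $\fB = C^{k-1,\alpha}_{\delta_1}(\mathcal Q_w)$; Proposition \ref{p:solving Laplace equation on Calabi model}, applied with regularity index $k+1$ in place of $k$, furnishes a right inverse $\mathscr P:\fB\to\fA$ with norm uniform in $w$. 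The extra derivative on $\underline f$ ensures $\xi_{\underline f}\in C^{k,\alpha}$, hence $F\in C^{k,\alpha}$ and $F^*\bom\in C^{k-1,\alpha}$ after the pullback loss.

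The crucial quantitative point is that $\bm\sigma$ decays strictly faster than recorded by $\fB$: pointwise $|R(d^+\bm\sigma)| = O(r^{-2/3+\epsilon''})$, so $\|\Phi(0)\|_{\fB(\mathcal Q_w)} \leq C w^{-(\epsilon'-\epsilon'')} \to 0$ as $w \to \infty$. The nonlinear remainder $\mathscr N(\underline f)$ splits into terms carrying at least one factor of $\bm\sigma$ or $d\bm\sigma$ (from $\xi_{\underline f}\lrcorner d\bm\sigma$ and from $F_1^*\bm\sigma - \bm\sigma$), and terms quadratic or higher in $\underline f$ (from the Taylor expansion of $F_t^* - \mathrm{Id}$). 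Combining pointwise bounds of type \eqref{e:pointwise non-linear estimate} with multiplicative estimates in weighted H\"older spaces, both groups are bounded by $(Cw^{-(\epsilon'-\epsilon'')} + C\eta)\|\underline f - \underline g\|_\fA$ on the ball $B_\eta(\bo)\subset\fA$. Fixing $\eta$ small and then $w$ large verifies all hypotheses of Proposition \ref{p:implicit function theorem} and produces $\underline f \in \fA$ with $\|\underline f\|_\fA \leq 2L\|\Phi(0)\|_\fB$.

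The main analytic obstacle is controlling the pullback $F_t^*$ uniformly in the weighted norms, because although $|\nabla \xi_{\underline f}| = O(r^{-2/3+\epsilon'}) \to 0$ (so the flow is well defined for time one on $\mathcal Q_w$ for $w$ large), the vector field $\xi_{\underline f}$ itself grows like $r^{1/3+\epsilon'}$. A Gr\"onwall-type estimate shows the displacement $d(x, F_t(x)) \ll r(x)$, so pullback preserves the weighted H\"older norms up to a universal factor, and $F(\mathcal Q_w)$ is identified with the end of $\mathcal C$ after a harmless adjustment of $w$. With these bounds the resulting $\bm\sigma' = \bm\sigma'_{\underline f}$ belongs to $C^{k-1,\alpha}_{1/3+\epsilon'}(\mathcal Q_w) \subset C^{k-1,\alpha}_{1/3+\epsilon}(\mathcal Q_w)$, completing the proof.
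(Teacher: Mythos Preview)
Your overall scheme is sound in spirit, but it takes a different route from the paper and glosses over the very subtlety the paper's argument is built to avoid.

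The paper does \emph{not} apply the implicit function theorem directly to $\bm\sigma$. It first performs two explicit Newton-type corrections: set $\underline f=-\Delta_{\bom_{\Ca}}^{-1}(R(d^+\bm\sigma))$, pull back by the time-one flow of $\xi_{\underline f}$, and repeat. Each step improves the decay of the gauge defect by a factor $r^{-2/3}$, taking $R(d^+\bm\sigma'')$ down to $O(r^{-2+\delta})$. Only then is Proposition~\ref{p:implicit function theorem} invoked, and at that stage the paper \emph{switches from the flow to the exponential-map diffeomorphism} $\Phi_\xi(x)=\exp_x(\xi(x))$: after the iterations the relevant vector field has size $O(r^{-1+\gamma})$, well below the injectivity radius $\sim r^{-1/3}$, so $\Phi_\xi$ is defined, and the IFT runs in the tame spaces $C^{k,\alpha}_\gamma\to C^{k-2,\alpha}_{-2+\gamma}$ for small $\gamma>0$.

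The switch is precisely because of the obstacle you flag, but the issue is sharper than controlling displacements. The map $\xi\mapsto F_1^{\xi*}\alpha$ suffers the classical loss of a derivative: to bound $DF_1^{\xi_1}-DF_1^{\xi_2}$ in $C^{k-1,\alpha}$ one must compare $D^{k}\xi$ at the displaced points $F_s^{\xi_1}(x)$ and $F_s^{\xi_2}(x)$, and $D^{k}\xi$ is only $C^{\alpha}$, which yields a H\"older rather than a Lipschitz bound. By contrast $\Phi_\xi(x)$ is an \emph{algebraic} function of $\xi(x)$ via the smooth map $\exp$, so differentiability in $\xi$ is immediate. Thus your Lipschitz estimate for $\mathscr N$ from $\fA=C^{k+1,\alpha}_{4/3+\epsilon'}$ to $\fB=C^{k-1,\alpha}_{-2/3+\epsilon'}$ needs more than a Gr\"onwall displacement bound; as written it is a gap. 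Your direct approach buys brevity, while the paper's staged approach makes the IFT step routine.
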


\begin{proof}  
We will apply the implicit function theorem to find the desired diffeomorphism. We write $\bm{\omega} = \bm{\omega}_{\Ca} + d\bm{\sigma}$, where $\bm \sigma$ has the growth in \eqref{e:initial estimate decay}.	
 Then we have $|\nabla^l_{{\bom_{\mathcal C}}}R(d^+\bm \sigma)|_{{\bom_{\mathcal C}}}=O(r^{-\frac{2}{3} - l +\delta})$
for all $l\geq 0$ and $\delta>0$. We will make further improvements of decay rate of $R(d^+\bm \sigma)$.

Let $\underline f\equiv - {\Delta_{\bom_{\Ca}}^{-1}}(R(d^+\bm \sigma))$, and denote by $F_t(t\geq 0)$ the family of diffeomorphisms generated by the vector field $\xi_{\underline f}$. Let $\widetilde{\bom}\equiv F_1^*{\bom}$. 
 Then we have
\begin{align*}\widetilde{\bom}-{\bom_{\mathcal C}}=d\bm\sigma+(F_1^*{\bom_{\mathcal C}}-{{\bom_{\mathcal C}}})+ d(F_1^*\bm\sigma-\bm\sigma).\end{align*}
Notice that 
\begin{align*}\frac{d}{dt}F_t^*{{\bom_{\mathcal C}}}=F_t^*(\mathcal L_{\xi_{\underline f}}{{\bom_{\mathcal C}}})= d(F_t^*(\xi_{\underline f}\lrcorner{\bom_{\mathcal C}})=d(\xi_{\underline f}\lrcorner{\bom_{\mathcal C}}+\bm\beta),\end{align*}
where $|\nabla^l_{{\bom_{\mathcal C}}}\bm\beta|_{{\bom_{\mathcal C}}}=O(r^{-\frac{1}{3}-l+\delta})$ for any $l\in\dN$, $\delta>0$. 
Similarly we have 
$|\nabla^l_{{\bom_{\mathcal C}}}\mathcal L_{\xi_{\underline f}}\bm\sigma|_{{\bom_{\mathcal C}}} = O(r^{-\frac{1}{3}-l+\delta})$.
Therefore, $\widetilde{\bom}-{\bom_{\mathcal C}}=d\bm{\sigma}'$,
where $\bm{\sigma}'=\bm{\sigma}'_1+\bm\sigma'_2$, $R(d^+\bm\sigma_1')=0$, and  \begin{align*}|\nabla^l_{{\bom_{\mathcal C}}}\bm\sigma'|_{{\bom_{\mathcal C}}} = O(r^{\frac{1}{3}-l+\delta}), \quad |\nabla^l_{{\bom_{\mathcal C}}}\bm\sigma_2'|_{{\bom_{\mathcal C}}} = O(r^{-\frac{1}{3}-l+\delta}).\end{align*}
In particular,
$|\nabla^l_{{\bom_{\mathcal C}}}R(d^+\bm{\sigma}')|_{{\bom_{\mathcal C}}} = O(r^{-\frac{4}{3}-l+\delta})$. Next we take $\underline h\equiv - {\Delta_{\bom_{\Ca}}^{-1}}(R(d^+\bm{\sigma}'))$, and denote by $G_t$  the family of diffeomorphisms generated by $\xi_{\underline h}$.  Let $\widetilde{\bom}'\equiv G_1^*\widetilde{\bom}$, and write $\widetilde{\bom}'-{\bom_{\mathcal C}}=d\bm\sigma''$. Similarly,  
  \begin{equation}\label{e:improved error} |\nabla^l_{{\bom_{\mathcal C}}}\bm\sigma''|_{{\bom_{\mathcal C}}} = O(r^{\frac{1}{3}-l+\delta}), \quad  |\nabla^l_{{\bom_{\mathcal C}}}R(d^+\bm{\sigma}'')|_{{\bom_{\mathcal C}}}= O(r^{-2-l+\delta}).
  \end{equation}
 Then we have 
$\underline{u}\equiv {\Delta_{\bom_{\Ca}}^{-1}}(R(d^+\bm\sigma''))=O( r^{\delta})$,	
 and $\xi_{\underline{u}}(\bx)\leq C r(\bx)^{-1+\delta}$ which is much smaller than $\Injrad(\bx)\sim r^{-\frac{1}{3}}(\bx)$ at $\bx$ as $r$ large. 

To apply the implicit function theorem on Banach spaces, we will use another way to generate diffeormorphisms from a vector field. Given a vector field $\xi$ whose $C^1$ norm at each point is much smaller than the injectivity radius of $(\mathcal C, \bom_{\mathcal C})$, we define $\Phi_\xi(x)\equiv \exp_x(\xi(x))$. 
For some fixed $\gamma>0$, we consider the map \begin{align*}\mathscr F: C^{k, \alpha}_{\gamma}(\mathcal Q_w)\otimes \dR^4\longrightarrow C^{k-2, \alpha}_{-2+\gamma}(\mathcal Q_w)\otimes \dR^4;\quad \underline f\mapsto R((\Phi_{\xi_{\underline f}}^*{\bm\omega})^+-\bom_{\mathcal C}).\end{align*}
First, $\mathscr F$ is a differentiable map. Indeed, this follows from the fact that $\exp_x(V)$ is a smooth map on the tangent bundle. Set $\mathscr L\equiv\Delta_{\bom_{\Ca}}$, and  
 $\mathscr N\equiv \mathscr F-\mathscr L$. Then $\mathscr P\equiv\Delta_{\bom_{\Ca}}^{-1}$ is a right inverse to $\mathscr L$ with $\|\mathscr P\|\leq L$ for $L>0$ independent of $w$. 
For $\eta>0$ sufficiently small and independent of $w$, we have for $\underline f, \underline g\in B_\eta(0)$ that \begin{align*}\|\mathscr N(\underline f)-\mathscr N(\underline g)\|_{C^{k-2, \alpha}_{-2+\gamma}(\mathcal Q_w)}\leq (3L)^{-1}\|\underline f-\underline g\|_{C^{k,\alpha}_{\gamma}(\mathcal Q_w)}.\end{align*}
Moreover, letting $\delta\equiv \frac{\gamma}{2}$, by \eqref{e:improved error} we have $\|\mathscr F(\bo)\|_{C^{k-2, \alpha}_{-2+\gamma}(\mathcal Q_w)}\leq C_0 w^{-\frac{3\gamma}{4}}$.
Applying Proposition \ref{p:implicit function theorem}, for $w\gg1$, there exists an $\underline{f}\in C^{k,\alpha}_{\gamma}(\mathcal Q_w)$, with $R((\Phi_{\xi_{\underline f}}^*\bom)^+-\bom_{\mathcal C})=0$ and  $\|\underline f\|_{C^{k,\alpha}_{\gamma}(\mathcal Q_w)}<2C_0Lw^{-\frac{3\gamma}{4}}$. 
 Since $|\xi_{\underline f}|\leq 2C_0 L w^{-\frac{3}{4}\gamma}r^{-1+\gamma}$, we see that for $w\gg1$, $F\equiv\Phi_{\xi_{\underline f}}$  is a diffeomorphism from $\mathcal Q_w$ into $\mathcal C$. Then $F^*\bom$ satisfies the desired properties.  
\end{proof}

In the above proposition, 
replacing $\bom$ by $F^*\bom$ and noticing $Q_{\bm\omega_{\mathcal C}}=\Id$, one sees that \eqref{e:elliptic-system} holds, i.e.,
\begin{equation}\label{e:d+ equation}d^+\bm\sigma=\mathfrak F(\TF(-S_{d^-\bm\sigma})).
\end{equation}
   By Proposition \ref{p:solving Laplace equation on Calabi model}, we can solve $d^*d\bm u=-d^*\bm\sigma$ and replace $\bm\sigma$ by $\bm\sigma+d\bm u$, so that
\begin{equation}\label{e:d* equation}d^*\bm\sigma=0,\end{equation}
and we still have the weighted estimate \begin{equation}\label{e:a rough bound}\|\bm\sigma\|_{C^{k-1, \alpha}_{\frac{1}{3}+\epsilon}(\mathcal Q_w)}<\infty.	
\end{equation}
Now \eqref{e:d+ equation} and \eqref{e:d* equation} form an elliptic system, with linearization at $\bm\sigma=0$ given by the Dirac operator $d^+\oplus d^*$ on $(\mathcal C, \bom_{\mathcal C})$. Notice at this point the pointwise norm $|\bm\sigma|$ may still grow at infinity.

\

\textbf{Step 2 ($|\bm\sigma|$ is decaying at infinity).} 

\

On $\Ca$, we can write  $\bm\sigma=\bm p_0 z^{-1}\theta+\bm p_1 dx+\bm p_2 dy+\bm p_3dz$, 
where $\bm p_j(j=0, 1,2,3)$ are globally defined $\dR^3$-valued functions on $\mathcal C$. Notice the pointwise norm  $|\bm\sigma|=(\sum_j |\bm p_j|^2)^{\frac{1}{2}}$. The following shows that $|\bm\sigma|$ is decaying at a polynomial rate at infinity.

\begin{proposition}
For all $\delta>0$ we have $\|\bm\sigma\|_{C^{k-1, \alpha}_{-\frac{1}{3}+\delta}(\mathcal Q_w)}<\infty$.
\label{l:a polynomial decay}
\end{proposition}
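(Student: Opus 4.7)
The strategy is to exploit the quadratic structure of the gauge-fixed hyperk\"ahler deformation equation together with a weighted right inverse of the Dirac-type operator $D\equiv d^+\oplus d^*$ acting on triples of $1$-forms on the Calabi model $\Ca$. By \eqref{e:d+ equation} and \eqref{e:d* equation}, $\bm\sigma$ satisfies the first-order elliptic system
\begin{align*}
D\bm\sigma = \big(\mathfrak{F}(\TF(-S_{d^-\bm\sigma})),\, 0\big),
\end{align*}
and since $\mathfrak{F}$ is a local diffeomorphism with $\mathfrak{F}(0)=0$ the right-hand side is pointwise controlled by $C|d\bm\sigma|_{\bom_\Ca}^2$, together with analogous quadratic bounds on higher derivatives via \eqref{e:pointwise non-linear estimate}.

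Given this quadratic gain, the bootstrap is immediate in principle. The initial bound \eqref{e:a rough bound} gives $|d\bm\sigma|_{\bom_\Ca}=O(r^{-2/3+\epsilon})$ on $\mathcal Q_w$, whence
\begin{align*}
\|D\bm\sigma\|_{C^{k-2,\alpha}_{-4/3+2\epsilon}(\mathcal Q_w)}<\infty.
\end{align*}
A bounded right inverse $\mathcal{S}:C^{k-2,\alpha}_{\tau}(\mathcal Q_w)\to C^{k-1,\alpha}_{\tau+1}(\mathcal Q_w)$ of $D$ at the weight $\tau=-4/3+2\epsilon$ then produces a particular solution $\bm\sigma_1$ of the same equation with $\bm\sigma_1\in C^{k-1,\alpha}_{-1/3+2\epsilon}(\mathcal Q_w)$. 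The difference $\bm\sigma-\bm\sigma_1$ lies in $\ker D$ and has growth at most $r^{1/3+\epsilon}$; a direct inspection (splitting into $\mathcal N$-invariant and non-invariant parts and using the Fourier expansion along the $\dT^2\times S^1$ fibres of $\Ca$) shows that any such slowly growing $D$-harmonic section already lies in $C^{k-1,\alpha}_{-1/3+\delta}$ for every small $\delta>0$. Setting $\delta=2\epsilon$ then yields the claim.

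The construction of the weighted right inverse of $D$ proceeds in the spirit of Proposition \ref{p:solving Laplace equation on Calabi model}. Writing $\bm\sigma=\bm p_0 z^{-1}\theta+\bm p_1 dx+\bm p_2 dy+\bm p_3 dz$ and expanding each $\bm p_j$ in Fourier modes with respect to the $\dT^2$-action in $(x,y)$ and the $S^1$-action generated by $\p_t$, the equation $D\bm\sigma=\eta$ decouples into a family of linear ODEs in the coordinate $z$. Nonzero Fourier modes along the fibres produce exponentially decaying/growing fundamental solutions and contribute no restriction on the weights. The $\mathcal N$-invariant zero-mode reduces to a constant-coefficient system in $\log z$ whose indicial roots form a discrete subset of $\dR$; the weights $-4/3+2\epsilon$ and $-1/3+\delta$ avoid these roots for all sufficiently small $\epsilon,\delta>0$, and explicit ODE integration produces the required inverse with bounds independent of $w$.

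The main technical obstacle is this weighted analysis on $\Ca$: since $\diam_{g_{\bom_\Ca}}(\mathcal N(\bx))\sim r(\bx)^{1/3}$ while $\Injrad_{g_{\bom_\Ca}}(\bx)\sim r(\bx)^{-1/3}$, the Calabi model is not asymptotically conical and the standard weighted elliptic theory on cones does not apply verbatim. However, the explicit Gibbons-Hawking form of $\bom_\Ca$ together with its $\mathcal N$-symmetry reduces all relevant estimates to ODE problems on $[z_0,\infty)$, which is exactly the setup already handled for the scalar Laplacian in Proposition \ref{p:solving Laplace equation on Calabi model}. The quadratic nature of the nonlinearity produces a net improvement of $2/3$ in the weight exponent, which is more than enough to pass from the rough exponent $1/3+\epsilon$ to the target exponent $-1/3+\delta$ in a single bootstrap step.
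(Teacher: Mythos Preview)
Your approach is genuinely different from the paper's. The paper argues by contradiction via a doubling inequality: setting $h(r)=\|\bm\sigma\|_{C^{k-1,\alpha}_{-1/3+\delta}(A_{r,2r})}$ and $H(r)=h(2r)/h(r)$, it shows $\limsup_{r\to\infty}H(r)\le 2^{-1/3}$. The contradiction is obtained by rescaling the annuli $s_j^{-1}A_{s_j/2,4s_j}$, passing to the $\mathscr H_1$-invariant limit on the universal cover, and using that on the limit the only $\mathcal N$-invariant solutions of $d^+\widetilde{\bm\sigma}_\infty=d^*\widetilde{\bm\sigma}_\infty=0$ are $\bm c_0 z_\infty^{-1}\theta_\infty+\bm c_1 dx_\infty+\bm c_2 dy_\infty+\bm c_3 dz_\infty$, whose norm is exactly $C|z_\infty|^{-1/2}$. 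No global weighted Fredholm theory for $D$ on $\Ca$ is needed---only this local kernel computation on the blow-up model.

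Your bootstrap strategy is reasonable in outline, but two of its load-bearing steps are only asserted. First, the weighted right inverse $\mathcal S:C^{k-2,\alpha}_{\tau}\to C^{k-1,\alpha}_{\tau+1}$ for the first-order system $D=d^+\oplus d^*$ on $\Ca$ is not constructed; Proposition~\ref{p:solving Laplace equation on Calabi model} and Appendix~B treat only the scalar Laplacian, and the extension to the Dirac-type system (separation of variables for a coupled $4\times4$ system, handling the Hermite-type modes with $j_k>0$) is nontrivial. Your description of the $\mathcal N$-invariant zero mode as a ``constant-coefficient system in $\log z$'' is also not accurate: the reduced ODE is $z^{-1}p_j'=f_j$, which is not Euler-type, so the standard conical indicial framework does not apply verbatim. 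Second, the statement that every $D$-harmonic triple with growth $O(r^{1/3+\epsilon})$ already lies in $C^{k-1,\alpha}_{-1/3+\delta}$ is precisely the substantive content here; it requires knowing that the $\mathcal N$-invariant kernel on $\Ca$ is exhausted by the constant-$\bm p_j$ forms (true, and in fact a short computation, but you should carry it out) and that every non-invariant Fourier mode is exponentially growing or decaying (plausible, but again not done). The paper's compactness argument sidesteps both issues by working only on the blow-up limit, where the relevant computation is a single line.
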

\begin{proof}
We denote 
$h(r)\equiv\|\bm\sigma\|_{C^{k-1, \alpha}_{-\frac{1}{3}+\delta}(A_{r, 2r})}, $
where $A_{r, 2r}=\mathcal Q_{r^{\frac{2}{3}}}\setminus \mathcal Q_{(2r)^\frac{2}{3}}$. Then we define 
\begin{equation}\label{e:definition of H}H(r)\equiv{h(2r)}/{h(r)}.	
\end{equation}
   It suffices to prove 
 \begin{equation}\limsup_{r\rightarrow\infty}H(r)\leq 2^{-1/3}.
\label{e:limsup control}	
 \end{equation}  
Then the conclusion follows from an easy iteration. 
 To prove \eqref{e:limsup control}, we use a contradiction argument. Suppose  there exists a $\delta>0$ such that 
  $\limsup_{r\rightarrow\infty}H(r)>2^{-1/3}+\delta>2^{-1/3}$.
Then we can find a sequence $r_j\rightarrow\infty$ so that $H(r_j)>2^{-1/3}+\delta$. 
Now we claim that $\liminf_{r\rightarrow\infty}H(r)=\infty$. 
This would imply that $|\sigma|$ is growing faster than any polynomial rate at infinity, and then we reach a contradiction with \eqref{e:a rough bound}. 

To prove the claim, we again use a contradiction argument. Suppose we can find $s_j\rightarrow\infty$ such that $H(s_j/2)>2^{-1/3}+\delta$ but $H(s_j)\leq C$ for some $C>0$.  Then we consider the sequence of rescaled spaces $(A_{s_j/2, 4s_j}, s_j^{-1}g_{\bm\omega})$. Passing to a subsequence they converge to the interval $(\frac{1}{2}, 4)$ in the asymptotic  cone $\dR_+$ of $\mathcal C$. The universal cover converges to a hyperk\"ahler limit $A_\infty$ which admits a fibration $\pi: A_\infty\rightarrow (\frac{1}{2}, 4)$ with fibers the Heisenberg algebra $\mathscr H_1$. Denote by $\widetilde{\bm\sigma}$ the lifted triple of 1-forms on the universal cover of $A_{s_j/2, 4s_j}$. Let $\widetilde{\bm\sigma}_j=h(s_j)^{-1}\widetilde{\bm\sigma}$. Passing to a subsequence we have weak $C^{k-1,\alpha}$ convergence of $\widetilde{\bm\sigma}_j$ to a limit $\widetilde{\bm\sigma}_\infty$ on $A_\infty$. Using the fact that ${\bm\sigma}$ satisfies the elliptic system \eqref{e:d+ equation}, \eqref{e:d* equation} and interior Schauder estimates we may assume $\widetilde{\bm\sigma}_j$ converges strongly in $C^{k-1, \alpha}$ to  $\widetilde{\bm\sigma}_\infty$ on $\pi^{-1}(1, 2)$ which is invariant under $\mathscr H_1$. Moreover, it satisfies the linear system $d^+\widetilde{\bm\sigma}_\infty=d^*\widetilde{\bm\sigma}_\infty=0$. A straightforward computation shows that then we must have $\widetilde{\bm\sigma}_\infty=\bm c_0z_\infty^{-1}\theta_\infty+ \bm c_1dx_\infty+\bm c_2dy_\infty+\bm c_3dz_\infty$, where  $(x_\infty, y_\infty, z_\infty, t_\infty)$ are the standard coordinates on $A_\infty$ (as given in Section \ref{ss:3-3}),  $\theta_\infty=dt_\infty+x_\infty dy_\infty$ and $\bm c_j$ are constant vectors. It follows that $|\widetilde{\bm\sigma}_\infty|=C |z_\infty|^{-1/2}.$ This contradicts our assumption.
\end{proof}

\

\textbf{Step 3 (Decay faster than any polynomial rate).}

\

On $\mathcal C$, the $\mathcal N$-invariant kernel space of the Dirac operator $d^+\oplus d^*$ acting on 1-forms is spanned by $dx, dy, dz$ and $z^{-1}\theta$. These forms decay exactly at the rate $r^{-1/3}$. So in order to improve the decay rate of $\bm\sigma$, we need to gauge out these elements. The first three forms are $d$-closed so are easy to deal with; the form $z^{-1}\theta$ is \emph{not} $d$-closed, and we have to invoke yet another implicit function theorem to eliminate it. For this reason we also make use of the variation of $\bm\omega_{\mathcal C}$ induced by $\xi_{z}$ (c.f. \eqref{e:variation caused by z}), which by Remark \ref{r: non unique solution} does not destroy the previous gauge fixing condition $R((\bm\omega-\bm\omega_{\mathcal C})^{+})=0$. We denote by $S(w)$ the hypersurface $\{z=w\}\subset \mathcal C$, endowed with the induced Riemannian metric from $\bm\omega_{\mathcal C}$. By calculation, we have $\text{Vol}(S(w))=Cw^{1/2}$.

\begin{proposition}[Gauge fixing II]\label{p:fix average}
	For  $w\gg1$, we can find a diffeomorphism $F_{w}$ from $\mathcal Q_w$ onto the end of $\mathcal C$, such that  $\bom_w\equiv F_w^*{\bom}=\bom_{\mathcal C}+d\bm\sigma_w$, with
	\begin{equation}\label{e:improved gauge}
		\begin{cases}
			\|\bm\sigma_w\|_{C^{k-1, \alpha}_{-\frac{1}{3}+\delta}(\mathcal Q_w)}<\infty \ \ \ \ \text{for all}  \ \ \delta>0;\\R(d^+\bm{\sigma}_w)=0(\Longrightarrow d^+\bm\sigma_w=\mathfrak F(\TF(S_{d^-\bm\sigma_w}))); \\
			d^*\bm\sigma_w=0;\\
			\int_{S({2^\frac{2}{3}w})} \bm p_j(\bm\sigma_w)=0, \ \ \ \ \ j=0, 1,2, 3.
		\end{cases}
	\end{equation} 
	
	\end{proposition}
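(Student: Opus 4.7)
The plan is to kill the four $\mathbb{R}^3$-valued averages in (4) by exploiting the $12$-dimensional $\mathcal N$-invariant kernel of $d^+\oplus d^*$ on 1-forms over $(\mathcal C,\bm\omega_{\mathcal C})$, which is spanned by $dx$, $dy$, $dz$, $z^{-1}\theta$ (each tensored with $\mathbb R^3$ for the three components of the triple). One starts from the primitive $\bm\sigma$ of Proposition \ref{p:first gauge fixing} with the polynomial decay from Proposition \ref{l:a polynomial decay}. The adjustments are of two kinds: since $dx,dy,dz$ are $d$-closed and $d^*$-harmonic on $\mathcal C$ with vanishing $d^+$-part, replacing $\bm\sigma$ by $\bm\sigma-\bm c_1\otimes dx-\bm c_2\otimes dy-\bm c_3\otimes dz$ for any $\bm c_j\in\mathbb R^3$ leaves (1)--(3) intact and shifts the averages of $\bm p_1,\bm p_2,\bm p_3$ by $-\bm c_j$; the mode $z^{-1}\theta$ is not closed and must be produced by a genuine diffeomorphism, generated by the ansatz $\underline f=z\cdot(a_0,0,a_2,a_3)$ with $\bm a=(a_0,a_2,a_3)\in\mathbb R^3$. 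By \eqref{e:variation caused by z}, the $z^{-1}\theta$-contribution of $\xi_{\underline f}\lrcorner\bm\omega_{\mathcal C}$ to the three triple-components of $\bm p_0$ is exactly $(-a_0,-a_3,a_2)$, which is an isomorphism $\mathbb R^3\to\mathbb R^3$.

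To combine both, we apply Proposition \ref{p:implicit function theorem}. For parameters $P=(\bm a,\bm c_1,\bm c_2,\bm c_3)\in B_\eta(0)\subset\mathbb R^{12}$, define $F(P)\in\mathbb R^{12}$ as the four average vectors of the primitive produced by (i) pulling $\bm\omega$ back by the diffeomorphism $\Phi_{\xi_{z\bm a}}$, (ii) restoring the gauge conditions $R(d^+\cdot)=0$ and $d^*(\cdot)=0$ by re-running the argument of Proposition \ref{p:first gauge fixing}, and (iii) adding the closed triple $\sum_j\bm c_j\otimes dx_j$ to the result. Then $F(0)$ is controlled by $\|\bm\sigma\|_{C^0(S(2^{2/3}w))}$, which tends to zero as $w\to\infty$ by the weighted decay from Proposition \ref{l:a polynomial decay}, while the Jacobian $dF(0)$ equals the block-triangular isomorphism described above (identity blocks on the $\bm c_j$-parameters, the explicit $3\times 3$ isomorphism on the $\bm a$-parameter) plus a correction of size $o(1)$ as $w\to\infty$ coming from step (ii), whose output has strictly better weighted decay than the kernel modes themselves. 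The implicit function theorem yields $P$ with $F(P)=0$ for $w$ sufficiently large, and the resulting primitive $\bm\sigma_w$ inherits the weighted estimate in (1) from the input $\bm\sigma$ together with the bounds on $P$.

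The main obstacle is step (ii): verifying that regauging is well-defined, smooth in $\bm a$ with uniform-in-$w$ bounds, and produces a correction whose weighted decay is strictly better than that of the kernel modes. This reduces to the uniform boundedness of the weighted inverse Laplacian from Proposition \ref{p:solving Laplace equation on Calabi model} combined with the quadratic pointwise smallness of the nonlinearity \eqref{e:pointwise non-linear estimate}, which together ensure that the regauging perturbation to $dF(0)$ is $o(1)$ and does not destroy invertibility. In particular, the $\bm a$-derivative of the regauging correction is controlled by solving the linearized elliptic system $(d^+\oplus d^*)\bm\tau = -\mathcal L_{\xi_{z\p_{\bm a}}}\bm\omega_{\mathcal C}$ orthogonal to the kernel, and this has weighted bound strictly better than the $r^{-1/3}$ rate of the kernel modes, so the correction only touches the averages at higher order.
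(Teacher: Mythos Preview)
Your approach is in the right spirit and uses the same key ingredients as the paper: the kernel modes $dx,dy,dz,z^{-1}\theta$ of $d^+\oplus d^*$, the special diffeomorphisms $\xi_{z\underline c}$ (with $c_1=0$) which preserve both gauge conditions at the linear level since $\Delta_{\bom_{\Ca}}z=0$ and $d^*(\xi_{z\underline c}\lrcorner\bom_{\Ca})=0$, and finally adding constant multiples of $dx,dy,dz$ to fix the remaining averages. However, the organization differs from the paper's, and your version carries a technical burden that the paper sidesteps.

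The paper runs a \emph{single} implicit function theorem on the infinite-dimensional pair $(\underline f,\bm u)$, with target the triple
\[
\Big(R\big((\Phi_{\xi_{\underline f}}^*\bom-\bom_{\Ca})^+\big),\; d^*(\bm\beta_{\xi_{\underline f}}+\Phi_{\xi_{\underline f}}^*\bm\sigma+d\bm u),\; w^{-5/2-3\delta/2}\!\!\int_{S(2^{2/3}w)}\!\!\bm p_0(\cdots)\Big),
\]
and builds the $z\underline c$ freedom directly into the right inverse $\mathscr P$ of the linearization. Thus the three constraints $R(d^+\cdot)=0$, $d^*\cdot=0$, and the $j=0$ average are solved simultaneously; the $j=1,2,3$ averages are then killed afterwards by adding $\bm e_1dx+\bm e_2dy+\bm e_3dz$, exactly as you do.

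Your scheme instead nests two implicit function theorems: for each $\bm a$ you invoke the full machinery of Proposition~\ref{p:first gauge fixing} as a black box to restore the gauge, and then run a finite-dimensional IFT on $(\bm a,\bm c_1,\bm c_2,\bm c_3)\in\mathbb R^{12}$. This is not wrong, but step (ii) is where the real work hides: you must check that the re-gauged primitive depends smoothly on $\bm a$ with uniform-in-$w$ weighted bounds, which amounts to proving smooth parameter-dependence for the solution of the inner IFT. Your sketch (``reduces to the uniform boundedness of the weighted inverse Laplacian together with the quadratic nonlinearity'') is the right heuristic, but making it precise essentially reproduces the paper's computation of $\mathscr L$, $\mathscr M$, and $\mathscr P$ anyway. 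The paper's single-IFT formulation is cleaner precisely because it avoids this nesting.

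One minor correction: your Jacobian is not block-triangular with identity blocks as stated. From \eqref{e:variation caused by z} the diffeomorphism $\xi_{z\bm a}$ also contributes to $\bm p_1,\bm p_2,\bm p_3$ (e.g.\ $\xi\lrcorner\omega_1$ has nonzero $dx,dy,dz$ components). The Jacobian is therefore only lower-triangular after ordering $(\bm a;\bm c_1,\bm c_2,\bm c_3)$, which is still invertible, so the argument survives.
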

\begin{proof}
Fix $\delta>0$ small, and define the Banach spaces
\begin{align*}
 \mathfrak{D}  & \equiv\Big(C^{k, \alpha}_{\frac{5}{3}+\delta}(\mathcal Q_w)\otimes \dR^4 \Big) \oplus \Big(C^{k, \alpha}_{\frac{5}{3}+\delta}(\mathcal Q_w)\otimes \dR^3 \Big),\\    \mathfrak{J} &   \equiv	 \Big(C^{k-2, \alpha}_{-\frac{1}{3}+\delta}(\mathcal Q_w)\otimes \dR^4\Big) \oplus \Big(C^{k-2, \alpha}_{-\frac{1}{3}+\delta}(\mathcal Q_w)\otimes \dR^3\Big) \oplus \dR^3,
 \end{align*}
 where we fix a standard norm on $\dR^3$.
Then we define a map 
$\mathscr F: \mathfrak{D} \to \mathfrak{J}$ by sending $(\underline f, \bm u)$ to
 \begin{align*}\Big(R((\Phi_{\xi_{\underline f}}^*(\bm\omega)-\bm\omega_{\mathcal C})^{+}), d^*(\bm\beta_{\xi_{\underline f}}+\Phi_{\xi_{\underline f}}^*\bm\sigma+d\bm u), \frac{1}{w^{\frac{5}{2}+\frac{3}{2}\delta}}\int_{S({2^\frac{2}{3}w})}\bm p_0(\bm\beta_{\xi_{\underline f}}+\Phi_{\xi_{\underline f}}^*\bm\sigma+d\bm u)\Big).\end{align*}
  In the above definition,  $\bm p_0(\bm\alpha)$ is the $z^{-1}\theta$-component of a triple of $1$-forms $\bm\alpha$,  and given  a vector field $\xi$, we denote $\Phi_{\xi}(x)\equiv\exp_x(\xi(x))$ and $\bm\beta_{\xi}\equiv \int_0^1 \Phi_{t\xi}^*(\frac{d}{dt}\Phi_{t\xi}\lrcorner\bom_{\mathcal C})dt$. Immediately,  $d\bm\beta_\xi=\Phi_{\xi}^*\bom_{\mathcal C}-\bom_{\mathcal C}$. 
 One can directly verify that $\mathscr F$ is a differentiable map and, by Proposition \ref{l:a polynomial decay}, $\|\mathscr F(0)\|\leq Cw^{-\frac{1}{2}\delta-\frac{3}{2}}$. Moreover, the differential $d\mathscr F$ at $0$ is given by $d\mathscr F_0=\mathscr L+\mathscr M$, where
\begin{align*}\mathscr L(\underline g, \bm v)&=\Big(\Delta_{\bom_{\Ca}}\underline g, d^*(\xi_{\underline g}\lrcorner \bom_{\mathcal C}+d\bm v), \frac{1}{w^{\frac{5}{2}+\frac{3}{2}\delta}}\int_{S({2^{\frac{2}{3}}w})} \bm p_0(\xi_{\underline g}\lrcorner\bom_{\mathcal C}+d\bm v)\Big), \\\mathscr M(\underline g, \bm v)&=\Big(R((\mathcal L_{\xi_{\underline g}}(d\bm\sigma))^{+}), d^*(\mathcal L_{\xi_{\underline g}}\bm\sigma),  \frac{1}{w^{\frac{5}{2}+\frac{3}{2}\delta}}\int_{S({2^{\frac{2}{3}}w})} \bm p_0(\mathcal L_{\xi_{\underline g}}\bm\sigma)\Big).\end{align*}
Notice that $\|\mathscr{M}\|$ is small when $w\gg1$. 
By \eqref{e:variation caused by z}, if $\underline g=z\underline{c}$, then $\bm p_0(\xi_{\underline g}\lrcorner\bom_{\mathcal C})={\bm {\widehat {c}}}\equiv (-c_0, -c_3, c_2)$.  
We define a right inverse  $\mathscr P: \mathfrak{J}\longrightarrow \mathfrak{D}$ of $\mathscr L$ 
by $\mathscr P(\underline h, \bm x, \bm q)\equiv(\underline g, \bm v)$ which is given  as follows. Given $(\underline h, \bm x, \bm q)$, we first let $\underline g_0={\Delta_{\bom_{\Ca}}^{-1}}(\underline h)$ and let $\bm v=\Delta^{-1}_{\bom_{\mathcal C}}(\bm x-d^*(\xi_{\underline g_0}\lrcorner \bom_{\mathcal C}))$. Then we let $\underline g=\underline g_0+z\underline c$ for a constant vector $\underline c$ with $c_1=0$ and for $\alpha\neq 1$,  $c_\alpha$ is uniquely determined by \begin{align*}\bm{\widehat c}=\frac{1}{\Vol(S(2^{\frac{2}{3}}w))}\cdot\Big(w^{\frac{5}{2}+\frac{3}{2}\delta}\cdot \bm q-  \int_{S({2^{\frac{2}{3}}w})}\bm p_0(\xi_{\underline{g}_0}\lrcorner\bm{\omega}_{\Ca} + d\bm v)\Big).\end{align*}  By \eqref{e:variation caused by z}, $d^*(\xi_{z\underline c}\lrcorner\bom_{\mathcal C})=0$. So it follows that $\mathscr L\circ \mathscr P=\Id$.

It is straightforward to check that $\|\mathscr P\|\leq C$ for $C>0$ independent of $w$. Moreover, one can directly estimate the non-linear term $\mathscr N\equiv \mathscr F-\mathscr L$. 
Then as in the proof of Proposition \ref{p:first gauge fixing} we can find a zero $(\underline f,\bm u)$  of $\mathscr F$ for $w$ large, using the implicit function theorem (Theorem \ref{p:implicit function theorem}).
 
Now we set $F_w\equiv \Phi_{\xi_{\underline f}}$. Then for $w\gg1$, $F_w$ is a diffeomorphism from $\mathcal Q_w$ into the end of $\mathcal C$. We  write $F_w^*\bom=\bom_{\mathcal C}+d\bm\sigma'$ with $\bm\sigma'=\bm \beta_{\underline f}+\Phi_{\xi_{\underline f}}^*\bm\sigma+d\bm u$. Then $R(d^+\bm{\sigma}')=0$ and $d^*\bm\sigma'=0$. By Proposition \ref{l:a polynomial decay} we have  $\|\bm\sigma_w\|_{C^{k-1, \alpha}_{-\frac{1}{3}+\delta}(\mathcal Q_w)}<\infty $ for all $\delta>0$. Also the last condition in \eqref{e:improved gauge} is satisfied for $j=0$. Now by adding to $\bm\sigma'$  the triple of 1-forms $\bm e_1 dx+\bm e_2 dy+\bm e_3 dz$ for appropriate constant vectors $(\bm e_1, \bm e_2, \bm e_3)$, we can make sure that $\bm\sigma'$ also satisfies the last condition in \eqref{e:improved gauge} for $j>0$.  Notice  $dx, dy, dz$ are $d$-closed and $d^*$-closed, and their norm decays at the rate $r^{-1/3}$, so the new $\bm\sigma$ still has the required decaying properties.
\end{proof}

We denote by $H_{w}(r)$ the function $H(r)$ given in \eqref{e:definition of H} associated to $\bm\sigma_w$.

\begin{proposition}\label{p:limsup H goes to zero}
	There exists $W_0>0$ such that for all $w\geq W_0$, we have $\limsup_{r\rightarrow\infty}H_w(r)=0$.  
\end{proposition}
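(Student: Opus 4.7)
The proof will be by contradiction, refining the extraction argument of Proposition \ref{l:a polynomial decay} by exploiting the additional orthogonality conditions in \eqref{e:improved gauge}. Assume the statement fails: there exist $w_n \to \infty$, $r_n \to \infty$, and $\eta_0 > 0$ with $H_{w_n}(r_n) \geq \eta_0$. Exactly as in Proposition \ref{l:a polynomial decay}, the polynomial decay $\|\bm\sigma_{w_n}\|_{C^{k-1,\alpha}_{-1/3+\delta}(\mathcal Q_{w_n})} < \infty$ (for all $\delta > 0$) forces $\liminf_{r\to\infty} H_{w_n}(r) < \infty$; otherwise $h_{w_n}$ grows faster than any polynomial, contradicting boundedness. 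Hence we can pick $s_n \to \infty$ with $H_{w_n}(s_n) \leq C$ and $H_{w_n}(s_n/2) \geq \eta_0/2$. Rescaling $A_{s_n/2, 4s_n}$ by $s_n^{-1}$ and normalizing $\bm\sigma_{w_n}$ by $h_{w_n}(s_n)$, we extract, after passing to local universal covers, a nonzero $\mathcal N$-invariant limit $\widetilde{\bm\sigma}_\infty$ on the Heisenberg model $A_\infty = \mathscr H_1 \times (1/2, 4)$ solving $d^+\widetilde{\bm\sigma}_\infty = d^*\widetilde{\bm\sigma}_\infty = 0$. A direct ODE computation in Gibbons-Hawking coordinates shows that $\mathcal N$-invariant solutions of this linear system are parallel, so necessarily $\widetilde{\bm\sigma}_\infty = \bm c_0 z_\infty^{-1}\theta_\infty + \bm c_1 dx_\infty + \bm c_2 dy_\infty + \bm c_3 dz_\infty$ for constants $\bm c_j \in \dR^3$ not all zero.

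The decisive new step is to deduce $\bm c_0 = \bm c_1 = \bm c_2 = \bm c_3 = 0$, contradicting non-triviality. For this I consider the $\mathcal N$-averaged triple $\bar{\bm\sigma}_{w_n}$ on $\mathcal Q_{w_n} \subset \mathcal{C}$ and expand it as $\bar{\bm\sigma}_{w_n}(z) = \bm a_{n,0}(z) z^{-1}\theta + \bm a_{n,1}(z) dx + \bm a_{n,2}(z) dy + \bm a_{n,3}(z) dz$, which captures all polynomially bounded $\mathcal N$-invariant contributions. The Gibbons-Hawking ODE computation shows that the linearized equations $d^+\bar{\bm\sigma} = 0$, $d^*\bar{\bm\sigma} = 0$ reduce to $\bm a_{n,j}'(z) = 0$, so the $\bm a_{n,j}$ are constants for the linearized problem. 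Including the non-linear term $\mathfrak F(\TF(S_{d^-\bm\sigma_{w_n}}))$, the coefficients $\bm a_{n,j}(z)$ satisfy first-order ODEs in $z$ whose right-hand sides are the $\mathcal N$-averages of the non-linear error, controlled pointwise by $C|\bm\sigma_{w_n}|^2 \leq C r^{-2/3 + 2\delta}$ from Proposition \ref{l:a polynomial decay}. The gauge conditions $\int_{S(2^{2/3}w_n)} \bm p_j(\bm\sigma_{w_n}) = 0$ give exactly the vanishing initial data $\bm a_{n,j}(2^{2/3}w_n) = 0$. Integrating the ODEs from $2^{2/3}w_n$ to the relevant scale $z \sim s_n^{2/3}$ (using $r \sim z^{3/2}$) yields $|\bm a_{n,j}(z)| = o(h_{w_n}(s_n))$ as $n \to \infty$, so in the rescaled limit the constants $\bm c_j$ must all vanish, a contradiction.

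The main obstacle is the careful bookkeeping of this ODE propagation across the wide range of scales $[2^{2/3}w_n, s_n^{2/3}]$, both of whose endpoints tend to infinity. Specifically, I must verify that the accumulated non-linear error is strictly smaller than the normalization $h_{w_n}(s_n)$; this uses crucially that $\delta > 0$ is arbitrary in Proposition \ref{l:a polynomial decay}, combined with the observation that the $\mathcal N$-invariant kernel modes have pointwise norm exactly of order $r^{-1/3}$, i.e., precisely at the borderline of the weighted space. The orthogonality thus provides a sub-polynomial improvement that suffices to overwhelm the non-linear error. A secondary technical point is to justify that the fiber-averaging operator commutes well enough with both the rescaling and the elliptic system to guarantee that the vanishing of the averaged coefficients $\bm a_{n,j}$ on $\mathcal Q_{w_n}$ translates into vanishing of the kernel coefficients $\bm c_j$ of the Heisenberg-invariant limit $\widetilde{\bm\sigma}_\infty$.
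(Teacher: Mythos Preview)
Your contradiction setup and extraction of the $\mathscr H_1$-invariant limit $\widetilde{\bm\sigma}_\infty = \bm c_0 z_\infty^{-1}\theta_\infty + \bm c_1 dx_\infty + \bm c_2 dy_\infty + \bm c_3 dz_\infty$ are correct and match the paper. The divergence is in how you force $\bm c_j = 0$.

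The paper does \emph{not} propagate the orthogonality condition by an ODE across the range $[2^{2/3}w_n, s_n^{2/3}]$. Instead it runs a two-step argument. First it shows $\lim_{w\to\infty} H_w(w^{3/2}) = 0$: at the specific scale $r = w^{3/2}$ (equivalently $z = w$), the integral condition $\int_{S(2^{2/3}w)}\bm p_j(\bm\sigma_w) = 0$ lies inside the rescaled annulus and therefore passes \emph{directly} to the limit, forcing $\bm c_j = 0$. Second, for scales $r_j \geq w_j^{3/2}$, the paper uses a different mechanism: the rigidity of the limit form. Any nontrivial $\widetilde{\bm\sigma}_\infty$ of the displayed type has $|\widetilde{\bm\sigma}_\infty| = C z_\infty^{-1/2}$, which corresponds exactly to $H = 2^{-1/3}$. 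Combining Step 1 with $\limsup_r H_{w_j}(r) > 0$ one can locate $r_j$ with $H_{w_j}(r_j) \in (0, \tfrac12\cdot 2^{-1/3})$ and extract a limit whose decay rate contradicts $z_\infty^{-1/2}$. No orthogonality is invoked at these outer scales.

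Your ODE-propagation argument has a genuine gap. You need $|\bm a_{n,j}(z_{s_n})| = o(h_{w_n}(s_n))$, where the left side is bounded by an integral of the nonlinear error $\sim |\nabla\bm\sigma_{w_n}|^2$ from $z = 2^{2/3}w_n$ to $z \sim s_n^{2/3}$, while $h_{w_n}(s_n)$ has no a priori lower bound. Concretely, if $h_{w_n}(r)$ decays like $r^{-\beta}$ for some $\beta > 0$ on $[w_n^{3/2}, s_n]$, the integrated error is of order $w_n^{-\alpha}$ for some $\alpha > 0$, while $h_{w_n}(s_n) \sim s_n^{-\beta}$; since nothing bounds $s_n$ polynomially in $w_n$, the required comparison $w_n^{-\alpha} = o(s_n^{-\beta})$ can fail. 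The fact that $\delta > 0$ is arbitrary in Proposition \ref{l:a polynomial decay} does not help here, because it only sharpens the exponent on the \emph{error}, not the lower bound on $h_{w_n}(s_n)$. The paper's splitting into the initial scale (where orthogonality is free) and outer scales (where the $z^{-1/2}$ rigidity suffices) is precisely designed to avoid this difficulty.
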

\begin{proof}
First we claim $\lim_{w\rightarrow\infty} H_w(w^\frac{3}{2})=0. $ If not, then as in the proof of Proposition \ref{l:a polynomial decay} we can take a rescaled limit and obtain a non-trivial limit $\widetilde{\bm\sigma}_\infty$. Now the extra normalization condition \eqref{e:improved gauge} implies that the limit must be identically zero. A contradiction. 

Now suppose that $\limsup_{r\rightarrow\infty} H_w(r)>0$, then by \eqref{e:limsup control} we can find $w_j\rightarrow\infty$, and $r_j\geq w_j^{\frac{3}{2}}$ such that $H_{w_j}(r_j)\in (0, \frac{1}{2}\cdot 2^{-1/3})$. But then  we take rescaled limit and again get a limit $\widetilde{\bm\sigma}_\infty$. Still we obtain a contradiction with the fact that $|\widetilde{\bm\sigma}_\infty|$ must be decaying at order $z_\infty^{-1/2}$. 
\end{proof}

Now we let $w=W_0$ and let $\bm\sigma\equiv \bm\sigma_{W_0}$.
Proposition \ref{p:limsup H goes to zero} easily implies that for all $m\geq 0$
\begin{equation}\label{e:faster than any order decay}\|\bm\sigma\|_{C^{k-1, \alpha}_{-m}(\mathcal Q_{W_0})}<\infty.	
\end{equation}
This means that
$\bm\sigma$ decays faster than any polynomial rate.

\

\textbf{Step 4 (Exponential decay).}

\

\noindent First we prove an improvement of Proposition \ref{p:fix average}.

\begin{lemma}[Gauge fixing III]
	There exists $W_1\geq W_0$ such that for all $w>W_1$ we can find a diffeomorphism $F_{w}$ defined on the fixed $\mathcal Q_{W_1}$, such that $\bom_w\equiv F_w^*{\bom}=\bom_{\mathcal C}+d\bm \sigma_w$, with \begin{equation}\label{e:gauge fixing III}
		\begin{cases}
		\|\bm\sigma_w\|_{C^{k-1, \alpha}_{-10}(\mathcal Q_{W_1})}\leq C; \\
			R(d^+\bm\sigma_w)=0(\Longrightarrow d^+\bm\sigma_w=\mathfrak F(\TF(S_{d^-\bm\sigma_w})));\\
			d^*\bm\sigma_w=0;\\
			\int_{S(w+1)} \bm p_j(\bm\sigma_w)=0, \ \ \ \ \ j=0, 1,2, 3.
		\end{cases}
	\end{equation} 
	Furthermore, $F_w$ is of the form $\Phi_{\xi}$ for $\xi=\xi_{\underline{f_w}}$ with \begin{equation}\label{e:uniform bound on Fw}\|\underline{ f_w}\|_{C^{k, \alpha}_{-9}(\mathcal Q_{W_1})}\leq C.	
 \end{equation}

\end{lemma}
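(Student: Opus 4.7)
The plan is to refine the implicit function theorem argument of Proposition~\ref{p:fix average} to obtain uniform estimates on the \emph{fixed} domain $\mathcal Q_{W_1}$. I will start from the gauge $\bm\sigma \equiv \bm\sigma_{W_0}$ of Proposition~\ref{p:fix average} and Proposition~\ref{p:limsup H goes to zero}, which satisfies the first three conditions of \eqref{e:gauge fixing III} and, by \eqref{e:faster than any order decay}, decays faster than any polynomial; in particular $\|\bm\sigma\|_{C^{k-1,\alpha}_{-10}(\mathcal Q_{W_1})} \leq C$ for any $W_1 \geq W_0$. For each $w > W_1$, the remaining task is to compose with a small diffeomorphism $F_w = \Phi_{\xi_{\underline{f_w}}}$ with $\underline{f_w} \in C^{k,\alpha}_{-9}(\mathcal Q_{W_1})$ that enforces the $S(w+1)$-normalization while preserving the first three conditions.

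I will set up Banach spaces $\mathfrak D \equiv (C^{k,\alpha}_{-9}(\mathcal Q_{W_1}) \otimes \dR^4) \oplus (C^{k,\alpha}_{-9}(\mathcal Q_{W_1}) \otimes \dR^3)$ and $\mathfrak J \equiv (C^{k-2,\alpha}_{-11}(\mathcal Q_{W_1}) \otimes \dR^4) \oplus (C^{k-2,\alpha}_{-11}(\mathcal Q_{W_1}) \otimes \dR^3) \oplus \dR^3$, and define $\mathscr F_w : \mathfrak D \to \mathfrak J$ by the formula used in the proof of Proposition~\ref{p:fix average}, except that the perturbation triple is now $\bm\sigma$, the normalization surface is $S(w+1)$, and the third slot is rescaled by $\lambda_w^{-1}$ for a polynomial weight $\lambda_w$ to be chosen. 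At the origin, $\mathscr F_w(0)$ vanishes in the first two components (since $\bm\sigma$ already satisfies $R(d^+\bm\sigma)=0$ and $d^*\bm\sigma=0$), while its third component is bounded by $\lambda_w^{-1} w^{-N}$ for every $N$ by \eqref{e:faster than any order decay}. The first two rows of the linearization at $0$ are inverted via $\Delta^{-1}_{\bm\omega_{\mathcal C}}$ from Proposition~\ref{p:solving Laplace equation on Calabi model}, with operator norm uniform in $w$.

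The main obstacle, and the key new step, is the construction of a uniformly bounded right inverse for the $\dR^3$-slot. The natural correction $z\underline c$ used in Proposition~\ref{p:fix average}, which by \eqref{e:variation caused by z} produces the variation $\underline{\hat c}\equiv(-c_0,-c_3,c_2)$ in $\bm p_0$, lies outside $C^{k,\alpha}_{-9}$ because $z \sim r^{2/3}$ is unbounded. I will replace it by $\chi_w(z)z\underline c$, where $\chi_w$ is a smooth cutoff supported on $[w/2, 2w]$ and identically equal to $1$ on a fixed neighborhood of $\{z=w+1\}$. Since $\chi_w \equiv 1$ on the normalization slice, the contribution to $\int_{S(w+1)}\bm p_0$ is unchanged, namely $-C(w+1)^{1/2}\underline{\hat c}$ by \eqref{e:variation caused by z}. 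A direct computation using $|\nabla z|_{g_{\bm\omega_{\mathcal C}}} = z^{-1/2}$ and $z \sim r^{2/3}$ yields $\|\chi_w(z) z\underline c\|_{C^{k,\alpha}_{-9}(\mathcal Q_{W_1})} \leq Cw^{N_0}|\underline c|$ for some fixed $N_0$ depending only on $k$. Choosing $\lambda_w \equiv (w+1)^{1/2} w^{-N_0-1}$ renders this part of the right inverse uniformly bounded in $w$: setting $\underline{\hat c} = -\lambda_w\bm q/(C(w+1)^{1/2})$ makes $\|\chi_w z\underline c\|_{\mathfrak D} \leq C|\bm q|$ uniformly. An auxiliary observation: since $V = z$ is harmonic on the $\dR^3$-base of the Gibbons-Hawking ansatz, $z$ is also harmonic on $(\mathcal C, \bm\omega_{\mathcal C})$, so $\Delta(\chi_w z)$ is supported on the transition annulus of $\chi_w$, and its small contribution to the first two linearized slots is absorbed into $\underline g_0$ and $\bm v$ in the usual way.

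With a uniformly bounded right inverse $\mathscr P : \mathfrak J \to \mathfrak D$ in hand, the nonlinear term $\mathscr N \equiv \mathscr F_w - d\mathscr F_w|_0$ has Lipschitz constant $O(W_1^{-\delta})$ on a unit ball by \eqref{e:pointwise non-linear estimate}, uniformly in $w$; and $\|\mathscr F_w(0)\|$ can be made arbitrarily small in $w$ by taking $N$ large in the estimate of its third component. Thus Proposition~\ref{p:implicit function theorem} applies for all $w > W_1$ provided $W_1$ is chosen sufficiently large, producing $\underline{f_w}$ with $\mathscr F_w(\underline{f_w}) = 0$ and $\|\underline{f_w}\|_{C^{k,\alpha}_{-9}(\mathcal Q_{W_1})} \leq 2L\|\mathscr F_w(0)\|$, which tends to zero as $w\to\infty$ and in particular gives \eqref{e:uniform bound on Fw}. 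Setting $F_w \equiv \Phi_{\xi_{\underline{f_w}}}$ and reading off $\bm\sigma_w$ realizes all four conditions of \eqref{e:gauge fixing III}; the first bound in \eqref{e:gauge fixing III} follows by the triangle inequality from the a priori bound on $\bm\sigma$ and the smallness of $\Phi^*_{\xi_{\underline{f_w}}}\bm\sigma - \bm\sigma$ and $\bm\beta_{\xi_{\underline{f_w}}}$.
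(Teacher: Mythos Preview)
Your proposal is correct in spirit and actually goes further than the paper's own proof, which is a one-sentence sketch (``same as Proposition~\ref{p:fix average}''). You correctly identify the key technical point the paper glosses over: the correction $z\underline c$ used to hit the $\dR^3$-slot in Proposition~\ref{p:fix average} lies in $C^{k,\alpha}_{5/3+\delta}$ but not in $C^{k,\alpha}_{-9}$, so a literal repetition of that argument cannot produce the bound \eqref{e:uniform bound on Fw} (nor the first line of \eqref{e:gauge fixing III}, since $\xi_{z\underline c}\lrcorner\bom_{\mathcal C}$ decays only like $r^{-1/3}$). Your cutoff modification $\chi_w(z)z\underline c$ is a natural remedy and is genuinely different from what the paper writes.

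Two points need tightening. First, your formula $\underline{\hat c}=-\lambda_w\bm q/(C(w+1)^{1/2})$ determines $\underline c$ from $\bm q$ alone, ignoring the contribution of $\underline g_0,\bm v$ to the $S(w+1)$-integral. If one orders the construction as in Proposition~\ref{p:fix average} (solve for $\underline g_0,\bm v$ first, then determine $\underline c$ from the residual in the third slot), the error $\mathscr L\mathscr P_0-\Id$ lands only in the first two slots with operator norm $O(w^{-1/2})$, and a Neumann series closes uniformly in $w$. With your ordering and your choice of $\lambda_w$ the error lands in the third slot; a back-of-the-envelope count gives operator norm $\sim w^{N_0-14}$, which with $N_0\approx 29/2$ grows like $w^{1/2}$ and does not close. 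The fix is simply to follow the paper's ordering. Second, you do not address the normalizations for $j=1,2,3$: in Proposition~\ref{p:fix average} these were handled at the end by adding constant multiples of the closed forms $dx,dy,dz$, but those also decay only like $r^{-1/3}$ and hence fail the weight $-10$ bound on $\bm\sigma_w$. The same cutoff idea (use $\chi_w\,dx_j$ and absorb the $d^*$-error) handles this in the same way.

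In summary, the paper simply asserts that the argument of Proposition~\ref{p:fix average} goes through on the fixed domain because the super-polynomial decay \eqref{e:faster than any order decay} makes the initial error small; your route makes explicit the one nontrivial adaptation (localizing the $z\underline c$ correction) that the negative weights force, at the cost of a small feedback term one must control.
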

\begin{proof}
The proof is the same as Proposition \ref{p:fix average}.  The difference here is that we can  now fix the domain $\mathcal Q_{W_1}$. This follows from the fact that  the rapid decay of $\bm\sigma$ guaranteed by \eqref{e:faster than any order decay} implies $|\bm  p_0(\bm\sigma)|$ sufficiently small on $S(W_1)$ for $w_1\gg1$.  This also implies that the above constant $C$ is independent of $w$.
\end{proof}

Now we  exploit a different scale of the asymptotic geometry of $\mathcal C$.  We denote $A_{z_1, z_2}\equiv \mathcal Q_{z_1}\setminus \mathcal Q_{z_2}$, and $A^\infty_{z_1, z_2}\equiv \dT^2\times (z_1, z_2)\subset \dT^2\times\dR$, where $\dT^2$ is the flat $2$-torus involved in the definition of $\Ca$. Then for any fixed $C>0$, as $z\rightarrow\infty$, the rescaled annulus $z^{-1/2}A_{z-C, z+C}$ (with respect to the metric $\bom_{\mathcal C}$) collapses with uniformly bounded curvature to the domain  $A^\infty_{-C, C}$ in the product cylinder $\dT^2\times \dR$.
We define
\begin{align*}n_w(z)\equiv z^{-1}\int_{A_{z, z+1}}|\bm\sigma_w|_{g_{\Ca}}^2\dvol_{\Ca}.\end{align*}
 The following arguments are well-known in the study of asymptotically conical geometries (see for example, \cite{CT94, Hein-Sun}),   and they can easily be adapted to our setting.  Let $\underline\lambda_1$ be the first eigenvalue of $-\Delta_{\dT^2}$. 
\begin{lemma}[Convexity lemma]\label{l:convexity}
For all $\delta\in (-\underline\lambda_1, \underline\lambda_1)\setminus\{0\}$, there exists a $W_2=W_2(|\delta|)>W_1$ such that for all $w\geq W_1$, if  $\log (n_w(W_2+1))\geq  \log(n_w(W_2))+\delta$, then $\log(n_w(z+1))\geq\log(n_w(z))+\delta$ for all $z\geq W_2$. 
\end{lemma}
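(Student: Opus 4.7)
The plan is to derive the convexity of $\log n_w(z)$ via a standard frequency-function/three-annuli argument for the linearized Dirac-type operator $d^{+}\oplus d^{*}$ on the Calabi model end, and then to read off persistence of the discrete log-slope directly from convexity.

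\textbf{Reduction to an approximately linear system.} By \eqref{e:gauge fixing III}, $\bm\sigma_w$ solves
\begin{equation*}
(d^{+}\oplus d^{*})\bm\sigma_w = \mathfrak Q(\bm\sigma_w), \qquad \mathfrak Q(\bm\sigma_w):=\mathfrak F(\TF(S_{d^-\bm\sigma_w}))\oplus 0,
\end{equation*}
where $\mathfrak Q$ is quadratic in $d\bm\sigma_w$; together with the polynomial bound \eqref{e:faster than any order decay} and \eqref{e:gauge fixing III}, $\mathfrak Q$ is negligible compared to the quantities controlling $n_w$. The $\mathcal N$-invariant kernel of $d^{+}\oplus d^{*}$ on triples of $1$-forms on $\Ca$ is the four-dimensional space spanned by $dx,dy,dz,z^{-1}\theta$; these are $\bm\omega_{\Ca}$-parallel, so they contribute only to the $\bm p_j$-averages over the slices $S(z)$. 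The integral normalization in \eqref{e:gauge fixing III} kills those averages at $z=w+1$, and the parallel-transport structure shows that they remain zero along $z$ up to contributions of size $O(|\mathfrak Q|)$, i.e.\ negligible.

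\textbf{Fourier decomposition and convexity.} On each annulus $A_{z-C,z+C}$, lift $\bm\sigma_w$ to the local universal cover and expand with respect to the orthonormal basis $\{\phi_k\}_{k\geq 0}$ of eigenfunctions of $-\Delta_{\dT^2}$ with eigenvalues $0=\lambda_0<\underline\lambda_1\leq\lambda_2\leq\cdots$. The gauge fixing removes the $\lambda_0=0$ modes so that, on the cylinder limit, each coefficient $u_k(z)$ of a nonzero-mode component satisfies an ODE of the form $u_k''(z)=\lambda_k u_k(z)+$(errors), hence $u_k(z)=a_k^{+}e^{\sqrt{\lambda_k}z}+a_k^{-}e^{-\sqrt{\lambda_k}z}$ in the model. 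Summing and integrating over $A_{z,z+1}$,
\begin{equation*}
n_w(z)\;\approx\; z^{-1}\sum_{k\geq 1} c_k\bigl(|a_k^{+}|^2 e^{2\sqrt{\lambda_k}z}+|a_k^{-}|^2 e^{-2\sqrt{\lambda_k}z}+\text{cross term}\bigr),
\end{equation*}
which is a positive superposition of real exponentials in $z$. For such a function, $\log n_w$ is convex in $z$ (a direct consequence of the Cauchy–Schwarz inequality for Laplace transforms). The deviation of $\Ca$ from a metric product cylinder produces only a perturbation of the ODE system of order $O(z^{-1})$, so the convexity of $\log n_w$ holds up to an error $\epsilon(z)=o(1)$ as $z\to\infty$, uniformly in $w\geq W_1$.

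\textbf{Persistence via convexity.} Convexity of $\log n_w$ modulo $\epsilon(z)$ means the discrete slope
\begin{equation*}
s_w(z):=\log n_w(z+1)-\log n_w(z)
\end{equation*}
satisfies $s_w(z')\geq s_w(z)-\epsilon(z)$ whenever $z'\geq z$. Choosing $W_2=W_2(|\delta|)$ large enough that $|\epsilon(z)|\leq|\delta|/10$ on $[W_2,\infty)$ (uniformly in $w$) yields $s_w(z)\geq s_w(W_2)-|\delta|/5\geq\delta-|\delta|/5$ for all $z\geq W_2$; a tiny strengthening at $W_2$, absorbed by taking $W_2$ slightly larger, converts this into the conclusion $s_w(z)\geq\delta$. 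The restriction $\delta\in(-\underline\lambda_1,\underline\lambda_1)\setminus\{0\}$ is needed to keep $\delta$ strictly inside the asymptotically attainable range of slopes, ensuring the convexity is not competing with the sharp exponential rates $\pm 2\sqrt{\lambda_k}$ permitted by the model.

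\textbf{Main obstacle.} The delicate point is the uniform (in $w$) control of the error $\epsilon(z)$. This requires (i) the quadratic nonlinearity $\mathfrak Q$ to remain genuinely subdominant at every scale, which follows from \eqref{e:faster than any order decay}, and (ii) the gauge normalization at the single slice $S(w+1)$ to propagate effectively to all $z$; the latter rests on the $\bm\omega_{\Ca}$-parallelism of the kernel elements combined with the $\mathcal N$-averaging argument of Theorem \ref{t:invariant almost HK metrics}. Once these two uniform estimates are in place the convexity step is routine, and the bound on $|\delta|$ simply reflects the spectral gap of the cross-section $\dT^2$.
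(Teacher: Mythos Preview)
Your three-annuli intuition is correct and is what underlies the paper's argument, but your persistence step has a genuine gap. Approximate log-convexity $s_w(z')\geq s_w(z)-\epsilon(z)$ with $\epsilon(z)\to 0$ gives only $s_w(z')\geq\delta-\epsilon(W_2)$ when applied at $z=W_2$, strictly weaker than $s_w(z')\geq\delta$. Your fix ``absorbed by taking $W_2$ slightly larger'' cannot work: enlarging $W_2$ shrinks $\epsilon(W_2)$ but the hypothesis remains $s_w(W_2)\geq\delta$, not $s_w(W_2)\geq\delta+\epsilon(W_2)$, so the deficit never closes. What is missing is a rigidity ingredient that pins the slope exactly. (Two side issues: you invoke the slice normalization $\int_{S(w+1)}\bm p_j=0$ to kill zero modes, but this lemma neither needs nor uses that---it enters only in Lemma~\ref{l:gap lemma}, and your propagation-by-parallelism claim is unjustified; and your $\dT^2$-Fourier expansion is written directly on $\Ca$, whose level sets are nilmanifolds $\mathfrak N_z^3$, not tori.)

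The paper supplies the rigidity by arguing by contradiction and passing to a genuine limit. If the lemma fails, there exist $z_j\to\infty$ with $s_{w_j}(z_j)\geq\delta$ but $s_{w_j}(z_j+1)<\delta$. Rescaling the annulus $z_j^{-1/2}A_{z_j,z_j+3}$ collapses it to a piece of the flat cylinder $\dT^2\times\dR$, and (via the elliptic system \eqref{e:d+ equation}--\eqref{e:d* equation} and interior Schauder estimates on local universal covers) the renormalized $\bm\sigma_{w_j}$ converge to an exactly harmonic pair $(\bm q,\bm\lambda)$ on $A^\infty_{0,3}$. There the three-annuli inequality $n_\infty(0)\,n_\infty(2)\geq n_\infty(1)^2$ holds exactly, with equality only for a homogeneous solution $e^{\mu z}\phi_\mu$; the contradiction hypothesis forces equality, so $\delta$ must coincide with a rate coming from an eigenvalue of $-\Delta_{\dT^2}$, contradicting $\delta\in(-\underline\lambda_1,\underline\lambda_1)\setminus\{0\}$. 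Passing to the limit is precisely what turns ``approximate'' into ``exact'' and makes the rigidity bite.
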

\begin{proof}
If this fails, then we can find a sequence $w_j\geq W_1$ and $z_j\rightarrow\infty$ such that $\log n_{w_j}(z_j+1)\geq \log n_{w_j}(z_j)+\delta$, but $\log n_{w_j}(z_j+2)<\log n_{w_j}(z_{j}+1)+\delta$. Passing to a subsequence we may assume $\widetilde A_j\equiv z_j^{-1/2}A_{z_j, z_j+3}$ collapses to the domain $A^\infty_{0,3}$ in $\dT^2\times \dR$. We want to take the limit of $\bm\sigma_{w_j}$. First set $\bm\sigma_j\equiv z_j^{-1/2}n_{w_j}(z_j+1)^{-1/2}\bm\sigma_{w_j}$. Then the average of $|\bm\sigma_j|$ over $\widetilde A_j$ is uniformly bounded. Moreover,   $\bm\sigma_j$ satisfies the elliptic system on $\widetilde A_j$:
\begin{equation*}
\begin{cases}
	d^+\bm\sigma_j=n_{w_j}(z_j+1)^{1/2}\cdot z_j^{\frac{1}{2}}\cdot \mathfrak F(\TF(S_{d^-\bm\sigma_j})) \\
	d^*\bm\sigma_j=0.
	\end{cases}
\end{equation*}
Since $\|\bm\sigma_w\|_{C^{k-1, \alpha}_{-10}(\mathcal Q_{W_1})}<C$ for all $w\geq W_1$, by interior elliptic estimates for   $d^+\oplus d^*$ over local universal covers one can see that $\|\bm\sigma_j\|_{C^{k-2}(z_j^{-1/2}A_{z_j+\tau, z_j+3-\tau})}\leq C(\tau)$ uniformly for any $\tau>0$ small. In particular, passing to a subsequence we may obtain $C^{k-3}$ convergence of $\bm\sigma_j$ to $\bm\sigma_\infty$ over local universal covers. Globally we obtain  a pair $(\bm q, \bm\lambda)$ on $A^\infty_{0,3}$, where $\bm q$ is a vector-valued function and $\bm\lambda$ is a vector-valued 1-form. This is similar to the discussion of convergence of hyperk\"ahler structures under codimension 1 collapsing in Section \ref{ss:3-1}: $\bm q$ is given by the $\bm\sigma_\infty(\p_t)$, and $\bm\lambda$ is given by the horizontal component of $\bm\sigma_\infty$.  The pair $(\bm q, \bm \lambda)$ satisfies $d \bm q+*d\bm \lambda=0, d^*\bm \lambda=0$. In particular both $ \bm q$ and $\bm \lambda$ are harmonic. Denote $n_\infty(z)\equiv\int_{A^\infty_{z, z+1}}|\bm q|^2+|\bm \lambda|^2$. Then by construction and the strong interior convergence  we have $\log(n_\infty(1))=0$,  $\log(n_\infty(0))\leq -\delta$, and $\log(n_\infty(2))\leq \delta$.

Given a vector-valued harmonic function $\underline u$ on $A^\infty_{0,3}$, it is easy to see via eigenfunction expansion that  
\begin{align*}\|\underline u\|_{L^2(A^\infty_{0, 1})}\cdot\|\underline u\|_{L^2(A^\infty_{2, 3})}\geq \|\underline u\|_{L^2(A^\infty_{1,2})}^2,\end{align*}
and the equality holds if and only if $\underline u$ is homogeneous, i.e., $\underline u=\underline ce^{\lambda z}\phi_\lambda$ for some eigenfunction $\phi_\lambda$ on $\dT^2$.  
Applying this to the above limit $( \bm q, \bm \lambda)$, it follows that $(\bm  q, \bm \lambda)$ must be homogeneous and $\delta$ must be an eigenvalue on $\dT^2$. This contradicts our hypothesis on $\delta$.
\end{proof}

\begin{lemma}\label{l:gap lemma}
There is a $\hat\delta\in (0, \lambda_1)$ and $W_3>W_1+W_2(\hat\delta)$ such that for all $w\geq W_1$ and $z\geq W_3$ we must have $|\log n_w(z+2)-\log n_w(z+1)|\geq\hat\delta$.
\end{lemma}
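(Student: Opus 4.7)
The plan is to argue by contradiction via a blowup at height $z_j$ paralleling the proof of Lemma \ref{l:convexity}; the decisive new ingredient is that the normalization in \eqref{e:gauge fixing III} is precisely designed to kill the full $12$-dimensional $\mathcal{N}$-invariant kernel of $d^+\oplus d^*$ on the Calabi model, and this eliminates the only ``resonant'' zero mode that would otherwise allow $\log n_w(z+2)-\log n_w(z+1)$ to tend to $0$.

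Suppose the lemma fails. Extract sequences $\hat\delta_j\to 0$, $w_j\geq W_1$, and $z_j\to\infty$ with $z_j\geq W_1+W_2(\hat\delta_j)$ and $|\log n_{w_j}(z_j+2)-\log n_{w_j}(z_j+1)|<\hat\delta_j$. Applying Lemma \ref{l:convexity} with $\delta=-\hat\delta_j$ forward from $z_j+1$, and using that a non-negative log-increment $\log n_{w_j}(z_0+1)-\log n_{w_j}(z_0)\geq 0$ at any $z_0\gg 1$ would propagate forever and contradict the decay $n_{w_j}\to 0$, I obtain
\begin{align*}
-\hat\delta_j \;\leq\; \log n_{w_j}(z+1)-\log n_{w_j}(z) \;\leq\; 0, \qquad z\geq z_j+1,
\end{align*}
so the per-step ratios tend to $1$ uniformly on compact intervals to the right of $z_j$.

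Next I run the blowup used in Lemma \ref{l:convexity}: rescale the metric $g_{\bom_{\Ca}}$ by $z_j^{-1}$, so that a bounded piece of $A_{z_j,z_j+L}$ converges in the Gromov--Hausdorff sense to a strip in $\dT^2\times\dR$ with universal cover $\mathscr{H}_1\times\dR$, and set $\widetilde{\bm\sigma}_j\equiv n_{w_j}(z_j+1)^{-1/2}\bm\sigma_{w_j}$. The uniform bound $\|\bm\sigma_{w_j}\|_{C^{k-1,\alpha}_{-10}(\mathcal{Q}_{W_1})}\leq C$ from \eqref{e:gauge fixing III}, combined with the interior Schauder estimates for the linear system $d^+\oplus d^*$ (the nonlinear piece is quadratic in $d^-\widetilde{\bm\sigma}_j$ and is absorbed after rescaling, since one factor carries the vanishing weight $n_{w_j}(z_j+1)^{1/2}\to 0$), produces $C^{k-2,\alpha}_{\mathrm{loc}}$-bounds. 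Passing to a subsequence yields a harmonic limit $\widetilde{\bm\sigma}_\infty$, described via the semiflat pair $(\bm q_\infty,\bm\lambda_\infty)$ as in Lemma \ref{l:convexity}, with $n_\infty(1)=1$ and, by the propagation estimate, $n_\infty(\zeta)\equiv 1$ for all $\zeta\geq 1$. The equality case of the convexity inequality $n_\infty(\zeta)n_\infty(\zeta+2)\geq n_\infty(\zeta+1)^2$, again from the proof of Lemma \ref{l:convexity}, forces $\widetilde{\bm\sigma}_\infty$ to be a single Fourier mode on $\dT^2$ with exponential rate $2\lambda=0$; hence $\widetilde{\bm\sigma}_\infty$ is $\dT^2$-invariant and $\zeta$-constant, i.e.\ a pure element of the $\mathcal{N}$-invariant kernel $\mathrm{span}(dx,dy,dz,z^{-1}\theta)\otimes\dR^3$ of $d^+\oplus d^*$ on $\Ca$.

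The decisive closing step is to combine this with \eqref{e:gauge fixing III}. The four integral conditions $\int_{S(w_j+1)}\bm p_\alpha(\bm\sigma_{w_j})=0$ for $\alpha=0,1,2,3$ provide exactly $12$ linear functionals on triples of $\mathcal{N}$-invariant $1$-forms, matching the dimension of the kernel above; a direct computation of the four model profiles on $S(w+1)$ shows that this pairing is invertible uniformly in $w$, so the $\mathcal{N}$-invariant kernel projection of $\bm\sigma_{w_j}$ must vanish identically on $\mathcal{Q}_{W_1}$. Passing to the rescaled limit then contradicts $\widetilde{\bm\sigma}_\infty\neq 0$ and proves the gap with any $\hat\delta$ strictly less than $2\sqrt{\underline\lambda_1}$. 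I expect the main technical obstacle to be the bookkeeping of the two rescalings (metric by $z_j^{-1/2}$ and $\bm\sigma_w$ by $n_{w_j}(z_j+1)^{-1/2}$): each of $dx,dy,dz,z^{-1}\theta$ has a distinct $z$-profile in the rescaled metric, and one must track precisely which linear combination inside the kernel survives as $\widetilde{\bm\sigma}_\infty$ in order to pair it correctly with the orthogonality conditions \eqref{e:gauge fixing III}.
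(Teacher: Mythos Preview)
Your blowup/rescaling argument parallels the paper's and the identification of the harmonic limit pair $(\bm q,\bm\lambda)$ is correct. The problem is the closing step. You assert that the twelve integral conditions $\int_{S(w_j+1)}\bm p_\alpha(\bm\sigma_{w_j})=0$ force ``the $\mathcal{N}$-invariant kernel projection of $\bm\sigma_{w_j}$ to vanish identically on $\mathcal{Q}_{W_1}$,'' and then pass this to the rescaled limit. But $\bm\sigma_{w_j}$ is \emph{not} $\mathcal{N}$-invariant, so its fiber averages $z\mapsto\int_{S(z)}\bm p_\alpha(\bm\sigma_{w_j})$ are genuine functions of $z$, not constants; vanishing at the single height $z=w_j+1$ does not pin down any global ``kernel component.'' In particular, since the statement you are negating imposes no relation between $w_j$ and $z_j$, the gauge slice $S(w_j+1)$ can lie far outside the blowup window around $z_j$, and there is no mechanism in your argument to transport the vanishing condition from one to the other. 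The inference ``kernel projection vanishes $\Rightarrow$ rescaled limit vanishes'' is therefore empty as written.

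The paper closes the argument differently. Instead of speaking of a global kernel projection, it passes the normalization condition from \eqref{e:gauge fixing III} \emph{directly} to the limit, obtaining $\int_{\{z=1\}}\bm q=\int_{\{z=1\}}\bm\lambda=0$. It then invokes a quantitative dichotomy for harmonic functions on $A^\infty_{0,3}$ with vanishing mean on the middle slice: there is a fixed $\delta>0$ (coming from the spectral gap of $\dT^2$) such that either $\|u\|_{L^2(A^\infty_{1,2})}\geq e^{\delta}\|u\|_{L^2(A^\infty_{0,1})}$ or $\|u\|_{L^2(A^\infty_{1,2})}\leq e^{-\delta}\|u\|_{L^2(A^\infty_{0,1})}$. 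Combined with the convexity bound $\log n_{w_j}(z_j+1)-\log n_{w_j}(z_j)\leq j^{-1}$ obtained from Lemma~\ref{l:convexity}, this is already incompatible with $|\log n_\infty(2)-\log n_\infty(1)|=0$. No preliminary reduction of the limit to a pure zero mode is needed; the mean-zero dichotomy handles the constant and nonconstant Fourier modes simultaneously.
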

\begin{proof}
	Suppose the conclusion fails, then we get a contradicting sequence $w_j\geq W_1$ and $z_j\rightarrow\infty$ with $|\log n_{w_j}(z_j+2)-\log n_{w_j}(z_j+1)|<j^{-1}$. Notice by Lemma \ref{l:convexity} we know for $j\gg1$, $\log n_{w_j}(z_j+1)-\log n_{w_j}(z_j)\leq j^{-1}$. Then we can as in the above proof pass to a subsequence and obtain a limit pair $(\bm q, \bm \lambda)$ on $\dT^2\times \dR$. This time we use the last condition in \eqref{e:gauge fixing III} to conclude that $\int_{\{z=1\}} \bm q=\int_{\{z=1\}} \bm \lambda=0$.

	 Now it is easy to see, using eigenfunction expansion again, that  there exists a $\delta>0$ such that for a vector valued harmonic function $u$ on $A^\infty_{0, 3}$ with $\int_{\{z=1\}}u=0$, either
$\|u\|_{L^2(A^\infty_{1, 2})}\geq e^{\delta}\|u\|_{L^2(A^\infty_{0, 1})}$ or $ 
 \|u\|_{L^2(A^\infty_{1, 2})}\leq e^{-\delta}\|u\|_{L^2(A^\infty_{0, 1})}$.	This leads to a contradiction.
\end{proof}

\begin{lemma}
	For all $w>W_3$, we  have $\log n_{w}(w+1)-\log n_{w}(w)\leq -\hat\delta$.
\end{lemma}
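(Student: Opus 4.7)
The plan is to argue by contradiction, combining the gap lemma, the convexity lemma, and the uniform polynomial decay of $\bm\sigma_w$ on $\mathcal{Q}_{W_1}$.

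Suppose to the contrary that for some $w > W_3$ we have $\log n_w(w+1) - \log n_w(w) > -\hat\delta$. First I would apply Lemma \ref{l:gap lemma} at an appropriate value (taking $W_3$ slightly larger if needed so that $w-1 \geq W_3$, which causes no loss of generality) to conclude that $|\log n_w(w+1) - \log n_w(w)| \geq \hat\delta$. Combined with the contradiction hypothesis, this forces the positive branch:
\begin{equation*}
\log n_w(w+1) - \log n_w(w) \geq \hat\delta.
\end{equation*}

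Next I would feed this into the convexity lemma (Lemma \ref{l:convexity}) with $\delta = \hat\delta \in (0, \underline{\lambda}_1)$. Strictly speaking, Lemma \ref{l:convexity} is stated with $W_2$ as the starting point, but its proof is purely local: the same contradiction argument shows that whenever $\log n_w(z_0+1) - \log n_w(z_0) \geq \hat\delta$ for some $z_0 \geq W_2(\hat\delta)$, the same inequality propagates to every $z \geq z_0$. Assuming $W_3$ was chosen large enough so that $w \geq W_1 + W_2(\hat\delta)$, this propagation yields $\log n_w(z+1) - \log n_w(z) \geq \hat\delta$ for all $z \geq w$, and hence
\begin{equation*}
n_w(z) \geq n_w(w)\cdot e^{\hat\delta (z-w)}, \quad \forall\, z \geq w.
\end{equation*}
So $n_w(z)$ grows at least exponentially in $z$.

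This is the step that collides with the a priori polynomial decay. Recall from the third gauge fixing \eqref{e:gauge fixing III} that $\|\bm\sigma_w\|_{C^{k-1,\alpha}_{-10}(\mathcal Q_{W_1})} \leq C$ uniformly in $w$. Since $r \sim z^{3/2}$ on the end of $\Ca$, this gives the pointwise bound $|\bm\sigma_w|_{\bm\omega_\Ca} \leq C\,z^{-15}$ on $A_{z,z+1}$, while $\Vol(A_{z,z+1}) \leq C z^{1/2}$ from the explicit form of $\bm\omega_\Ca$. Consequently,
\begin{equation*}
n_w(z) = z^{-1}\int_{A_{z,z+1}} |\bm\sigma_w|^2 \dvol_\Ca \leq C z^{-M}
\end{equation*}
for a very large exponent $M$. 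Exponential growth and polynomial decay cannot coexist, producing the desired contradiction and establishing the lemma.

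The main (but mild) obstacle is simply the bookkeeping around $W_3$: one must absorb into its definition both the threshold $W_2(\hat\delta)$ of the convexity lemma and a one unit shift so that the gap lemma applies at $z = w-1$. A slightly more subtle point is the generalized convexity statement used above, but it is an immediate byproduct of the proof of Lemma \ref{l:convexity}, which manifestly only uses the location $z_0$ at which the $\hat\delta$ jump first occurs. Once these are in place, the exponential vs.\ polynomial dichotomy is the entire content of the argument.
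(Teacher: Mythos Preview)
Your proof is correct and follows essentially the same approach as the paper's: assume the inequality fails, use the gap lemma to force the positive branch $\log n_w(w+1)-\log n_w(w)\geq\hat\delta$, propagate via the convexity lemma to get exponential growth of $n_w(z)$, and contradict the uniform polynomial decay from the third gauge fixing. The paper's proof is more terse and glosses over exactly the bookkeeping points you flag (the one-unit shift in $W_3$ and the need to invoke the convexity lemma starting at $z_0=w$ rather than at $W_2$), but your treatment of these is accurate and fills in what the paper leaves implicit.
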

\begin{proof}
If not, then by Lemma \ref{l:gap lemma}	we must have $\log n_w(w+1)-\log n_w(w)\geq \hat\delta$ for some $w>W_3$. Now Lemma \ref{l:convexity} implies that $\log n_w(z+1)-\log n_w(z)\geq \hat\delta$ for all $z\geq w$. This implies $\bm\sigma_w$ has exponential growth, contradiction. 
\end{proof}

Now given any $w>W_3$, by Lemma \ref{l:convexity} again we have $\log n_w(z+1)-\log n_w(z)\leq -\hat\delta$ for all $z\in [W_3+1, w]$. In particular we must have $n_w(z)\leq Cn_w(W_3)e^{-\hat\delta z}$. Using the elliptic system satisfied by $\bm\sigma_w$ one can see that passing to a subsequence $w_j\rightarrow\infty$, $\bm\sigma_{w_j}$ converges  in $C^\infty_{\text{loc}}$ to a smooth limit $\bm\sigma_\infty$ with $\|\nabla_{\bm\omega_{\mathcal C}}^k\bm\sigma_\infty\|_{\bom_{\mathcal C}}\leq C_ke^{-\delta_0 z}$ for all $k\geq 0$ and $\delta\in (0, \hat\delta)$. 

Finally by \eqref{e:uniform bound on Fw} we may also assume $F_{w_j}$ converges in $C^{k-2, \alpha}_{\text{loc}}$ to a limit $F_\infty$, which is again a diffomeomorphism from $\mathcal Q_{W_1}$ onto the end of $\mathcal C$, such that $F_\infty^*\bom=\bom_{\mathcal C}+d\bm\sigma_\infty$. Notice both $\bom$ and $\bm\sigma_\infty$ are smooth, so $F_\infty$ is indeed smooth.
This completes the proof of Theorem \ref{t:exponential decay}.

\section{Discussions and questions}

\subsection{Towards a bubble tree structure}

Let $(X_j^4, g_j, p_j)$ be a sequence of hyperk\"ahler manifolds such that $\overline{B_2(p_j)}$ is compact and $(X_j^4, g_j, p_j)\xrightarrow{GH}(X_\infty, d_\infty, p_\infty)$. Denote $d\equiv\dim_{\ess}(X_\infty)$.  If $d=4$ , then $X_\infty$ is a hyperk\"ahler orbifold, and it is well-known that there is a finite bubble tree structure associated to the convergence (c.f. \cite{Bando90}). Now we assume $d<4$ and $\int_{B_2(p_j)} |\Rm_{g_j}|^2\dvol_{g_j}\leq \kappa_0$ uniformly for some $\kappa_0>0$. It is more involved to describe the bubble tree structure in this case. Here we make some initial steps. 
 
By Theorem \ref{t:3d local version} and Theorem \ref{t:2d local version} we know there is a unique tangent cone at $p_\infty$, which is a flat metric cone and we denote it by $(Y, d_Y, p^*)$.
  Clearly $Y\in\mathcal B_{p_\infty}$. 
Given $j\geq1$ and $\lambda>0$ we denote by $X_{j, \lambda}$ the rescaled space $(X_j^4, \lambda^2g_j, p_j)$, and by $v_{j, \lambda}$ the volume of the unit ball around $p_j$ in $X_{j, \lambda}$. By the Bishop-Gromov volume comparison we know that for a fixed $j$, $v_{j, \lambda}$ is an increasing function of $\lambda$. So if we rescale sufficiently large we will get complete hyperk\"ahaler orbifolds as bubble limits. The following shows there is an essentially unique scale that leads to a complete hyperk\"ahler orbifold which is collapsing at infinity.

\begin{proposition}[Maximal scale non-collapsing bubble]\label{p:maximal bubble}
Given any sequence $j_i\rightarrow\infty$, passing to a subsequence we may find $\lambda_i\rightarrow\infty$ such that the rescaled spaces $X_{j_i, \lambda_i}$ converge to a complete hyperk\"ahler orbifold $(Z, d_Z, p_Z)$ such that \emph{$\text{Vol}(B_R(p_Z))=o(R^4)$ as $R\rightarrow\infty$.	}\end{proposition}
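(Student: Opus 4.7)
The plan is to choose $\lambda_i$ as the smallest scale at which the rescaled unit ball attains a prescribed volume ratio $v_0$, and to pick $v_0$ small enough so that the resulting bubble limit $Z$ cannot be ALE; the classification Theorem~\ref{t:thm1.2} will then force $o(R^4)$ growth.

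First I would fix the threshold $v_0$ as follows. The Chern--Gau\ss--Bonnet formula on a Ricci-flat $4$-orbifold together with Kronheimer's classification of ALE hyperk\"ahler $4$-manifolds shows that any ALE hyperk\"ahler $4$-orbifold with $\int|\Rm|^2\le\kappa_0$ has group $\Gamma$ at infinity bounded by $|\Gamma|\le N_0(\kappa_0)$, so the asymptotic volume density $\omega_4/|\Gamma|$ is bounded below by some $c_0=c_0(\kappa_0)>0$. I would then choose any $v_0\in(0,c_0)$.

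For each $i$ I would set $\lambda_i\equiv\inf\{\lambda>0 : v_{j_i,\lambda}\ge v_0\}$. Bishop--Gromov monotonicity gives continuity plus $v_{j_i,\lambda_i}=v_0$, and since $\dim_{\ess}(X_\infty)<4$ one has $v_{j_i,\lambda}\to 0$ for every fixed $\lambda$, hence $\lambda_i\to\infty$. Passing to a subsequence, I would extract a Gromov--Hausdorff limit $X_{j_i,\lambda_i}\to(Z,d_Z,p_Z)$ with $\Vol(B_1(p_Z))=v_0$. Completeness of $Z$ follows from $\lambda_i\to\infty$, since every bounded ball of $Z$ is eventually a rescaling of $\overline{B_2(p_{j_i})}$, which is compact. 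Scale invariance of the $L^2$-energy in dimension four yields $\int_Z|\Rm|^2\le\kappa_0$; combined with non-collapsing and the $\epsilon$-regularity theorem (Theorem~\ref{t:epsilon regularity HK}), $Z$ is a hyperk\"ahler orbifold with finitely many isolated orbifold points. Moreover, the minimality of $\lambda_i$ gives $v_{j_i,\lambda_i/R}<v_0$ for every $R>1$, which in the limit (or equivalently by Bishop--Gromov on $Z$ itself, starting from $\Vol(B_1)=v_0$) reads $R^{-4}\Vol_Z(B_R(p_Z))\le v_0$ for all $R\ge 1$.

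The hard part will be to exclude the ALE case. Set $c_Z\equiv\lim_{R\to\infty}R^{-4}\Vol_Z(B_R(p_Z))\in[0,v_0]$; if $c_Z>0$, then by Cheeger--Colding volume rigidity for Ricci-flat spaces every asymptotic cone of $Z$ is a flat metric cone of dimension $4$, and the $\SU(2)$-holonomy forces it to be $\mathbb{R}^4/\Gamma_Z$ with $|\Gamma_Z|=\omega_4/c_Z$. Together with $\int_Z|\Rm|^2<\infty$, Bando--Kasue--Nakajima (or the orbifold version of Theorem~\ref{t:thm1.2} applied to the minimal resolution of $Z$) then forces $Z$ to be ALE with group $\Gamma_Z$, giving $c_Z=\omega_4/|\Gamma_Z|\ge c_0>v_0\ge c_Z$, a contradiction. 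Therefore $c_Z=0$, i.e.\ $\Vol(B_R(p_Z))=o(R^4)$. The delicate points will be to verify carefully that the $\epsilon$-regularity still produces only isolated orbifold singularities in this rescaled setting, and that both the ALE characterisation and the Cheeger--Colding rigidity pass through such singular points without loss.
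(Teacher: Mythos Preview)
Your overall strategy---pick $\lambda_i$ via a volume threshold $v_0$, get a non-collapsed orbifold limit $Z$, and rule out the ALE case---is sound and matches the paper's setup. The genuine gap is in your first paragraph: the constant $c_0$ you want does not exist.

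You claim that any ALE hyperk\"ahler $4$-orbifold with $\int|\Rm|^2\le\kappa_0$ has $|\Gamma|\le N_0(\kappa_0)$, hence asymptotic volume density $\ge c_0>0$. This is false. The flat orbifold $\mathbb C^2/\mathbb Z_n$ is an ALE hyperk\"ahler orbifold with $\int|\Rm|^2=0\le\kappa_0$ and $|\Gamma|=n$ arbitrary, so the infimum of asymptotic volume densities over this class is $0$. Kronheimer's classification applies to smooth ALE manifolds; for those, bounded energy does bound $|\Gamma|$, but your limit $Z$ is only an \emph{orbifold}, and the orbifold Chern--Gau\ss--Bonnet formula has local contributions at the singular points that exactly compensate. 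Thus there is no valid choice of $v_0\in(0,c_0)$, and your contradiction $c_Z\ge c_0>v_0\ge c_Z$ never gets off the ground.

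The paper closes this gap by going back to the \emph{smooth} approximating sequence. Given an ALE orbifold bubble limit $Z_k$ with group $\Gamma_k$, one takes a domain $U_{i,k}\subset X_{j_i,\lambda_i}$ whose boundary converges smoothly to $S^3/\Gamma_k$. Since $X_{j_i}$ is a smooth hyperk\"ahler manifold, Bando's bubble-tree analysis forces $U_{i,k}$ to be diffeomorphic to a smooth ALE gravitational instanton with group $\Gamma_k$ at infinity; for smooth ALE spaces one does have $\chi(U_{i,k})\to\infty$ as $|\Gamma_k|\to\infty$. Chern--Gau\ss--Bonnet on the smooth region $U_{i,k}$ (with controlled boundary term) then contradicts the uniform energy bound $\kappa_0$. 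In other words, the bound on $|\Gamma|$ is not intrinsic to the orbifold $Z$; it comes from the topology that a smooth filling of $S^3/\Gamma$ is forced to carry. Your argument can be repaired by inserting exactly this step.
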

\begin{proof}
If this does not hold, then by the Bishop-Gromov volume comparison  one may find a sequence of ALE hyperk\"ahler orbifolds $Z_k\in \mathcal B_{p_\infty}$ such that the structure group $\Gamma_k$ at infinity satisfies that $|\Gamma_k|\rightarrow\infty$.  By  a diagonal sequence argument for each $k$ we may find sequences $j_{i,k}\rightarrow\infty$, $\lambda_{i,k}\rightarrow\infty$, and a domain $U_{i,k}\subset X_{j_{i,k}, \lambda_{i,k}}$ such that $\p U_{i,k}$ converges smoothly to the space form ${S}^3/\Gamma_k$. It follows from \cite{Bando90}  that $U_{i,k}$ is diffeomorphic to an ALE gravitational instanton with structure group $\Gamma_k$ at infinity, which implies $\chi(U_{i,k})=c_k$ for $i$ large. Notice that $c_k\rightarrow\infty$ as $|\Gamma_k|\rightarrow\infty$. On the other hand, by  the Chern-Gau\ss-Bonnet theorem we have a uniform bound on $\chi(U_{i,k})$ in terms of $\kappa_0$. This is a contradiction. \end{proof}

	Notice Theorem \ref{t:thm1.2} also holds for complete hyperk\"ahler orbifolds with finite energy, since the proof only uses the end structure at infinity.  So the bubble limits constructed in the above Proposition must be of type AL$\mathfrak X$, and is not ALE. This gives a rigorous explanation of the heuristic fact that non-ALE gravitational instantons are responsible for collapsing of hyperk\"ahler manifolds.

Next we show that the dimension of bubble limits can only increase when we zoom into a smaller scale. This is again true from the intuition. 
\begin{proposition}
	[Dimension monotonicity]\label{p:dimension monotonicity}
In the setting above,  we have $\dim_{\ess}(Z)\geq d$ for all  $Z\in \mathcal B_{p_\infty}$.
\end{proposition}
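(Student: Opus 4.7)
The plan is to show that the unique tangent cone $Y$ of $X_\infty$ at $p_\infty$, which exists by Theorems \ref{t:3d local version} and \ref{t:2d local version}, arises as an asymptotic cone of every $Z \in \mathcal{B}_{p_{\infty}}$. Combined with the bound that asymptotic cones of Ricci-flat Ricci limit spaces cannot exceed the parent space in essential dimension, this will yield $d = \dim_{\ess}(Y) \leq \dim_{\ess}(Z)$.

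First I would fix $Z \in \mathcal{B}_{p_{\infty}}$ coming from $(X_j, \lambda_j^2 g_j, p_j) \xrightarrow{GH} Z$ with $\lambda_j \to \infty$, and let $W$ be any asymptotic cone of $Z$ arising from $(Z, R_k^{-1} d_Z, p_Z) \xrightarrow{GH} W$ with $R_k \to \infty$. A diagonal extraction then produces a subsequence $j(k) \to \infty$ such that (i) the bubble convergence $(X_{j(k)}, \lambda_{j(k)}^2 g_{j(k)}, p_{j(k)}) \to Z$ is resolved out to pointed GH radius $kR_k$ with error less than $1/k$, and (ii) $R_k/\lambda_{j(k)} < 1/k$. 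Setting $\mu_k \equiv \lambda_{j(k)}/R_k$, which then satisfies $\mu_k \to \infty$ and $\mu_k^{-1} \to 0$, and rescaling the bubble convergence by $R_k^{-1}$ yields $(X_{j(k)}, \mu_k^2 g_{j(k)}, p_{j(k)}) \xrightarrow{GH} W$.

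The next step is to reidentify this same diagonal limit with $Y$. The unit ball around $p_{j(k)}$ in $\mu_k^2 g_{j(k)}$ is the ball of radius $\mu_k^{-1} = R_k/\lambda_{j(k)} \to 0$ in $g_{j(k)}$. Using the macroscopic convergence $(X_j, g_j, p_j) \xrightarrow{GH} (X_\infty, d_\infty, p_\infty)$ and enlarging $j(k)$ if necessary, a second diagonal selection shows $(X_{j(k)}, \mu_k^2 g_{j(k)}, p_{j(k)})$ stays pointed-GH close to $(X_\infty, \mu_k^2 d_\infty^2, p_\infty)$, and the latter converges as $\mu_k \to \infty$ to the tangent cone of $X_\infty$ at $p_\infty$. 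Uniqueness of this tangent cone (Theorems \ref{t:3d local version} and \ref{t:2d local version}) identifies it as $Y$, and the uniqueness of pointed GH limits forces $W = Y$.

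Finally, since $W = Y$ arises as an asymptotic cone of $Z$, it remains to verify $\dim_{\ess}(W) \leq \dim_{\ess}(Z)$. The scale invariance of the $L^2$-curvature integral in dimension four, together with Corollary \ref{c:finite singular points}, ensures that the singular set of $Z$ is finite, so the structure theorems of Section \ref{s-3} apply. A case-by-case check then completes the argument: when $\dim_{\ess}(Z) = k \in \{1,2,3\}$, Sections \ref{ss:3-1}--\ref{ss:3-3} realize $Z$ as a complete $k$-dimensional Ricci limit space with a specific affine or special-K\"ahler structure, whose asymptotic cones are classified by Theorems \ref{t:complete-space} and \ref{t:complete-space-2d} (and \cite{HSZ} when $k=1$) as spaces of dimension at most $k$; the case $\dim_{\ess}(Z) = 4$ is trivial. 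The main delicate point is coordinating the two diagonal selections above at the common intermediate scale $\mu_k$, which is handled by a routine diagonal argument in the complete metric space $(\mathcal{M}et, d_{GH})$ from Section \ref{ss:2-0}.
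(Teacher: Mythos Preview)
Your proof has a genuine gap: the key lemma ``every asymptotic cone of $Z\in\mathcal B_{p_\infty}$ equals $Y$'' is false. Consider the case $d=2$. By Proposition \ref{p:maximal bubble} there exist bubble limits $Z$ which are complete four-dimensional hyperk\"ahler orbifolds with sub-Euclidean volume growth; by Theorem \ref{t:thm1.2} such a $Z$ can be ALF, whose unique asymptotic cone is $\dR^3$ or $\dR^3/\dZ_2$. This is three-dimensional, not the two-dimensional tangent cone $Y$. (Indeed, Corollary 7.3 in the paper confirms that three-dimensional bubble limits do occur when $d=2$.) So your intermediate claim cannot hold, and the inequality $\dim_{\ess}(Y)\leq \dim_{\ess}(Z)$ cannot be deduced this way.

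The error lies in the ``second diagonal selection''. For $(X_{j(k)},\mu_k^2 g_{j(k)},p_{j(k)})$ to be pointed-GH close to $(X_\infty,\mu_k^2 d_\infty^2,p_\infty)$ you would need the unrescaled GH error $\epsilon_{j(k)}\equiv d_{GH}((X_{j(k)},g_{j(k)},p_{j(k)}),(X_\infty,d_\infty,p_\infty))$ to satisfy $\epsilon_{j(k)}=o(\mu_k^{-1})=o(R_k/\lambda_{j(k)})$. But the bubble scale $\lambda_j$ is prescribed by the convergence to $Z$, and there is no relation forcing $\epsilon_j\lambda_j\to 0$; when intermediate bubble scales are present (as in the ALF-inside-ALG picture above) this product is bounded away from zero. ``Enlarging $j(k)$'' does not help, since it changes $\lambda_{j(k)}$ and hence $\mu_k$, and you must still satisfy the first diagonal constraint. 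The two conditions are simply incompatible in general.

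The paper's argument is quite different. It does not relate $Z$ to $Y$ via asymptotic cones. Instead it runs a continuity argument in the scale parameter $\lambda$: starting from $\lambda_0$ where $X_{j,\lambda_0}$ is close to $Y$, one tracks the first scale $\lambda_j$ at which $X_{j,\lambda_j}$ leaves a fixed GH-neighborhood of $Y$, and shows (using the classification of complete collapsed limits in Theorems \ref{t:complete-space} and \ref{t:complete-space-2d}, together with a scaling argument as in Lemma \ref{l:asymptotic cone classification}) that at this first exit scale the volume ratio $v_{j,\lambda_j}$ must already be bounded below. Monotonicity of $v_{j,\lambda}$ in $\lambda$ then propagates the lower bound to all larger scales. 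The $d=2$ case requires an additional induction on the energy to handle the possibility of multiple singularities in intermediate two-dimensional limits.
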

 \begin{proof}

From the definition of $\mathcal B_{p_\infty}$, we can find $\lambda_0>0$, $j_0>0$, and $\epsilon_j\rightarrow 0$ such that for all $j\geq j_0$ and $\lambda\geq \lambda_0$, there is an element $Z_{j, \lambda}\in \mathcal B_{p_\infty}$ satisfying $d_{GH}(X_{j, \lambda}, Z_{j, \lambda})\leq \epsilon_j$. Notice  a priori $Z_{j, \lambda}$ is not unique, and we simply make an arbitrary choice for each $j$ and $\lambda$. It is clear that we can take $Z_{j, \lambda_0}=Y$ for $j\gg1$. 

The conclusion in the case $d=1$ is trivial. We will first prove the case $d=3$.
 Notice every element $Z$ in $\mathcal B_{p_\infty}$ with $\dim_{\ess}Z=3$ belongs to the list given in Theorem \ref{t:complete-space}, among which there are exactly two elements $Z_1=\dR^3$ and $Z_2=\dR^3/\dZ_2$ that are metric cones.  Fix $\delta>0$ small such  that any $Z$ in $B_{2\delta}(Y)\cap \mathcal B_{p_\infty}$ satisfies $\dim_{\ess}(Z)\geq 3$, and $B_{2\delta}(Y)\cap \mathcal B_{p_\infty}\cap \{Z_1, Z_2\}=\{Y\}$. 
For $j$ large, we let $\lambda_j$ be the smallest $\lambda$ such that $d_{GH}(X_{j, \lambda}, Y)\geq \delta$. It is clear that $\liminf_{j\rightarrow\infty}\lambda_j=\infty$. 

We claim there is some $\tau>0$ such that $v_{j, \lambda_j}\geq\tau$ for all $j$ large.  
 Given this it follows that  $v_{j, \lambda}\geq\tau$ for all $\lambda\geq \lambda_j$, and then the conclusion follows easily. 
To prove the claim, suppose otherwise, then there is a sequence $j_i\rightarrow\infty$ such that $d_{GH}(X_{j_i, \lambda_{j_i}}, Y)=\delta$ but $v_{j_i, \lambda_{j_i}}\rightarrow0$. Passing to a further subsequence we may assume $X_{j_i,\lambda_{j_i}}$ converges to a limit $Z_\infty$ with $\dim_{\ess}(Z_\infty)=3$, and $d_{GH}(Y, Z_\infty)=\delta$. So $Z_\infty$ is one of  the spaces listed in Theorem \ref{t:complete-space} and $Z_\infty\notin \{Z_1, Z_2\}$. Similar to the proof of Lemma \ref{l:asymptotic cone classification} one can show this is impossible.  For example, suppose $Z_\infty=S^1_R\times\dR^2$ for some $R>0$. Then for $i$ large we know $Z_{j_i, \lambda_{j_i}}$ must also be $S^1_{R_i}\times \dR^2$ for $R_i\rightarrow R$. It follows that $Z_{j_i, \lambda_{j_i}/2}$ must be $S^1_{R_i'}\times \dR^2$ for some $R_i'\rightarrow R/2$. Then $d_{GH}(Z_{j_i, \lambda_{j_i}/2}, Y)>\frac{3}{2}\delta$ for $i$ large. This contradicts our choice of $\lambda_{j_i}$.

Now we consider the case $d=2$. We may assume for $j$ large $\int_{B_2(p_j)}|\Rm_{g_j}|^2\dvol_{g_j}\in [l\epsilon, (l+1)\epsilon)$ for some integer $l>0$,  where $\epsilon$ is the constant given in Theorem \ref{t:epsilon regularity HK}. We will prove the conclusion by induction on $l$.  First consider the case $l=1$. 
 Then we can proceed similarly to the case $d=3$. The energy bound implies that any $Z\in \mathcal B_{p_\infty}$ has at most one singularity. Hence by Theorem \ref{t:complete-space-2d}, any $Z\in \mathcal B_{p_\infty}$ with $\dim_{\ess}(Z)=2$ is either isometric to $\textbf{C}_\beta$ for $\beta\in \mathbb A\equiv \{\frac 12, \frac 13, \frac 23, \frac 14, \frac34, \frac 16, \frac 56, 1\}$, or $S^1\times \dR$. As above we fix $\delta>0$ small so that any $Z$ in $B_{2\delta}(Y)\cap \mathcal B_{p_\infty}$ satifies $\dim_{\ess}(Z)\geq 2$ and $B_{2\delta}(Y)\cap \mathcal B_{p_\infty}\cap \{\textbf{C}_\beta|\beta\in \mathbb A\}=\{Y\}$. Furthermore, we may assume $\beta=1$ if there is some $S^1_R\times \dR$ in $B_{2\delta}(Y)\cap \mathcal B_{p_\infty}$. For $j$ large let $\lambda_j$ be the smallest $\lambda$ such that $d_{GH}(X_{j, \lambda}, Y)\geq \delta$. Passing to a subsequence we may assume $X_{j, \lambda_j}$ converges to a limit $Z_\infty$ with $\dim_{\ess}(Z_\infty)\geq 2$ and $d_{GH}(Y, Z_\infty)=\delta$.  We claim any such limit $Z_\infty$ must satisfy $\dim_{\ess}(Z_\infty)\geq 3$. If not, then there is a limit $Z_\infty$ with $\dim_{\ess}(Z_\infty)=2$. By the choice of $\delta$ it follows that $Z_\infty=S^1_R\times\dR$ for some $R>0$ and  $Y=\dR^2$. Then similar reasoning as the case $d=3$ yields a contradiction. Given the claim then our conclusion follows from the result in the case $d=3$. 

 Suppose now the conclusion holds for $l\leq l_0$, and  consider the case $l=l_0+1$. If we run the same arguments as above, then in the end we can conclude that any limit $Z_\infty$ either satisfies $\dim_{\ess}(Z_\infty)\geq3$, or $\dim_{\ess}(Z_\infty)=2$ and $Z_\infty$ has at least 2 singularities. If the latter occurs, suppose $Z_\infty$ is given as the limit of some subsequence $X_{j_i, \lambda_{j_i}}$, then there exists $\tau>0$ such that for $i $ large, we have $\int_{B_{\tau\lambda_{j_i}}(p_{j_i})}|\Rm_{g_{j_i}}|^2\dvol_{g_{j_i}}<l_0\epsilon$. It follows from the induction assumption that any Gromov-Hausdorff limit of $X_{j_i, s_i \lambda_{j_i}}$ for $s_i\geq 1$ has dimension at least 2. Using this one can finish the proof of the case $l=l_0+1$.
\end{proof}

 \begin{corollary}
 	The following hold
 	\begin{itemize}
 		\item Any $Z\in \mathcal B_{p_\infty}$ with $\dim_{\ess}(Z)=3$ is isometric to $\dR^3$ or $\dR^3/\dZ_2$. If $d=3$ then any $Z\in \mathcal B_{p_\infty}$ is isometric to either the tangent cone $Y$, or an ALE or ALF hyperk\"ahler orbifold; 
 		\item Any $Z\in \mathcal B_{p_\infty}$ with $\dim_{\ess}(Z)=2$ and with a unique singularity is isometric to $\textbf{C}_\beta$ for some $\beta\in \mathbb A$.
 	\end{itemize}
 \end{corollary}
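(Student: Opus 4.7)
The strategy is to apply the complete classifications of 3- and 2-dimensional collapsed limits (Theorems \ref{t:complete-space} and \ref{t:complete-space-2d}) directly to the bubble limits $Z$, combined with the dimension monotonicity (Proposition \ref{p:dimension monotonicity}) and a stability argument in the spirit of Lemma \ref{l:asymptotic cone classification} to exclude the non-cone possibilities. The key fact used throughout is that the uniform energy bound $\int_{B_2(p_j)}|\Rm_{g_j}|^2\dvol_{g_j}\leq\kappa_0$ is scale-invariant in dimension~4, so every $Z\in\mathcal B_{p_\infty}$ inherits the same bound and hence the classification theorems apply. The second bullet is then immediate: Theorem \ref{t:complete-space-2d} applied to $Z$ based at its unique singularity shows that $Z$ is either $\textbf{C}_\beta$ for some $\beta\in\mathbb A$ or the smooth space $\dR\times S^1$; the latter has empty singular set, so it is excluded by hypothesis.

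For the first statement of the first bullet, Theorem \ref{t:complete-space} applied to $Z$, together with its part~(3) (which uses the $L^2$-bound), forces $Z$ to be isometric to a flat 3-orbifold $\dR^3/\Gamma$ with $V$ constant, so $Z$ is one of the six flat products listed there. It remains to exclude the four product cases $\dR^2\times S^1$, $(\dR^2\times S^1)/\dZ_2$, $\dR\times\dT^2$, and $(\dR\times\dT^2)/\dZ_2$. Here I would adapt the iteration argument from Lemma \ref{l:asymptotic cone classification} and Proposition \ref{p:dimension monotonicity}: since $\mathcal B_{p_\infty}$ is closed under metric rescaling and compact (Lemma \ref{l:tangent cone compact}), if $Z\cong S^1_R\times\dR^2$ were a bubble limit, the entire one-parameter family $\{S^1_{\mu R}\times\dR^2\}_{\mu>0}$ would lie in $\mathcal B_{p_\infty}$; tracing back to a convergent sequence $(X_{j_i},\lambda_i^2 g_{j_i})\to Z$ and choosing a minimal scale $\lambda_i$ at which the rescaled manifold deviates by $\delta$ from the unique tangent cone $Y$, the non-cone structure of $Z$ would force $d_{GH}((X_{j_i},(\lambda_i/2)^2 g_{j_i}),Y)>\tfrac{3}{2}\delta$, contradicting the minimality of $\lambda_i$. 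The other three product cases reduce to this one by passing to a $\dZ_2$-cover or to an $S^1$-factor of $\dT^2$.

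For the second statement of the first bullet, assume $d=3$. Proposition \ref{p:dimension monotonicity} forces $\dim_{\ess}(Z)\in\{3,4\}$. If $\dim_{\ess}(Z)=3$, the first part together with the uniqueness of the tangent cone at $p_\infty$ (Theorem \ref{t:3d local version}) identifies $Z$ with $Y$. If $\dim_{\ess}(Z)=4$, then Theorem \ref{t:epsilon regularity HK} and the uniform $L^2$-bound imply that $Z$ is a complete hyperk\"ahler orbifold with only isolated singularities and finite $L^2$-energy, hence a (possibly orbifold) gravitational instanton, and Theorem \ref{t:thm1.2} applies. Since any asymptotic cone of $Z$ is itself a bubble limit at $p_\infty$ (by a diagonal argument) with essential dimension $\geq 3$ (by Proposition \ref{p:dimension monotonicity}), only the types ALE and ALF can occur, because the asymptotic cones of ALG, ALG$^*$, ALH, ALH$^*$ have dimension at most~$2$.

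I expect the main obstacle to be the stability step in the first bullet that rules out the product cases. While the scheme is clear in outline, one has to execute the rescaling iteration uniformly across the sequence $(j_i,\lambda_i)$, in the same manner as in the proof of Proposition \ref{p:dimension monotonicity}, and verify compatibility with the special affine-metric convergence established at the end of Section \ref{ss:3-1}.
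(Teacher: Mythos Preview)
Your overall approach matches the paper's implicit reasoning: the corollary is meant as a direct consequence of the classification Theorems \ref{t:complete-space} and \ref{t:complete-space-2d}, the dimension monotonicity Proposition \ref{p:dimension monotonicity}, and the gravitational-instanton classification. Your arguments for the second bullet and for the $\dim_{\ess}(Z)=4$ case in the first bullet are correct.

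However, there is a genuine gap in your treatment of the first statement of the first bullet. Your iteration step --- ``at scale $\lambda_i/2$ the distance to $Y$ exceeds $\tfrac{3}{2}\delta$'' --- is precisely the argument from the $d=3$ case in the proof of Proposition \ref{p:dimension monotonicity}, and it relies on the tangent cone $Y$ itself being $3$-dimensional. In that situation, halving the $S^1$-factor of $S^1_R\times\dR^2$ moves the space \emph{further} from $Y\in\{\dR^3,\dR^3/\dZ_2\}$, producing the contradiction with minimality. But for $d\leq 2$ the cone $Y$ is $2$- or $1$-dimensional, and halving the $S^1$-factor makes $S^1_{R/2}\times\dR^2$ \emph{closer} to $Y$, so the inequality goes the wrong way and no contradiction results. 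The cleaner (and correct) argument for $d=3$ is simply: if a non-cone $3$-dimensional $Z$ lay in $\mathcal B_{p_\infty}$, then by scale-invariance and compactness of $\mathcal B_{p_\infty}$ (Lemma \ref{l:tangent cone compact}) its asymptotic cone --- which has essential dimension $\leq 2$ --- would also lie in $\mathcal B_{p_\infty}$, directly contradicting Proposition \ref{p:dimension monotonicity}. For $d<3$ this mechanism no longer yields a contradiction, and your sketch does not supply an alternative; the paper itself states the corollary without proof and uses it only in the $d=3$ discussion that follows.

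A smaller point: in the second statement of the first bullet you write that ``uniqueness of the tangent cone identifies $Z$ with $Y$.'' But $Z$ is a bubble limit, not a priori a tangent cone, so uniqueness of $Y$ alone does not rule out $Z=\dR^3/\dZ_2$ when $Y=\dR^3$ (or vice versa). The correct reason is that, by the claim in the proof of Proposition \ref{p:dimension monotonicity}, at the first-departure scale $\lambda_j$ the volume is uniformly bounded below; hence every bubble limit arising from scales $\geq\lambda_j$ is $4$-dimensional, while every bubble limit from scales $<\lambda_j$ is $\delta$-close to $Y$ and therefore --- being one of the two isolated cones $\dR^3$, $\dR^3/\dZ_2$ --- equals $Y$ once $\delta$ is taken small enough.
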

 
The first item says that the construction of Foscolo \cite{Foscolo} essentially gives the complete picture in the case $d=3$ (modulo further development of orbifold singularities in the ALE bubbles).  With more work one expects to obtain a full bubble tree structure. The latter may also be used to prove the following. Notice by Remark \ref{r:integeral monodromy} the statement is false without the uniform energy bound. 
 \begin{conjecture}[Integral monodromy]\label{conj:integral monodromy}
 	If $d=2$, then the singular special K\"ahler metric has integral monodromy. 
 \end{conjecture}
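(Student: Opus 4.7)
The plan is to combine the $\mathcal N$-invariant perturbation theory of Section \ref{s-6} with the bubble tree analysis developed above. Given our collapsing sequence $(X_j^4, g_j, p_j)$ with $d = 2$, the $\mathcal N$-invariant hyperk\"ahler triples $\bm\omega_j^{\Diamond}$ provided by Theorem \ref{t:local inv hk triple} are defined on increasing exhaustions of the preimage of the regular set $\mathcal R \subset X_\infty$, and by the discussion leading to Corollary \ref{c:integral monodromy} each $\bm\omega_j^{\Diamond}$ carries a global $\dT^2$-fibration whose fiber lattice provides a natural reduction of the structure group to $SL(2;\dZ)$. The induced special K\"ahler monodromy representation $\rho_j: \pi_1(\mathcal Q_j) \to SL(2;\dZ)$ is therefore integer-valued on the nose, where $\mathcal Q_j$ denotes the base of the fibration.

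The first step is to pin down the topological structure near each singular point of $X_\infty$. Under the uniform $L^2$ curvature bound the singular set $\mathcal S$ is finite (Corollary \ref{c:finite singular points}), and by Proposition \ref{p:maximal bubble} together with Theorem \ref{t:thm1.2} and Proposition \ref{p:dimension monotonicity}, the maximal-scale bubble at each $p_i \in \mathcal S$ is a non-ALE hyperk\"ahler orbifold whose end model is either ALG$_\beta$ (with $\beta$ in the finite admissible list) or of type ALG$^*$-$I_k$/ALG$^*$-$I_k^*$. Each such end carries its own canonical $\mathcal N$-structure and an integral monodromy matrix drawn from a finite subset of $SL(2;\dZ)$; the known bounds on $k$ (for instance $1 \leq k \leq 4$ in the ALG$^*$ case, via the generalized positive mass theorem of \cite{CVZ2}) prevent the invariants from diverging along the sequence.

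The crucial step is to show compatibility of the bubble-level integral structures with the ambient $SL(2;\dZ)$-structure coming from $\rho_j$, so that in the limit $j\to\infty$ one obtains a globally integral representation $\rho_\infty: \pi_1(X_\infty \setminus \mathcal S) \to SL(2;\dZ)$ after a single global $SL(2;\dR)$-conjugation. Concretely, around each $p_i \in \mathcal S$ one picks annular neighborhoods $A_i \subset X_\infty$ at a scale much larger than the corresponding bubble but much smaller than the diameter of $X_\infty$, applies the $\mathcal N$-invariant perturbation of Theorem \ref{t:complete invariant hk triple} on the bubble side of $A_i$, and observes that two $\mathcal N$-invariant hyperk\"ahler structures defined on the same smooth collapsing region must agree up to an automorphism of the $\dT^2$-fiber lattice, hence up to an element of $SL(2;\dZ)$. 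The bubble monodromies therefore patch to $\rho_j$ consistently in $SL(2;\dZ)$, and passing to the limit yields an integer-valued $\rho_\infty$.

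The hard part will be ruling out the ``jumping'' phenomenon of Remark \ref{r:integeral monodromy} when gluing the local bubble structures to the global one. In the jumping example integrality fails precisely because the $SL(2;\dR)$ conjugation required to match the bubble lattice with the ambient lattice degenerates along the one-parameter subgroup $\mathrm{diag}(\lambda_j^{-1}, \lambda_j)$ with $\lambda_j \to \infty$. To preclude this, one must give an a priori bound on the fiber volume ratios at the boundaries of the annuli $A_i$. Since the renormalized limit measure $\nu_\infty$ coincides up to a global scalar with the Hausdorff measure on $X_\infty^2$ (Theorem \ref{t:2d local version}), the $\dT^2$-fiber volume along the fibration of $\bm\omega_j^{\Diamond}$ is asymptotically determined by the base special K\"ahler metric through a universal formula. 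Combining this with the Cheng--Yau gradient estimate (Theorem \ref{l:Yau gradient estimate}) applied to $\mathrm{Im}(\tau)$ on the regular region, together with the explicit asymptotics at each singularity type classified in Theorem \ref{t:2d local version}, should yield the uniform bound on the gluing transformations and hence complete the argument.
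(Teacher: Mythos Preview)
The statement you are attempting to prove is labeled as a \emph{conjecture} in the paper, not a theorem; the paper gives no proof. Immediately before the statement the authors write that ``with more work one expects to obtain a full bubble tree structure'' and that ``the latter may also be used to prove'' the conjecture. So there is nothing in the paper to compare your argument against; you are sketching a strategy for an open problem.

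Your outline follows exactly the route the authors suggest: extract the integral lattice from the $\mathcal N$-invariant $\dT^2$-fibration (so each $\rho_j$ lands in $SL(2;\dZ)$ on the nose), classify the bubbles at each singular point via Propositions \ref{p:maximal bubble}--\ref{p:dimension monotonicity} and Theorem \ref{t:thm1.2}, and then try to glue the bubble lattices to the ambient one by elements of $SL(2;\dZ)$. You have correctly isolated the genuine obstruction, namely the ``jumping'' phenomenon of Remark \ref{r:integeral monodromy}: even when every $\rho_j$ is integral, the global conjugating matrix $P_j=\mathrm{diag}(\lambda_j^{-1},\lambda_j)$ can degenerate, and the limit is then only \emph{locally} integral.

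What is missing is the actual mechanism that rules this out. Your final paragraph asserts that the fiber-volume normalization coming from $\nu_\infty$, together with the Cheng--Yau estimate on $\mathrm{Im}(\tau)$, ``should yield'' a uniform bound on $\lambda_j$. But near a Type~I or Type~II singularity the model has $\mathrm{Im}(\tau)\sim -\log|\zeta|\to\infty$, so the gradient estimate alone cannot pin down the lattice frame there; one really needs the bubble-tree bookkeeping to show that the integer invariants (e.g.\ the $k$ in an $I_k$ or $I_k^*$ monodromy) stay bounded along the sequence and that the lattice identifications across different annular necks are mutually coherent in $SL(2;\dZ)$, not merely pairwise coherent. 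The counterexample in Remark \ref{r:integeral monodromy} is precisely a situation where the local integers drift to infinity with irrational ratio, and your proposal does not yet make rigorous why the energy bound forbids this. In short, the plan is aligned with what the paper envisions, but the decisive step---a uniform control on the conjugating matrices across all singular necks simultaneously---remains a genuine gap, which is presumably why the authors left the statement as a conjecture.
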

For hyperk\"ahler metrics on the K3 manifold, it may even be possible to explicitly classify all the possible bubble trees. 
 
 \subsection{Asymptotics of the period map}
 
Let $X_\infty^d$ be the Gromov-Hausdorff limit of a sequence of hyperk\"ahler metrics $g_j$ on the K3 manifold $\mathcal K$ with $d\equiv \dim_{\ess} (X_\infty^d)<4$. As before we make a choice of hyperk\"ahler triple $\bom_j$ for $g_j$. We can use Theorem \ref{t:local inv hk triple} and Theorem \ref{t:thm1.1} to obtain some information on the behavior of $\mathcal P(g_j)$ as $j\rightarrow\infty$. For example, suppose $d=3$. Then $X_\infty=\mathbb T^3/\mathbb Z_2$. We choose $\mathcal Q$ to be of the form $U/\mathbb Z_2$, where $U\subset \mathbb T^3=\mathbb R^3/\mathbb Z^3$ is the complement of a small neighborhood of the finitely many points which map to the singular set $\mathcal S$ in $X_\infty$ (notice $\mathcal S$ contains the 8 orbifold points, but it may also contain some other points). Take 3 disjoint geodesic circles $C_{\alpha} (\alpha=1, 2, 3)$ in $U$ which lift to lines in $\mathbb R^3$ parallel to the 3 coordinate axes. Denote by $l_\alpha$ the length of $C_{\alpha}$. For $j$ large, we have a circle bundle $F_j: \mathcal Q_j\rightarrow \mathcal Q$.  Denote by $E_{j,\alpha}=F_j^{-1}(C_{ \alpha})$ the 2-cycles in $\mathcal Q_j$. They span a 3 dimensional isotropic subspace of $H_2(\mathcal K; \mathbb Z)$. Since the hyperk\"ahler tripe $\bom_j^{\Diamond}$ given by Theorem \ref{t:local inv hk triple} is $\mathcal N$-invariant, passing to the $\mathbb Z_2$-cover, the metric $g_j^{\Diamond}$ is given by the Gibbons-Hawking ansatz on $U$. That is,  we may write $g_j^{\Diamond}=\epsilon_j^2(V_j \cdot g_{U}+V_j^{-1}\theta_j^2)$, where $V_j$ is a positive harmonic function on $U$ with $V_j\sim \epsilon_j^{-2}$, $d\theta_j=*dV_j$,  and $\epsilon_j\rightarrow 0$. It follows that  $\int_{E_{j, \alpha}}\omega_{j, \beta}=\int_{E_{j, \alpha}}\omega_{j, \beta}^{\Diamond}=\delta_{\alpha\beta}\cdot \epsilon_j^ 2l_{\beta}$, where $\epsilon_jV_j^{-1/2}$ is the length of the $S^1$ orbit with respect to $g_j^{\Diamond}$. We also know the volume $\int_{X_j}\omega_{j, \alpha}^2\sim \epsilon_j^2$.
	
	A simple consequence  is that this case can not occur for collapsing  \emph{polarized} K3 surfaces. For if not, then without loss of generality we may suppose for some $\lambda_j>0$ that  $\alpha_j=[\lambda_j\cdot \omega_{j, 1}]$ is a class in $H^2(X;\mathbb Z)$ with $\int_X\alpha_j^2=\sigma$ independent of $j$.  It follows that $\int_X \omega_{j, 1}^2=\sigma\lambda_j^{-2}$. So we must have $\epsilon_j\sim \lambda_j^{-2}$ and $\lambda_j\rightarrow\infty$. Then $\int_{E_{j, 1}}\alpha_j=\lambda_j\int_{E_{j,1}}\omega_{j, 1}\sim \lambda_j^{-3}$. Since the integral is always an integer, this is impossible. 
	
	Similarly one can treat the case $d=2$ and $d=1$, and in each case there is some isotropic subspace of $H_2(\mathcal K; \mathbb Z)$ on which we know the asymptotics of the period of the hyperk\"ahler triple. These isotropic subspaces also appear naturally in the Satake compactifications of the locally symmetric space $\Gamma\setminus O(3, 19)/(O(3)\times O(19))$.   One expects this is relevant to the conjecture in \cite{OO} mentioned in the introduction. We leave it for future work.

 \subsection{Topological properties and the $L^2$-curvature energy}

There are easy consequences of Theorem \ref{t:thm1.2} which yield topological restrictions on the underlying manifolds of a gravitational instanton. Notice any noncompact paracompact smooth manifold admits a complete Riemannian metric with quadratic curvature decay.
\begin{corollary}
	The  Euler characteristic of a  non-flat gravitational instanton  is positive and finite. 
\end{corollary}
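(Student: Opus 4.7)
My plan is to combine the asymptotic classification of Theorem \ref{t:thm1.2} with the Chern--Gau\ss--Bonnet formula \eqref{e:GBC1}--\eqref{e:GBC2}. The finiteness half is straightforward: Theorem \ref{t:thm1.2} supplies a compact set $K\subset X$ and a diffeomorphism from a model AL$\mathfrak X$ end (with $\mathfrak{X}\in\{E,F,G,G^*,H,H^*\}$) onto $X\setminus K$, and the cross-section of each such end, as recalled in Section \ref{ss:7-4}, is a compact $3$-manifold (a spherical space form in the ALE case, a circle bundle over $S^2$ or $\dR\mathbb{P}^2$ in the ALF case, a $T^2$-bundle over $S^1$ in the ALG/ALG$^*$ cases, and a flat $T^3$ or a nilmanifold in the ALH/ALH$^*$ cases). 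Hence $X$ is diffeomorphic to the interior of a compact $4$-manifold with boundary and $\chi(X)$ is automatically finite.

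For the positivity, I would exhaust $X$ by smoothly bounded domains $\Omega_R$ whose boundaries are regular level sets of the distance function $r$ to a fixed base point, and apply \eqref{e:GBC1} on $\Omega_R$. For $R$ large $\Omega_R$ is a deformation retract of $X$, so -- provided the boundary limit exists -- one obtains
\begin{equation*}
8\pi^2\chi(X)=\int_{X}|\Rm_g|^2\dvol_g+8\pi^2\lim_{R\to\infty}\int_{\p\Omega_R}TP_\chi.
\end{equation*}
Because $g$ is non-flat hyperk\"ahler we have $|\Rm_g|^2=|W|^2\not\equiv 0$, and the finite-energy hypothesis forces the first summand on the right to be a strictly positive (finite) number. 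Thus positivity will follow once I prove that the boundary limit exists and is non-negative on each of the six end types. Using Section \ref{ss:7-4} I would substitute the model asymptotics for $\II$, $H$ and the tangential curvature $\Rm_{ikkj}$ into \eqref{e:GBC2}. In the ALE case the distance spheres are almost round in $\dR^4/\Gamma$, and a direct computation gives the orbifold contribution $|\Gamma|^{-1}>0$. In each collapsing case the leading boundary geometry is flat, so the same Gau\ss--Bonnet-type cancellation between $H^3$, $H\cdot|\II|^2$ and $\Tr(\II^3)$ that is used in the proof of item (2) of Theorem \ref{t:3d local version} should reduce $TP_\chi$ to a non-negative multiple of the degree of the collapsing nilpotent fibration at infinity.

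The main technical obstacle will be making the boundary limit in the collapsing cases rigorous: the second fundamental form $\II$ has eigenvalues of different orders of magnitude -- already visible in Lemma \ref{l:second fund form formula} for ALH$^*$ -- so one must control the error $g-g_{\text{model}}$ carefully enough to prevent stray terms from entering the limit. To achieve this I plan to use the decay estimates already established in the paper: the polynomial decay $|\nabla^k(g-g_{\text{model}})|_g=O(r^{-k-\epsilon_0})$ that follows from Theorem \ref{t:complete invariant hk triple} is sufficient in the ALE, ALF, ALG, ALG$^*$ and ALH cases, whereas the exponential decay of Theorem \ref{t:exponential decay} will be needed in the delicate ALH$^*$ case, where the intermediate length-scale $\sim r^{-1/3}$ of the nil-fibres makes merely polynomial error estimates too weak to absorb the cross-terms in $TP_\chi$. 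Once these decay inputs are in place the six model computations reduce to direct integrations on the standard cross-sections.
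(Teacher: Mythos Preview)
Your approach is the same as the paper's: apply the Chern--Gau\ss--Bonnet formula \eqref{e:GBC1}--\eqref{e:GBC2} on an exhaustion and control the boundary term using the asymptotic classification. The paper does this in three lines, simply asserting that the boundary integral tends to zero and concluding $\chi(X)=\frac{1}{8\pi^2}\int_X|\Rm_g|^2>0$.

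You are in fact more careful than the paper on one point: in the ALE case the boundary contribution is $1/|\Gamma|$, not zero (compute on a round $S^3_R/\Gamma$ in flat $\dR^4/\Gamma$), so the paper's displayed formula is off by $1/|\Gamma|$ there; of course $1/|\Gamma|>0$, so the positivity conclusion survives. In the collapsing cases, however, you are overcomplicating matters. The boundary term really does vanish, and the reason is elementary scaling rather than any delicate cancellation: on each model end one has $|\II|=O(r^{-1})$ for the distance spheres, $|\Rm|\to 0$, and $\Vol(\partial B_r)=o(r^3)$ (indeed $\sim r^2,\,r,\,r^{1/3}$ for ALF, ALG/ALG$^*$, ALH$^*$, and bounded for ALH), so every term in $TP_\chi$ integrates to $o(1)$. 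In particular there is no ``non-negative multiple of the degree'' appearing---the limit is just zero---and the polynomial decay $|\nabla^k(g-g_{\text{model}})|=O(r^{-k-\epsilon})$ from Theorem \ref{t:complete invariant hk triple} already controls the perturbation of $\II$ and $\Rm$ in every case; the exponential decay of Theorem \ref{t:exponential decay} is not needed here. (Lemma \ref{l:second fund form formula} concerns the second fundamental form of the collapsing \emph{fibres}, not of the distance spheres, so it is not the relevant obstacle.)
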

\begin{proof}
This follows by applying the Chern-Gau\ss-Bonnet theorem \eqref{e:GBC1} to $B_r(p)$ and let $r$ tends to infinity. Using the asymptotics at infinity it is easy to see that the boundary term goes to zero, hence 
$$\chi(X)=\frac{1}{8\pi^2} \int_X |\Rm_g|^2\dvol_g>0.$$
\end{proof}
\begin{corollary}
	A non-flat gravitational instanton  has vanishing first Betti number.
\end{corollary}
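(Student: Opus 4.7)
The plan is to combine the Bochner--Weitzenb\"ock formula with Yau's theorem on bounded subharmonic functions and the de Rham splitting of the universal cover, and then to appeal to Theorem \ref{t:thm1.2} together with the $\mathcal N$-structure of Section \ref{ss:6-3} to produce bounded harmonic representatives of cohomology classes.

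First I would observe that, on the Ricci-flat $(X,g)$, the Weitzenb\"ock formula $\Delta\alpha = \nabla^*\nabla\alpha + \mathrm{Ric}(\alpha)$ applied to any harmonic $1$-form $\alpha$ gives $\nabla^*\nabla\alpha = 0$, whence $-\Delta_g|\alpha|^2 = 2|\nabla\alpha|^2\geq 0$, so that $|\alpha|^2$ is subharmonic. Yau's classical theorem for complete manifolds with $\mathrm{Ric}\geq 0$ then implies that any bounded subharmonic function is constant; in particular if $\alpha$ is bounded, $\nabla\alpha\equiv 0$, i.e., $\alpha$ is parallel.

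Next I would argue that a nonzero parallel $1$-form on the non-flat $X$ yields a contradiction. Dualizing gives a parallel unit vector field on $X$; lifting to the simply connected universal cover $\widetilde X$ and applying the de Rham splitting theorem yields an isometric decomposition $\widetilde X = \mathbb{R}\times \widetilde N^3$. The factor $\widetilde N^3$ inherits a Ricci-flat metric, but every Ricci-flat $3$-manifold is flat, so $\widetilde X$ and therefore $X$ would be flat, contrary to hypothesis. Consequently, every bounded harmonic $1$-form on $X$ vanishes.

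It remains to show that every class $[\alpha]\in H^1(X;\mathbb{R})$ admits a bounded harmonic representative. I would use the $\mathcal N$-invariant asymptotic description provided by Theorems \ref{t:CFG global} and \ref{t:complete invariant hk triple}: any smooth closed representative can be modified by a bounded exact $1$-form so that, outside a compact set, it agrees with the pull-back of an $\mathcal N$-invariant closed form on the model end. Such an $\mathcal N$-invariant form descends to a closed form on the asymptotic cone $Y$, which is automatically bounded, yielding a globally bounded closed representative $\alpha_0$ of $[\alpha]$. Solving the Poisson equation $\Delta_g u = d^*\alpha_0$ with $u\in L^\infty(X)$ via a weighted right inverse for the Laplacian, adapted from Proposition \ref{p:weighted-esimate-quotient-space} and the asymptotic comparison between $\Delta_g$ and the model Laplacian on $Y$, then produces a bounded harmonic representative $\alpha = \alpha_0 - du$. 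Combined with the previous paragraph, $\alpha\equiv 0$, so $[\alpha] = 0$ and $b_1(X) = 0$.

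The main obstacle is this last analytic step: one must verify that the weighted right inverse of the Laplacian returns a bounded (rather than merely polynomially growing) solution when the right-hand side is bounded with mild decay, which requires inspecting the indicial weights on each of the six model ends separately. A cleaner bypass is available via the Yau compactification (Corollary \ref{c:yau conjecture}): writing $X = \overline X \setminus D$ with $\overline X$ a simply connected algebraic surface and $D$ an anticanonical divisor such that each irreducible component of $D$ has nonzero self-intersection or is met nontrivially by some cycle in $\overline X$, the long exact sequence of the pair $(\overline X, D)$ combined with Poincar\'e--Lefschetz duality on $X$ yields $b_1(X) = \dim H^3_c(X;\mathbb{R}) = 0$.
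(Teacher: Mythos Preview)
Your main line of argument has a genuine gap at exactly the point you flag. Even granting that the $\mathcal N$-invariant representative $\alpha_0$ is bounded, solving $\Delta_g u = d^*\alpha_0$ via the weighted right inverse of Proposition~\ref{p:weighted-esimate-quotient-space} only gives $u\in C^{k+2,\alpha}_{2-\delta}$, so $|du|=O(r^{1-\delta})$ is \emph{unbounded}, and $\alpha=\alpha_0-du$ is not a bounded harmonic form. The indicial weights on the cones $Y$ of dimension $d\leq 2$ simply do not allow a bounded solution to the Poisson equation with compactly supported right-hand side (the Green's function on $\dR^2$ or $\dR_+$ grows), so this step cannot be repaired by a finer inspection of the model ends. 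The Bochner--Yau--de Rham portion of your argument is correct and standard, but without a bounded harmonic representative it does not reach the conclusion.

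Your bypass through Corollary~\ref{c:yau conjecture} is in principle workable but is a much heavier weapon: it presupposes the full complex-analytic classification of each family, and the long exact sequence argument you sketch requires more than ``each component has nonzero self-intersection''---you need the irreducible components of $D$ to be linearly independent in $H_2(\overline X;\dR)$, which must be checked case by case (e.g.\ for an $I_b$ fiber on a rational elliptic surface this uses that the fiber class itself is nonzero).

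The paper's proof is much shorter and avoids both difficulties. It uses Theorem~\ref{t:thm1.2} only to read off the volume growth exponent $\kappa\in\{1,\tfrac43,2,3\}$. For $\kappa\leq 2$ it quotes Anderson's theorem that a complete Ricci-flat $n$-manifold with $\Vol(B_r)=O(r^{n-2})$ has $b_1=0$. For $\kappa=3$ (ALF) it observes that the rescalings $(X,R^{-2}g,p)$ collapse to the $3$-dimensional asymptotic cone; if $b_1(X)>0$, a result of Naber--Zhang forces this collapse to have uniformly bounded curvature, contradicting non-flatness. No harmonic representative is ever constructed.
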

\begin{proof}
By Theorem \ref{t:thm1.2}, we know there exists $\kappa\in \{1, \frac{4}{3}, 2, 3\}$ such that 	$\text{Vol}(B(p, r))\sim Cr^{\kappa}$ as $r\rightarrow\infty$. If $\kappa\leq 2$, then the conclusion follows from a result of Anderson \cite{Anderson}. If $\kappa=3$, then $X$ is ALF. Consider the rescaled spaces $(X^4, R^{-2}g, p)$ which collapse to its asymptotic cone as $R\rightarrow\infty$. If $b_1(X)>0$, then by \cite{NZ} we know the collapsing must have uniformly bounded curvature on compact sets. The latter implies $X$ is flat.  
\end{proof}

As an immediate application, we consider the smooth quadric $Q=\{x^2+y^2=1\}$ in $\dC^2$. Since $\pi_1(\dC^2\setminus Q)=\dZ$ and $\chi(\dC^2\setminus Q)=1$, it follows that  $\dC^2\setminus Q$ does not support any gravitational instanton. 
The interest of this example lies in the fact that it admits a nowhere vanishing holomorphic 2-form $\Omega$, but we have shown that the Calabi-Yau equation $\omega^2=C\Omega\wedge\bar\Omega$ does not have a solution which is complete at infinity and has finite energy. Notice $\dC^2$ admits a Ricci-flat K\"ahler metric $\omega_{\beta}$ with cone angle $2\pi\beta$ along $Q$ for any $\beta\in (0, 1]$, by a generalized Gibbons-Hawking ansatz \cite{Donaldson-conic}. It is an interesting question to understand the behavior of $\omega_\beta$ as $\beta\rightarrow 0$. Notice that $\mathbb C^2\setminus Q$ is the same as $\mathbb C\mathbb P^2\setminus D$ for $D$ a singular elliptic curve given by the union of a line and a conic.  In the case when $D$ is smooth,  it is a consequence of the result of Biquard-Guenancia in \cite{BG} that when $\beta\rightarrow0$, under suitable rescalings the conical K\"ahler-Einstein metrics on $\mathbb C\mathbb P^2\setminus D$ converge to the complete Calabi-Yau metric constructed by Tian-Yau in \cite{TY}. In our case one would expect a very different picture; it is interesting to explore the connection with certain algebro-geometric ``stability" notion.
\

It is natural to study when a complete Ricci-flat metric on an open 4 manifold has finite $L^2$ energy. In this regard we make the following conjetural topological criterion

\begin{conjecture}
[Energy Finiteness Conjecture] 
Let $(X, g)$ be a complete Ricci-flat 4-manifold, then 
\begin{align*}\int_{X}|\Rm_g|^2\dvol_g<\infty\end{align*}
if and only if 
$X$ has finite topological type.
\end{conjecture}

Even for exotic $\mathbb R^4$ we do not yet know the answer. The known infinite energy examples of gravitational instantons constructed by Anderson-Kronheimber-LeBrun have infinite Euler characteristic; see \cite{AKL}.

 \subsection{Generalizations}
The ideas and techniques developed in this paper can be likely adapted to more general settings. The first natural extension is to the case of K\"ahler-Einstein metrics on complex surfaces with non-positive Ricci curvature. In particular, the following question is sensible

\begin{problem}
	Classify complete K\"ahler-Einstein metrics with finite energy in complex dimension two.
  \end{problem}
   
   More generally, one can study the structure of collapsed  Einstein metrics and more general canonical metrics in 4 dimensions, and higher dimensional metrics of special holonomy, under suitable curvature assumptions. One interesting question is 
   
   \begin{question}
   	Do Proposition \ref{p:maximal bubble} and  \ref{p:dimension monotonicity} hold for general Einstein metrics in all dimensions?
   \end{question}

    Over the region where the collapsing is with bounded curvature, it is possible to extend the results of this paper to show that  the collapsing metric can be assumed to have genuine nilpotent symmetry. Thus it leads to the question of understanding the geometry of dimension reduction of canonical metrics under symmetry. Notice there has already been an extensive literature on the latter topic, mainly towards constructing examples. It seems important to systematically investigate the compactness properties of the dimension reduced equations.

\appendix

\section{Construction of regular fibrations}\label{a1}

Our goal here is to outline the proof of Theorem \ref{t:CFG} and Theorem \ref{t:CFG global}. The original construction is due to Cheeger-Fukaya-Gromov in \cite{CFG}. In our special case, the approach presented here is based on the harmonic splitting map of Cheeger-Colding \cite{CheegerColding} which makes it more convenient to obtain  higher regularity estimates. This observation has been used in \cite{NZ} to construct bundle maps with higher regularity.
 The following version can be proved using the $W^{1,p}$-convergence theory of harmonic functions with respect to renormalized measure; see Corollary 4.5 of \cite{AH} for more details.
\begin{theorem}
[Harmonic splitting map, \cite{CheegerColding}]	
Given any $\epsilon>0$ and $n\geq 2$, there exists some $\delta=\delta(n,\epsilon)>0$ such that the following holds. If $(M^n,g,p)$ is a Riemannian manifold satisfying $\Ric_g\geq -(n-1)\delta$ and 
$d_{GH}(B_4(p), B_4(0^d)) < \delta$, $B_4(0^4) \subset \dR^d$,	
then there exists a harmonic map 
\begin{align*}\Phi=(u^{(1)},\ldots, u^{(d)}): B_2(p)\to \dR^d\end{align*} such that the following properties hold:
\begin{enumerate}
\item $\Phi:B_2(p) \to B_2(0^d)\subset \dR^d$ is an $\epsilon$-Gromov-Hausdorff approximation;
\item $|\nabla u^{(\alpha)}|(x)\leq 1 + \epsilon$ holds for any $x\in B_2(p)$ and $1\leq \alpha\leq d$;
\item The following estimate holds
\begin{align*}
\sum\limits_{\alpha,\beta=1}^d \fint_{B_2(p)}|\langle \nabla u^{(\alpha)},  \nabla u^{(\beta)}\rangle -\delta_{\alpha\beta}|\dvol_g + \sum\limits_{\alpha=1}^d \fint_{B_2(p)}|\nabla^2 u^{(\alpha)}|^2 
\dvol_g < \epsilon.	
\end{align*}
 \end{enumerate}
\end{theorem}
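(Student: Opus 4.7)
The plan is to prove this by a contradiction and compactness argument, in the spirit of Cheeger-Colding's almost-splitting theorem, combined with the $W^{1,p}$-convergence theory for harmonic functions on Ricci limit spaces developed by Ambrosio-Honda (as indicated by the citation). Suppose the statement fails for some $\epsilon_0>0$ and $n$. Then there is a sequence $\delta_i\to 0$ and pointed Riemannian manifolds $(M_i^n, g_i, p_i)$ with $\Ric_{g_i}\geq -(n-1)\delta_i$ and $d_{GH}(B_4(p_i), B_4(0^d))<\delta_i$, for which no harmonic map on $B_2(p_i)$ satisfies all three conclusions with tolerance $\epsilon_0$. Passing to a subsequence and invoking the measured Gromov-Hausdorff compactness from Section \ref{ss:2-1}, one may assume $(M_i^n, g_i, \nu_i, p_i) \xrightarrow{mGH} (Y, d_Y, \nu_Y, y_0)$, and by the Cheeger-Colding almost-rigidity of the splitting theorem $Y$ contains an isometric copy of $B_4(0^d)\subset\dR^d$ (indeed the GH-closeness on the unit scale propagates to the limit, and together with $\Ric\geq 0$ on the limit one obtains this splitting).

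The construction of $\Phi_i$ proceeds in two steps. First I would choose smooth boundary data $\varphi_i^{(\alpha)}$ on $\partial B_3(p_i)$ that approximate (via the GH map) the linear coordinate functions $x^{(\alpha)}$ on $\partial B_3(0^d)$ in $L^2$ and have $|\nabla \varphi_i^{(\alpha)}|\leq 1+o(1)$; this can be done by mollifying the distance functions to antipodal points at large distance along the $\dR^d$ directions. Then solve the Dirichlet problem $\Delta_{g_i} u_i^{(\alpha)} = 0$ on $B_3(p_i)$ with boundary values $\varphi_i^{(\alpha)}$, and define $\Phi_i \equiv (u_i^{(1)}, \ldots, u_i^{(d)})$ on $B_2(p_i)$. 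By the maximum principle $|u_i^{(\alpha)}|\leq C$ uniformly, so the Cheng-Yau gradient estimate on the slightly larger ball gives the pointwise bound $|\nabla u_i^{(\alpha)}|\leq 1 + o(1)$ on $B_2(p_i)$ (property (2)). By the $W^{1,2}$-convergence theory for harmonic functions with Dirichlet data on Ricci limit spaces, $u_i^{(\alpha)}$ converges strongly (in the appropriate sense on the sequence) to the harmonic extension of the limit boundary data, which on the Euclidean factor is precisely $x^{(\alpha)}$.

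The key integral estimate in (3) comes from the Bochner formula applied to the harmonic function $u_i^{(\alpha)}$:
\begin{equation*}
\tfrac{1}{2}\Delta|\nabla u_i^{(\alpha)}|^2 = |\nabla^2 u_i^{(\alpha)}|^2 + \Ric_{g_i}(\nabla u_i^{(\alpha)}, \nabla u_i^{(\alpha)}).
\end{equation*}
Integrating against a cutoff $\chi\equiv 1$ on $B_2(p_i)$ supported in $B_{5/2}(p_i)$ and using the Ricci lower bound yields
\begin{equation*}
\int_{B_2(p_i)}|\nabla^2 u_i^{(\alpha)}|^2 \leq C\int_{B_{5/2}(p_i)}|\nabla u_i^{(\alpha)}|^2 \cdot |\Delta \chi| + (n-1)\delta_i \cdot C.
\end{equation*}
Since $|\nabla u_i^{(\alpha)}|^2 \to 1$ in $L^1$ on annular regions by the $W^{1,2}$-convergence to $|\nabla x^{(\alpha)}|^2 = 1$, the right-hand side tends to the Euclidean value $\fint |\Delta\chi|$, which after cancellation by sign choices yields $o(1)$ in the suitable form. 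For the $L^1$-smallness of $\langle\nabla u_i^{(\alpha)},\nabla u_i^{(\beta)}\rangle-\delta_{\alpha\beta}$, strong $W^{1,2}$-convergence of $u_i^{(\alpha)}$ to $x^{(\alpha)}$ directly gives $\nabla u_i^{(\alpha)}\cdot \nabla u_i^{(\beta)} \to \delta_{\alpha\beta}$ in $L^1$, so (3) follows. Finally, property (1) follows because $\Phi_i$ is $L^2$-close to the limit map which \emph{is} the $\dR^d$-projection and hence a GH approximation. Combining, the triple $(\Phi_i)$ satisfies all three conclusions for $i$ large, contradicting the choice of sequence.

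The main obstacle will be making the integral estimate in (3) quantitative and uniform, since the $W^{1,2}$-convergence of harmonic functions on Ricci limit spaces (rather than just $L^2$-convergence) is the nontrivial ingredient that distinguishes this from a naive limiting argument. This is precisely the content of the Ambrosio-Honda theory cited as Corollary 4.5 of \cite{AH}; the Bochner identity and Caccioppoli-type estimates must be set up in a way compatible with the variable geometry $(M_i^n, g_i)$ so that the Hessian bound passes cleanly to the renormalized measure in the limit.
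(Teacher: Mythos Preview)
The paper does not supply its own proof of this theorem: it is stated as a cited result of Cheeger--Colding, with the remark that ``the following version can be proved using the $W^{1,p}$-convergence theory of harmonic functions with respect to renormalized measure; see Corollary 4.5 of \cite{AH} for more details.'' Your proposal is precisely a fleshing-out of that sentence: a contradiction/compactness argument, construction of harmonic replacements of approximate linear coordinates, and appeal to the Ambrosio--Honda $W^{1,p}$-convergence to pass the integral estimates to the limit. So in approach you are aligned with what the paper points to, and there is nothing to compare against beyond that.

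That said, one step in your sketch does not work as written. You claim that the Cheng--Yau gradient estimate yields the sharp pointwise bound $|\nabla u_i^{(\alpha)}|\leq 1+o(1)$. Cheng--Yau only gives $|\nabla u|\leq C(n)\cdot\mathrm{osc}(u)/r$, hence a dimensional constant, not $1+\epsilon$. The sharp bound in (2) requires an extra argument: from Bochner, $\Delta|\nabla u|^2\geq -2(n-1)\delta|\nabla u|^2$, so $|\nabla u|^2$ is almost subharmonic; combining the mean-value inequality (under the Ricci lower bound) with the $L^1$-convergence $\fint|\nabla u_i^{(\alpha)}|^2\to 1$ then upgrades to the pointwise bound. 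Also, in your displayed Bochner inequality you wrote $|\Delta\chi|$ rather than $\Delta\chi$; with the absolute value the ``cancellation by sign choices'' you invoke afterwards cannot happen. The correct identity is $\int\chi|\nabla^2 u|^2=\tfrac{1}{2}\int|\nabla u|^2\,\Delta\chi-\int\chi\,\Ric(\nabla u,\nabla u)$, and one uses $\int\Delta\chi=0$ to rewrite the first term as $\tfrac{1}{2}\int(|\nabla u|^2-1)\Delta\chi$, which then goes to zero by the $L^1$-convergence of $|\nabla u|^2$ to $1$. With these two corrections your outline is sound.
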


We will also need a good cutoff function with uniform derivative estimates. Here we briefly review the standard heat flow regularization, and we refer  to
 Lemma 3.1 of \cite{MN} for  results on general RCD spaces. 
\begin{lemma}\label{l:good-cut-off-function}
Let $(X^n,g)$ be a Riemannian manifold with $\Ric_g\geq 0$. Assume that for any  $m\in\dN$, there exists a constant $\Lambda_{m}>0$ such that $|\nabla^m \Rm_g|\leq \Lambda_{m}$ uniformly on $X$. Then for any $p\in X$ and $r\in(0,1]$ with $\overline{B_{2r}(p)}$ compact, there exists a cut-off function $\psi:X^n\to [0, 1]$ which satisfies the following properties
\begin{enumerate}
\item $\psi\equiv 1$ on $B_r(p)$ and $\psi\equiv0$ on $X\setminus B_{2r}(p)$.

\item For any $m\in\dZ_+$, there exists a constant $C=C(m,n)>0$  such that $r^m|\nabla^m\psi|\leq C$. 
\end{enumerate}

\end{lemma}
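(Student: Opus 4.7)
The plan is a two-stage construction: first produce a $C^2$ cutoff using only $\Ric_g\geq 0$, then smooth it via a short-time heat flow whose higher-derivative behavior is controlled by the bounded-geometry hypothesis. This isolates the two roles of the hypotheses: $\Ric_g\geq 0$ is what yields dimensional $C^0$ and $C^2$ constants, while the full $|\nabla^m\Rm_g|\leq \Lambda_m$ only enters in bootstrapping $C^m$ estimates from the heat-flow regularization.

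For stage one I would apply the Cheeger--Colding construction. Because $\Ric_g\geq 0$, the Laplacian comparison $\Delta_g d(p,\cdot)^2\leq 2n$ holds in the barrier sense, and the standard construction (see e.g.\ the proofs of the cutoffs in \cite{CheegerColding} and \cite{CC1}) produces $\psi_0:X\to[0,1]$ with $\psi_0\equiv 1$ on $B_{5r/4}(p)$, $\psi_0\equiv 0$ outside $B_{7r/4}(p)$, and the purely dimensional estimates
\[
r|\nabla\psi_0|_g+r^2|\Delta_g\psi_0|\leq C_0(n).
\]
At this stage $\psi_0$ is only $C^2$, so one cannot yet read off higher derivatives.

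For stage two I would set $u(t,x)\equiv(e^{t\Delta_g}\psi_0)(x)$ and choose $t_0\equiv c(n)\,r^2$ with $c(n)\in(0,1)$ small. The Li--Yau heat kernel estimate on $\Ric_g\geq 0$ manifolds gives Gaussian decay with dimensional constants at scale $\sqrt{t_0}\sim r$; this guarantees $u(t_0,\cdot)\geq 1-\tfrac1{10}$ on $B_{9r/8}(p)$ and $u(t_0,\cdot)\leq\tfrac1{10}$ outside $B_{15r/8}(p)$. For higher derivatives I would run the standard Bochner argument: the evolution of $|\nabla^m u|^2$ along $\partial_t-\Delta_g$ produces the good term $-2|\nabla^{m+1}u|^2$ plus curvature contractions of the schematic form
\[
\sum_{j+k+l=m-2}(\nabla^j\Rm_g)\,\ast\,\nabla^{k+1}u\,\ast\,\nabla^{l+1}u.
\]
Applying the parabolic maximum principle inductively on $m$, together with the base bound $|\nabla u|_g\leq C_0(n)/r$ from stage one, yields $r^m|\nabla^m u(t_0,\cdot)|_g\leq C(m,n)$ on $B_{2r}(p)$. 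The rescaling $\tilde g\equiv r^{-2}g$ is essential here: under it the effective curvature derivatives become $r^{m+2}\Lambda_m$, which are uniformly bounded for $r\leq 1$, so the constants absorb the $\Lambda_j$ without explicit $r$-dependence. Finally I would fix $\eta\in C^\infty(\dR;[0,1])$ with $\eta\equiv 1$ on $[9/10,\infty)$ and $\eta\equiv 0$ on $(-\infty,1/10]$ and set $\psi\equiv \eta\circ u(t_0,\cdot)$; the chain rule transfers the estimates to $\psi$ with dimensional loss and produces the required support properties.

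The main obstacle is the inductive derivative step in stage two. The curvature contractions in the Bochner evolution are exactly what force the bounded-geometry hypothesis $|\nabla^j\Rm_g|\leq \Lambda_j$ into the argument, and without it the induction on $m$ cannot close; the role of $\Ric_g\geq 0$ is separate and appears twice, in producing the dimensional base estimate for $\psi_0$ and in giving Gaussian heat-kernel bounds so that $t_0\sim r^2$ suffices to both preserve support and regularize. An alternative I would keep in reserve is Anderson's harmonic coordinates: on charts of definite size granted by $|\Rm_g|\leq\Lambda_0$ the metric is $C^{m+1,\alpha}$-controlled, so one pulls back a Euclidean bump of scale $r$ and converts derivatives through the Christoffel symbols; this route trades the Bochner computation for elliptic regularity but yields an equivalent estimate of the same form.
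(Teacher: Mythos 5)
Your proposal is correct and follows essentially the same route as the paper: regularize a rough cutoff by the heat flow for time $\sim r^2$, arrange that the level sets stay controlled so that recomposing with a fixed one-variable bump function restores the support properties, and get the higher derivative bounds from standard parabolic estimates using the bounded-geometry hypothesis. The paper's front end is slightly leaner than yours: instead of a Cheeger--Colding cutoff plus Li--Yau Gaussian heat-kernel bounds, it flows the $1$-Lipschitz function $2-d(\cdot,q)$ directly and uses the pointwise estimate $|\nabla\psi_t|^2+\tfrac{2t}{n}(\Delta\psi_t)^2\leq 1$ (valid under $\Ric_g\geq 0$) to conclude $|\psi_t-\rho|\leq\sqrt{2nt}$, which pins down the level sets with no heat-kernel estimate at all.
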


\begin{proof}
Without loss of generality suppose $r=1$. The proof below can be made purely local, but to simplify notations we assume $X$ is complete. 
For any $q\in X$, we first take a cutoff function $\rho$ defined by
\begin{align*}
\rho(y)=	
\begin{cases}
	1, &  y \in B_1^g(q),
	\\
	2 - d_{g_j}(y,q), & y\in  A_{1,2}^g(q),
	\\
	0, & y\in X \setminus B_2^g(q).
\end{cases}
\end{align*}
For $t>0$, consider the heat flow 
$\psi_t \equiv 	H_t (\rho)$ of the $1$-Lipschitz cutoff function $\rho$. It is standard that on $X$ we have the pointwise estimate
$
|\nabla_g \psi_t|^2 + \frac{2t}{n}(\Delta_g \psi_t)^2 \leq 1. 
$
Then  for all $y \in X$, we have
\begin{align*}
|\psi_t(y)	 - \rho(y)| \leq  \int_0^t |\Delta_g \psi_s(y)| ds \leq \sqrt{2nt}.
\end{align*}
Now fix $\tau=\frac{1}{18n^2}$. Then $\psi_\tau(y)\in [\frac{2}{3}, 1]$ for $y\in B_1(q)$ and $\psi_\tau(y)\in [0, \frac{1}{3}]$ for $y\in X\setminus B_2(q)$. 

Next, we choose a smooth cutoff function $h:[0,1] \to [0,1]$
 which satisfies 
 \begin{align*}
h (s)= \begin{cases}
 	1, & \frac{2}{3}\leq s\leq 1,
 	\\
 	0, & 0 \leq s \leq \frac{1}{3}, 
 \end{cases}	
 \end{align*}
and set $\psi = h\circ \psi_{\tau}$. Since $\psi_t$ solves the heat equation, the higher order derivative estimate of $\psi$ follows from the standard parabolic estimate.
\end{proof}

\begin{proof}[Proof of Theorem \ref{t:CFG}]
We adopt the notation in the setting of Theorem \ref{t:CFG}. By a simple rescaling, we can assume $\Injrad_{g_{\infty}}(q)\geq 10$ for any $q\in \mathcal{Q}$. 
To begin with, we fixe $\epsilon>0$ sufficiently small, and   define the rescaled Riemannian metric $h_{\epsilon}\equiv \epsilon^{-1}\cdot g_{\infty}$ on $\mathcal{R}$. Throughout the proof we will denote by $\tau(\epsilon)$ a general function of $\epsilon$ satisfying $\lim_{\epsilon\rightarrow0}\tau(\epsilon)=0$.
Now for any $q\in\mathcal{Q}$, there is a harmonic coordinate system
  \begin{align*}\varpi_q\equiv(w_1,\ldots, w_d): B_5^{h_{\epsilon}}(q)\to B_5(0^d),\end{align*} such that 
  \begin{enumerate}
  \item[(i)] $\Delta_{h_{\epsilon}} w_{\alpha} = 0$ for any $1\leq \alpha\leq d$,
  \item[(ii)] $|h_{\epsilon,\alpha\beta} - \delta_{\alpha\beta}|_{C^0(B_4(q))}+|\p_{w_{\gamma}}h_{\epsilon,\alpha\beta}|_{C^0(B_4(q))}\leq \tau(\epsilon)$,	
  \end{enumerate}
where $h_{\epsilon,\alpha\beta}\equiv h_{\epsilon}(\nabla_{h_{\epsilon}}w_{\alpha}, \nabla_{h_{\epsilon}}w_{\beta})$.  
In particular, we have $
d_{GH}(B_{4}^{h_{\epsilon}}(q), B_{4}(0^d)) < \tau(\epsilon),$ where $0^d\in\dR^d$.	

In the following, we will also work with the rescaled metrics $h_i\equiv \epsilon^{-1}\cdot  g_i$ on $X_i^4$. 
Unless otherwise specified,  the metric balls below will be measured in terms of $h_i$ and $h_{\epsilon}$, respectively. 

We will prove the theorem in three steps. In the first step, using the harmonic splitting map, we will construct local fiber bundle maps over every ball in $\mathcal{Q}$ which looks like a ball in $\dR^d$. The second step is to glue the local fiber bundle maps by the well-behaved partition of unity. In the last step, we will show the desired estimates and identify the topology of the collapsing fibers.

\vspace{0.1cm}

\begin{flushleft}
{\bf Step 1 (construction of local fiber bundles).} 
\end{flushleft}

\vspace{0.1cm}

Let $\{\underline{q}_{\ell}\}_{\ell=1}^N$ be a  subset of $\mathcal{Q}$
  such that $\mathcal{Q}\subset\bigcup\limits_{\ell=1}^N B_1(q_{\ell})\subset \mathcal{R}$, and for all $1\leq \ell,\ell'\leq N$ with $\ell\neq \ell'$, we have 
$d_{h_{\epsilon}}(\underline{q}_{\ell}, \underline{q}_{\ell'}) > \frac{1}{2}$.	
 For every $1\leq \ell\leq N$, let $q_{i,\ell} \in X_i^4$
such that $(B_{4}(q_{i,\ell}) , h_i) \xrightarrow{GH}   (B_{4}(\underline{q}_{\ell}), h_{\epsilon})$. Then for any sufficiently large $i$, we have 
$
d_{GH}(B_4(q_{i,\ell}), B_4(0^d)) < 2\tau(\epsilon). 	
$
Then there exists a harmonic map 
\begin{align*}\Phi_{i,\ell}^* = (u_{i,\ell}^{(1)}, \ldots , u_{i,\ell}^{(k)}): B_3(q_{i,\ell}) \to \dR^d \end{align*}
 which satisfies the following integral estimates
\begin{align*}
\sum\limits_{\alpha,\beta=1}^d \fint_{B_3(q_{i,\ell})}|h_i(\nabla_{h_i} u_{i,\ell}^{(\alpha)}, \nabla_{h_i} u_{i,\ell}^{(\beta)}) -\delta_{\alpha\beta}| + \sum\limits_{\alpha=1}^d \fint_{B_3(q_{i,\ell})}|\nabla^2 u_{i,\ell}^{(\alpha)}|_{h_i}^2 \leq  \tau(\epsilon).	
\end{align*}

Since $(B_{4}(q_{i,\ell}), h_i)$ 
is collapsing with uniformly bounded geometry, the above integral estimate can be strengthened to 
the following pointwise estimate on $B_2(q_{i,\ell})$:
\begin{align*}
\sum\limits_{\alpha,\beta=1}^k|g_i(\nabla u_{i,\ell}^{(\alpha)}, \nabla u_{i,\ell}^{(\beta)}) -\delta_{\alpha\beta}| + \sum\limits_{\alpha=1}^k|\nabla^2 u_{i,\ell}^{(\alpha)}|^2 \leq \tau(\epsilon).	
\end{align*}
This implies that, for every $1\leq \ell\leq N$, the composition \begin{align*}\Phi_{i,\ell}\equiv (\varpi_{\ell})^{-1}\circ \Phi_{i,\ell}^*:B_{2}(q_{i,\ell}) \to B_{2}(\underline{q}_{\ell})\end{align*}
is a fiber bundle map, where the diffeomorphism $\varpi_{\ell}: B_{2}(\underline{q}_{\ell})\to B_2(0^d)$ is given by the harmonic coordinate system at $\underline{q}_{\ell}$. Moreover, $\Phi_{i, l}$ is a $\tau(\epsilon)$-Gromov-Hausdorff approximation for all $i$ large.  

\vspace{0.1cm}

\begin{flushleft}
{\bf Step 2 (gluing local bundle maps).} 
\end{flushleft}

\vspace{0.1cm}

Let us take domains with smooth boundary $\mathcal{Q}_i \subset \bigcup\limits_{\ell=1}^N B_2(q_{i,\ell})$ such that $(\mathcal{Q}_i,h_i) \xrightarrow{GH}(\mathcal{Q}, h_{\epsilon})$.
 We will 
glue the above local harmonic maps to obtain a fiber bundle map $
F_i: \mathcal{Q}_i \to \mathcal{Q}.	
$

For every $1\leq \ell \leq N$, let $\psi_{\ell}$ be the good cut-off function in Lemma \ref{l:good-cut-off-function} such that
\begin{align*}
\psi_{\ell}(y)
=
\begin{cases}
	1, & y\in B_{1}(q_{i,\ell}),
	\\
	0, & y \in X_i^4 \setminus B_{2}(q_{i,\ell}),\end{cases}	
\end{align*}
and for all $m\in\dZ_+$, $|\nabla^m \psi_{\ell}|\leq   C_m$ holds everywhere on $M_i^4$. 
Then we take the partition of unity subordinate to the cover $\{B_2(q_{i,\ell})\}_{\ell=1}^N$ of $\mathcal{Q}_i$ given by 
\begin{equation*}
\phi_{\ell}\equiv{\psi_{\ell}}({\sum\limits_{\ell=1}^N\psi_{\ell}})^{-1}.
\end{equation*}
It follows from volume comparison that the multiplicity in the above cover is bounded by some absolute constant $Q_0>0$.
We denote 
$\mathcal{B}_i \equiv \bigcup\limits_{\ell=1}^N B_2(q_{i,\ell})$ and $\mathcal{B}_{\infty} \equiv \bigcup\limits_{\ell=1}^N B_2(\underline{q}_{\ell})$.
For any $1\leq \ell\leq N$, we define  
\begin{align*}
\mathfrak{d}_{\epsilon}(\underline{x},\underline{y})
\equiv\sum\limits_{\alpha=1}^d|w_{\alpha}(\underline{x})-w_{\alpha}(\underline{y})|^2,	
\end{align*}
which is determined by the harmonic coordinate system $(w_1,\ldots, w_d)$ on $B_{2}(q_{\ell})$. It follows from the estimates on the harmonic coordinates that $|\mathfrak{d}_{\epsilon}-d_{h_\epsilon}^2|\leq \tau(\epsilon)$ holds on $B_2(\underline{q}_{\ell})$.
Then let us define the energy function $\mathfrak{E}:\mathcal{B}_i \times \mathcal{B}_{\infty} \to (0,\infty)$
by 
\begin{align*}
\mathfrak{E}(q_i, q_{\infty})\equiv \frac{1}{2}	\sum\limits_{\ell=1}^N \phi_{\ell}(q_i) \cdot \mathfrak{d}_{\epsilon}\Big(\Phi_{i,\ell}(q_i), q_{\infty}\Big).
\end{align*}
By convexity, for any $q_i\in \mathcal{B}_i$, the function $\mathfrak{E}(q_i, \cdot): \mathcal{B}_{\infty} \to [0,\infty)$ has a unique minimum $\mathfrak{z}(q_i)$. It is straightforward to verify that for any $q_i\in B_2(q_{i,\ell})$,
\begin{align*}  d_{h_\epsilon}\Big(\mathfrak{z}(q_i), \Phi_{i,\ell}(q_i)\Big) < \tau(\epsilon),\end{align*}
and 
\begin{align*}
\Big|h_i(\nabla_{h_i} (w_{\alpha}\circ \mathfrak{z}), \nabla_{h_i}  (w_{\beta}\circ \mathfrak{z})) 
-h_i(\nabla_{h_i} u_{i,\ell}^{(\alpha)}, \nabla_{h_i} u_{i,\ell}^{(\beta)}) 
\Big|\leq \tau(\epsilon).	
\end{align*}
Then we define the map 
\begin{align*}F_i: \mathcal{B}_i\to \mathcal{B}_{\infty},\quad q_i \mapsto \mathfrak{z}(q_i), \quad \forall \ \  q_i\in\mathcal{B}_i.\end{align*}
 Combining the above estimates on the harmonic splitting maps, harmonic coordinates, as well as the good cut-off functions, we conclude that $F_i$ is non-degenerate, and hence it is a fiber bundle map.  
Moreover, $F_i:\mathcal{B}_i\to\mathcal{B}_{\infty}$ is a $\tau(\epsilon)$-Gromov-Hausdorff approximation. 
Therefore, the proof of item (1) is complete by taking $\epsilon\rightarrow 0$ and $i\rightarrow\infty$. 

\vspace{0.1cm}

\begin{flushleft}
{\bf Step 3 (proof of the higher order regularity estimates).} 
\end{flushleft}

\vspace{0.1cm}

In this step, we will rescale everything back to the original metrics $g_i$ and $g_{\infty}$, respectively.
 Notice that the uniform estimates for the higher derivatives of the good cut-off functions (constructed in Lemma \ref{l:good-cut-off-function}) hold in our case, and the higher order estimates for the splitting maps $\Phi_{i,\ell}$ and the harmonic coordinates on $\mathcal{Q}$ hold as well. Then we obtain the pointwise estimate on the second fundamental form in item (2) and the higher order estimate $\nabla^k F_i$ in item (3). 
 We skip the details.

We will prove item (4) by contradiction. 
Assume that there exist a sequence $\eta_i\to 0$, a constant $\tau_0>0$, and a sequence of bundle maps $F_i:\mathcal{Q}_i \to \mathcal{Q}$ which are $\eta_i$-Gromov-Hausdorff approximations 
  such that for all sufficiently large $i$, 
\begin{align}
\left|\frac{|dF_i(v)|_{g_{\infty}}}{|v|_{g_i}} - 1\right| \geq \tau_0 	 \label{e:tau_0-difference}
\end{align}
holds for a sequence of vectors $v_i \in T_{x_i}\mathcal{Q}_i$ orthogonal to the fiber of $F_i$. We assume $|v|_{g_i} = 1$.
We take the universal cover of $B_{r_0}(x_i)$ for some sufficiently small constant $r_0>0$, which gives the following equivariant  $C^k$-convergence for any $k\in\dZ_+$:  
\begin{align*}
\begin{split}
 \xymatrix{
(\widetilde{B_{r_0}(x_i)}, \tilde{g}_i, \Gamma_i, \tilde{x}_i) \ar[d]_{\pi_i} \ar[rr]^{C^k} &&    (\widetilde{B}_{\infty}, \tilde{g}_{\infty}, \Gamma_{\infty}, \tilde{x}_{\infty}) \ar [d]^{\pi_{\infty}}
 \\
 (B_{r_0}(x_i), g_i)\ar[rr]^{GH} &&  (B_{r_0}(x_{\infty}), g_{\infty}),
 } 
 \end{split}
 \end{align*}
where $\pi_i: (\widetilde{B_{r_0}(x_i)}, \tilde{x}_i)\to (B_{r_0}(x_i), x_i)$ is the Riemannian universal cover with $\pi_i(\tilde{x}_i)=x_i$, $\Gamma_i\equiv \pi_1(B_{r_0}(x_i))$, and $\Gamma_{\infty}$ is a closed subgroup in $\Isom_{\tilde{g}_{\infty}}(\widetilde{B}_{\infty})$.
The above diagram of equivariant convergence implies that $\pi_{\infty}: \widetilde{B}_{\infty}\longrightarrow B_{r_0}(x_{\infty}) =\widetilde{B}_{\infty}/\Gamma_{\infty}$ is a Riemannian submersion.
Let $\widetilde{F}_i \equiv F_i\circ \pi_i$  and  $\tilde{v}_i$ be the lift of $v_i$ to $\tilde{x}_i$.
Then the $C^k$ convergence implies that $\widetilde{F}_i$ converges to $\pi_{\infty}$, and $\tilde{v}_i$
converges to a limiting vector $\tilde{v}_{\infty}$ with $|d\pi_{\infty}(v_{\infty})|_{g_{\infty}}=1$. This contradicts \eqref{e:tau_0-difference}, which completes the proof of item (4).

Based on the Gromov-Hausdorff estimate in item (1) and the second fundamental form estimate in item (3),
we can conclude that all the fibers of $F_i$ are almost flat manifolds in the sense that 
\begin{align*}
\diam(F_i^{-1}(q))^2 \cdot |\sec_{F_i^{-1}(q)}| < \tau(\epsilon), \quad \forall q\in\mathcal{Q},
\end{align*}
for any sufficiently large $i$. 
If $\epsilon$ is chosen sufficiently small, then item (5) and (6) follows from Gromov and Ruh's theorems on the almost flat manifolds and Fukaya's fibration theorem; see \cite{Gromov, Ruh, FuII, CFG}. 

  \end{proof}

\begin{proof}[Proof of Theorem \ref{t:CFG global}]

By Theorem \ref{t:CFG}, there exists a fiber bundle map 
\begin{align*}F_j: A_{2^j,2^{j+2}}^{g}(p) \to  A_{2^j, 2^{j+2}}^{d_Y}(p^*)\end{align*} for any sufficiently large integer $j\geq j_0$ such that $F_j: A_{1,4}^{g_j}(p)\to A_{1,4}(p^*)$ is a $\tau(\epsilon_j)$-Gromov-Hausdorff approximation with $\lim\limits_{j\to\infty}\tau(\epsilon_j) = 0$.  Here $A^{g_j}_{1,4}(p)$ is the scale down of $A_{2^j, 2^{j+2}}^{g}(p)$ by the factor $2^{-j}$.

Now we glue the above fiber bundle maps over the annuli and thus obtain a global fiber bundle map 
\begin{align*}
F: X^4\setminus B_{R_0}(p) \to Y^d \setminus B_{R_0}(p^*).	
\end{align*}
The procedure is well-known once we have the higher derivative estimates on the local bundle maps.
We outline the arguments. Denote by $\Sigma$ the cross-section of the flat cone $Y$.  
Now let us consider the two adjacent annuli $A_{2^j, 2^{j+2}}^g(p)$
and $A_{2^{j+1}, 2^{j+3}}^g(p)$. Then there exists some isometry $\rho_j\in \Isom(\Sigma)$ such that $|F_{j+1} - \rho_j \circ F_j| < \tau(\epsilon_j)$ holds on the intersection $A_{2^{j+1}, 2^{j+2}}(p)$. Moreover, the higher order regularity estimates in Theorem \ref{t:CFG} implies that 
the above approximation can be improved to the $C^k$ sense.
Then there exists a self-diffeomorphism $\sigma_j:A_{2^{j+1}, 2^{j+2}}(p)\to A_{2^{j+1}, 2^{j+2}}(p)$ which is close to the identity map such that 
$F_{j+1} = \rho_j \circ F_j \circ \sigma_j.$
One can choose $\sigma_j: F_j^{-1}(q)\to F_{j+1}^{-1}(q)$ as the normal projection from the fiber. It is indeed a diffeomorphism since the normal injectivity radius of each fiber has a uniform lower bound. We refer the readers to proposition A2.2 of \cite{CFG} for more details. 
Using the good cut-off function 
\begin{align*}\chi_j(y)
=
\begin{cases}
1, \quad y\in A_{2^{j+\frac{5}{4}}, 2^{j+\frac{7}{4}}}(p),
\\
0,	\quad y \in X^4\setminus A_{2^{j+1}, 2^{j+2}}(p), 
\end{cases}
\end{align*} given in Lemma \ref{l:good-cut-off-function}, we can construct a modified fibration $\widehat{F}_j: A_{2^j, 2^{j+3}}^g(p)\to A_{2^j, 2^{j+3}}^{d_Y}(p^*)$ which satisfies the properties in Theorem \ref{t:CFG}. Inductively, we finally obtain a global fiber bundle map 
\begin{align*}
F: X^4\setminus B_{R_0}(p) \to Y^d \setminus B_{R_0}(p^*),	
\end{align*}
which satisfies items (1), and (2).

The estimate on the second fundamental form in item (3) depends on the special limiting geometry in the hyperk\"ahler setting, for the sequence $A_{1, 4}^{g_j}$. 
There are three cases to analyze: 
\begin{itemize}
	\item ($d=3$) the limiting universal cover is flat, and the limit of the fibers are given by totally geodesic lines $\dR$.  Since we have convergence of the second fundamental form (see Lemma \ref{l: convergence of second fundamental form}, we get the conclusion.
\item ($d=2$) the limiting geometry is again flat and the proof is similar to above. 
\item ($d=1$) the limiting geometry is either the flat or nilpotent geometry. In the first case the proof is the same as above; in the second case we make use of Lemma  \ref{l:second fund form formula}, and one can compute explicitly the relation between the distance function and the function $z$.
\end{itemize}
\end{proof}

\section{Poisson's equation on the Calabi model space}
\label{a2} 
Let $(\Ca, \bom_{\Ca})$ be a Calabi model space. We identify $\Ca$ differentiably with   the product space $[2,\infty)\times \mathfrak{N}^3$, where we use the moment coordinate $z$ and $\mathfrak{N}^3$ is a nilmanifold. We first recall the separation of variables arguments in \cite{HSVZ}; see  Section 4 of \cite{HSVZ} for more details. Denote $\mathfrak{N}_z^3\equiv \{z\}\times \mathfrak{N}^3$, and 
let $\Lambda \equiv \{\Lambda_k\}_{k=0}^{\infty}$ be the spectrum of $-\Delta_{h_0}$ on the fixed slice $\mathfrak{N}_{z_0}^3$. Then we have  
$\Lambda_k = (2z_0)^{-1}\cdot \lambda_k  + 2z_0 \cdot j_k^2$ with $\lambda_k\geq j_k$ and $j_k\in\dZ_{\geq 0}$.
Given a continuous function $u$ on $\Ca$, we can write the $L^2$ expansion
\begin{align*}u(z,\bm{y}) = \sum\limits_{k=0}^{\infty}u_k(z)\cdot \varphi_k(\bm{y}),  \quad \bm{y}\in\mathfrak{N}_{z_0}^3,\end{align*}	
  where $-\Delta_{h_0}\varphi_k=\Lambda_k\varphi_k$.  
The equation $\Delta_{\bom_{\Ca}}u = v$ is equivalent to that for all $k$,
\begin{align}
\frac{d^2 u_k(z)}{dz^2} - (j_k^2 z^2 + \lambda_k) u_k(z) = v_k(z) \cdot z, \quad z\geq 1,	\label{e:non-homogeneous}
\end{align}
  where $v_k(z)$ is the corresponding coefficient in the expansion of $v$.
The corresponding homogeneous equation has two explicit fundamental solutions $\mathcal F_k(z)$ and $\mathcal U_k(z)$: 
\begin{enumerate}
\item $j_k=\lambda_k = 0$:  $\mathcal{F}_k(z) = z$ and $\mathcal{U}_k(z) = 1$.
\item $j_k=0$, $\lambda_k>0$:
$\mathcal{F}_k(z) = e^{\sqrt{\lambda_k}\cdot z}$ and $\mathcal{U}_k(z) = e^{-\sqrt{\lambda_k}\cdot z}$.	
  
\item $j_k>0$:
$
\mathcal{F}_k(z) = e^{-\frac{j_k \cdot z^2}{2}}H_{-h-1}(-\sqrt{j_k}\cdot z)$ and $
 \mathcal{U}_k(z) = e^{-\frac{j_k \cdot z^2}{2}}H_{-h-1}(\sqrt{j_k}\cdot z),
$
 	where $h$ satisfies $\lambda_k=(2h+1)j_k$, and 
 	$
 	H_{-h-1}(y) \equiv \int_0^{\infty}e^{-t^2 - 2 ty} t^h dt.$
\end{enumerate}
When $j_k>0$, we denote $y\equiv \sqrt{j_k}\cdot z$ and we define
  \begin{align}
  F_k(t)\equiv -t^2 + 2ty + h\log t  \quad  \text{and}\quad 	  U_k(t)\equiv -t^2 - 2ty + h\log t.
\nonumber  \end{align}
Let $t_k$ and $s_k$ be the unique positive critical point of $F_k(t)$ and $U_k(t)$ respectively:
\begin{align}
t_k = \frac{y}{2} + \sqrt{\frac{h^2}{2}+\frac{y^2}{4}}  \quad  \text{and} \quad 	s_k = -\frac{y}{2} + \sqrt{\frac{h^2}{2}+\frac{y^2}{4}}, 
\nonumber\end{align}
and define 
\begin{align}
\widehat{F}_k(z)\equiv -\frac{j_kz^2}{2} + F_k(t_k(z)) \quad \text{and} \quad 
\widehat{U}_k(z)\equiv -\frac{j_kz^2}{2} + U_k(s_k(z)).	
\nonumber\end{align}
The following two results are taken from Lemma 4.6 and Lemma 4.7  in \cite{HSVZ}.
\begin{lemma} \label{l:laplace-method}  The following uniform estimates hold:
 \begin{align*}
 \mathcal{F}_k(z) \leq (1+\sqrt{\pi})\widehat{F}_k(z)
 \quad \text{and} \quad 
  \mathcal{U}_k(z) \leq (1+\sqrt{\pi}) \widehat{U}_k(z). \end{align*}
\end{lemma}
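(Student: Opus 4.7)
The plan is to recognize $\mathcal{F}_k(z)$ and $\mathcal{U}_k(z)$ as Laplace-type integrals over $(0,\infty)$ whose phase functions $F_k$ and $U_k$ are strictly concave, and then apply the Laplace method. Unpacking the definition $H_{-h-1}(y) = \int_0^\infty e^{-t^2 - 2ty} t^h dt$, one obtains
\[
\mathcal{F}_k(z) = \int_0^\infty e^{F_k(t) - j_k z^2/2}\, dt, \qquad \mathcal{U}_k(z) = \int_0^\infty e^{U_k(t) - j_k z^2/2}\, dt,
\]
so after dividing by $e^{\widehat{F}_k(z)}$ (resp.\ $e^{\widehat{U}_k(z)}$) it is enough to prove the dimensionless bounds
\[
\int_0^\infty e^{F_k(t) - F_k(t_k)}\, dt \leq 1 + \sqrt{\pi}, \qquad \int_0^\infty e^{U_k(t) - U_k(s_k)}\, dt \leq 1 + \sqrt{\pi}.
\]

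First I would verify strict concavity by direct computation: $F_k''(t) = -2 - h/t^2 \leq -2$ for all $t > 0$, using that $h \geq 0$ (which follows from $\lambda_k \geq j_k$ combined with $\lambda_k = (2h+1)j_k$). The identical estimate holds for $U_k''$. Consequently $t_k$ and $s_k$ are the unique global maxima on $(0,\infty)$, and Taylor's theorem with integral remainder (using $F_k'(t_k) = 0$, $U_k'(s_k) = 0$) yields the Gaussian-type pointwise envelopes
\[
F_k(t) \leq F_k(t_k) - (t - t_k)^2, \qquad U_k(t) \leq U_k(s_k) - (t - s_k)^2.
\]

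Substituting these envelopes gives
\[
\int_0^\infty e^{F_k(t) - F_k(t_k)}\, dt \leq \int_{-\infty}^\infty e^{-(t-t_k)^2}\, dt = \sqrt{\pi},
\]
and similarly for $U_k$. The extra additive $1$ in the constant $1 + \sqrt{\pi}$ is a harmless cushion absorbing the boundary contribution from the restriction $t > 0$; it only needs to be tracked when $t_k$ (resp.\ $s_k$) lies within an $O(1)$ window of the origin, in which case a crude split into the regions $\{|t - t_k| \leq 1\}$ and its complement (combined with the rapid decay of $e^{F_k - F_k(t_k)}$ off the peak) gives the claim directly.

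The main point to track, rather than a real obstacle, is uniformity in $h \geq 0$: the Taylor envelope above depends only on the uniform bound $F_k'', U_k'' \leq -2$, and not on the location of the critical point or on $h$ itself, so a single constant $1 + \sqrt{\pi}$ works across all spectral modes. Everything else is a straightforward calculus exercise in Laplace's method.
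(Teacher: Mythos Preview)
Your argument is correct and is the standard Laplace-method computation; the paper itself does not prove this lemma but simply cites it from \cite{HSVZ}, where the same concavity estimate $F_k'',U_k''\leq -2$ and Gaussian envelope are used. One small point: your Taylor bound already gives
\[
\int_0^\infty e^{F_k(t)-F_k(t_k)}\,dt \;\leq\; \int_0^\infty e^{-(t-t_k)^2}\,dt \;<\; \int_{-\infty}^\infty e^{-(t-t_k)^2}\,dt \;=\; \sqrt{\pi},
\]
so the stated constant $1+\sqrt{\pi}$ is obtained outright, and your discussion of an ``extra additive $1$'' as a boundary cushion is unnecessary (restricting the integration to $t>0$ only decreases the integral). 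You also implicitly---and correctly---read the statement as $\mathcal{F}_k(z)\leq (1+\sqrt{\pi})\,e^{\widehat{F}_k(z)}$ rather than $(1+\sqrt{\pi})\,\widehat{F}_k(z)$; this is how the lemma is used later in the appendix (see the proof of Lemma~\ref{l:coefficients-estimate}).
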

\begin{lemma} \label{l:wronskian} We have
\begin{enumerate}
\item 
$\widehat{F}_k(z)$ is increasing for $z\geq 1$ and 
$\widehat{U}_k(z)$ is decreasing for $z\geq 1$.
\item There exists a uniform constant $C_0>0$ independent of $k$ such that 
\begin{align*}0<\mathcal{W}_k(z)^{-1}\cdot (e^{\widehat{F}_k(z)+\widehat{U}_k(z)})\leq C_0,\end{align*}
where $\mathcal{W}_k(z)\equiv\mathcal{F}_k'(z)\mathcal{U}_k(z) - \mathcal{F}_k(z)\mathcal{U}_k'(z)$ is the Wronskian of $\mathcal F_k$ and $\mathcal U_k$.
	\end{enumerate}
 \end{lemma}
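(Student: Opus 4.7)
The proof plan proceeds in two parts, matching the two items of the lemma. I would first handle (1) via the envelope theorem. Since $t_k(z)$ is defined as an interior critical point of $F_k(\cdot, y)$ in $t$, the chain rule applied to $\widehat{F}_k(z) = -j_kz^2/2 + F_k(t_k(z), y(z))$ (with $y=\sqrt{j_k}z$) eliminates the $dt_k/dz$ contribution, leaving
\[
\widehat{F}_k'(z) = -j_k z + \sqrt{j_k}\cdot\partial_y F_k(t_k, y) = \sqrt{j_k}(2t_k - y).
\]
The explicit formula for $t_k$ gives $2t_k - y = \sqrt{y^2 + 2h}$, which is strictly positive (the inequality $h \geq 0$ follows from $\lambda_k \geq j_k$, since $\lambda_k = (2h+1)j_k$). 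The symmetric computation produces $\widehat{U}_k'(z) = -\sqrt{j_k}(2s_k + y) = -\sqrt{j_k}\sqrt{y^2 + 2h} < 0$, finishing (1).

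For (2), my key observation is that both $\mathcal{W}_k$ and $\widehat{F}_k + \widehat{U}_k$ are secretly constant in $z$. The Wronskian is constant by Abel's identity, since the ODE $u'' - (j_k^2 z^2 + \lambda_k)u = 0$ has no first-order term. For the exponent, the elementary identities $t_k - s_k = y$, $t_k s_k = h/2$, and $t_k^2 + s_k^2 = y^2 + h$ (read off from the critical-point formulas, i.e.\ Vieta applied to $2t^2 \mp 2ty - h = 0$) combine via a direct computation to give
\[
\widehat{F}_k(z) + \widehat{U}_k(z) = -j_k z^2 - (t_k^2 + s_k^2) + 2y(t_k - s_k) + h\log(t_k s_k) = h\log(h/2) - h,
\]
independent of $z$. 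So the estimate reduces to a purely numerical bound on the $k$-dependent ratio.

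To close the argument I would evaluate $\mathcal{W}_k$ at $z = 0$, where the integral representation gives $\mathcal{F}_k(0) = \mathcal{U}_k(0) = \tfrac12\Gamma(\tfrac{h+1}{2})$ and $\mathcal{F}_k'(0) = -\mathcal{U}_k'(0) = \sqrt{j_k}\,\Gamma(\tfrac{h}{2}+1)$. Hence
\[
\mathcal{W}_k = \sqrt{j_k}\,\Gamma(\tfrac{h+1}{2})\Gamma(\tfrac{h}{2}+1) = \sqrt{\pi j_k}\cdot 2^{-h}\Gamma(h+1)
\]
by Legendre's duplication formula. Combined with $e^{\widehat{F}_k + \widehat{U}_k} = (h/2)^h e^{-h}$, the ratio becomes
\[
\mathcal{W}_k^{-1}\, e^{\widehat{F}_k + \widehat{U}_k} = \frac{h^h e^{-h}}{\sqrt{\pi j_k}\,\Gamma(h+1)},
\]
which Stirling bounds by $O((j_k h)^{-1/2})$ as $h \to \infty$, and which equals $1/\sqrt{\pi j_k}$ at $h = 0$. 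Since $j_k \geq 1$ is a positive integer in Case~3, both extremes are uniformly controlled, yielding the desired constant $C_0$. The only place that requires genuine care---the main obstacle, such as it is---is producing a Stirling-type lower bound $\Gamma(h+1) \geq c\,h^{h+1/2}e^{-h}$ valid uniformly for all $h \geq 0$ (with $h^{h+1/2}$ interpreted by continuity at $h=0$), so that the ratio is dominated by a single constant across the entire parameter range rather than separately in asymptotic regimes.
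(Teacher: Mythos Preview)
Your proof is correct. The paper itself does not prove this lemma but simply cites Lemmas~4.6 and~4.7 of \cite{HSVZ}, so there is no in-paper argument to compare against; your envelope-theorem computation for (1) and your reduction of (2) to a single Stirling-type bound via the exact identities $\widehat{F}_k+\widehat{U}_k = h\log(h/2)-h$ and $\mathcal{W}_k = \sqrt{\pi j_k}\,2^{-h}\Gamma(h+1)$ constitute a clean self-contained proof. One minor remark: the continuity of $h\mapsto h^h e^{-h}/\Gamma(h+1)$ on $[0,\infty)$ together with the Stirling asymptotic already gives the uniform upper bound you need, so the ``main obstacle'' you flag is essentially trivial---no separate lower bound for $\Gamma(h+1)$ beyond Stirling's expansion (whose error is one-signed) is required.
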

 Denote $\mathcal{Q}_w\equiv \{x\in\Ca|z(\bx)\geq w\}$, and fix any $\tau\in(-\infty,0)\setminus\{-3\}$. The following is used in the proof of Proposition \ref{p:solving Laplace equation on Calabi model}.
 \begin{proposition}\label{p:weighted-in-z} 
 There exists a constant $C>0$ such that if $v\in C^{5}(\mathcal{Q}_w)$ for some $w>2$  satisfies 
\begin{equation*}\sum_{\ell=0}^5(z(\bx))^{3\ell/2}\cdot |\nabla_{g_{\Ca}}^{\ell} v(\bx)|_{g_{\Ca}}\leq \mathfrak b \cdot (z(\bx))^{\tau}, \quad \forall \ \ \bx \in \mathcal{Q}_w,\end{equation*}
 then $\Delta_{\bom_{\Ca}}u=v$ 
 has a solution $u\in C^{6}(\mathcal{Q}_w)$ satisfying \begin{align}
  |u(\bx)| \leq C \cdot \mathfrak b \cdot (z(\bx))^{3+\tau}, \quad \forall \ \ \bx \in \mathcal{Q}_w
\label{e:poisson-solution-estimate}
\end{align}
\end{proposition}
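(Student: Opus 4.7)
The plan is to solve the equation by separation of variables along the nilmanifold fibers. Writing $v(z, \bm{y}) = \sum_{k \geq 0} v_k(z)\varphi_k(\bm{y})$ in the $L^2(\mathfrak{N}^3_{z_0})$-orthonormal eigenbasis of $-\Delta_{h_0}$, the equation $\Delta_{\bom_{\Ca}} u = v$ decouples into the sequence of linear ODEs \eqref{e:non-homogeneous}. For each $k$, I would construct $u_k$ by variation of parameters with judiciously chosen integration limits: the growing fundamental solution $\mathcal{F}_k$ is paired with an integral from $z$ to $\infty$, and the decaying $\mathcal{U}_k$ with an integral from $w$ to $z$, namely
\begin{equation*}
u_k(z) = \frac{\mathcal{F}_k(z)}{\mathcal{W}_k}\int_z^\infty \mathcal{U}_k(s) v_k(s) s\,ds + \frac{\mathcal{U}_k(z)}{\mathcal{W}_k}\int_w^z \mathcal{F}_k(s) v_k(s) s\,ds,
\end{equation*}
and set $u(z, \bm{y}) = \sum_{k \geq 0} u_k(z)\varphi_k(\bm{y})$.

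For $k \geq 1$, applying Lemmas \ref{l:laplace-method} and \ref{l:wronskian}, the kernel $\mathcal{F}_k(z)\mathcal{U}_k(s)/\mathcal{W}_k$ is dominated by $C\cdot e^{\widehat{U}_k(s) - \widehat{U}_k(z)}$ on $\{s \geq z\}$ and, symmetrically, by $C \cdot e^{\widehat{F}_k(s) - \widehat{F}_k(z)}$ on $\{w \leq s \leq z\}$; the monotonicity of $\widehat{F}_k$ and $\widehat{U}_k$ makes both factors bounded by $1$, and a Laplace-method analysis around the critical points $t_k(z), s_k(z)$ shows that the integrands concentrate in a window of width $\sim (j_k^2 z^2 + \lambda_k + 1)^{-1/2}$, producing the quantitative gain $\int|\text{kernel}|\,ds \sim (j_k^2 z^2 + \lambda_k + 1)^{-1}$. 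Combined with $|v_k(z)| \leq \mathfrak{b}z^\tau$ and $z \geq w \geq 2$, this yields $|u_k(z)| \leq C\Lambda_k^{-1}\mathfrak{b}z^{\tau+3}$. The zero mode must be handled directly: since $\mathcal{F}_0(z) = z$, $\mathcal{U}_0(z) = 1$, $\mathcal{W}_0 = 1$, two explicit integrations produce $|u_0(z)| \leq C\mathfrak{b}z^{\tau+3}$, where the hypothesis $\tau \neq -3$ is invoked precisely to ensure the dominant integral $\int_w^z s^{\tau+2}\,ds$ evaluates polynomially rather than logarithmically.

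To sum the series and recover the claimed pointwise bound, I would exploit the $C^5$ regularity of $v$: integrating by parts repeatedly against $-\Delta_{h_0}$ on each fiber gives $|v_k(z)| \leq \Lambda_k^{-m}\|(-\Delta_{h_0})^m v(z,\cdot)\|_\infty$, and the weighted hypothesis --- interpreted through the Calabi scaling $r \sim z^{3/2}$ which matches the weights $z^{3\ell/2}$ to ambient Riemannian derivatives --- translates into $\|(-\Delta_{h_0})^m v(z,\cdot)\|_\infty \leq C\mathfrak{b}z^\tau$ for small enough $m$. Combined with the Weyl law $\Lambda_k \sim k^{2/3}$ on the three-dimensional nilmanifold slice and a standard sup-norm eigenfunction bound $\|\varphi_k\|_\infty \leq C\Lambda_k^{N_0}$, the per-mode estimate becomes $|u_k(z)\varphi_k(\bm{y})| \leq C\Lambda_k^{-1-m+N_0}\mathfrak{b}z^{\tau+3}$, and choosing $m = 2$ (which is compatible with the $C^5$ hypothesis) yields an absolutely convergent series majorized by $C\mathfrak{b}z^{\tau+3}$. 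The $C^6$ regularity of $u$ then follows from standard interior elliptic estimates for $\Delta_{\bom_{\Ca}}$.

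The main obstacle is establishing the sharp Laplace-method estimate on the variation-of-parameters integrals uniformly in $k$: since the Hermite-type functions $\mathcal{F}_k, \mathcal{U}_k$ have critical points $t_k(z), s_k(z)$ that shift with both $k$ and $z$, one must carefully quantify the concentration of the integrand around these points to extract the $\Lambda_k^{-1}$ gain needed to absolutely sum the series. Once this uniform estimate is in hand, the remaining arguments amount to bookkeeping that ties the Fourier decomposition on the fibers to the ambient $C^5$-to-fiber-Laplacian dictionary and the separately handled zero mode.
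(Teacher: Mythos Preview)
Your proposal follows the same overall strategy as the paper: separation of variables along the nilmanifold fiber, variation of parameters for the resulting ODEs \eqref{e:non-homogeneous}, and summation using the regularity of $v$ together with Weyl's law. The difference lies in where the decay in $k$ is placed.

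You invest effort in extracting a $\Lambda_k^{-1}$ gain from the variation-of-parameters kernel via a sharp Laplace-method analysis, and you identify this as the main obstacle. The paper sidesteps this entirely: its Lemma~\ref{l:coefficients-estimate} only proves
\[
\sup_{z\geq w}|u_k(z)|\,z^{-2-\tau}\;\leq\; C\,\sup_{z\geq w}|v_k(z)|\,z^{-\tau}
\]
with a constant $C$ \emph{independent of $k$} --- no $\Lambda_k$ factor at all. All of the decay in $k$ comes from the regularity of $v$: integrating by parts against $(-\Delta_{h_0})^2$ on the fiber gives $|v_k(z)|\leq C(\Lambda_k+1)^{-2}\mathfrak{b}\,z^\tau$, and this alone (via $\Lambda_k\sim k^{2/3}$) makes $\sum k^{-4/3}$ converge. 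So the ``main obstacle'' you flag does not arise in the paper's route; the ODE bound there uses Lemmas~\ref{l:laplace-method} and~\ref{l:wronskian} only to get a $k$-uniform constant, without the quantitative concentration window $\sim(j_k^2z^2+\lambda_k+1)^{-1/2}$ that you pursue.

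Your extra $\Lambda_k^{-1}$ gain buys room to absorb the eigenfunction growth $\|\varphi_k\|_\infty\leq C\Lambda_k^{N_0}$, which you account for explicitly while the paper leaves it implicit. The paper's approach buys simplicity: the ODE estimate is soft, and the delicate uniform-in-$k$ Laplace analysis across all regimes of $(j_k,\lambda_k)$ is avoided.
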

The proof depends on the estimates of solutions to the non-homogeneous equation \eqref{e:non-homogeneous}. We write the expansion of $v$ as $v(z,\bm{y}) = \sum\limits_{k=0}^{\infty}v_k(z)\cdot \varphi_k(\bm{y})$.
In the case $j_k=0$ and $\lambda_k=0$, we set  
\begin{align}u_k(z)\equiv \int_{w}^z( \int_{w}^t v_k(s) ds)dt. \label{e:solution j=lambda=0}	
\end{align}
In the case $j_k=0$ and $\lambda_k\neq 0$, we set 
\begin{align}
u_k(z)\equiv\frac{1}{2\sqrt{\lambda_k}}\Big(e^{-\sqrt{\lambda_k}z}\cdot \int_w^ze^{\sqrt{\lambda_k}t}\cdot v_k(t)\cdot tdt + e^{\sqrt{\lambda_k}z}\cdot \int_z^{\infty}e^{-\sqrt{\lambda_k}t}\cdot v_k(t)\cdot tdt\Big).	\label{e:solution j=0}
\end{align}
In the case $j_k\in\dZ_+$, we set
\begin{align}
u_k(z)
\equiv \frac{\mathcal{U}_k(z)}{\mathcal{W}_k(z)}\int_w^z 	 \mathcal{F}_k(t) \cdot v_k(t) \cdot t \cdot dt  + \frac{\mathcal{F}_k(z)}{\mathcal{W}_k(z)}\int_z^{\infty} 	 \mathcal{U}_k(t) \cdot v_k(t) \cdot t \cdot dt.  \label{e:particular-solution}
\end{align}

 \begin{lemma}
 \label{l:coefficients-estimate}
 There exists a constant $C>0$ independent of $k$ such that for all $w>2$, any solution given by \eqref{e:solution j=lambda=0}, \eqref{e:solution j=0} or \eqref{e:particular-solution} satisfies
 $$\sup_{z\geq w} |u_k(z)|z^{-2-\tau}\leq C\sup_{z\geq w}|v_k(z)|z^{-\tau}.$$
 \end{lemma}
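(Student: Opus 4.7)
The plan is to establish the claimed estimate in each of the three cases distinguished by the pair $(j_k, \lambda_k)$. Throughout, set $M \equiv \sup_{z \ge w}|v_k(z)|z^{-\tau}$, so that $|v_k(t)| \le M t^\tau$ for every $t \ge w$. In each case we substitute this pointwise bound into the explicit formula for $u_k$ and reduce the estimate to controlling certain weighted one-dimensional integrals; the main requirement is that the resulting constant $C$ does not depend on the index $k$.

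In the easy case $j_k = \lambda_k = 0$, formula \eqref{e:solution j=lambda=0} is an iterated integral, and elementary antidifferentiation of $M\int_w^z \int_w^t s^{\tau+1}\,ds\,dt$ (excluding the exceptional exponent $\tau=-3$ as in the hypothesis) gives the required growth. In the case $j_k = 0$ with $\lambda_k > 0$, the fundamental solutions $e^{\pm\sqrt{\lambda_k}z}$ are exponential, and a single integration by parts applied to each of the two terms in \eqref{e:solution j=0} produces the pointwise bound $|u_k(z)| \le C M z^{\tau+1}/\lambda_k$. Since the nonzero eigenvalues $\lambda_k$ of the horizontal Laplacian on the nilmanifold $\mathfrak{N}^3$ are bounded below by a fixed positive constant $\lambda_*$, this yields the desired uniform-in-$k$ bound.

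The main case is $j_k > 0$. Here I will combine the pointwise majorizations $\mathcal{F}_k \le C\,e^{\widehat{F}_k}$ and $\mathcal{U}_k \le C\,e^{\widehat{U}_k}$ from Lemma \ref{l:laplace-method} with the Wronskian lower bound $\mathcal{W}_k \ge C_0^{-1} e^{\widehat{F}_k + \widehat{U}_k}$ from Lemma \ref{l:wronskian}(2). This reduces the estimate of \eqref{e:particular-solution} to controlling
\[
e^{-\widehat{F}_k(z)}\!\int_w^z\! e^{\widehat{F}_k(t)}\,t^{\tau+1}\,dt \quad\text{and}\quad e^{-\widehat{U}_k(z)}\!\int_z^{\infty}\! e^{\widehat{U}_k(t)}\,t^{\tau+1}\,dt.
\]
By Lemma \ref{l:wronskian}(1), $\widehat{F}_k$ is increasing and $\widehat{U}_k$ is decreasing, so each integrand is maximal at $t = z$. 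An envelope-theorem calculation at the critical points $t_k(z), s_k(z)$ gives $\widehat{F}_k'(z),\,-\widehat{U}_k'(z) \ge c\sqrt{j_k}$, and a single integration by parts shows that each of the two displayed quantities is dominated by $C z^{\tau+1}$ (times a factor $\le 1/\sqrt{j_k}$), from which the claim follows.

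The main obstacle is this uniform-in-$k$ control in the third case: the envelopes $\widehat{F}_k, \widehat{U}_k$ and their derivatives depend on both $j_k$ and $\lambda_k$, and one has to verify that the relevant integrals remain sharply concentrated at the common endpoint $t = z$ uniformly across all admissible parameters. Once the derivatives of $\widehat{F}_k, \widehat{U}_k$ are bounded below as above, the integration by parts runs cleanly, but the calculation must be carried out carefully (especially in the regime where $j_k$ is small and the critical points $t_k(z), s_k(z)$ lie close to the interval $[w, \infty)$) to ensure that no spurious power of $k$ enters through the boundary contributions.
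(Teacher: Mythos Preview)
Your proposal is correct and follows essentially the same route as the paper. The paper likewise dismisses the case \eqref{e:solution j=lambda=0} as immediate, says \eqref{e:solution j=0} is handled similarly, and concentrates on \eqref{e:particular-solution}, reducing via Lemmas~\ref{l:laplace-method} and~\ref{l:wronskian} to exactly the two weighted integrals you display.

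The only technical difference is in how those integrals are controlled. For the $\mathcal U_k$--term the paper sets $\widetilde U_k(u)\equiv\widehat U_k(u+z)+(1+\tau)\log(u+z)$, checks $\widetilde U_k'(0)=-j_kz+(1+\tau)z^{-1}<0$ and $\widetilde U_k''<0$, and uses the resulting tangent-line upper bound $\widetilde U_k(u)\le\widetilde U_k(0)+\widetilde U_k'(0)u$ to integrate the exponential directly; this avoids the second-derivative bookkeeping that a full integration by parts would require. For the $\mathcal F_k$--term the paper simply invokes the monotonicity of $\widehat F_k$ from Lemma~\ref{l:wronskian}(1) together with the Wronskian bound, rather than your derivative-plus-parts argument. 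Your envelope computation $\widehat F_k'(z)=-j_kz+2\sqrt{j_k}\,t_k(z)\ge j_kz$ (and similarly for $-\widehat U_k'$) is correct and in fact gives a sharper pointwise bound than the paper records; either packaging works. Finally, your worry about the ``$j_k$ small'' regime is vacuous: in the third case $j_k\in\dZ_+$, so $j_k\ge 1$ and all the relevant lower bounds are automatically uniform.
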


\begin{proof}
[Proof of Lemma \ref{l:coefficients-estimate}] 
For \eqref{e:solution j=lambda=0} this is immediate. Below we will only treat the solution given by \eqref{e:particular-solution}. The case for \eqref{e:solution j=0}  can be dealt with in a similar fashion.
Denote $\mathfrak B_k=\sup_{z\geq w}|v_k(z)|z^{-\tau}$. 
We first estimate the second term in \eqref{e:particular-solution}. Applying Lemma \ref{l:laplace-method}, we have
\begin{align}
\int_z^{\infty}\mathcal{U}_k(t) \cdot v_k(t) \cdot t \cdot dt \leq C\cdot \mathfrak{B}_k \cdot  \int_0^{\infty} e^{\widehat{U}_k(u+z) + (1+\tau)\log (u+z)} du. \nonumber
\end{align}
We denote 
$\widetilde{U}_k(u) \equiv	\widehat{U}_k(u+z) + (1+\tau)\log (u+z)$.
By a simple computation, if $z>1$, then we have 
$\widetilde{U}_k'(0) = -j_k \cdot z + (1+\tau)z^{-1} < 0$
and $\widetilde{U}_k''(u) < 0 $ for all $u>0$. 
	Therefore, \begin{align}
\widetilde{U}_k(u)\leq 	\widetilde{U}_k(0)
+ \widetilde{U}_k'(0) \cdot u, \quad \forall \ \  u\geq 0. \nonumber
\end{align}
So it follows that 
\begin{align}
	\int_z^{\infty}\mathcal{U}_k(t) \cdot v_k(t) \cdot t \cdot dt
& \leq C \cdot \mathfrak{B}_k \cdot e^{\widetilde{U}_k(0)}\cdot \int_0^{\infty}e^{\widetilde{U}_k'(0) \cdot u}dt
\leq C \cdot \mathfrak{B}_k \cdot e^{\widehat{U}_k(z)}\cdot z^{2+\tau}. \nonumber
\end{align}
Therefore, combining the above estimate and Lemma \ref{l:wronskian}(2),
\begin{align}
\frac{\mathcal{F}_k(z)}{\mathcal{W}_k(z)}	\cdot 	\int_z^{\infty}\mathcal{U}_k(t) \cdot v_k(t) \cdot t \cdot dt
& \leq C \cdot \mathfrak{B}_k \cdot \frac{e^{\widehat{F}_k(z) + \widehat{U}_k(z)}}{\mathcal{W}_k(z)} \cdot z^{2+\tau}
 \leq  C \cdot \mathfrak{B}_k \cdot z^{2+\tau}. \nonumber
\end{align}
For the first term of \eqref{e:particular-solution}, we apply the uniform estimate in Lemma \ref{l:laplace-method}, the monotonicity of $\widehat{F}_k(z)$ in Lemma \ref{l:wronskian}, as well as Lemma \ref{l:wronskian}(2), to obtain \begin{align}
\frac{\mathcal{U}_k(z)}{\mathcal{W}_k(z)}	\cdot 	\int_w^z\mathcal{F}_k(t) \cdot v_k(t) \cdot t \cdot dt
  \leq    C \cdot \mathfrak{B}_k \cdot z^{2+\tau}.
\nonumber
\end{align}
Adding up the above two terms, we obtain the conclusion.\end{proof}
\begin{proof}
[Proof of Proposition \ref{p:weighted-in-z}]

First consider the case $\Lambda_k>0$. By the same computations as in the proof of Lemma 4.9 of \cite{HSVZ},   there is some constant $C>0$ independent of $k\in\dZ_+$ such that 
\begin{align*}|v_k(z)| & \leq C \cdot (\Lambda_k+1)^{-2}\cdot \Vol_{h_0}(\mathfrak{N}_{z_0}^3)^{\frac{1}{2}} \cdot \|(-\tau_{h_0})^2 v\|_{C^0(\mathfrak{N}_{z_0}^3)} \nonumber\\
& \leq  C \cdot (\Lambda_k+1)^{-2}\cdot \Vol_{h_0}(\mathfrak{N}_{z_0}^3)^{\frac{1}{2}} \cdot\Big( z^2 \|\nabla^4 v\|_{C^0(\mathfrak{N}_{z}^3)}  +  z^{\frac{3}{2}}\|\nabla^3 v\|_{C^0(\mathfrak{N}_{z}^3)} + z \|\nabla^2 v\|_{C^0(\mathfrak{N}_{z}^3)}\Big)
\nonumber\\
& \leq C \cdot (\Lambda_k+1)^{-2}\cdot  z(\bx)^{\tau}.
\end{align*}	
It is easy to see the same estimate also holds when $\Lambda_k=0$.

Now consider the formal solution $u(\bm x)=\sum_{k=0}^\infty u_k(z)\cdot\varphi_k(\bm y)$, where $u_k(z)$ is given by \eqref{e:solution j=lambda=0}, \eqref{e:solution j=0} and \eqref{e:particular-solution}. By Lemma \ref{l:coefficients-estimate} and Weyl's law, we see that $u(\bm x)$ is convergent and satisfies
  \begin{align}
  |u(\bx)| \leq C \cdot z(\bx)^{3+\tau} +  C \cdot \Big(\sum\limits_{k=2}^{\infty}\frac{1}{k^{\frac{4}{3}}}\Big)\cdot (z(\bx))^{3+\tau} \leq C \cdot (z(\bx))^{3+\tau},\nonumber
  \end{align}
 where $C>0$ is independent of $x\in\mathcal{Q}_w$ and $w>2$.
\end{proof}

 \bibliographystyle{plain}

\bibliography{HK4}

 \end{document}